    \newcommand{\href}[2]{#2}
\newtheorem{theorem}{Theorem}[section]
\newtheorem{corollary}[theorem]{Corollary}
\newtheorem{lemma}[theorem]{Lemma}
\newtheorem{assumption}[theorem]{Assumption}
\newtheorem{proposition}[theorem]{Proposition}
\newtheorem{definition}[theorem]{Definition}
\newtheorem{example}[theorem]{Example}
\newtheorem{remark}[theorem]{Remark}
\numberwithin{equation}{section}  
  \newcounter{mnote}
  \let\oldmarginpar\marginpar
    \renewcommand\marginpar[1]{\-\oldmarginpar[\raggedleft\footnotesize #1]%
    {\raggedright\footnotesize #1}}
\definecolor{myblue}{rgb}{0.2,0.2,0.7}
\definecolor{mygreen}{rgb}{0,0.6,0}
\definecolor{mycyan}{rgb}{0,0.6,0.6}
\definecolor{myred}{rgb}{0.9,0.2,0.2}
\definecolor{mymagenta}{rgb}{0.9,0.2,0.9}
\definecolor{mywhite}{rgb}{1.0,1.0,1.0}
\definecolor{myblack}{rgb}{0.0,0.0,0.0}
\newcommand*{\ang}[1]{\left\langle #1 \right\rangle}
\newcommand{\beq}{\begin{equation}}
\newcommand{\eeq}{\end{equation}}
\newcommand{\beqa}{\begin{eqnarray}}
\newcommand{\eeqa}{\end{eqnarray}}
\newcommand{\tbar}{|\hspace*{-0.15em}|\hspace*{-0.15em}|}
\newcommand{\leqs}{\leqslant}      
\newcommand{\geqs}{\geqslant}      
\renewcommand{\div}{{\operatorname{div}}}
\newcommand{\eps}{\varepsilon}
\newcommand{\tiR}{\mbox{{\tiny $R$}}}
\newcommand{\N}{{\mathbb N}}       
\newcommand{\R}{{\mathbb R}}       
\newcommand{\cG}{{\mathcal G}}
\newcommand{\cL}{{\mathcal L}}
\newcommand{\cM}{{\mathcal M}}
\newcommand{\cS}{{\mathcal S}}
\newcommand{\cT}{{\mathcal T}}
\newcommand{\bD}{{\bf D}}
\newcommand{\bu}{{\bf u}}
\newcommand{\bv}{{\bf v}}
\begin{document}

\title[Convergence of adaptive methods for nonlinear PDE]
      {Local Convergence of Adaptive Methods \\
       for Nonlinear Partial Differential Equations}

\author[M. Holst]{Michael Holst}
\email{mholst@math.ucsd.edu}

\author[G. Tsogtgerel]{Gantumur Tsogtgerel}
\email{gantumur@math.ucsd.edu}

\author[Y. Zhu]{Yunrong Zhu}
\email{zhu@math.ucsd.edu}

\address{Department of Mathematics\\
         University of California San Diego\\ 
         La Jolla CA 92093}

\thanks{MH was supported in part by NSF Awards~0715146 and 0915220,
by DOE Award DE-FG02-04ER25620, and by DOD/DTRA Award HDTRA-09-1-0036.}
\thanks{GT and YZ were were supported in part by NSF Award~0715146.}

\date{\today}

\keywords{Adaptive methods, nonlinear equations, 
approximation theory, nonlinear approximation,
convergence, contraction, optimality, inf-sup conditions, 
weak convergence, weak-* convergence,
{\em a priori} estimates, {\em a posteriori} estimates,
measure spaces, manifolds}

\begin{abstract}
In this article we develop convergence theory for a general class
of adaptive approximation algorithms for abstract nonlinear 
operator equations on Banach spaces, and then use the theory to
obtain convergence results for practical adaptive finite element
methods (AFEM) applied to several classes of nonlinear elliptic equations.
In the first part of the paper,
we develop a weak-* convergence framework for nonlinear 
operators, whose Gateaux derivatives are locally Lipschitz and satisfy a local inf-sup condition. The framework can be viewed as extending the recent convergence results for
linear problems of Morin, Siebert and Veeser 
to a general nonlinear setting.
We formulate an abstract adaptive approximation algorithm 
for nonlinear operator equations in Banach spaces with local structure.
The weak-* convergence framework is then applied to this class of 
abstract locally adaptive algorithms, giving a general convergence result.
The convergence result is then applied to a standard AFEM algorithm in
the case of several semilinear and quasi-linear scalar elliptic equations 
and elliptic systems, including: a semilinear problem with subcritical nonlinearity, the 
steady Navier-Stokes equations, and a quasilinear problem with
nonlinear diffusion.
This yields several new AFEM convergence results for these nonlinear 
problems.
In the second part of the paper we develop a second abstract convergence 
framework based on strong contraction, extending the recent contraction 
results for linear problems of Cascon, 
Kreuzer, Nochetto, and Siebert and of Mekchay and Nochetto 
to abstract nonlinear problems.
We then establish conditions under which it is possible to apply the 
contraction framework to the abstract adaptive algorithm defined earlier,
giving a contraction result for AFEM-type algorithms applied to nonlinear
problems.
The contraction result is then applied to a standard AFEM algorithm in
the case of several semilinear scalar elliptic equations,
including: a semilinear problem with subcritical nonlinearity, the 
Poisson-Boltzmann equation, and the Hamiltonian constraint in
general relativity, yielding AFEM contraction results in each case.
\end{abstract}

\maketitle

\clearpage

{\footnotesize
\tableofcontents
}
\vspace*{-0.5cm}

\section{Introduction}
   \label{sec:intro}

Due to the pioneering work of 
Babuska and Rheinboldt~\cite{Babuska.I;Rheinboldt.W1978}, 
adaptive finite element methods (AFEM) based on {\em a posteriori} error
estimators become standard tools in solving PDEs arising in scientific 
and engineering.
A standard adaptive algorithm has the general iterative structure:
\begin{equation}{\label{eq:adaptive}}
 \textsf{Solve} \longrightarrow \textsf{Estimate} \longrightarrow \textsf{Mark} \longrightarrow \textsf{Refine}
\end{equation}
where 
\textsf{Solve} computes the discrete solution 
       $u_k$ in a subspace $X_k \subset X$;
\textsf{Estimate} computes certain error estimators based on $u_k$, which
        are reliable and efficient in the sense that they are good
        approximation of the true error $u-u_k$ in the energy norm; 
\textsf{Mark} applies certain marking strategies based on the
        estimators; and finally, 
\textsf{Refine} divides each marked element and completes the mesh to
        to obtain a new partition, and subsequently
        an enriched subspace $X_{k+1}$.
The fundamental problem with the adaptive procedure~\eqref{eq:adaptive} 
is guaranteeing convergence of the solution sequence.
For {\em a posteriori} error analysis, we refer to the 
books~\cite{Ainsworth.M;Oden.J2000,Verfurth.R1996,Repin.S2008} 
and the references cited therein.

The first convergence result for~\eqref{eq:adaptive} was obtained by 
Babuska and Vogelius~\cite{Babuska.I;Vogelius.M1984} for 
linear elliptic problems in one space dimension.
The multi-dimensional case was open
until D\"orfler~\cite{Dorfler.W1996} proved convergence of~\eqref{eq:adaptive}
for Poisson equation, under the assumption that the initial mesh was fine 
enough to resolve the influence of data oscillation.
This result was improved by Morin, Nochetto, and 
Siebert~\cite{Morin.P;Nochetto.R;Siebert.K2002}, in which the convergence was proved without conditions on the 
initial mesh, but requiring the so-called \emph{interior node property}, together
with an additional marking step driven by data oscillation.
Since these seminal papers, a number of substantial steps have been taken
to generalize these convergence results for linear elliptic problems in 
various directions.
Of particular interest to us here are the following.
In~\cite{Morin.P;Siebert.K;Veeser.A2007,Morin.P;Siebert.K;Veeser.A2008,Siebert.K2009} the asymptotic convergence results were obtained for a general class adaptive methods for a large class of linear problems.
The theory does not require marking due 
to oscillation, or the interior node property, and allows more general marking strategies than what had been used in
D\"{o}rfler's arguments, with different 
{\em a posteriori error} estimators.
In another direction, it was showed by Binev, Dahmen and 
DeVore~\cite{Binev.P;Dahmen.W;DeVore.R2004} for the first time that
AFEM for Poisson equation in the plane has optimal computational complexity 
by using a critical coarsening step.
This result was improved by Stevenson~\cite{Stevenson.R2007} by showing the optimal 
complexity in general spatial dimension without coarsening step.
These error reduction and optimal complexity results were improved recently 
in several aspects in~\cite{Cascon.J;Kreuzer.C;Nochetto.R;Siebert.K2008}.
In the analysis of~\cite{Cascon.J;Kreuzer.C;Nochetto.R;Siebert.K2008}, 
the artificial assumptions of interior node and extra marking due to 
data oscillation were removed, and the convergence result is applicable to general 
linear elliptic equations.
The main ingredients of this new convergence analysis are the global upper
bound on the error give by the {\em a posteriori} estimator, 
orthogonality (or possibly only quasi-orthogonality) of the underlying
bilinear form arising from the linear problem, and a type of error indicator 
reduction produced by each step of AFEM.
We refer to~\cite{Nochetto.R;Siebert.K;Veeser.A2009} for a recent survey of 
convergence analysis of AFEM for linear elliptic PDEs which gives an overview
of all of these results through late 2009.

There are a number of recent and not-so-recent articles concerning 
{\em a posteriori} error analysis for nonlinear partial differential 
equations;
cf.~\cite{Bank.R;Smith.R1993,Verfurth.R1994,Pousin.J;Rappaz.J1994,Bansch.E;Siebert.K1995,Repin.S2000a,Krizek.M;Nemec.J;Vejchodsky.T2001,Rappaz.J2006,Kim.K2007,Repin.S2008,Charina.M;Conti.C;Fornasier.M2008}.
However, to date there have been only a handful of AFEM convergence results
for nonlinear problems.
Some of the results are: AFEM convergence for a scalar problem involving
the $p$-Laplacian was shown in~\cite{Veeser.A2002,Diening.L;Kreuzer.C2008};
AFEM convergence for a class of convex nonlinear problems arising
in elasticity in~\cite{Carstensen.C;Orlando.A;Valdman.J2006,Carstensen.C2009};
and AFEM convergence for the nonlinear Poisson-Boltzmann equation
in~\cite{Chen.L;Holst.M;Xu.J2007}.
These results
typically
involve problem-specific handling of the nonlinearity.
A recent article in a more general direction is the paper of 
Ortner and Praetorius~\cite{Ortner.C;Praetorius.D2008} where the convergence 
analysis of an adaptive algorithm for a large class of nonlinear equations is 
discussed based on energy minimization, including the cases lacking an 
Euler-Lagrange equation due to low differentiability properties of the energy.
However, their argument is tailored specifically for non-conforming finite 
element methods, with some remaining obstacles for the conforming case.

In this article we develop convergence theory for a general class
of adaptive approximation algorithms for abstract nonlinear 
operator equations on Banach spaces, and then use the theory to
obtain convergence results for practical adaptive finite element
methods (AFEM) applied to several classes of nonlinear elliptic equations.
In the first part of the paper,
we develop a weak-* convergence framework for nonlinear 
operators, whose Gateaux derivatives are locally Lipschitz and satisfy a local inf-sup condition.
The framework can be viewed as extending the recent convergence results for
linear problems of Morin, Siebert and Veeser~\cite{Morin.P;Siebert.K;Veeser.A2007,Morin.P;Siebert.K;Veeser.A2008,Siebert.K2009}
to a general nonlinear setting.
We formulate an abstract adaptive approximation algorithm 
for nonlinear operator equations in Banach spaces with local structure.
The weak-* convergence framework is then applied to this class of 
abstract locally adaptive algorithms, giving a general convergence result.
The convergence result is then applied to a standard AFEM algorithm in
the case of several semilinear and quasi-linear scalar elliptic equations 
and elliptic systems, including a semilinear problem with polynomial nonlinearity, the 
steady Navier-Stokes equations, and a more general quasilinear problem.
This yields several new AFEM convergence results for these nonlinear 
problems.

A disadvantage of the weak-* convergence framework is that it does not 
give information on adaptive finite element convergence {\em rate};
strict error contraction results are key to complexity analysis of 
specific instances of the AFEM algorithms.
To allow for complexity results of this type, in the second part of the 
paper we develop a second abstract convergence 
framework based on strong contraction, extending the recent contraction 
results for linear problems of 
Cascon, Kreuzer, Nochetto, and Siebert~\cite{Cascon.J;Kreuzer.C;Nochetto.R;Siebert.K2008}
and of Mekchay and Nochetto~\cite{Mekchay.K;Nochetto.R2005}
to abstract nonlinear problems.
We then establish conditions under which it is possible to apply the 
contraction framework to the abstract adaptive algorithm defined earlier,
giving a contraction result for AFEM-type algorithms applied to nonlinear
problems.
The contraction result is then applied to a standard AFEM algorithm in
the case of several semilinear scalar elliptic equations,
including a semilinear problem with polynomial nonlinearity, the 
Poisson-Boltzmann equation~\cite{Holst.M;McCammon.J;Yu.Z;Zhou.Y2009}
and the Hamiltonian constraint~\cite{Holst.M;Tsogtgerel.G2009}
in general relativity, yielding AFEM contraction results in each case.
%

The remainder of this paper is organized as follows.
In Section~\ref{sec:weak_framework}, we develop an abstract framework for 
ensuring that a sequence of Petrov-Galerkin (PG) approximations to the 
nonlinear problem converges to the solution 
of a nonlinear equation, by ensuring the weak-* convergence to zero of the 
sequence of corresponding nonlinear residuals.
This involves first establishing {\em a priori} estimates and
a general convergence result in Section~\ref{subsec:weak_apriori}, together
with recalling some (mostly standard) {\em a posteriori} error estimates
in Section~\ref{subsec:weak_aposteriori}.
In Section~\ref{sec:adaptive}, we present a class of abstract
adaptive algorithms which (under reasonable assumptions) fit into both the 
weak-* convergence framework
developed in Section~\ref{sec:weak_framework} and the contraction framework
developed in Section~\ref{sec:contraction-abstract}.
The class of algorithms is general enough to include both classical
adaptive finite element methods (AFEM) for two- and three-dimensional
elliptic systems, as well as AFEM algorithms for geometric elliptic PDE on
Riemannian manifolds (cf.~\cite{Holst.M;Tsogtgerel.G2009,Holst.M2001}).
In Section~\ref{sec:convergence}, we give the main convergence results for the
class of adaptive algorithms described in Section~\ref{sec:adaptive}.
In particular, we prove that
the adaptive algorithm generates a sequence of approximate solutions which
converge strongly to the solution, by showing that the corresponding sequence
of nonlinear residuals weak-* converges to zero.
We present a sequence of examples in Section~\ref{sec:examples} to illustrate 
the weak-* convergence framework.
In Section~\ref{sec:contraction-abstract}, we outline a second distinct 
abstract framework for
ensuring that a sequence of approximations to the nonlinear problem
produced by an adaptive algorithm converges to the solution of a nonlinear 
equation, by ensuring strict contraction of the {\em quasi-error}
(the sum of the error norm and the error indicator).
This framework is based on establishing strengthened Cauchy and 
quasi-orthogonality-type 
inequalities for successive PG approximations produced by adaptive 
algorithms in Sections~\ref{subsec:global-quasi}--\ref{subsec:local-quasi},
together with a general abstract contraction result derived in 
Section~\ref{subsec:contraction-contract}.
The contraction result is an abstraction of the contraction arguments used
in~\cite{Mekchay.K;Nochetto.R2005,Cascon.J;Kreuzer.C;Nochetto.R;Siebert.K2007,Holst.M;Tsogtgerel.G2009,Holst.M;McCammon.J;Yu.Z;Zhou.Y2009}, suitable for use
with approximation techniques for nonlinear problems.
As in these existing frameworks, it is based
on establishing:
(1) quasi-orthogonality;
(2) error indicator bound on the error;
(3) a type of indicator reduction.
We prove that under these assumptions, the adaptive algorithm generates 
a sequence of approximate solutions for which the quasi-error strictly 
contracts.
Finally, we present several examples of increasing difficulty
in Section~\ref{sec:contraction-ex} to illustrate this framework.

\section{An Abstract Weak* Convergence Framework}
\label{sec:weak_framework}

In this section, we focus on developing a general convergence framework
for abstract nonlinear equations.
To explain the problem class, the adaptive approximation algorithm,
and the set of convergence results we wish to establish,
let $X$ and $Y$ be real Banach spaces (complete normed
vector spaces over the field $\mathbb{R}$) with norms $\|\cdot\|_X$
and $\|\cdot\|_Y$, respectively.
Denote the topological dual spaces of bounded linear functionals on 
$X$ and $Y$ as $X^*$ and $Y^*$ respectively.
In this paper, we are interested in the convergence of a general class of
adaptive algorithms for solving the nonlinear equation: 
$$\mbox{Find}\;\; u\in X, \mbox{ such that } F(u) = 0,$$
or in a {\em weak form}:
\begin{equation}
\label{eqn:model}
\mbox{Find}\;\; u\in X,  \mbox{ such that } \langle F(u), v\rangle = 0,
\forall v\in Y,
\end{equation}
based on placing some minimal conditions on the first 
(Gateaux or Frechet) derivative of $F$.
We note that~\eqref{eqn:model} often itself arises naturally through
Gateaux differentiation of a scalar-valued energy $J:X\rightarrow \mathbb{R}$,
as the \emph{Euler Condition} for stationarity of $J(u)$,
although we will consider the general case here
whereby there may not be an underlying energy functional.
In any case, recall (cf.~\cite{Vain73,Kesa89,OrRh70})
that the Gateaux variation of $F$ at $u \in X$
in the direction $w \in X$ is given as:
\begin{equation}
\label{eqn:gateaux-variation}
F^{\prime}(u)w
= \left. \frac{d}{d\epsilon} F(u+\epsilon w) \right|_{\epsilon=0},
\end{equation}
and recall that when they exist as bounded linear operators, the 
Gateaux and Frechet derivatives at $u$ in the direction $w$
agree with $F'(u)$ above, uniquely generated by~\eqref{eqn:gateaux-variation}.
Note that in general, the solution to equation~\eqref{eqn:model} may not be unique.
In this paper,we are interested in the \emph{locally unique solution}, which is unique in a neighborhood:
\begin{definition}
\label{def:local_unique}
We say $u\in X$ is a \emph{locally unique} solution to \eqref{eqn:model} in a neighborhood $U\subset X$ of $u,$ if $u$ is the only solution of \eqref{eqn:model} in $U.$
\end{definition}
Our aim now is to show that: 
\emph{For any convergent sequence $\{u_{k}\}$ in $X,$ if the residuals $F(u_{k})$ of the nonlinear 
equation~\eqref{eqn:model} weak-* converge to zero, 
then the sequence converges to the solution of~\eqref{eqn:model}.}
Based on this abstract convergence result, the remainder of this section will 
be devoted to establishing existence, {\em a priori} error estimates, and 
{\em a posteriori} error estimates, for Petrov-Galerkin approximations
to equation~\eqref{eqn:model}.

The following simple theorem will form the basis for our convergence analysis.
\begin{theorem}\label{thm:abstract_convergence}
For a continuous (nonlinear) map $F:X\to Y^*,$ suppose that $u\in X$ is a locally unique solution to~\eqref{eqn:model} in a neighborhood $U\subseteq X$ of $u.$
Let $\{u_k\}\subset U$ be a sequence converging to some $u_*\in U$,
such that
\begin{equation}\label{eqn:weak_convergence_residual}
\lim_{k\to \infty}\langle F(u_k), v\rangle =0, \qquad \forall v\in Y.
\end{equation}
Then we have $u_*=u$.
\end{theorem}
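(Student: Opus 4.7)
The plan is to exploit the continuity of $F$ together with the local uniqueness of $u$ in $U$. The argument is essentially a uniqueness-of-limits argument in the weak-$*$ topology on $Y^*$, applied to the sequence $\{F(u_k)\}$.

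First I would use continuity of $F:X\to Y^*$ to pass the convergence $u_k\to u_*$ in $X$ through $F$, obtaining $F(u_k)\to F(u_*)$ in the norm of $Y^*$. Strong convergence in $Y^*$ clearly implies weak-$*$ convergence, so for every $v\in Y$ we have $\langle F(u_k),v\rangle\to\langle F(u_*),v\rangle$.

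Next I would combine this with the hypothesis \eqref{eqn:weak_convergence_residual}, namely $\langle F(u_k),v\rangle\to 0$ for each $v\in Y$. Since limits of scalar sequences are unique, this forces $\langle F(u_*),v\rangle=0$ for every $v\in Y$, i.e.\ $F(u_*)=0$ as an element of $Y^*$. Thus $u_*$ is itself a solution of the weak equation \eqref{eqn:model}.

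Finally, since $u_*\in U$ by assumption, and $u$ is by hypothesis the unique solution of \eqref{eqn:model} lying in $U$ (Definition \ref{def:local_unique}), local uniqueness yields $u_*=u$. I do not expect any substantive obstacle: the theorem really just records the fact that continuity upgrades weak-$*$ convergence of residuals along a strongly convergent sequence to the statement that the limit is an actual solution, after which local uniqueness does the rest. The content of the paper lies not in this lemma but in verifying the hypothesis \eqref{eqn:weak_convergence_residual} for sequences generated by the adaptive algorithm, which is what the subsequent sections must address.
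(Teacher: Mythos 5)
Your proof is correct and is essentially the same argument the paper gives: the paper writes $\ang{F(u_*),v}=\ang{F(u_*)-F(u_k),v}+\ang{F(u_k),v}$ and lets the first term vanish by continuity of $F$ and the second by hypothesis, which is exactly your "strong convergence implies weak-$*$ convergence plus uniqueness of scalar limits" step, followed by the same appeal to local uniqueness.
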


\begin{proof}
We have
\begin{equation*}
\begin{split}
\ang{F(u_*),v}
&=
\ang{F(u_*)-F(u_k),v}
+ \ang{F(u_k),v}
\\
&\leqs
\|F(u_*)-F(u_k)\|_{Y^*} \|v\|_{Y}
+ |\ang{F(u_k),v}|.
\end{split}
\end{equation*}
The conclusion follows by the continuity of $F,$ \eqref{eqn:weak_convergence_residual} and uniqueness of $u$ in $U.$
\end{proof}

One of our central goals in the paper is now to develop a practical way to 
generate the sequence $\{u_k\}$ satisfying the conditions in 
Theorem~\ref{thm:abstract_convergence}.
To this end, we introduce two sequences of nested (finite-dimensional) 
subspaces
$$X_0\subset X_1\subset\ldots\subset X \mbox{ and }Y_0\subset Y_1\subset\ldots\subset Y,$$
where $\dim (X_{k}) = \dim (Y_{k})$ for each $k\in \N.$
In addition, we introduce the spaces $(X_{\infty}, Y_{\infty}):$ 
$$
X_{\infty} = \overline{\bigcup_{k} X_{k}}^{\|\cdot\|_{X}},\qquad \mbox{and} \qquad Y_{\infty} = \overline{\bigcup_{k} Y_{k}}^{\|\cdot\|_{Y}}.
$$ 
We focus on a class of approximation methods whereby the sequence of
approximations $\{u_k\in X_k\}\subset X$ to the exact solution $u \in X$
to~\eqref{eqn:model} are generated by solving the Petrov-Galerkin (PG) problems
\begin{equation}\label{eqn:nonlinear_galerkin}
\mbox{Find } u_k \in X_k, \mbox{ such that } \langle F(u_k),  v_{k}\rangle =0,\qquad  \forall v_{k}\in Y_k.
\end{equation}

We next consider conditions on $F$ to establish well-posedness 
of~\eqref{eqn:nonlinear_galerkin}, and derive {\em a priori} 
error estimates for the approximations $u_k \approx u$.

\subsection{A Priori Error Estimates}
\label{subsec:weak_apriori}

Let $G$ be a $C^1$ mapping from $X\to Y^*,$ understood as an approximation of $F.$
Assume $G$ satisfies the following conditions:

\begin{enumerate}
\item[(H1)] There exists a constant $\delta>0$ such that $G'$ satisfies
$$\left\|G'(u) - G'(x)\right\|_{\mathcal{L}(X,Y^*)} \leqs L\|u-x\|_X,\;\; \forall x\in X \mbox{ with } \|u-x\|_X\leqs \delta.$$
\item[(H2)] $G'(u)$ is an isomorphism from $X\to Y^*,$ and there exists a constant $M>0$ such that
$$\left\|{G'}^{-1}(u)\right\|_{\mathcal{L}(Y^*,X)} \leqs M.$$
\item[(H3)] $\|G(u)\|_{Y^*}\leqs C,$ where $C=\min\{\frac{\delta}{2M}, \frac{1}{4M^2 L}\}.$
\end{enumerate}
Assumptions (H2) and (H3) are stability and consistency conditions, respectively.
If $G$ satisfies (H1)-(H3), then we have the following lemma, 
similar to~\cite[Theorem 2]{Pousin.J;Rappaz.J1994}.
\begin{lemma}\label{lem:existence_and_priori_estimate}
Let $G$ satisfy the assumptions (H1)-(H3), then there exist a constant $\delta_0>0$ and a unique $u_G\in X$ such that
$G(u_G) = 0, \;\;\mbox{and } \; \|u-u_G\|_X\leqs \delta_0.$
Moreover, we have the following {\em a priori} error estimate:
$$\|u-u_G\|_X\leqs 2 \left\|G'(u)^{-1}\right\|_{\mathcal{L}(Y^*, X)}\|G(u)\|_{Y^*}.$$ 
\end{lemma}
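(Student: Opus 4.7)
The plan is to reformulate the equation $G(u_G)=0$ as a fixed-point equation and apply the Banach contraction mapping theorem to a Newton-like iteration with a frozen derivative. Specifically, I introduce the map
\[
T:X\to X,\qquad T(x)=x-G'(u)^{-1}G(x),
\]
which is well-defined thanks to (H2). A point $u_G$ is a fixed point of $T$ iff $G'(u)^{-1}G(u_G)=0$, and since $G'(u)^{-1}$ is an isomorphism, this is equivalent to $G(u_G)=0$. So it suffices to produce a unique fixed point of $T$ in a suitable ball around $u$.

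First I would fix the candidate radius $\delta_{0}:=2M\|G(u)\|_{Y^{*}}$. By (H3), $\delta_{0}\leqs 2MC\leqs \delta$, so the closed ball $\overline{B_{\delta_{0}}(u)}$ lies inside the region where the Lipschitz bound (H1) holds. The key calculation uses the identity
\[
T(x)-u=-G'(u)^{-1}G(u)-G'(u)^{-1}\int_{0}^{1}[G'(u+t(x-u))-G'(u)](x-u)\,dt,
\]
obtained by writing $G(x)=G(u)+\int_{0}^{1}G'(u+t(x-u))(x-u)\,dt$. Combining (H1) and (H2) gives
\[
\|T(x)-u\|_{X}\leqs M\|G(u)\|_{Y^{*}}+\tfrac{ML}{2}\|x-u\|_{X}^{2}\leqs \tfrac{\delta_{0}}{2}+\tfrac{ML}{2}\delta_{0}^{2}.
\]
Because (H3) yields $M^{2}L\|G(u)\|_{Y^{*}}\leqs 1/4$, one checks $\tfrac{ML}{2}\delta_{0}^{2}\leqs \delta_{0}/4$, so $T$ maps $\overline{B_{\delta_{0}}(u)}$ into itself.

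Next I would establish contraction. For $x,y\in\overline{B_{\delta_{0}}(u)}$,
\[
T(x)-T(y)=G'(u)^{-1}\!\int_{0}^{1}\![G'(u)-G'(y+t(x-y))](x-y)\,dt,
\]
and since $y+t(x-y)$ remains in the ball (convexity), (H1)--(H2) give the Lipschitz estimate $\|T(x)-T(y)\|_{X}\leqs ML\delta_{0}\|x-y\|_{X}$. Again by (H3), $ML\delta_{0}=2M^{2}L\|G(u)\|_{Y^{*}}\leqs 1/2$, so $T$ is a strict contraction. Banach's fixed-point theorem then provides a unique $u_{G}\in \overline{B_{\delta_{0}}(u)}$ with $T(u_{G})=u_{G}$, hence $G(u_{G})=0$.

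Finally, the a priori estimate drops out of the contraction. Since $T(u)-u=-G'(u)^{-1}G(u)$, writing
\[
\|u_{G}-u\|_{X}=\|T(u_{G})-u\|_{X}\leqs \|T(u_{G})-T(u)\|_{X}+\|T(u)-u\|_{X}\leqs \tfrac{1}{2}\|u_{G}-u\|_{X}+\|G'(u)^{-1}\|_{\mathcal{L}(Y^{*},X)}\|G(u)\|_{Y^{*}},
\]
absorbing the first term on the right gives $\|u-u_{G}\|_{X}\leqs 2\|G'(u)^{-1}\|_{\mathcal{L}(Y^{*},X)}\|G(u)\|_{Y^{*}}$. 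The only delicate point is calibrating the radius $\delta_{0}$ so that (i) it is small enough for (H1) to apply and for the contraction constant to be $<1$, yet (ii) it is large enough that $T(\overline{B_{\delta_{0}}(u)})\subset \overline{B_{\delta_{0}}(u)}$; this is precisely where the specific threshold $C=\min\{\delta/(2M),1/(4M^{2}L)\}$ in (H3) is used, and is the main bookkeeping obstacle.
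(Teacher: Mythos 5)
Your proof is correct and follows essentially the same route as the paper: both introduce the frozen-Newton map $T(x)=x-G'(u)^{-1}G(x)$, verify self-mapping and contraction on a ball around $u$ using (H1)--(H3), and invoke Banach's fixed-point theorem, with the a~priori bound extracted from the contraction via $\|u_G-u\|_X\leqs\|T(u_G)-T(u)\|_X+\|T(u)-u\|_X$. The only cosmetic difference is your data-dependent radius $\delta_0=2M\|G(u)\|_{Y^*}$ (with a quadratic self-mapping estimate) versus the paper's fixed radius $\delta_0=\min\{\delta,1/(2LM)\}$ (with self-mapping deduced from contraction plus $\|T(u)-u\|_X\leqs MC$); both choices are calibrated by the same threshold $C$ in (H3) and yield the same conclusion.
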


\begin{proof}
We show existence and uniqueness by fixed-point argument.
Define first
$$T(x) = x - G'(u)^{-1} G(x), \qquad\forall x\in X.$$
This new operator $T$ is well-defined because $G'(u)$ is an isomorphism by
Assumption (H2).
Then for any $x_{1}, x_{2}\in X$ we have
\begin{eqnarray*}
\|T(x_{1}) - T(x_{2})\|_{X}
&=& \left\|(x_{1}-x_{2}) + G(u)'^{-1} (G(x_{2}) - G(x_{1}))\right\|_X\\
&=& \left\|(x_{1}-x_{2}) - G(u)'^{-1}\int_{0}^1 G'(x_{1}+t(x_{2}-x_{1})) (x_{1}-x_{2}) dt\right\|_X\\
&=& \left\|G'(u)^{-1} \int_{0}^1 (G'(u) - G'(x_{1}+t(x_{2}-x_{1}))) (x_{1}-x_{2}) dt\right\|_X\\
\end{eqnarray*}
 Let $\delta_0>0$ such that $\delta_0=\min\{\delta, \frac{1}{2LM}\}.$
We try to show that $T$ is a contraction mapping in the ball $B(u, \delta_{0})\subset X.$
By Assumption (H1),  we have
$$\|G'(u) - G'((x_{1}+t(x_{2}-x_{1}))\|_{\mathcal{L}(X,Y^*)} \leqs L\delta_{0},\;\; \forall x_{1}, x_{2}\in B(u, \delta_{0}).$$
Therefore, by the choice of $\delta_{0}$ and (H2) we have
\begin{eqnarray*}
\|T(x_{1}) - T(x_{2})\|_{X} &\leqs& L\delta_{0}\left\|G'(u)^{-1}\right\|_{\mathcal{L}(Y^*,X)} \|x_{1}-x_{2}\|_{X} \leqs \frac{1}{2} \|x_{1} - x_{2}\|_{X}.
\end{eqnarray*}
In addition, by using the above inequality and Assumption (H3), for any $x\in B(u, \delta_0)$ we have
 \begin{eqnarray*}
 \|T(x) -u\|_X &\leqs& \|T(x) - T(u)\|_X + \|T(u)-u\|_X\\
&\leqs& \frac{1}{2} \|x-u\|_X + \left\|G'(u)^{-1} G(u)\right\|_X\\
&\leqs& \frac{1}{2}\delta_0 + MC\leqs \delta_0.
\end{eqnarray*}
Therefore, $T$ is a contraction mapping from $B(u, \delta_0)$ to $B(u, \delta_0).$
Thus, there exists a unique $u_G\in B(u, \delta_0)$ such that $u_G = T(u_G)$, that is, $G(u_G) =0.$
Moreover, 
$$\|u-u_G\|_X = \|u-T(u_G)\|_{X} \leqs 2 \left\|G'(u)^{-1}\right\|_{\mathcal{L}(Y^*, X)}\|G(u)\|_{Y^*},$$
which completes the proof.
\end{proof}

Lemma~\ref{lem:existence_and_priori_estimate} provides us with an abstract framework for existence, uniqueness, and the {\em a priori} error estimate (giving continuous dependence) for the approximated scheme $G(x) =0.$
Based on this lemma, we now try to construct such a nonlinear operator $G$ for the Petrov-Galerkin formulation~\eqref{eqn:nonlinear_galerkin}.
This turns out to be nontrivial, since Petrov-Galerkin formulations are built only on the subspaces $(X_{k}, Y_{k}),$ whereas the operator $G:X\to Y^{*}$ is defined on the pair $(X, Y).$
Therefore, for each pair $(X_{k}, Y_{k}),$ we need to construct an operator $F_{k}:X\to Y^{*}$ such that the weak solution of $F_{k}(x) =0$ is equivalent to the solution of~\eqref{eqn:nonlinear_galerkin}.

To this end, let us first introduce a bilinear form
$b: X\times Y \to \mathbb{R}$ at $u \in X$:
\begin{equation}\label{eqn:b}
  b(x,y)=\langle F'(u)x, y\rangle,\;\; \forall x\in X, \;\; \forall y\in Y,
\end{equation}
which is the linearization of $F$ at $u.$
Denote by $\|b\|$ the norm of $b$:
$$\|b\|:=\sup\{b(x,y): x\in X,\; y \in Y \mbox{ s.t. } \|x\|_X=\|y\|_Y=1\} =\|F'(u)\|_{\mathcal{L}(X, Y^*)}.$$
We assume ``inf-sup'' conditions hold for $b$, i.e., 
there exists a constant $\beta_0>0$ such that
\begin{subequations}
\begin{equation}\label{eqn:nonlinear_continuous_infsup}
  \inf_{x\in X, \|x\|_X=1} \sup_{y\in Y, \|y\|_Y=1} b(x,y) =  \inf_{y\in Y, \|y\|_Y=1} \sup_{x\in X, \|x\|_X=1} b(x,y) =\beta_0>0.
\end{equation}
This condition is equivalent to assuming that $F'(u)$ is an isomorphism from $X$ to $Y^*$ with
$$\|F'(u)^{-1}\|_{\mathcal{L}(Y^*,X)} =\beta_0^{-1}.$$
In the finite-dimensional spaces $(X_k, Y_k),$ we assume that $b$ satisfies a discrete inf-sup condition of the form
\begin{equation}\label{eqn:nonlinear_discrete_infsup}
  \inf_{x\in X_k, \|x\|_X=1} \sup_{y\in Y_k, \|y\|_Y=1} b(x,y) = \inf_{y\in Y_k, \|y\|_Y=1} \sup_{x\in X_k, \|x\|_X=1} b(x,y)\geqs\beta_1>0.
\end{equation}
Based on these inf-sup conditions, we have that $b(\cdot,\cdot)$ also satisfies the following inf-sup condition for the pair of spaces $(X_{\infty}, Y_{\infty}).$

\begin{lemma}\label{lem:infty_infsup}
Let the bilinear form $b(\cdot, \cdot)$ satisfies the inf-sup condition~\eqref{eqn:nonlinear_discrete_infsup} on $(X_k, Y_k)$ for $k=1,2, \dots.$
Then it satisfies the inf-sup condition on $(X_{\infty}, Y_{\infty}):$
\begin{equation}\label{eqn:nonlinear_infty_infsup}
\inf_{x\in X_{\infty}, \|x\|_X=1}\sup_{y\in Y_{\infty}, \|y\|_{Y}=1} b(x,y) = \inf_{y\in Y_{\infty}, \|y\|_Y=1}\sup_{x\in X_{\infty}, \|x\|_{X}=1} b(x,y)\geqs\beta_1>0.
\end{equation}
\end{lemma}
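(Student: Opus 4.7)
The plan is to prove the left inf-sup inequality in~\eqref{eqn:nonlinear_infty_infsup}; the right one is completely symmetric, so I would record it at the end as ``the same argument, interchanging the roles of $X$ and $Y$''. The strategy is a density/continuity argument: approximate an arbitrary unit vector of $X_\infty$ by elements of some $X_{k_n}$, apply the discrete inf-sup~\eqref{eqn:nonlinear_discrete_infsup} in each $(X_{k_n},Y_{k_n})$ to obtain test vectors $y_n \in Y_{k_n} \subset Y_\infty$, and then pass to the limit using continuity of the bilinear form $b$.

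Concretely, fix $x \in X_\infty$ with $\|x\|_X = 1$ and let $\eps > 0$. By definition of $X_\infty$ as the closure of $\bigcup_k X_k$, there exist an increasing index $k_n$ and $x_n \in X_{k_n}$ with $x_n \to x$ in $X$. For $n$ large we have $x_n \neq 0$, so after rescaling I may assume $\|x_n\|_X = 1$ (the rescaled sequence still converges to $x$ since $\|x\|_X=1$). The discrete inf-sup~\eqref{eqn:nonlinear_discrete_infsup} on $(X_{k_n}, Y_{k_n})$ then yields $y_n \in Y_{k_n}$ with $\|y_n\|_Y = 1$ and
\begin{equation*}
b(x_n, y_n) \;\geqs\; \beta_1 - \tfrac{1}{n}.
\end{equation*}
Since $y_n \in Y_{k_n} \subset Y_\infty$, I can then write
\begin{equation*}
\sup_{y \in Y_\infty,\, \|y\|_Y = 1} b(x,y) \;\geqs\; b(x, y_n) \;=\; b(x_n, y_n) + b(x - x_n, y_n) \;\geqs\; \beta_1 - \tfrac{1}{n} - \|b\|\,\|x - x_n\|_X,
\end{equation*}
using $\|y_n\|_Y = 1$ and boundedness of $b$. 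Letting $n \to \infty$ and using $\|x - x_n\|_X \to 0$ gives $\sup_{y\in Y_\infty,\,\|y\|_Y=1} b(x,y) \geqs \beta_1$, and taking the infimum over $x \in X_\infty$ with $\|x\|_X = 1$ proves the left equality-inequality in~\eqref{eqn:nonlinear_infty_infsup}. Switching the roles of $X$ and $Y$ (using~\eqref{eqn:nonlinear_discrete_infsup} in its second form) gives the right one, and since both equal a quantity $\geqs \beta_1$, equality holds and the lemma follows.

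I do not expect a real obstacle here: the boundedness of $b$ needed to control $b(x - x_n, y_n)$ is just $\|b\| = \|F'(u)\|_{\cL(X,Y^*)} < \infty$, which is part of the standing assumption that $F'(u)$ is an isomorphism (and in particular bounded). The only minor subtlety is that the supremum witness $y_n$ in~\eqref{eqn:nonlinear_discrete_infsup} need not be attained, which is why I introduce the error $\tfrac{1}{n}$; this absorbs into the limit harmlessly. Everything else is elementary: density of $\bigcup_k X_k$ in $X_\infty$ together with continuity of $b$ is precisely the content needed to transfer a uniform discrete inf-sup constant to the closure.
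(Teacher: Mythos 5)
Your density/continuity argument is the standard way to transfer a uniform discrete inf-sup constant to the closure, and it is almost certainly the content of the cited reference (\cite[Lemma~4.2]{Morin.P;Siebert.K;Veeser.A2007}); the paper itself gives no proof beyond that citation, so there is no independent argument to compare against. The core of your proof is correct: the nestedness of the $X_k$ justifies choosing $x_n\in X_{k_n}$ with increasing $k_n$; the rescaling step is fine because $\|x_n\|_X\to\|x\|_X=1$; and the estimate $b(x,y_n)\geqs\beta_1-\tfrac1n-\|b\|\,\|x-x_n\|_X$ with $\|b\|=\|F'(u)\|_{\cL(X,Y^*)}<\infty$ closes the argument for the lower bound on each side. (In fact, since $Y_{k_n}$ is finite-dimensional the supremum in~\eqref{eqn:nonlinear_discrete_infsup} is attained, so the $1/n$ slack is unnecessary, though harmless.)

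The one place you hand-wave is the final sentence: ``since both equal a quantity $\geqs\beta_1$, equality holds'' is not a valid inference --- two numbers can each be $\geqs\beta_1$ without being equal. What your argument actually establishes is that \emph{both} inf-sup quantities are $\geqs\beta_1>0$, which is the operative content of the lemma (it is what the paper uses to build the stable projectors). The equality of the two inf-sup quantities is a separate, standard functional-analysis fact: once both are positive, the induced operator $B:X_\infty\to Y_\infty^*$, $\langle Bx,y\rangle=b(x,y)$, is bounded below with dense range, hence an isomorphism, and the two inf-sup constants equal $1/\|B^{-1}\|$ and $1/\|(B^*)^{-1}\|$ respectively; since $\|(B^*)^{-1}\|=\|(B^{-1})^*\|=\|B^{-1}\|$ (using reflexivity of $Y_\infty$), they coincide. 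You should either state this explicitly or drop the equality claim and record only the two-sided lower bound, which is all that is used downstream.
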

\end{subequations}
\begin{proof}
See~\cite[Lemma 4.2]{Morin.P;Siebert.K;Veeser.A2007}.
\end{proof}

For each $k=0, 1, \cdots, \infty,$ inf-sup condition~\eqref{eqn:nonlinear_discrete_infsup} or~\eqref{eqn:nonlinear_infty_infsup} implies existence of two projectors
$$\Pi_k^X : X\to X_k \;\; \mbox{ and }\;\; \Pi_k^Y : Y\to Y_k,$$
defined by
\begin{eqnarray}
  b(x-\Pi_k^X x, y_k)=0 & \forall y_k\in Y_k & \forall x\in X, \label{eqn:Pi_kX}\\
  b(x_k, y-\Pi_k^Y y)=0 & \forall x_k\in X_k & \forall y\in Y.\label{eqn:Pi_KY}
\end{eqnarray}
These operators are stable in the following sense:
\begin{equation}\label{eqn:stability_of_Pik}
\|\Pi_k^X\|_{\mathcal{L}(X,X_k)}\leqs \frac{\|b\|}{\beta_1}\quad \mbox{and}\quad
\|\Pi_k^Y\|_{\mathcal{L}(Y,Y_k)}\leqs  \frac{\|b\|}{\beta_1}.
\end{equation}
In fact, take projector $\Pi_k^X$ as an example,  by the discrete inf-sup condition~\eqref{eqn:nonlinear_discrete_infsup}, we have
\begin{eqnarray*}
  \beta_1 \|\Pi_k^X x\|_{X} &\leqs& \sup_{y_k\in Y_k, \|y_k\|_Y=1} b(\Pi_k^X x, y_k)\\
  &=& \sup_{y_k\in Y_k, \|y_k\|_Y=1} b(x, y_k)\\
  &\leqs& \|b\| \|x\|_X.
\end{eqnarray*}
Moreover, the discrete inf-sup condition~\eqref{eqn:nonlinear_discrete_infsup} guarantees that
$$(\Pi_{k}^{X})^{2} = \Pi_{k}^{X} \mbox{  and  }(\Pi_{k}^{Y})^{2} = \Pi_{k}^{Y}.$$


Now we are ready to define the nonlinear operator $F_{k}:X\to Y^{*}$ for $k=0, 1, \ldots, \infty:$ 
\begin{equation}\label{eqn:def_of_Fk}
\langle F_k(x), y\rangle :=\langle F(x), \Pi_k^Y y\rangle + b(x, y-\Pi_k^Y y), \;\; \forall x\in X, \; y\in Y.
\end{equation}
By a direct calculation, we observe that 
\begin{equation}
\label{eqn:Fk-derivative}
\langle F_k'(x)w, y\rangle :=\langle F'(x)w, \Pi_k^Y y\rangle + \langle F'(u)w, y-\Pi_k^Y y\rangle.
\end{equation}
In particular, we have $F_{k}'(u) = F'(u).$
This operator $F_{k}$ gives rise to another nonlinear problem:
\begin{equation}\label{eqn:Fk-eq}
\mbox{Find}\;\; w\in X, \;\;\mbox{such that }\;\; \langle F_{k}(w), y\rangle =0, \quad \forall y\in Y.
\end{equation}
The equation~\eqref{eqn:Fk-eq} is posed on the whole spaces $(X, Y).$
However, it is not difficult to verify that the solution to~\eqref{eqn:nonlinear_galerkin} and the zero of~\eqref{eqn:def_of_Fk} are equivalent: 
\begin{lemma}[{\cite[Lemma 1]{Pousin.J;Rappaz.J1994}}]\label{lem:Fk}
$u_k\in X_k$ is a solution of~\eqref{eqn:nonlinear_galerkin} if and only if $u_k\in X$ is a solution of~\eqref{eqn:Fk-eq}.
\end{lemma}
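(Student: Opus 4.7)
The plan is to verify the two implications separately, using the projector identities \eqref{eqn:Pi_kX}--\eqref{eqn:Pi_KY} together with the direct-sum decomposition $Y = Y_k \oplus \ker \Pi_k^Y$ that follows from $(\Pi_k^Y)^2 = \Pi_k^Y$.

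For the forward direction, suppose $u_k \in X_k$ solves \eqref{eqn:nonlinear_galerkin}. Given any $y \in Y$, I would split $y = \Pi_k^Y y + (y - \Pi_k^Y y)$ inside the definition \eqref{eqn:def_of_Fk}. The first piece, $\langle F(u_k), \Pi_k^Y y\rangle$, vanishes because $\Pi_k^Y y \in Y_k$ and $u_k$ satisfies \eqref{eqn:nonlinear_galerkin}; the second piece, $b(u_k, y - \Pi_k^Y y)$, vanishes by \eqref{eqn:Pi_KY} since $u_k \in X_k$. Consequently $\langle F_k(u_k), y\rangle = 0$ for every $y \in Y$, which is \eqref{eqn:Fk-eq}.

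For the reverse direction, suppose $w \in X$ solves \eqref{eqn:Fk-eq}. Testing against $y = y_k \in Y_k$ kills the second summand of \eqref{eqn:def_of_Fk} and leaves $\langle F(w), y_k\rangle = 0$ for all $y_k \in Y_k$. Substituting this back into \eqref{eqn:Fk-eq} for a general $y \in Y$ gives $b(w, y - \Pi_k^Y y) = 0$ for all $y \in Y$, i.e., $b(w, z) = 0$ for every $z \in \ker \Pi_k^Y$. To promote this to $w \in X_k$, I would write $w = \Pi_k^X w + e$ with $e := w - \Pi_k^X w$: by \eqref{eqn:Pi_kX}, $b(e, y_k) = 0$ on $Y_k$, while for $z \in \ker \Pi_k^Y$ the identity \eqref{eqn:Pi_KY} applied with $y = z$ (so $z - \Pi_k^Y z = z$) gives $b(\Pi_k^X w, z) = 0$, and hence $b(e, z) = b(w,z) - b(\Pi_k^X w, z) = 0$ as well. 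Since $Y = Y_k + \ker \Pi_k^Y$, this means $b(e, y) = 0$ for every $y \in Y$, and the continuous inf-sup condition \eqref{eqn:nonlinear_continuous_infsup} forces $e = 0$. Thus $w = \Pi_k^X w \in X_k$, and combined with the already-derived identity $\langle F(w), y_k\rangle = 0$ on $Y_k$ this is exactly \eqref{eqn:nonlinear_galerkin}. The main substantive step is this upgrade: without the isomorphism property of $F'(u)$ encoded in \eqref{eqn:nonlinear_continuous_infsup}, the vanishing of $b(w, \cdot)$ on $\ker \Pi_k^Y$ alone would not be enough to conclude $w \in X_k$.
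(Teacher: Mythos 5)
Your argument is correct and follows essentially the same route as the paper's: both directions rest on the projector identities \eqref{eqn:Pi_kX}--\eqref{eqn:Pi_KY}, the idempotence of $\Pi_k^Y$, and the continuous inf-sup condition to upgrade $b(w-\Pi_k^X w,\cdot)\equiv 0$ on $Y$ to $w=\Pi_k^X w\in X_k$. The only cosmetic difference is in the converse direction: you establish $\langle F(w),y_k\rangle=0$ first and then make the splitting $Y=Y_k\oplus\ker\Pi_k^Y$ explicit to kill $b(w-\Pi_k^X w,\cdot)$ on each summand, whereas the paper first tests with $y=v-\Pi_k^Y v$ and invokes the identity $b(w-\Pi_k^X w,v)=b(w,v-\Pi_k^Y v)$ directly -- the same computation in a different order.
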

\begin{proof}
We include the proof here for completeness.
If $u_k\in X_k\subset X$ is a solution to~\eqref{eqn:nonlinear_galerkin}, then 
$$\langle F(u_k), v_k\rangle =0,\quad \forall v_k\in Y_k.$$
Therefore,  $$\langle F(u_k), \Pi_k^Y y\rangle =0,\quad \forall y\in Y.$$
For the second term in~\eqref{eqn:def_of_Fk}, notice that $u_k\in X_k,$ and by the definition of $\Pi_k^Y,$ we have 
$$b(u_k, y-\Pi_k^Y y) =0, \quad \forall y\in Y.$$
Thus, $u_k\in X$ is a solution to~\eqref{eqn:Fk-eq}.

Conversely, let $w\in X$ satisfy $\langle F_k(w), y\rangle =0, \;\; \forall y\in Y, $ that is
$$ \langle F(w), \Pi_k^Y y\rangle + b(w, y-\Pi_k^Y y) =0, \;\; \forall y\in Y.$$
By choosing $y= v-\Pi_k^Y v, $ we obtain
$b(w, v- \Pi_k^Y v) =0, \quad \forall v\in Y.$
By the definition of $\Pi_k^Y$ and $\Pi_k^X,$ we then have 
$$b(w - \Pi_k^X w, v) = b(w, v- \Pi_k^Y v) =0, \quad \forall v\in Y.$$
Since the inf-sup condition holds for $b,$ we have $w = \Pi_k^X w \in X_k.$
On the other hand, by choosing $y=v_k\in Y_k,$ we then have 
$$\langle F(w), v_k\rangle =0, \quad \forall v_k\in Y_k,$$ which implies 
    that $w\in X_k$ is a solution to~\eqref{eqn:nonlinear_galerkin}.
\end{proof} 

Lemma~\ref{lem:Fk} shows that~\eqref{eqn:Fk-eq} is actually a reformulation of~\eqref{eqn:nonlinear_galerkin}, which posed in $(X_{k}, Y_{k})$, into the whole spaces $(X, Y).$
It enables us to obtain the well-posedness and a priori error estimate of~\eqref{eqn:nonlinear_galerkin} by applying Lemma~\ref{lem:existence_and_priori_estimate} to $F_{k}.$
More precisely, we have the following main result.
\begin{theorem}\label{thm:apriori_estimates}
  Suppose equation~\eqref{eqn:model} and the discretization~\eqref{eqn:nonlinear_galerkin} satisfy the inf-sup conditions~\eqref{eqn:nonlinear_continuous_infsup}, \eqref{eqn:nonlinear_discrete_infsup} respectively.
Moreover, suppose that $F'$ is Lipschitz continuous at $u,$ that is, 
  \begin{eqnarray*}
    &\exists \delta \mbox{ and } L \mbox{ such that for all } w\in X,\;\; \|u-w\|_X\leqs \delta\\
    & \|F'(u)-F'(w)\|_{\mathcal{L}(X,Y^*)} \leqs L\|u-w\|_X.
  \end{eqnarray*}
  If in addition the subspace $X_0$ satisfies the approximation condition
  \begin{equation}
  \label{eqn:X0}
\inf_{\chi_{0}\in X_{0}}\|u - \chi_{0}\|_X \leqs \|b\|^{-1}\left(1+ \frac{\|b\|}{\beta_1}\right)^{-1} \min\left\{\frac{\delta \beta_0}{2}, \frac{\beta_0^2}{4L}\right\},
  \end{equation} 
 then there exist a constant $\delta_1>0$ such that equation~\eqref{eqn:nonlinear_galerkin} has a locally unique solution $u_k\in X_k$ in $B(u, \delta_1)$ for any $k\geqs 0$ such that $X_0\subset X_k.$
Moreover, we have the {\em a priori} error estimates:
  \begin{equation}
    \|u-u_k\|_X \leqs \frac{2\|b\|}{\beta_0}\left(1+ \frac{\|b\|}{\beta_1}\right) \min_{\chi_k\in X_k}\|u-\chi_k\|_X.\label{eqn:quasi_optimal_Xk}
     \end{equation}
\end{theorem}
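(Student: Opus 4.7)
My plan is to reduce Theorem~\ref{thm:apriori_estimates} to Lemma~\ref{lem:existence_and_priori_estimate} applied to the extended operator $F_k:X\to Y^*$ defined by~\eqref{eqn:def_of_Fk}. By Lemma~\ref{lem:Fk}, a zero of $F_k$ in $X$ automatically lies in $X_k$ and solves the Petrov--Galerkin problem~\eqref{eqn:nonlinear_galerkin}, so it suffices to verify hypotheses (H1)--(H3) of Lemma~\ref{lem:existence_and_priori_estimate} for the choice $G = F_k$, taking $u$ as the reference point and using the projector $\Pi_k^Y$ to control the passage from $F$ to $F_k$.

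Hypothesis (H2) is essentially read off from~\eqref{eqn:Fk-derivative}: setting $x=u$ there gives $F_k'(u) = F'(u)$, and the continuous inf-sup condition~\eqref{eqn:nonlinear_continuous_infsup} then yields $M := \|F_k'(u)^{-1}\|_{\mathcal{L}(Y^*,X)} = \beta_0^{-1}$. For (H1), I would differentiate~\eqref{eqn:Fk-derivative} to obtain
\[
\langle (F_k'(u) - F_k'(x))w, y\rangle = \langle (F'(u) - F'(x))w, \Pi_k^Y y\rangle,
\]
and then combine the Lipschitz hypothesis on $F'$ with the stability bound~\eqref{eqn:stability_of_Pik} on $\Pi_k^Y$ to produce an effective Lipschitz constant $L_k \leqs L\,\|b\|/\beta_1$ for $F_k'$ at $u$, valid whenever $\|u-x\|_X \leqs \delta$.

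The main work is the consistency estimate (H3). Since $F(u)=0$, the definition~\eqref{eqn:def_of_Fk} collapses to $\langle F_k(u), y\rangle = b(u, y - \Pi_k^Y y)$. I will then exploit the orthogonality~\eqref{eqn:Pi_KY} to subtract any $\chi_k\in X_k$, obtaining $\langle F_k(u), y\rangle = b(u - \chi_k, y - \Pi_k^Y y)$; taking a supremum over $\|y\|_Y=1$, using $\|I-\Pi_k^Y\|_{\mathcal{L}(Y,Y)} \leqs 1+\|b\|/\beta_1$, and taking an infimum over $\chi_k\in X_0\subset X_k$ will give
\[
\|F_k(u)\|_{Y^*} \leqs \|b\|\left(1 + \frac{\|b\|}{\beta_1}\right) \inf_{\chi_0\in X_0}\|u - \chi_0\|_X.
\]
The approximation condition~\eqref{eqn:X0} is precisely tailored so that this right-hand side meets the threshold $C$ required in (H3).

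With (H1)--(H3) in hand, Lemma~\ref{lem:existence_and_priori_estimate} produces a unique $u_k\in B(u,\delta_1)\subset X$ with $F_k(u_k)=0$ together with the bound $\|u-u_k\|_X \leqs 2\|F_k'(u)^{-1}\|\,\|F_k(u)\|_{Y^*}$. By Lemma~\ref{lem:Fk}, $u_k\in X_k$ and solves~\eqref{eqn:nonlinear_galerkin}, and inserting $\|F_k'(u)^{-1}\| = \beta_0^{-1}$ together with the consistency bound above (now taking the infimum over the larger subspace $X_k$) yields exactly~\eqref{eqn:quasi_optimal_Xk}. I expect the delicate part to be (H3) and the matching of constants across all three hypotheses: the Lipschitz constant for $F_k'$ inherits the factor $\|\Pi_k^Y\|$, and ensuring (H3) holds uniformly for every $k$ with $X_0\subset X_k$ (not just at the tip) is exactly what dictates the form of the approximation condition~\eqref{eqn:X0}.
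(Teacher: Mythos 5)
Your proposal is correct and follows essentially the same route as the paper: reduce to Lemma~\ref{lem:existence_and_priori_estimate} with $G=F_k$ via Lemma~\ref{lem:Fk}, read off (H2) from $F_k'(u)=F'(u)$, obtain (H1) from the Lipschitz hypothesis together with stability of $\Pi_k^Y$, and establish (H3) from the consistency bound using the approximation condition~\eqref{eqn:X0}. The only (inessential) difference is in (H3): you insert an arbitrary $\chi_k\in X_k$ on the left argument of $b$ and absorb $\|I-\Pi_k^Y\|$, while the paper passes to $b(u-\Pi_k^X u, v)$ by duality and bounds $\|u-\Pi_k^X u\|_X$ — two dual ways of obtaining the identical constant.
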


\begin{proof}
By Lemma~\ref{lem:Fk},  a solution to equation~\eqref{eqn:nonlinear_galerkin} is equivalent to a solution to the equation~\eqref{eqn:Fk-eq}.
By choosing $G=F_k$ in Lemma~\ref{lem:existence_and_priori_estimate}, we only need to verify Assumptions (H1)-(H3).

Note that~\eqref{eqn:Fk-derivative} implies $F_{k}'(u) = F'(u).$
Therefore, we have 
$$\|F'(u)^{-1}\|_{\mathcal{L}(Y^*,X)} =\beta_0^{-1}$$
 from the inf-sup condition~\eqref{eqn:nonlinear_continuous_infsup}.
The assumption (H2) follows.
Again, by~\eqref{eqn:Fk-derivative} we deduce that for any $w, x\in X$ and $y\in Y,$
$$\langle (F_k'(u) - F'_k(x))w, y\rangle = \langle (F'(u) - F'(x))w, \Pi_k^Y y\rangle.$$
Therefore, 
\begin{eqnarray*}
\left\|F_k'(u) - F_k'(x) \right\|_{\mathcal{L}(X, Y^*)} &\leqs& \left\|F'(u) - F'(x) \right\|_{\mathcal{L}(X, Y^*)} \left\|\Pi_k^Y\right\|_{\mathcal{L}(Y, Y_k)}\\
&\leqs&  \frac{\|b\|}{ \beta_1}  \left\|F'(u) - F'(x)\right\|_{\mathcal{L}(X, Y^*)}\\
&\leqs&  \frac{\|b\|}{ \beta_1}L  \|u - x\|_{X},
\end{eqnarray*}
where in the second inequality we used stability~\eqref{eqn:stability_of_Pik} of $\Pi_k^Y.$
Hence, $F_k$ satisfies (H1).

For Assumption (H3), we have
\begin{eqnarray*}
\|F_k(u)\|_{Y^*} &=& \sup_{v\in Y, \;\|v\|_Y=1} \langle F_k(u), v\rangle\\
&=& \sup_{v\in Y, \;\|v\|_Y=1} b(u,  v-\Pi_k^Y v)\\
&=& \sup_{v\in Y, \;\|v\|_Y=1} b(u-\Pi_k^X u,  v)\\
&\leqs& \|b\| \|u-\Pi_k^X u\|_X.
\end{eqnarray*}
By triangle inequality and stability~\eqref{eqn:stability_of_Pik} of $\Pi_k^X$, we have
$$\|u-\Pi_k^X u\|_X \leqs \|u-\chi_k\|_X+\|\Pi_k^X (u-\chi_k)\|_X \leqs \left(1+\frac{\|b\|}{\beta_1}\right)\inf_{\chi_k\in X_k}\|u-\chi_k\|_X.$$
Therefore, we obtain
$$\|F_k(u)\|_{Y^*} \leqs \|b\|\left(1+\frac{\|b\|}{\beta_1}\right)\inf_{\chi_k\in X_k}\|u-\chi_k\|_X.
$$
Notice that $X_0\subset X_k,$
and by assumption~\eqref{eqn:X0} we have 
$$\|F_k(u)\|_{Y^*} \leqs \|b\|\left(1+\frac{\|b\|}{\beta_1}\right)\inf_{\chi_0\in X_0}\|u-\chi_0\|_X\leqs \min\left\{\frac{\delta \beta_0}{2}, \frac{\beta_0^2}{4L}\right\}.$$
Hence, Assumption (H3) is satisfied.
Therefore by Lemma~\ref{lem:existence_and_priori_estimate}, the there exists a constant $\delta_1>0$ such that equation~\eqref{eqn:nonlinear_galerkin} has a locally unique solution $u_k\in X_k$ in $B(u, \delta_1)$ for any $k\geqs 0.$
Furthermore, we have the following a priori error estimate:
  \begin{equation*}
    \|u-u_k\|_X \leqs 2 \left\|F_{k}'(u)^{-1}\right\|_{\mathcal{L}(Y^*, X)}\|F_{k}(u)\|_{Y^*}\leqs\frac{2\|b\|}{\beta_0}\left(1+ \frac{\|b\|}{\beta_1}\right) \inf_{\chi_k\in X_k}\|u-\chi_k\|_X.
     \end{equation*}
     This completes the proof.
\end{proof}

\begin{remark}
\label{rk:initial_partition}
Theorem~\ref{thm:apriori_estimates} is similar to~\cite[Theorem 4]{Pousin.J;Rappaz.J1994}.
However, instead of assuming the approximation property
 $$\displaystyle\lim_{h\to 0} \inf_{x_k\in X_k}\|u-x_k\|_X =0$$
as used in their proof, we only assume that the initial subspace  $X_0$ satisfies~\eqref{eqn:X0}.
This is important because in the adaptive setting, we cannot (and of course, do not want to) guarantee that $h\to 0$ uniformly.
The assumption~\eqref{eqn:X0} is essentially the approximation property of the subspace $X_0,$ since that 
$$\inf_{\chi_0\in X_0}  \|u-\chi_0\|_X \leqs \|u-I_0^X u\|_X.$$
In most of the applications we consider, the finite element space $X_0$ has certain approximation property, i.e., $\|u-I_0^X u\|_X=O(h_0^{\alpha})$ for some $\alpha>0,$ where $I_0^X$ is inclusion or quasi-interpolation.
Therefore, the condition~\eqref{eqn:X0} can be satisfied by choosing the meshsize $h_0$ of the initial triangulation to be sufficiently small.
\end{remark}

Based on Theorem~\ref{thm:apriori_estimates}, there exists a locally unique solution $u_{\infty} \in B(u, \delta_1)\subset X_{\infty}$ with the test space $Y_{\infty}.$
In the remainder of this section, we will show that the PG solution sequence $\{u_k\in X_k\}$ converges to the solution $u_{\infty} \in X_{\infty}$ of~\eqref{eqn:nonlinear_galerkin} in $(X_{\infty}, Y_{\infty}).$
Therefore, we indeed constructed a convergent sequence $u_{k}\to u_{\infty}$ as $k\to \infty$ by the Petrov-Galerkin approximation.

With this $u_{\infty},$ let  us introduce another bilinear form $b_{\infty}(\cdot, \cdot): \; X_{\infty}\times Y_{\infty} \to \mathbb{R}$ as
\begin{equation}\label{eqn:b_infty}
b_{\infty}(x, y) := \langle F'(u_{\infty}) x, y\rangle,\;\; \forall x \in X_{\infty} \;; y\in Y_{\infty},
\end{equation}
which is formed by linearizing $F$ at $u_{\infty} \in X_{\infty}$.
Comparing with~\eqref{eqn:b}, we have 
\begin{eqnarray*}
\sup_{y\in Y_{\infty}, \|y\|_Y=1} b_{\infty}(x, y) &=&\sup_{y\in Y_{\infty}, \|y\|_Y=1}( b(x,y) + b_{\infty}(x, y) - b(x,y))\\
 &\ge& \beta_0 - \sup_{y\in Y_{\infty}, \|y\|_Y=1} \langle (F'(u_{\infty})-F'(u)) x, y\rangle\\
 &\ge& \beta_0 - \|F'(u) - F'(u_{\infty})\|_{\mathcal{L}(X, Y^*)}, \;\; \forall x\in X_{\infty}, \; \|x\|_X=1.
\end{eqnarray*}
Therefore, if $F'$ satisfies the Lipschitz continuity condition for some $\delta>0$ as stated in Theorem~\ref{thm:apriori_estimates}, 
then we can choose a constant $\delta_{1}>0$ sufficiently small such that the following inf-sup condition holds in $B(u, \delta_{1}):$
\begin{subequations}
\begin{equation}\label{eqn:b_infty_infsup}
\inf_{y\in Y_{\infty}, \|y\|_Y =1} \sup_{x\in X_{\infty}, \|x\|_x=1} b_{\infty}(x, y)=\inf_{x\in X_{\infty}, \|x\|_X =1} \sup_{y\in Y_{\infty}, \|y\|_Y=1} b_{\infty}(x, y) =\tilde{\beta}_0 >0.
\end{equation}
Similarly, we can show the discrete inf-sup condition holds in $B(u, \delta_{1}):$
\begin{equation}\label{eqn:b_infty_discrete_infsup}
\inf_{y\in Y_{k}, \|y\|_Y =1} \sup_{x\in X_{k}, \|x\|_X=1} b_{\infty}(x, y)=\inf_{x\in X_{k}, \|x\|_X =1} \sup_{y\in Y_{k}, \|y\|_Y=1} b_{\infty}(x, y) =\tilde{\beta}_1 >0.
\end{equation}
\end{subequations}
These inf-sup conditions imply that there exists stable projections $\tilde{\Pi}_{k}^{X}$ and $\tilde{\Pi}_{k}^{Y}$ similar to~\eqref{eqn:Pi_kX}-\eqref{eqn:Pi_KY}.
Same as before, we can define a sequence of nonlinear equations:
\begin{equation}
\label{eqn:galerkin-infty}
\mbox{Find } x\in X_{\infty}, \quad \mbox{such that } \langle \widetilde{F}_{k}(x), y\rangle =0, \forall y\in Y_{\infty},
\end{equation}
where 
$$\tilde{F}_{k}(x), y\rangle = \langle F(x), \tilde{\Pi}_{k}^{Y} y\rangle + b_{\infty}(x, y- \tilde{\Pi}_{k}^{Y} y).$$
Following the same lines of the proof of Lemma~\ref{lem:Fk}, 
one can show the solution to the nonlinear 
equation~\eqref{eqn:galerkin-infty} is the solution to the 
PG problem~\eqref{eqn:nonlinear_galerkin} for each $k = 0, 1, \ldots.$

In the proof of Theorem~\ref{thm:apriori_estimates} , if we replace $(X, Y)$ by $(X_{\infty}, Y_{\infty}),$  $u$ by $u_{\infty} $ and the inf-sup conditions~\eqref{eqn:nonlinear_continuous_infsup}-\eqref{eqn:nonlinear_discrete_infsup} by~\eqref{eqn:b_infty_infsup}-\eqref{eqn:b_infty_discrete_infsup}, then we have the following theorem.
\begin{theorem}
\label{thm:abstract}
Let the assumptions in Theorem~\ref{thm:apriori_estimates} be fulfilled.
 Then there exists a neighborhood $B(u, \delta_1)$ of $u$ such that the equation~\eqref{eqn:nonlinear_galerkin} has a locally unique solution $u_k\in X_k$ for each $k\geqs 0.$
 We also have the following {\em a priori} error estimate:
$$\|u_{\infty} - u_k\|_{X} \leqs C \inf_{\chi_k\in X_k}\|u_{\infty} - \chi_k\|_{X},\;\; \forall k=0, 1, \cdots.$$ 
Consequently, the PG sequence $\{u_k\}$ converges to $u_{\infty},$ that is, $\displaystyle\lim_{k\to\infty}u_k = u_{\infty}$ in $X.$
\end{theorem}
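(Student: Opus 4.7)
The plan is to invoke Theorem~\ref{thm:apriori_estimates} almost verbatim, but with the ambient pair $(X,Y)$ replaced by the pair $(X_\infty,Y_\infty)$, the reference point $u$ replaced by $u_\infty$, the bilinear form $b$ replaced by $b_\infty$ from \eqref{eqn:b_infty}, and the inf-sup conditions \eqref{eqn:nonlinear_continuous_infsup}--\eqref{eqn:nonlinear_discrete_infsup} replaced by \eqref{eqn:b_infty_infsup}--\eqref{eqn:b_infty_discrete_infsup}. The existence of a locally unique $u_\infty\in B(u,\delta_1)\cap X_\infty$ solving the PG problem on $(X_\infty,Y_\infty)$ is already guaranteed by Theorem~\ref{thm:apriori_estimates} applied to the pair $(X_\infty,Y_\infty)$ (equivalently, by the discussion preceding the statement), and the Lipschitz condition on $F'$ is inherited from the ambient assumption since $X_\infty\subset X$.

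First I would verify the three hypotheses (H1)--(H3) are satisfied when $G$ is taken to be the operator $\widetilde F_k$ defined in \eqref{eqn:galerkin-infty} and $u$ in the abstract lemma is replaced by $u_\infty$: (H1) follows from the Lipschitz continuity of $F'$ together with stability of $\tilde\Pi_k^Y$ in $(X_\infty,Y_\infty)$, exactly as in the proof of Theorem~\ref{thm:apriori_estimates}; (H2) follows from the inf-sup condition \eqref{eqn:b_infty_infsup} since $\widetilde F_k'(u_\infty)=F'(u_\infty)$; and (H3) follows from the identity $\|\widetilde F_k(u_\infty)\|_{Y_\infty^*}\leqs \|b_\infty\|\,\|u_\infty-\tilde\Pi_k^X u_\infty\|_X$, combined with the fact that $X_0\subset X_k$ and the approximation hypothesis \eqref{eqn:X0}, which, by the density of $\bigcup_k X_k$ in $X_\infty$, continues to hold uniformly in $k$ when the smallness constant is adjusted. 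Next I would apply Lemma~\ref{lem:existence_and_priori_estimate} to obtain a locally unique zero $u_k\in B(u_\infty,\delta_1')$ of $\widetilde F_k$, which by the analog of Lemma~\ref{lem:Fk} coincides with the PG solution in $X_k$, together with the quasi-optimality bound
\begin{equation*}
\|u_\infty-u_k\|_X\leqs \frac{2\|b_\infty\|}{\tilde\beta_0}\left(1+\frac{\|b_\infty\|}{\tilde\beta_1}\right)\inf_{\chi_k\in X_k}\|u_\infty-\chi_k\|_X.
\end{equation*}

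Finally, the convergence $u_k\to u_\infty$ in $X$ follows immediately from the quasi-optimality estimate together with the density $X_\infty=\overline{\bigcup_k X_k}^{\|\cdot\|_X}$: given $\varepsilon>0$, pick $N$ and $\chi_N\in X_N$ with $\|u_\infty-\chi_N\|_X<\varepsilon$; then for every $k\geqs N$ we have $\chi_N\in X_k$ and hence $\inf_{\chi_k\in X_k}\|u_\infty-\chi_k\|_X\leqs\varepsilon$, so $\|u_\infty-u_k\|_X\to 0$.

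The only genuinely delicate point is ensuring that when we transplant Theorem~\ref{thm:apriori_estimates} from $(X,Y)$ to $(X_\infty,Y_\infty)$, the analog of the initial-space smallness condition \eqref{eqn:X0} is still available. This is exactly where the hypotheses of Theorem~\ref{thm:apriori_estimates} are used: the original \eqref{eqn:X0} for $X_0$ provides a uniform bound on $\|u-\Pi_k^X u\|_X$, and since $\|u-u_\infty\|_X\leqs\delta_1$ can be made arbitrarily small by shrinking $\delta_1$ (which in turn forces $\|F'(u)-F'(u_\infty)\|$ small via Lipschitz continuity and keeps $\tilde\beta_0,\tilde\beta_1$ bounded away from zero via \eqref{eqn:b_infty_infsup}--\eqref{eqn:b_infty_discrete_infsup}), the corresponding smallness condition with $u$ replaced by $u_\infty$ is satisfied as well. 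This uniform control across all $k\geqs 0$ is what allows a single radius $\delta_1$ to serve for every subspace in the nested sequence.
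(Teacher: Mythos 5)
Your proposal is correct and follows essentially the same route as the paper: applying Theorem~\ref{thm:apriori_estimates} with $(X,Y)$, $u$, $b$ replaced by $(X_\infty,Y_\infty)$, $u_\infty$, $b_\infty$, using the modified operators $\widetilde F_k$ from~\eqref{eqn:galerkin-infty} in place of $F_k$, and then invoking density of $\bigcup_k X_k$ in $X_\infty$. You simply spell out the verification of (H1)--(H3) and the transfer of the smallness condition~\eqref{eqn:X0} more explicitly than the paper, which compresses all of this into ``by the same argument.''
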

\begin{proof}
By the same argument as in Theorem~\ref{thm:apriori_estimates}, equation~\eqref{eqn:nonlinear_galerkin} has a locally unique solution $u_k\in X_k$ for each $k\geqs 0.$
Furthermore, we have the quasi-optimal estimate:
$$\|u_{\infty} - u_k\|_{X} \leqs C \inf_{\chi_k\in X_k}\|u_{\infty} - \chi_k\|_{X},\;\; \forall k=0, 1, \cdots.$$
By density of $\bigcup_{k=1}^{\infty}X_k$ in $X_{\infty},$ we then have $\displaystyle\lim_{k\to \infty} \|u_k-u_{\infty}\|_X =0.$
\end{proof}

\begin{remark}
Theorem~\ref{thm:abstract} confirms that the approximate sequence $\{u_k\}$ has a limit $u_{\infty}\in X_{\infty}.$
However, this $u_{\infty}$ does not necessarily coincide with the exact solution $u.$
Note that $u_{\infty} = u$ if and only if the residual $F(u_{\infty})=0.$
Obviously, this is the case when $X_{\infty} =X.$
However, in general adaptive settings, one has $X_{\infty}\neq X$.
Nevertheless, by Theorem~\ref{thm:abstract_convergence}, it suffices to verify the weak-* convergence: $F(u_k)\rightharpoonup 0.$
\end{remark}

\subsection{A Posteriori Error Estimates}
\label{subsec:weak_aposteriori}

Given any approximation $u_k$ of $u,$ the nonlinear residual $F(u_k)$ can be used to estimate the error
$\|u-u_k\|_{X}$, through the use of a
{\em linearization theorem}~\cite{LiRh94,Verfurth.R1994}.
An example due to Verf\"urth is the following.
\begin{theorem} \cite{Verfurth.R1994}
   \label{thm:verf}
Let $u\in X$ be a regular solution of~\eqref{eqn:model}
so that the Gateaux derivative $F'(u)$ is a linear
homeomorphism of $X$ onto $Y^*$.
Assume that $F'$ is Lipschitz continuous at $u,$ that is, 
  \begin{eqnarray*}
    &\exists \delta \mbox{ and } L \mbox{ such that for all } w\in X,\;\; \|u-w\|_X\leqs \delta\\
    & \|F'(u)-F'(w)\|_{\mathcal{L}(X,Y^*)} \leqs L\|u-w\|_X.
  \end{eqnarray*}
  Let $R=\min \{\delta, L^{-1} \|F'(u)^{-1}\|_{\mathcal{L}(Y^*,X)}, 2 L^{-1} \|F'(u)\|_{\mathcal{L}(X,Y^*)} \}$.
Then for all $u_k \in X$ such that $\|u-u_k\| < R$,
\begin{equation}
   \label{eqn:verf}
   C_1 \|F(u_k)\|_{Y^*}
   \leqs \| u - u_k \|_{X}
   \leqs C_2 \|F(u_k)\|_{Y^*},
\end{equation}
where $C_1=\frac{1}{2}\|F'(u)\|^{-1}_{\mathcal{L}(X,Y^*)}$
and $C_2=2 \| F'(u)^{-1} \|_{\mathcal{L}(Y^*,X)}$.
\end{theorem}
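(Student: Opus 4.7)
My plan is to derive both inequalities from a single linearization identity followed by straightforward perturbation bookkeeping. Since $F(u)=0$, for any $u_k$ with $\|u-u_k\|_X < \delta$ the Gateaux derivative lets me write
\begin{equation*}
F(u_k) = F(u_k)-F(u) = F'(u)(u_k-u) + r(u_k),
\end{equation*}
where $r(u_k) := \int_0^1 \bigl[F'(u+t(u_k-u)) - F'(u)\bigr](u_k-u)\,dt$. The Lipschitz hypothesis on $F'$ immediately yields the remainder bound $\|r(u_k)\|_{Y^*} \leqs \tfrac{L}{2}\|u_k-u\|_X^2$. Both inequalities in \eqref{eqn:verf} will come from reading this identity in the two natural ways.

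For the lower bound (efficiency), I would take the $Y^*$-norm on both sides of the identity and apply the triangle inequality to get
\begin{equation*}
\|F(u_k)\|_{Y^*} \leqs \|F'(u)\|_{\mathcal{L}(X,Y^*)}\|u_k-u\|_X + \tfrac{L}{2}\|u_k-u\|_X^2.
\end{equation*}
The clause $\|u_k-u\|_X \leqs 2L^{-1}\|F'(u)\|_{\mathcal{L}(X,Y^*)}$ built into $R$ is exactly what is needed to absorb the quadratic term into the linear one, producing $\|F(u_k)\|_{Y^*} \leqs 2\|F'(u)\|_{\mathcal{L}(X,Y^*)}\|u_k-u\|_X$, which rearranges to the left inequality of \eqref{eqn:verf} with $C_1 = \tfrac12 \|F'(u)\|^{-1}_{\mathcal{L}(X,Y^*)}$.

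For the upper bound (reliability), I would apply the bounded inverse $F'(u)^{-1}$ to the identity to obtain $u_k-u = F'(u)^{-1}F(u_k) - F'(u)^{-1}r(u_k)$, take $X$-norms, and use the remainder estimate to get
\begin{equation*}
\|u_k-u\|_X \leqs \|F'(u)^{-1}\|_{\mathcal{L}(Y^*,X)}\|F(u_k)\|_{Y^*} + \tfrac{L}{2}\|F'(u)^{-1}\|_{\mathcal{L}(Y^*,X)}\|u_k-u\|_X^2.
\end{equation*}
The smallness clause in $R$ controlled by the factor $L^{-1}\|F'(u)^{-1}\|_{\mathcal{L}(Y^*,X)}$ lets me absorb the quadratic term on the left and conclude $\|u_k-u\|_X \leqs 2\|F'(u)^{-1}\|_{\mathcal{L}(Y^*,X)}\|F(u_k)\|_{Y^*}$, giving the right inequality with $C_2 = 2\|F'(u)^{-1}\|_{\mathcal{L}(Y^*,X)}$. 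There is no deep obstacle in this argument; the only real care required is bookkeeping, verifying that each clause in the definition of $R$ plays exactly the role needed — one ensures $u_k$ lies in the Lipschitz domain of $F'$, the other two tame the quadratic remainder $r(u_k)$ from the $Y^*$ and $X$ sides respectively.
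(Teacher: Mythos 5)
The paper does not actually reprove this result; its ``proof'' is just a citation to Verf\"urth. Your Taylor-expansion argument with the quadratic remainder $r(u_k)$ and two absorptions is exactly the standard route Verf\"urth takes, so the approach is right. However, you have glossed over the one place where a careful reader must do arithmetic, and that step exposes a discrepancy you should have flagged. To absorb the quadratic term in the upper-bound step you write
\begin{equation*}
\|u_k-u\|_X \leqs \|F'(u)^{-1}\|\,\|F(u_k)\|_{Y^*} + \tfrac{L}{2}\|F'(u)^{-1}\|\,\|u_k-u\|_X^2
\end{equation*}
and then invoke the ``smallness clause controlled by $L^{-1}\|F'(u)^{-1}\|$.'' But to move $\tfrac{L}{2}\|F'(u)^{-1}\|\,\|u_k-u\|_X^2$ to the left and keep at least half of $\|u_k-u\|_X$, you need $L\|F'(u)^{-1}\|\,\|u_k-u\|_X \leqs 1$, i.e.\ $\|u_k-u\|_X \leqs L^{-1}\|F'(u)^{-1}\|^{-1}$ (note the reciprocal), which is not the same as the radius $L^{-1}\|F'(u)^{-1}\|$ appearing in the stated $R$ unless $\|F'(u)^{-1}\|=1$. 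The version with the extra reciprocal is also the only one that is dimensionally consistent with a bound on $\|u-u_k\|_X$ (compare it with the third clause $2L^{-1}\|F'(u)\|$, which does scale correctly), and it is what appears in Verf\"urth's original statement; the middle clause of $R$ in the paper is a misprint. Your lower-bound absorption, by contrast, is carried out correctly against the clause $2L^{-1}\|F'(u)\|$. So the proof strategy is sound and essentially complete, but asserting ``the clause lets me absorb'' without running the computation is precisely what allowed the misprint to pass unnoticed; you should do the one line of arithmetic and record that $R$ must read $\min\{\delta,\,L^{-1}\|F'(u)^{-1}\|^{-1},\,2L^{-1}\|F'(u)\|\}$.
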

\begin{proof}
See~\cite{Verfurth.R1994}.\qed
\end{proof}
The linearization is controlled by the
choice of $\delta$ sufficiently small, where $\delta$ is the radius of an open
ball in $X$ about $u$.
The strength of the nonlinearity is represented by the
factors in~\eqref{eqn:verf} involving the linearization $F'(u)$ and
its inverse.
To build an asymptotic estimate of the error, one focuses on
two-sided estimates for the nonlinear residual
$\| F(u_k) \|_{Y^*}$ appearing on each side of~\eqref{eqn:verf}.

\section{A General Adaptive Algorithm} 
\label{sec:adaptive}

The analysis in Section~\ref{sec:weak_framework} reveals that under reasonable 
assumptions on the nonlinear operator $F(\cdot),$ the Petrov-Galerkin 
problem~\eqref{eqn:nonlinear_galerkin} is well-posed.
Moreover, given the nested subspaces $\{X_k\}$ and $\{Y_k\},$ the solution 
sequence $\{u_k\in X_k\}$ converges to the exact solution $u\in X$ if the 
corresponding residual sequence $\{F(u_k)\}\subset Y^{*}$ weak-* converges to zero,
that is
\begin{equation}
\label{eqn:residual_weak_conv}
\lim_{k\to \infty} \langle F(u_k), v\rangle =0,\quad \forall v\in Y.
\end{equation}
In this section, we show how to construct subspaces $(X_{k}, Y_{k})$
in an adaptive setting so as to ensure~\eqref{eqn:residual_weak_conv}.
In particular, based on a few assumptions on the algorithm, we show that the 
solution sequence generated by the algorithm produces a residual sequence
that satisfies~\eqref{eqn:residual_weak_conv}.

\subsection{The Setting: Banach Spaces with Local Structure}
\label{subsec:local-structure}

Since the algorithm to be analyzed is of a finite element type, we need to 
have as the spaces $X$ and $Y$ function spaces defined over a 
domain $\Omega$ in $\R^d$, or over a manifold.
The manifold setting is more general because a domain is trivially a manifold; 
however, in order to avoid the necessary differential geometric language to
also cover the case of geometric PDE on manifolds, we consider here the even 
more general setting of measure spaces, which allows for a simple and 
transparent discussion of the core ideas.
(In~\cite{HoTs07b,HoTs07a}, we consider specifically this geometric
PDE setting.)

Let $(\Omega,\Sigma, \mu)$ be a measure space, where $\Omega$ is a set
(a subset of $\R^d$ or a $d$-manifold), 
$\Sigma$ is a $\sigma$-algebra, and $\mu:\Sigma\to[0,\infty]$ is a measure.
Recall that a {\em $\sigma$-algebra} $\Sigma\subseteq 2^{\Omega}$ over 
$\Omega$ is a partition (a collection of subsets or elements) of $\Omega$ 
which contains $\Omega$, and is closed under the complement in $\Omega$ 
and countable union operations.
Then a \emph{measure} $\mu:\Sigma\to[0,\infty]$ is a function with 
$\mu(\emptyset)=0$ and additive under disjoint countable unions.
\begin{subequations}\label{eqn:group-subadd}
We say that $\cT$  is a {\em partition} (a set of subsets) of $\Omega$ 
with {\em elements} (simply connect subsets) $\{\tau\}_{\tau \in \cT}$ 
if $\bigcup_{\tau \in \cT}\overline{\tau}
    = \overline{\Omega}$ and $\tau_1 \cap \tau_2 = \emptyset$
for any $\tau_1, \tau_2 \in \cT$ such that $\tau_1 \neq \tau_2.$
We introduce the meshsize function $h_{\cT}$ associated to $\cT$ as 
$$ h_{\cT}(x) = \mu(\tau)^{\frac{1}{d}}, \quad \forall x\in \tau\in \cT.$$
Note that $h_{\cT}$ is well defined up to a $d$-dimensional Lebesgue measure zero skeleton.
Thus, we can understand $h_{\cT}\in L^{\infty}(\Omega)$ as a piecewise constant function. Given any subset $\cS \subset \cT,$ we denote $\Omega_{\cS} = \bigcup_{\tau\in \cS} \tau.$ Let $X(\tau)$ and be the finite element subspaces defined on each element $\tau\in \cT.$ We denote $X(\Omega_{\cS}):= \bigcup_{\tau\in \cS} X(\tau).$ For simplicity, let $X(\cT):=X(\Omega_{\cT}).$ We use similar notation for $Y$ and $Z$ below.

We assume now that the Banach spaces $X(\cT)$ and $Y(\cT)$ associated with the partition $\cT$ 
have certain local structures provided by the associated measure space
$(\Omega,\Sigma, \mu).$
In particular, we assume that the
induced norms $\|\cdot \|_{X}$ and $\| \cdot\|_{Y}$ are 
subadditive in the underlying domain:
\begin{equation}
\label{eqn:subadditive}
\begin{array}{lll}
\left\|\left\{\|w\|_{X(\tau)}\right\}_{\tau\in\cT} \right\|_{\ell^p} &\simeq& \|w\|_{X},\;\; \forall w\in X(\cT);\\
& & \\
\left\|\left\{\|v\|_{Y(\tau)}\right\}_{\tau\in\cT} \right\|_{\ell^q} &\simeq& \|v\|_{Y},\;\; \forall v\in Y(\cT).
\end{array}
\end{equation}
In addition, we assume that the norms are absolutely continuous 
with respect to the measure $\mu(\cdot)$ in the sense that, for any $w \in X$ and $v\in Y,$ there holds 
\begin{eqnarray}
\label{eqn:normcont}
\|w\|_{X(\omega)} \to  0 \mbox{  and  } \|v\|_{Y(\omega)} \to 0,  \mbox{  as  }  \mu(\omega) \to 0.
\end{eqnarray}
Furthermore, we assume that the abstract or generalized finite element spaces have the following local approximation property:
Let $\overline{Y}\subset Y$ be a dense subspace of $Y$;
we assume that for any partition $\cT$, there exists an 
interpolation operator $I_{\cT}: \overline{Y}\to Y(\cT)$ such that for all
$v\in \overline{Y}$,
\begin{equation}
\label{eqn:Yapp}
\|v-I_{\cT} v\|_{Y(\tau)} \lesssim \|h_{\cT}^s\|_{\infty, \tau}\|v\|_{\overline{Y}(\tau)},\;\; \forall \tau\in \cT,
\end{equation}
where $s>0$ is a constant.
\end{subequations}

The two most relevant examples of such Banach spaces with this type of
local structure are subspaces $X \subset L^p(\Omega)$, where $\Omega$
is either a bounded open subset of $\mathbb{R}^n$, or where $\Omega$
is a Riemannian $W^{t,q}$-manifold (a differentiable manifold with
metric in $W^{t,q}$), and where $\cT$ is a 
partition of $\Omega$ into elements $\tau$.
Such subspaces then include Sobolev spaces of scalar and vector
functions over domains and partitions in $\mathbb{R}^n$
(cf.~\cite{Lions.J;Magenes.E1973,Adams.R1978}),
as well as Sobolev spaces of $W^{s,p}$-sections 
of vector bundles over $\Omega$ and partition elements $\tau$
(see~\cite{Pala65,Hebey96,HNT07b} for a discussion of these spaces).
See~\cite{Holst.M2001,Holst.M;Nagy.G;Tsogtgerel.G2009,Holst.M;Tsogtgerel.G2009}
and Section~\ref{subsec:hc} for examples in the case of manifold domains.
We note that to show~\eqref{eqn:Yapp} holds in specific cases, it is
not enough to assume that $h_{\tau}(x)$ is sufficiently small, but also
that certain geometric (e.g. geodesic angle) conditions hold for the
elements $\{\tau\}$.
In this article, we assume that the subspace contruction schemes produce
partitions $\{\tau\}$ satisfying the appropriate geometric conditions
so that~\eqref{eqn:Yapp} holds.
Finally, we remark that an intermediate space $Z$ such that 
$X \subset Z$, with continuous (even compact) embedding 
\begin{equation}
\label{eqn:z-space}
X(\cT) \hookrightarrow Z(\cT),
\end{equation}
will sometimes play a critical role.
It is assumed that $Z$ has the same local structure as $X$ and $Y$ over
a measure space $(\Omega,\Sigma, \mu)$, in that
both~\eqref{eqn:subadditive} and~\eqref{eqn:normcont} hold for $Z$.
The role of $Z$ will usually be played by $L^p(\Omega)$ for suitably chosen
exponent $p$.

\subsection{The Algorithm: \textsf{SOLVE-ESTIMATE-MARK-REFINE}} 

We now formulate an adaptive algorithm based on enriching the local
structure using error indicators, partition marking, and partition refinement.
Let 
$X_{k}:=X(\cT_k)$ and $Y_{k}:=Y(\cT_k)$ be the abstract finite element spaces defined on the partition $\cT_k.$
Given an initial partition $\cT_0$ of the domain, the adaptive algorithm for 
solving equation~\eqref{eqn:model} is an iteration involving the following main steps: 
\begin{equation}
\label{eqn:adaptive}
\begin{array}{ll}
&(1)\;\; u_k := \textsf{SOLVE} \left(X_{k}, Y_{k}\right);\\
&(2)\;\; \{\eta(u_{k},\tau)\}_{\tau\in \cT_k} := \textsf{ESTIMATE}\left(u_k ,\cT_k\right); \\
&(3)\;\; \cM_k := \textsf{MARK}\left(\{\eta(u_{k},\tau)\}_{\tau\in \cT_k}, \cT_k\right); \\
&(4)\;\; \cT_{k+1} := \textsf{REFINE}\left(\cT_k, \cM_k, \ell\right), \textsf{ increment } k.
\end{array}
\end{equation}

We will handle each of the four steps as follows:
\begin{itemize}
\item \textsf{SOLVE}: We use standard inexact Newton + multilevel solvers for equation~\eqref{eqn:nonlinear_galerkin}
      to produce $u_k \in X_{k}$ on each partition $\cT_k$ (cf.~\cite{Bank.R;Rose.D1981,Holst.M2001,Deuflhard.P2004}).
      To simplify the analysis here, we assume that the discrete solution
      $u_k$ is the exact solution to~\eqref{eqn:nonlinear_galerkin}.
      
\item \textsf{ESTIMATE}: Given a partition $\cT_k$ and the corresponding output  $u_k \in X_{k}$ of the \textsf{SOLVE} modules,
      this module computes and outputs the {\em a posteriori} error estimator $\{\eta(u_{k}, \tau)\}_{\tau\in \cT_k},$ where for each element $\tau\in \cT_k$ the indicator $\eta(u_{k}, \tau)~\geqs~0.$
      
\item \textsf{MARK}:  Based on the {\em a posteriori} error indicators $\{\eta(u_{k}, \tau)\}_{\tau\in \cT_k},$ this module gives a strategy to choose a subset of elements $\cM_k$ of $\cT_k$ for refinement.

\item \textsf{REFINE}: Given the set of marked elements $\cM_k$ and the partition $\cT_k,$ this procedure produces a new partition $\cT_{k+1}$ by refining (subdividing) all elements in $\cM_k$ $\ell\geqs 1$ times.
Some other elements in $\cT_k\setminus \cM_k$ may also be refined based on 
some requirement of the partition, 
such as geometric relationships between neighboring elements 
(sometimes called {\em geometric conformity})
in order to support construction of the spaces $X(\cT)$.
This procedure is known as \emph{completion}.
\end{itemize}

Now we state some basic assumptions on these modules, which will be used in the convergence analysis in Section~\ref{sec:convergence}.

\subsubsection{\textsf{REFINE}} 

We suppose that refinement relies on unique quasi-regular element subdivisions.
More precisely, there exist constants $c_1, c_2\in  (0, 1)$ independent of the partition $\cT,$ such that  any element $\tau \in \cT$ can be subdivided into $n(\tau) \geqs 2$ subelements 
$\tau_1',\dots, \tau_{n(\tau)}'$ such that
\begin{subequations}
\label{eqn:group-refine} 
\begin{equation}
\label{eqn:subelement}
\overline{\tau} = \overline{\tau}_1'\cup \cdots \cup \overline{\tau}_{n(\tau)}',\quad \mu(\tau) = \sum_{i=1}^{n(\tau)} \mu(\tau_i'),
\end{equation}
and 
\begin{equation}
\label{eqn:subelem-meas}
c_1 \mu(\tau) \leqs \mu(\tau_i') \leqs c_2 \mu(\tau), \quad i = 1, \dots , n(\tau).
\end{equation}

We define now the class $\cG$ {\em admissible partitions} of $\Omega$ 
as the subclass of all partitions of $\Omega$ that satisfy the two properties:
\begin{itemize}
\item The partition is subordinate to (a refinement of) $\cT_0$;
\item The partition is locally quasi-uniform in the sense that
\begin{equation}
\label{eqn:local_quasi_uniform}
\sup_{\cT \in \cG} \max_{\tau\in \cT} \#N_{\cT}(\tau) \lesssim 1,\quad  \sup_{\cT \in \cG} \max_{\tau'\in N_{\cT}(\tau)} \frac{\mu(\tau)}{\mu(\tau')} \lesssim 1,
\end{equation}
where $ N_{\cT}(\tau) := \{\tau' \in \cT | \overline{\tau}' \cap \overline{\tau} \neq \emptyset\}$ denotes the set of neighboring elements of $\tau$ in $\cT.$
\end{itemize}
In addition, we suppose that the output partition 
$$\cT' := \textsf{REFINE}(\cT , \cM, \ell)$$ 
satisfies the requirement 
\begin{equation}
\label{eqn:refine}
\forall \tau \in \cM\subset \cT,  \tau \notin \cT', 
\end{equation}
\end{subequations} 
that is, each marked element of the input partition is subdivided at least 
once in the output partition.
Additional elements in $\cT \setminus \cM$ may be refined in order to 
fulfill some other requirements for partitions coming from class $\cG$;
for example, properties such as geometric conformity may need to
also hold in specific case of constructions of $X(\cT)$ over $\cT$
in order to ensure that~\eqref{eqn:Yapp} holds.

\subsubsection{\textsf{SOLVE}} 

We assume that the abstract finite element spaces $X(\cT )$ and $Y(\cT )$ 
build over $\cT$ have the following two natural properties.
Let $\cT, \cT' \in \cG$.
The spaces $X(\cT)$ and $Y(\cT)$ are called {\em conforming} if
\begin{subequations} 
\label{eqn:group-femspace}
\begin{equation}
\label{eqn:conform}
X( \cT)\subset X \mbox{ and  } Y(\cT )\subset Y, \mbox{ and } \dim X(\cT) = \dim Y(\cT),
\end{equation}
and are called {\em nested} if
\begin{equation}
 \label{eqn:nested}
\mbox{if  }\cT' \mbox{ is a refinement of } \cT  \mbox{ then  }X(\cT )\subset X(\cT' ) \mbox{ and } Y(\cT )\subset Y(\cT' ).
\end{equation}
We note that the underlying paritition $\cT$ does not need to be 
{\em geometrically conforming} in order for the spaces built over
$\cT$ to be conforming in the sense of~\eqref{eqn:conform}.
We also assume that the discrete inf-sup condition~\eqref{eqn:nonlinear_discrete_infsup} holds:
\begin{equation}
\label{eqn:discrete-infsup}
\inf_{x\in X(\cT), \|x\|_X =1} \sup_{y\in Y(\cT), \|y\|_Y =1} b(x, y)= \inf_{y\in Y(\cT), \|y\|_Y =1} \sup_{x\in X(\cT), \|x\|_X =1} b(x, y)\geqs \beta_1,
\end{equation}
with some constant $\beta_1 > 0.$
In most conforming finite element spaces in Sobolev spaces, this is an immediate consequence of the usual interpolation error estimates, cf.~\cite{Ciarlet.P1978}.
In Theorem~\ref{thm:apriori_estimates} for the well-posedness of the 
discrete equation, we require the space $X_{0}$ satisfies~\eqref{eqn:X0}: 
\begin{equation}
\label{eqn:initial_partition}
\inf_{\chi_{0}\in X_{0}} \|u - \chi_{0}\|_{X} \leqs \|b\|^{-1}\left(1+ \frac{\|b\|}{\beta_1}\right)^{-1} \min\left\{\frac{\delta \beta_0}{2}, \frac{\beta_0^2}{4L}\right\}, 
\end{equation}
where $\|b\| = \|F'(u)\|_{\mathcal{L}(X, Y^{*})},$ $\beta_0,\beta_{1}$ are the inf-sup constants in 
\eqref{eqn:nonlinear_continuous_infsup} and~\eqref{eqn:nonlinear_discrete_infsup} respectively, $L$ is the Lipschitz constant for $F'(u)$ and $\delta$ is the Lipschitz radius.
\end{subequations} 
Moreover, we suppose that the output 
$$
u_{\cT} := \textsf{SOLVE}\left(X(\cT ), Y(\cT )\right) 
$$ 
is the \emph{Petrov-Galerkin approximation} of $u$ with respect to $\left(X(\cT), Y(\cT)\right):$ 
$$u_{\cT} \in X(\cT ) :\quad \langle F(u_{\cT}) , v\rangle = 0, \;\;\forall v \in Y(\cT).$$
Thanks to~\eqref{eqn:conform}, \eqref{eqn:discrete-infsup} and the assumption on the initial partition~\eqref{eqn:initial_partition}, by Theorem~\ref{thm:apriori_estimates} the Petrov-Galerkin approximation $u_{\cT}$ exists, is unique, and is a $\|\cdot\|_X$ -quasi-optimal choice from $X(\cT ).$

\subsubsection{\textsf{ESTIMATE}} 

Now we make some assumptions on the output 
$$\{\eta (u_{\cT}, \tau)\}_{\tau \in \cT} := \textsf{ESTIMATE}(u_{\cT},\cT)$$ 
for any admissible partition $\cT \in \cG.$
First, we assume that  the following estimate holds for the Petrov-Galerkin approximation $u_{\cT}:$ 
\begin{subequations}
\label{eqn:group-estimate}
for any subset $\cS \subset \cT$ and $v\in Y,$
\begin{equation}\label{e:est-up-bnd}
\langle F(u_{\cT}), v\rangle \lesssim \eta(u_{\cT},\cS) \|v\|_{Y(\Omega_{\cS})}  +  \eta(u_{\cT},\cT\setminus \cS)\|v\|_{Y(\Omega_{\cT\setminus \cS})},
\end{equation}
where $\eta(u_{\cT}, \cS) = \left\| \left\{\eta(u_{\cT},\tau)\right\}_{\tau\in \cS}\right\|_{\ell^p}$
and $\Omega_{\cS}=\bigcup_{\sigma\in \cS}\sigma$ for $\cS \subset \cT.$
We note that the estimate~\eqref{e:est-up-bnd} implies the global upper-bound
\begin{equation}
\label{eqn:global_upper}
\|u-u_{\cT}\|_{X} \lesssim \eta(u_{\cT},\cT).
\end{equation}
Second, we assume the error indicator $\eta(u_{\cT},\tau)$ satisfies local stability.
More precisely, there exists a function $D\in Z(\Omega)$ such that
\begin{equation}
\label{e:est-stab}
\eta(u_{\cT},\tau) \lesssim  \|u_{\cT}\|_{X(\omega_{\cT}(\tau))} + \|D\|_{Z(\omega_{\cT}(\tau))},
\qquad  \forall \tau \in \cT,
\end{equation}
where
 $\omega_{\cT} (\tau) \subset \Omega$ is the patch (union) of elements in $N_{\cT} (\tau),$ 
\end{subequations}
and where the space $Z$ is the appropriate auxillary space as in~\eqref{eqn:z-space} in Section~\ref{subsec:local-structure}.
\begin{remark}
We remark that the stability assumption~\eqref{e:est-stab} is weaker than the local lower bound bound.
As we can see from the examples in Section~\ref{sec:examples}, one can obtain the stability estimate~\eqref{e:est-stab} from the usual local lower bound estimates.
\end{remark}
%

\subsubsection{\textsf{MARK}}

We suppose that the output 
$$ \cM := \textsf{MARK}\left(\{\eta (u_{\cT},\tau)\}_{\tau \in \cT}, \cT \right) $$
of marked elements has the property
\begin{equation}\label{eqn:mark}
\eta(u_{\cT},\tau) 
\leqs
\xi\big(\max_{\sigma\in \cM}\eta(u_{\cT},\sigma)\big),
\qquad 
\tau\in \cT\setminus \cM,
\end{equation}
where $\xi:\R_+\to\R_+$ is a continuous function satisfying $\xi(0)=0.$
Most marking strategies used in practice satisfy~\eqref{eqn:mark}.
For instance, the maximum strategy or equidistribution strategy, cf.~\cite{Morin.P;Siebert.K;Veeser.A2007a}.
In particular, the following D\"orfler marking strategy also satisfies the assumption~\eqref{eqn:mark}:
Given $\theta \in (0,1]$, a marked subset $\cM$ of elements is constructed to satisfy 
\begin{equation}
   \label{E:dorfler-property}
   \eta(u_{\cT},\cM) \geqs \theta \eta(u_{\cT},\cT).
\end{equation} 
This marking strategy, which was proposed by D\"orfler~\cite{Dorfler.W1996} in his original AFEM convergence paper, is proven to be crucial in the proof of contraction, cf.~\cite{Morin.P;Nochetto.R;Siebert.K2002,Cascon.J;Kreuzer.C;Nochetto.R;Siebert.K2007}.
We refer to Section~\ref{sec:contraction-ex} for more detail.

\section{Convergence Analysis}
\label{sec:convergence}

Based on the assumptions on the adaptive algorithm, and on the abstract 
framework discussed in Sections~\ref{sec:weak_framework}, 
we are now ready to state and prove the abstract convergence result based on a
weak-* residual convergence.
\begin{theorem}[Abstract Convergence]
\label{thm:conv}
Let $u$ be a locally unique exact solution of~\eqref{eqn:model}.
Assume that the nonlinear operator $F'(u)$ satisfies the inf-sup condition 
\eqref{eqn:nonlinear_continuous_infsup} and is Lipschitz continuous in a neighborhood of $u.$
Let $\{u_k \}$ be the sequence of approximate solutions generated by 
iteration~\eqref{eqn:adaptive}.

If the finite element spaces $(X_{k}, Y_{k})$ satisfy~\eqref{eqn:group-subadd}, and the modules \textsf{REFINE}, \textsf{SOLVE}, 
\textsf{ESTIMATE}, and \textsf{MARK} satisfy, respectively, 
\eqref{eqn:group-refine}, \eqref{eqn:group-femspace},
\eqref{eqn:group-estimate}, and~\eqref{eqn:mark}, 
then there exists $u_{\infty}\in X$ such that $\displaystyle\lim_{k\to \infty} u_{k} = u_{\infty}.$
Moreover, the sequence $\{u_{k}\}$ satisfies 
\begin{equation}
\label{eqn:res-weak}
\lim_{k\to \infty} \langle F(u_k), v\rangle = 0, \quad \forall v\in Y.
\end{equation}
Consequently, we have $u_{\infty} = u,$ that is $\displaystyle\lim_{k\to \infty}\|u_k - u\|_{X} = 0.$
\end{theorem}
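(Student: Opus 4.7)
The plan is to verify the hypotheses of Theorem~\ref{thm:abstract_convergence}: strong convergence $u_k \to u_\infty$ in $X$, and weak-* residual convergence $\langle F(u_k), v\rangle \to 0$ for every $v \in Y$. Once both are in hand, Theorem~\ref{thm:abstract_convergence} identifies $u_\infty$ with $u$.

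Strong convergence is immediate from Theorem~\ref{thm:abstract}: the continuous and discrete inf-sup conditions~\eqref{eqn:nonlinear_continuous_infsup} and~\eqref{eqn:discrete-infsup}, the local Lipschitz property of $F'$ at $u$, the conformity and nestedness~\eqref{eqn:conform}--\eqref{eqn:nested} (the latter supplied by the refinement rules~\eqref{eqn:group-refine}), and the initial-approximation hypothesis~\eqref{eqn:initial_partition} are all in force. Consequently each $u_k$ exists and is uniquely defined in $B(u,\delta_1)$ and $u_k \to u_\infty \in X_\infty$ strongly in $X$. Continuity of $F$ then yields a uniform bound $\sup_k \|F(u_k)\|_{Y^*} < \infty$.

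For the residuals, fix $v \in Y$. By the uniform bound on $\|F(u_k)\|_{Y^*}$ and the density of $\overline{Y}$ in $Y$, a standard $3\varepsilon$ approximation argument reduces the claim to showing $\langle F(u_k), v\rangle \to 0$ for each $v \in \overline{Y}$. The Petrov-Galerkin condition gives $\langle F(u_k), I_{\cT_k} v\rangle = 0$, so applying the upper bound~\eqref{e:est-up-bnd} with $\cS = \cM_k$ yields
\[
|\langle F(u_k), v\rangle| \;\lesssim\; \eta(u_k, \cM_k)\, \|v - I_{\cT_k} v\|_{Y(\Omega_{\cM_k})} + \eta(u_k, \cT_k \setminus \cM_k)\, \|v - I_{\cT_k} v\|_{Y(\Omega \setminus \Omega_{\cM_k})}.
\]
The interpolation estimate~\eqref{eqn:Yapp} together with the subadditivity~\eqref{eqn:subadditive} control the $Y$-norm factors uniformly by $\|v\|_{\overline{Y}}$, so the task reduces to showing $\eta(u_k, \cM_k) + \eta(u_k, \cT_k \setminus \cM_k) \to 0$.

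The crux is proving $\max_{\tau \in \cM_k}\eta(u_k, \tau) \to 0$. Since~\eqref{eqn:refine} guarantees every marked element is subdivided, and~\eqref{eqn:subelem-meas} with $c_2<1$ forces measures of repeatedly refined elements to zero, a measure-theoretic bookkeeping (the abstract-measure-space analogue of the key lemma of Morin-Siebert-Veeser) yields that individual measures of elements in $\cM_k$ vanish uniformly. Combining this with local stability~\eqref{e:est-stab}, the compact embedding~\eqref{eqn:z-space}, the absolute continuity of norms~\eqref{eqn:normcont}, and the already-established strong convergence $u_k \to u_\infty$, I obtain $\max_{\tau \in \cM_k}\eta(u_k, \tau) \to 0$. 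The marking property~\eqref{eqn:mark} together with continuity of $\xi$ at zero propagates this decay to the unmarked elements, and summation via~\eqref{eqn:subadditive} controls both $\eta(u_k, \cM_k)$ and $\eta(u_k, \cT_k \setminus \cM_k)$. This establishes~\eqref{eqn:res-weak}, and Theorem~\ref{thm:abstract_convergence} then closes the argument. The main obstacle is the measure-theoretic decay lemma for $\max_{\tau \in \cM_k}\eta(u_k, \tau)$: because $\cM_k$ is an \emph{arbitrary} subset of $\cT_k$, obtaining decay of the \emph{maximum} (not merely a sum or mean) over marked elements requires careful exploitation of the refinement rules together with the abstract local-structure assumptions of Section~\ref{subsec:local-structure}.
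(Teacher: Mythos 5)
Your high-level plan matches the paper's: invoke Theorem~\ref{thm:abstract} for existence of $u_\infty$ and strong convergence $u_k \to u_\infty$, reduce the residual statement to $v \in \overline{Y}$ by density, feed the upper bound~\eqref{e:est-up-bnd} and the marking property~\eqref{eqn:mark} into that, and close with Theorem~\ref{thm:abstract_convergence}. Your intermediate claims that $\max_{\tau\in\cM_k}\mu(\tau)\to 0$ (a consequence of Lemma~\ref{lm:hk} applied to $\cT_k^0 \supseteq \cM_k$, via~\eqref{eqn:refine}) and that consequently $\max_{\tau\in\cM_k}\eta(u_k,\tau)\to 0$ via~\eqref{e:est-stab}, \eqref{eqn:normcont}, and strong convergence are sound, and propagation of the $\ell^\infty$-decay of indicators to unmarked elements via $\xi$ is also correct.

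The gap is in the final step, and it is caused by the choice $\cS = \cM_k$. Your plan reduces to proving $\eta(u_k,\cM_k) + \eta(u_k,\cT_k\setminus\cM_k) \to 0$, which is precisely the estimator convergence $\eta(u_k,\cT_k)\to 0$. But that is the content of Theorem~\ref{thm:convergence_estimator}, proved in the paper only \emph{after} Theorem~\ref{thm:conv} and under the extra efficiency hypothesis~\eqref{e:loc-low-bnd}; so the route is circular as an attack on Theorem~\ref{thm:conv}. More concretely, the phrase ``summation via~\eqref{eqn:subadditive} controls both $\eta(u_k,\cM_k)$ and $\eta(u_k,\cT_k\setminus\cM_k)$'' does not go through. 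What you have established is $\max_{\tau\in\cT_k}\eta(u_k,\tau)\to 0$, an $\ell^\infty$-statement. The quantities $\eta(u_k,\cS)$ are $\ell^p$-aggregates (typically $p<\infty$) over index sets of unbounded cardinality, and $\ell^\infty$-decay of the entries does not force $\ell^p$-decay of the aggregate: take $\eta(u_k,\tau)=(\#\cM_k)^{-1/p}$ with $\#\cM_k\to\infty$, so that the maximum vanishes while $\eta(u_k,\cM_k)\equiv 1$. Assumption~\eqref{eqn:subadditive} is a structural norm equivalence and supplies no decay mechanism.

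The paper circumvents this by taking $\cS=\cT_j^+$ for a fixed $j$ rather than $\cS=\cM_k$, and then letting $k\to\infty$ with $j$ fixed. Two features of that decomposition are essential and absent from yours. First, $\cT_j^+$ is a \emph{fixed finite} set of elements, contained in $\cT_k\setminus\cM_k$ for all $k\geq j$; hence the element-wise decay you derived legitimately upgrades to decay of the finite sum $\eta(u_k,\cT_j^+)$, with no cardinality blow-up. Second, on the complementary piece $\cT_k\setminus\cT_j^+$ the paper does \emph{not} try to make the indicator aggregate small --- it only shows it bounded via~\eqref{e:est-stab} --- and instead drives the partner $Y$-norm factor $\|v-\bar v\|_{Y(\Omega_j^0)}$ to zero by combining~\eqref{eqn:Yapp} and Lemma~\ref{lm:hk} on $\Omega^0=\bigcap_i\Omega_i^0$ with absolute continuity~\eqref{eqn:normcont} on the shrinking set $\Omega_j^0\setminus\Omega^0$. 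In short, each of the two terms in~\eqref{e:est-up-bnd} must pair a small factor with a merely bounded one, with the roles reversed between the two terms; your decomposition tries to make both indicator factors vanish while both $Y$-norm factors stay bounded, and that cannot be justified from the hypotheses of Theorem~\ref{thm:conv}.
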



We split the partition $\cT_k$ into two sets
$\cT_k^+$ and $\cT_k^0$,
where 
$$\cT_k^+ = \{\tau\in \cT_k: \tau\in \cT_i,\;\; \forall i\geqs k\}$$ contains all the elements that will not be refined after $k$-th step, 
and $\cT_k^0=\cT_k\setminus \cT_k^+$ is the set of elements that will be refined at least once after $k$-th step.
Here the superscript `$+$' means the measure of the elements in $\cT^+_k$ is positive.
We denote
$$\Omega_k^0=\Omega_{\cT_k^0}:= \bigcup_{\tau \in \cT_k^0} \tau \;\;\mbox{ and } \;\; \Omega_k^+=\Omega_{\cT_k^+} :=  \bigcup_{\tau \in \cT_k^+} \tau.$$
For simplicity, we denote $\Omega^0=\displaystyle \bigcap_{i=0}^{\infty}\Omega_i^0.$

We note that the sequence $\{h_{k}\}\subset L^{\infty}(\Omega)$ of the meshsize function is bounded and monotone decreasing for a.e.\ $x\in \Omega.$
Moreover, we have 
 \begin{lemma}[{\cite[Corollary 4.5]{Morin.P;Siebert.K;Veeser.A2007a}}]
 \label{lm:hk}
 The sequences $\{h_{k}\}$ and $\{\Omega_{k}^{0}\}$ satisfy
$$
\lim_{k\to \infty} \|h_{k}\|_{L^{\infty}(\Omega_{k}^{0})} = 0.
$$
 \end{lemma}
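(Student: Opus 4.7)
The strategy is to exploit the strict contraction factor $c_2 \in (0,1)$ from \eqref{eqn:subelem-meas}, together with the finite branching of each refinement step, in order to show that for any threshold $\epsilon > 0$, only finitely many elements of measure $\geqs \epsilon^d$ can ever appear in the sequence $\{\cT_k\}$, after which all elements of $\cT_k^0$ must fall uniformly below that threshold.

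First I would fix $\epsilon > 0$ and define
$$ \mathcal{S}_\epsilon := \{\tau : \tau \in \cT_k \text{ for some } k \geqs 0, \ \mu(\tau) \geqs \epsilon^d\}. $$
The key step is to establish that $\mathcal{S}_\epsilon$ is a \emph{finite} set. Every $\tau \in \mathcal{S}_\epsilon$ is a descendant, via the refinement tree, of some ancestor $\tau_0 \in \cT_0$ with $\mu(\tau_0) \geqs \mu(\tau) \geqs \epsilon^d$, and there are only finitely many such ancestors by the local quasi-uniformity of $\cT_0$ in \eqref{eqn:local_quasi_uniform} (in practice enforced by $\mu(\Omega) < \infty$ in the applications of interest). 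From each such $\tau_0$, the contraction estimate \eqref{eqn:subelem-meas} yields $\mu(\tau') \leqs c_2^n \mu(\tau_0)$ for any descendant $\tau'$ at depth $n$, so the depth in the refinement tree above the threshold $\epsilon^d$ is bounded by $\lceil\log(\mu(\tau_0)/\epsilon^d)/\log(1/c_2)\rceil$. Combined with the uniform branching bound inherited from \eqref{eqn:subelement} and \eqref{eqn:local_quasi_uniform}, this gives finitely many descendants of $\tau_0$ in $\mathcal{S}_\epsilon$, and hence finiteness of $\mathcal{S}_\epsilon$.

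Next I would observe that each $\tau \in \mathcal{S}_\epsilon$ is of one of two types: either $\tau$ is eventually refined at some (unique) step $J_\tau$, in which case $\tau \notin \cT_k$ for all $k > J_\tau$; or $\tau$ is never refined, in which case $\tau \in \cT_k^+$ (hence $\tau \notin \cT_k^0$) for every $k$ in which it appears. Setting
$$ K_\epsilon := 1 + \max\{J_\tau : \tau \in \mathcal{S}_\epsilon \text{ and $\tau$ is refined at some step}\}, $$
which is finite by finiteness of $\mathcal{S}_\epsilon$, one sees that for every $k \geqs K_\epsilon$ no element of $\mathcal{S}_\epsilon$ lies in $\cT_k^0$. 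Equivalently, every $\tau \in \cT_k^0$ satisfies $\mu(\tau) < \epsilon^d$, so $\|h_k\|_{L^\infty(\Omega_k^0)} < \epsilon$. Since $\epsilon > 0$ was arbitrary, this proves the claim.

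The main obstacle is the finiteness of $\mathcal{S}_\epsilon$: it relies on combining the contraction factor $c_2 < 1$ (to get bounded depth above the threshold) with both the finite branching per refinement and the finiteness of ancestors in $\cT_0$ above that threshold. Without the strict contraction in \eqref{eqn:subelem-meas}, one could in principle have an infinite chain of descendants all of measure $\geqs \epsilon^d$, and the uniform estimate would fail even though $h_k$ still decreases pointwise a.e.\ on $\Omega^0$.
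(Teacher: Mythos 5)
The paper gives no proof here; it cites~\cite[Corollary 4.5]{Morin.P;Siebert.K;Veeser.A2007a}, and your counting argument is precisely the standard one used in that reference, so the approach matches. The core steps are sound: the strict contraction factor $c_2<1$ in~\eqref{eqn:subelem-meas} bounds the depth of descendants above any measure threshold, the lower bound $c_1\mu(\tau)\leqs\mu(\tau_i')$ combined with $\sum_i\mu(\tau_i')=\mu(\tau)$ gives the branching bound $n(\tau)\leqs 1/c_1$, and once $\mathcal{S}_\epsilon$ is shown finite the observation that each $\tau\in\cT_k^0$ must be refined at some step $\geqs k$ yields the contradiction with $k\geqs K_\epsilon$.

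One imprecision worth fixing: the finiteness of the set of ancestors in $\cT_0$ above the threshold is \emph{not} a consequence of the local quasi-uniformity condition~\eqref{eqn:local_quasi_uniform}, which only controls the ratio of measures of neighboring elements within a single partition and says nothing about the cardinality of $\cT_0$. The right justification is simply that $\cT_0$ is a finite partition --- a tacit standing assumption of the AFEM setting --- or, equivalently, that $\mu(\Omega)<\infty$ so that no partition can contain more than $\mu(\Omega)/\epsilon^d$ elements of measure $\geqs\epsilon^d$. Either suffices; the appeal to~\eqref{eqn:local_quasi_uniform} does not. With that correction, the proof is complete.
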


Now we are ready to prove Theorem~\ref{thm:conv}.
\begin{proof}[Proof of Theorem~\ref{thm:conv}]
Theorem~\ref{thm:abstract} shows the existence of the Petrov Galerkin solutions $u_{k}\in X_{k}$ and $u_{\infty}\in X$ such that 
$$\lim_{k\to \infty} u_{k} = u_{\infty}.$$
If we can show~\eqref{eqn:res-weak}, that is, the residuals weak-* converge to 0, then Theorem~\ref{thm:abstract_convergence} implies that $u_{\infty} = u.$
Therefore, we need to prove~\eqref{eqn:res-weak}.
Notice that $\overline{Y}$ is dense in $Y,$ we only need to show that 
\begin{equation}
\label{eqn:res-weaky}
\lim_{k\to \infty} \langle F(u_k), v\rangle = 0, \quad \forall v\in \overline{Y}.
\end{equation}


By definition,  the sets $\cT_{k}^{+}$ are nested, that is for any $j\leqs k,$ 
\begin{equation*}
\cT_j^+ \subset \cT_k^+ \subset \cT_k 
\qquad \textrm{and} \qquad
\Omega_j^0=\Omega_{\cT_k\setminus \cT_j^+}.
\end{equation*}
Applying the upper bound~\eqref{e:est-up-bnd} with $\cT=\cT_k$ and $\cS = \cT_j^+$, for any $v\in \overline{Y}$  we have
\begin{equation}\label{e:res}
\langle F(u_k), v\rangle 
=
\langle F(u_k), v-\bar{v}\rangle
\lesssim
\eta(u_{k},\cT_k\setminus \cT_j^+) \|v- \bar{v}\|_{Y(\Omega_j^0)} 
+ \eta(u_{k},\cT_j^+) \|v- \bar{v}\|_{Y(\Omega_j^+)},
\end{equation}
where $\bar{v}$ is arbitrary in $Y_k.$
Given any $\eps>0,$ we need to show that 
for sufficiently large $k$ and $j$, and for a suitable $\bar{v}\in Y_k,$
each term in the right hand side of the above estimate can be bounded by a multiple of $\eps.$

By the local approximation assumption~\eqref{eqn:Yapp},  there exists a $\bar{v}:= I_{j} v\in Y_j \subset Y_{k}$ such that
$$\|v- \bar{v}\|_{Y(\tau)}\lesssim \|h_{j}^{s}\|_{\infty,\tau} \|v\|_{\overline{Y}(\tau)}.$$ 
So according to Lemma~\ref{lm:hk}  for sufficiently large $j,$ we have $\|v- \bar{v}\|_{Y(\Omega^0)}\leqs \frac{\eps}{2}.$
On the other hand, it is easy to see that $\mu(\Omega_j^0\setminus \Omega^0)\to 0$ as $j\to\infty.$
Therefore, by~\eqref{eqn:normcont} for sufficiently large $j$ one has
$\|v- \bar{v}\|_{Y(\Omega_j^0\setminus \Omega^0)}\leqs \frac{\eps}{2}.$
Hence by~\eqref{eqn:subadditive}, we obtain
\begin{equation*}
\|v- \bar{v}\|_{Y(\Omega_j^0)}
\lesssim
\|v- \bar{v}\|_{Y(\Omega^0)}
+ \|v- \bar{v}\|_{Y(\Omega_j^0\setminus \Omega^0)}
\leqs \eps.
\end{equation*}
Notice that $\eta(u_{k},\cT_k\setminus \cT_j^+)$ is uniformly bounded because
\begin{equation*}
\begin{split}
\eta(u_{k},\cT_k\setminus \cT_j^+)
&\leqs
\left\| \left\{ \eta(u_{k},\tau) \right\}_{\tau\in \cT_k}\right\|_{\ell^p}
\\
&\lesssim
\left\| \left\{ \|u_k\|_{X(\omega_k(\tau))} \right\}_{\tau\in \cT_k}\right\|_{\ell^p}
+ \left\| \left\{ \|D\|_{Z(\omega_k(\tau))} \right\}_{\tau\in \cT_k}\right\|_{\ell^p}\\
&\lesssim
\|u_k\|_{X}
+ \|D\|_{Z}
\\
&\leqs
\|u_k-u_{\infty}\|_{X}
+ \|u_{\infty}\|_{X}
+ \|D\|_{Z},
\end{split}
\end{equation*}
where in the second inequality, we used the inequality~\eqref{e:est-stab}, and in the third inequality, we used~\eqref{eqn:local_quasi_uniform} and~\eqref{eqn:subadditive}.
Now since $\displaystyle \lim_{k\to \infty} \|u_k - u_{\infty}\|_X =0,$ for sufficiently large $j\leqs k$ we have
$$\eta(u_{k},\cT_k\setminus \cT_j^+) \leqs 2\|u_{\infty}\|_X + \|D\|_{Z}.$$
Therefore, the first term in the right hand side of~\eqref{e:res} satisfies:
$$
\eta(u_{k},\cT_k\setminus \cT_j^+) \|v- \bar{v}\|_{Y(\Omega_j^0)} \lesssim (2\|u_{\infty}\|_X + \|D\|_{Z})\epsilon.
$$

We fix this $j$ and consider the second term in the right hand side of~\eqref{e:res}, and let $k\geqs j.$
By marking strategy~\eqref{eqn:mark}, for all $\tau\in \cT_j^+ \subset \cT_{k}^{+},$ we have 
$$
\eta(u_{k},\tau)  \leqs \xi\big(\max_{\sigma\in \cT_k^0}\eta(u_{k},\sigma)\big),
$$ 
and moreover for $\sigma\in \cT_k^0$ we have
\begin{eqnarray*}
\eta(u_{k},\sigma)&
\lesssim&
\|u_{k}\|_{X(\omega_k(\sigma))} + \|D\|_{Z(\omega_k(\sigma))}\\
&\leqs& 
\|u_{k}-u_{\infty}\|_{X(\omega_k(\sigma))}+ \|u_{\infty}\|_{X(\omega_k(\sigma))}+ \|D\|_{Z(\omega_k(\sigma))}.
\end{eqnarray*}
The first term goes to zero because $u_k\to u_{\infty}.$
For the second and third terms, we notice that $\mu(\omega_{k}(\sigma))\to 0$ as $k\to \infty$ by the locally quasi-uniformity~\eqref{eqn:local_quasi_uniform} and Lemma~\ref{lm:hk}.
Hence $\|\cdot\|_{X(\omega_{k}(\sigma))}\to 0$ and $\|\cdot\|_{Z(\omega_{k}(\sigma))}\to 0$ as $\mu(\omega_{k}(\sigma))\to 0$ by~\eqref{eqn:normcont}.
Therefore, we can choose 
$k\geqs j$ sufficiently large such that 
$\eta(u_{k},\cT_j^+) \leqs \eps.$
Finally, we proved that 
$$
\lim_{k\to \infty}\langle F(u_{k}), v\rangle =0,\qquad \forall v\in \overline{Y}.
$$
Therefore, \eqref{eqn:res-weak} holds.
This completes the proof.
\end{proof}


The convergence of $u_k\to u$ as $k\to \infty$ in Theorem~\ref{thm:conv} does not imply the convergence of the estimator.
It is indeed possible for  the error indicators to be not efficient in the sense that they might contain strong overestimation.
In other words, an efficient error indicator should be bounded by the error $\|u-u_k\|_X$ in certain way.
 
\begin{theorem}
\label{thm:convergence_estimator}
Let there exist $D\in Z$
and a continuous function $\phi:\R_+\to\R_+$ with $\phi(0)=0$,
such that for any $\cT\in\cG$ and $\tau\in \cT$
\begin{equation}\label{e:loc-low-bnd}
 \eta(u_{k},\tau) 
 \lesssim 
 \|u-u_{k}\|_{X(\tau)} 
 + \phi(\mu(\tau))
 \left( \|u_{k}\|_{X(\tau)} 
 + \|D\|_{Z(\tau)} \right).
\end{equation}
Then under the hypotheses of Theorem~\ref{thm:conv}, we have 
\begin{equation*}
\lim_{k\to \infty} \eta(u_{k},\cT_k) =0.
\end{equation*}
\end{theorem}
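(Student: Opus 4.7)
The plan is to combine the local efficiency hypothesis~\eqref{e:loc-low-bnd} with the marking condition~\eqref{eqn:mark}, using a dominated-convergence argument on a countable index set. Following the splitting in the proof of Theorem~\ref{thm:conv}, I would write $\cT_k=\cT_k^0\cup\cT_k^+$, treat the ``refined'' part $\cT_k^0$ by the oscillation going to zero with the meshsize, and the ``persistent'' part $\cT_k^+$ via marking.

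On $\cT_k^0$ I would raise~\eqref{e:loc-low-bnd} to the $p$-th power and use the triangle inequality $\|u_k\|_{X(\tau)}\le \|u\|_{X(\tau)}+\|u-u_k\|_{X(\tau)}$ to absorb the term $\phi(\mu(\tau))^p\|u-u_k\|_{X(\tau)}^p$ into $\|u-u_k\|_X^p$; summing with the subadditivity~\eqref{eqn:subadditive} gives
\begin{equation*}
\eta(u_k,\cT_k^0)^p \lesssim \|u-u_k\|_X^p + \bigl(\max_{\tau\in\cT_k^0}\phi(\mu(\tau))\bigr)^p\bigl(\|u\|_X^p+\|D\|_Z^p\bigr),
\end{equation*}
which vanishes by Theorem~\ref{thm:conv} together with Lemma~\ref{lm:hk} and the assumption $\phi(0)=0$. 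Since refinement forces $\cM_k\subset \cT_k^0$, the marking property then delivers, for every $\tau\in\cT_k^+$,
\begin{equation*}
\eta(u_k,\tau)\le \xi\bigl(\max_{\sigma\in\cM_k}\eta(u_k,\sigma)\bigr)\le \xi\bigl(\eta(u_k,\cT_k^0)\bigr)=:\delta_k,
\end{equation*}
with $\delta_k\to 0$ by continuity of $\xi$ at zero.

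The delicate part is passing from the pointwise bound $\eta(u_k,\tau)\le \delta_k$ on $\cT_k^+$ to a vanishing $\ell^p$ sum, because on persistent elements the factor $\phi(\mu(\tau))$ does \emph{not} shrink and the cardinality $|\cT_k^+|$ may tend to infinity. My plan is to apply~\eqref{e:loc-low-bnd} once more on $\cT_k^+$ and split off the vanishing part, producing a $k$-independent envelope
\begin{equation*}
\eta(u_k,\tau)^p \lesssim \min\{\delta_k^p,\, h(\tau)\} + \|u-u_k\|_{X(\tau)}^p,\qquad h(\tau):=\bar\phi^{\,p}\bigl(\|u\|_{X(\tau)}^p+\|D\|_{Z(\tau)}^p\bigr),
\end{equation*}
with $\bar\phi=\sup_{\tau\in\cG}\phi(\mu(\tau))<\infty$ (finite by continuity of $\phi$ and the bound $\mu(\tau)\le \max_{\tau_0\in\cT_0}\mu(\tau_0)$). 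Since the elements of $\cT_\infty^+:=\bigcup_k\cT_k^+$ have pairwise disjoint interiors, subadditivity yields $\sum_{\tau\in\cT_\infty^+}h(\tau)\lesssim \|u\|_X^p+\|D\|_Z^p<\infty$, so $h$ is a summable dominant on the counting space $\cT_\infty^+$ that is independent of $k$. Since $\min\{\delta_k^p,h(\tau)\}\to 0$ pointwise as $\delta_k\to 0$, Lebesgue's dominated convergence gives $\sum_{\tau\in\cT_k^+}\min\{\delta_k^p,h(\tau)\}\to 0$, while $\sum_\tau\|u-u_k\|_{X(\tau)}^p\le \|u-u_k\|_X^p\to 0$, and hence $\eta(u_k,\cT_k^+)\to 0$. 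Combining the two halves proves the claim. The main obstacle I expect is precisely the construction of this fixed summable dominant $h$: local efficiency alone is insufficient on $\cT_k^+$, and one must separate the fixed quantities $(u,D)$ from the vanishing residual $\|u-u_k\|$ so that the marking-generated smallness $\delta_k$ can drive the dominated-convergence step.
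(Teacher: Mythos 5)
Your proof is correct, but it takes a genuinely different route from the paper's. The paper reuses the $j$-then-$k$ decomposition from the proof of Theorem~\ref{thm:conv}: fix an auxiliary index $j$, split $\cT_k = (\cT_k\setminus\cT_j^+)\cup\cT_j^+$, apply the local efficiency bound on $\cT_k\setminus\cT_j^+$ (where $\mu(\tau)\leq\max_{\sigma\in\cT_j^0}\mu(\sigma)\to 0$ as $j\to\infty$), and handle $\eta(u_k,\cT_j^+)$ by the marking argument exactly as in Theorem~\ref{thm:conv}. The crucial simplification in the paper's version is that $\cT_j^+$ has \emph{fixed finite} cardinality once $j$ is chosen, so the pointwise bound $\eta(u_k,\tau)\leq\delta_k\to 0$ from marking trivially gives $\eta(u_k,\cT_j^+)\leq|\cT_j^+|^{1/p}\delta_k\to 0$; no integration-theoretic machinery is needed, at the cost of the usual ``given $\eps$, first choose $j$, then choose $k\geq j$'' two-step bookkeeping. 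You instead split at level $k$ itself, so $\cT_k^+$ has unbounded cardinality and the pointwise bound alone is insufficient; you recover the vanishing $\ell^p$-sum via a fixed summable dominant $h$ on the countable disjoint family $\cT_\infty^+=\bigcup_k\cT_k^+$ and dominated convergence. Both arguments rest on the same ingredients (local efficiency, subadditivity, marking, $u_k\to u$ from Theorem~\ref{thm:conv}, and Lemma~\ref{lm:hk}); yours is more self-contained and avoids the nested choice of $j$ and $k$, while the paper's is shorter because the finite-cardinality shortcut makes the dominated-convergence step unnecessary. Two small remarks on your write-up: the inequality $\xi(\max_{\sigma\in\cM_k}\eta(u_k,\sigma))\leq\xi(\eta(u_k,\cT_k^0))$ implicitly assumes $\xi$ is monotone, which is not stated in~\eqref{eqn:mark}; it is cleaner to set $\delta_k:=\xi(\max_{\sigma\in\cM_k}\eta(u_k,\sigma))$ directly and invoke only continuity of $\xi$ at $0$, since $\max_{\cM_k}\eta\leq\eta(u_k,\cT_k^0)\to 0$. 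Also, the summability $\sum_{\tau\in\cT_\infty^+}h(\tau)<\infty$ should be justified by applying~\eqref{eqn:subadditive} to each sub-collection $\cT_k^+\subset\cT_k$ and then passing to the supremum over $k$ via monotone convergence, since~\eqref{eqn:subadditive} is stated for partitions, not for arbitrary disjoint families spread across levels.
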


\begin{proof}
For $k\geqs j$ by definition of $\eta$ and~\eqref{e:loc-low-bnd} we have
\begin{equation*}
\begin{split}
\eta (u_{k}, \cT_k ) 
&\lesssim 
\eta ( u_{k},\cT_k \setminus \cT_j^+ ) 
+ \eta (u_{k},\cT_j^+ ) \\
&\lesssim 
\| u - u_k \|_{X( \Omega_{\cM_j^0})} 
+\eta(u_{k},\cT_j^+)\\
&+ \left\| \left\{ \phi(\mu(\tau))
 \left( \|u_{k}\|_{X(\tau)} 
+ \|D\|_{Z(\tau)} \right)\right\}_{\tau\in \cT_k \setminus \cT_j^+}\right\|_{\ell^p}.
\end{split}
\end{equation*}
By Theorem~\ref{thm:conv}, we have
$\| u - u_k \|_{X(\Omega_j^0)} 
\leqs 
\| u - u_k \|_{X} \to 0,$ as $k\to \infty.$
The second term goes to zero since
for any $\tau\in \cT_k\setminus \cT_j^+$, $\mu(\tau) \to 0$ as $j\to \infty.$
We follow the same arguments as in Theorem~\ref{thm:conv} to show that the last term converges to zero.
This completes the proof.
\end{proof}

Note that convergence of both the error (Theorem~\ref{thm:conv}) and the estimator (Theorem~\ref{thm:convergence_estimator}) are important.
The convergence result in Theorem~\ref{thm:conv},
$\displaystyle \lim_{k\to \infty} \|u -u_k \|_X = 0,$ means that the approximate solutions get arbitrarily close to the exact solution.
However, this would be of little practical use without the second convergence result, namely $\displaystyle \lim_{k\to\infty} \eta = 0,$ which is the computable counterpart of the first result and thus allows one to recognize the improvement of the approximate solutions.
In particular, $\displaystyle\lim_{k\to \infty} \eta =0$ ensures that if one includes a stopping test with a given positive tolerance, then the algorithm stops after a finite number of iterations.

\section{Examples}
\label{sec:examples}

In this section, we present some nonlinear examples, and apply the weak-* convergence framework developed in the previous sections to show convergence of the adaptive algorithm~\eqref{eqn:adaptive} for these problems.
We consider a fairly broad set of nonlinear problems (see~\cite{Caloz.G;Rappaz.J1994,Verfurth.R1994,Rappaz.J2006} for example), and show how the weak-* framework can be applied in each case.
Specifically, we consider a specific semilinear problem with subcritical nonlinearity, the stationary incompressible Navier-Stokes equations, and a quasi-linear stationary heat equation with convection and nonlinear diffusion.
Many other nonlinear equations are also covered by this general framework.

We restrict polygonal (or polyhedral) domains $\Omega\subset \mathbb{R}^d,$ where $d=2,3$ is the space dimension;
however, all of the results extend to more general domains with standard boundary approximation algorithms and analysis techniques.
In the examples presented here, the  function spaces $X$ and $Y$ are the Sobolev spaces $W^{s,p}(\Omega)$ with $s\geqs 0$ and $p>1,$ equipped with the norm $\|\cdot\|_{s,p,\Omega}$ and semi-norm $|\cdot|_{s,p, \Omega}.$
The space $W_0^{s,p}(\Omega)$ is the closure of ${\mathcal D}(\Omega)$ in $W^{s,p}(\Omega),$ and $W^{-s,q}(\Omega)$ is the dual space of $W^{s,p}_0(\Omega)$ with $\frac{1}{p} + \frac{1}{q} =1.$
When $p=2,$ we shall denote $H^s(\Omega)$ and $H_0^s(\Omega)$ instead of $W^{s,2}(\Omega)$ and $W^{s,2}_0(\Omega)$ respectively, with the norm $\|\cdot\|_{s,\Omega}$ and semi-norm $|\cdot|_{s, \Omega}$ instead of  $\|\cdot\|_{s,2, \Omega}$ and $|\cdot|_{s,2, \Omega}.$
The space $Z$ in the convergence analysis is taken to be $L^p(\Omega)$ for suitable choice of $p.$
More detailed presentations of the Sobolev spaces can be found, for example in the monographs~\cite{Lions.J;Magenes.E1973,Adams.R1978} in the case of domains in $\mathbb{R}^d$, or~\cite{Pala65,Hebey96,HNT07b} in the case of manifold domains.
These Sobolev spaces satisfy the subadditive assumption~\eqref{eqn:group-subadd}.

Let the initial partition $\cT_0$ of $\Omega$ be conforming and shape regular.
We restrict ourself to a shape-regular bisection algorithm for the refinement.
There is a vast literature on bisection algorithms; cf.~\cite{Arnold.D;Mukherjee.A;Pouly.L2000,Plaza.A;Carey.G2000,Chen.L2006a,Stevenson.R2008} and the references cited therein.
It is well known that the bisection algorithm as well as the shape-regularity of $\cT_0$ guarantee assumption~\eqref{eqn:group-refine} holds for any partitions generated by the algorithm.
Without loss of generality, we  assume that $\|h_0\|_{\infty,\Omega} \leqs 1$ is fine enough such that~\eqref{eqn:initial_partition} holds.

Starting from the initial triangulation $\cT_0,$ the adaptive algorithm~\eqref{eqn:adaptive} generates a sequences of shape-regular triangulations $\{\cT_k\}_{k}$ of $\Omega,$ as well as a sequence of approximate solutions $\{u_k\}.$
For the marking strategy, the condition~\eqref{eqn:mark} is satisfied for example if we use \emph{D\"orfler's strategy} (cf.~\cite{Dorfler.W1996}) \eqref{E:dorfler-property}, or the \emph{Maximum strategy} (cf.~\cite{Babuska.I;Rheinboldt.W1978a}).
Apart from the assumptions on mesh refinement and the marking strategy discussed above, for each individual example below, we need to construct the specific finite element spaces $X_k\subset X$ and $Y_k\subset Y$ which satisfy the conditions~\eqref{eqn:group-femspace}.
We also need to define the specific error indicator $\eta,$ which satisfies~\eqref{eqn:group-estimate}.
More precisely, according to Theorem~\ref{thm:conv} and~\ref{thm:convergence_estimator} we only need to:    
\begin{enumerate}
\item Verify the continuous inf-sup condition~\eqref{eqn:nonlinear_continuous_infsup} and the uniform discrete inf-sup condition~\eqref{eqn:nonlinear_discrete_infsup} of the bilinear form $b(\cdot,\cdot)$ defined by~\eqref{eqn:b};
\item Define appropriate error estimator $\eta$ which satisfies~\eqref{eqn:group-estimate};
\item Verify that $\eta$ satisfies~\eqref{e:loc-low-bnd} to prove the convergence of error indicator Theorem~\ref{thm:convergence_estimator}.
We note that the standard local lower bounds for the error indicator will guarantee~\eqref{e:loc-low-bnd}.
\end{enumerate}

In the remainder of this section, we will follow the general framework presented in Section~\ref{subsec:weak_aposteriori} (cf. \cite{Verfurth.R1994,Verfurth.R1996}) to derive {\em a posteriori} error estimates for each example.
We then verify the basic assumptions on the error estimators and the nonlinear equations.
As a consequence, we then conclude convergence of the adaptive algorithm for each example.
\subsection{Semilinear Examples: Single Equations and Systems}
In this subsection, we give two semi-linear examples.
The general formulation of a semi-linear equation is as follows:
\begin{equation}\label{eqn:semi_linear}
F(u) := Lu + N(u)=0,
\end{equation} 
where $L:X\to Y^{*}$ is a bounded linear operator, and $N(\cdot):X\to Z\subset Y^{*}$ is a $C^1$ mapping from $X$ onto a subspace $Z$ of $Y^{*}.$
We assume that 
\begin{enumerate}
\item[(S1)] $L$ satisfies the continuous as well as the discrete inf-sup conditions:
\begin{equation}\label{eqn:L_continuous_infsup}
  \inf_{x\in X, \|x\|_X=1} \sup_{y\in Y, \|y\|_Y=1} \langle Lx,y\rangle =  \inf_{y\in Y, \|y\|_Y=1} \sup_{x\in X, \|x\|_X=1} \langle Lx,y \rangle =\alpha_0>0.
\end{equation}
\begin{equation}\label{eqn:L_discrete_infsup}
  \inf_{x\in X_k, \|x\|_X=1} \sup_{y\in Y_k, \|y\|_Y=1} \langle Lx,y\rangle =  \inf_{y\in Y_k, \|y\|_Y=1} \sup_{x\in X_k, \|x\|_X=1} \langle Lx,y \rangle =\alpha_1>0.
\end{equation}
\item[(S2)] The embedding $Z\subset Y^{*}$ is compact
            as in~\eqref{eqn:z-space}.
\end{enumerate} 
First of all, we establish well-posedness of the equation~\eqref{eqn:semi_linear} under the above assumptions on $L$ and $N.$
\begin{theorem} \label{thm:semi_linear}
Let $F$ satisfy (S1) and (S2), and $X_0$ satisfy~\eqref{eqn:X0}.
If $N'(u)$ is Lipschitz continuous in a neighborhood of $u,$ then the Petrov-Galerkin problem~\eqref{eqn:nonlinear_galerkin} possesses a unique solution $u_k$ in a neighborhood of $u.$
Moreover, we have the error estimates
$$\|u-u_{k}\|_{X} \lesssim \inf_{\chi_{k}\in X_{k}} \|u - \chi_{k}\|_{X}.$$
\end{theorem}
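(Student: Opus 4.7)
The strategy is to reduce Theorem~\ref{thm:semi_linear} to Theorem~\ref{thm:apriori_estimates} applied to the semi-linear operator $F = L + N$. First, I would record that $F'(u) = L + N'(u)$ so that $F'(u) - F'(w) = N'(u) - N'(w)$, and the assumed Lipschitz continuity of $N'$ at $u$ immediately yields the Lipschitz continuity of $F'$ at $u$ required by Theorem~\ref{thm:apriori_estimates}. The approximation condition \eqref{eqn:X0} on $X_0$ is a hypothesis of the present theorem. Hence the nontrivial work reduces to verifying the two inf-sup conditions \eqref{eqn:nonlinear_continuous_infsup} and \eqref{eqn:nonlinear_discrete_infsup} for the linearization
$b(x,y) = \langle F'(u)x, y\rangle = \langle Lx, y\rangle + \langle N'(u)x, y\rangle$.

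For the continuous inf-sup \eqref{eqn:nonlinear_continuous_infsup}, I would observe that by (S1) the operator $L:X\to Y^*$ is an isomorphism, while the factorization $N'(u):X\to Z\hookrightarrow Y^*$ together with the compact embedding from (S2) implies that $N'(u):X\to Y^*$ is a compact operator. Consequently $F'(u) = L + N'(u)$ is a compact perturbation of an isomorphism and hence Fredholm of index zero. Since $u$ is assumed to be a regular locally unique solution (Definition~\ref{def:local_unique}), $F'(u)$ is injective, and Fredholm index zero combined with injectivity yields the isomorphism property, which is equivalent to the continuous inf-sup condition for $b$.

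The main obstacle is the discrete inf-sup \eqref{eqn:nonlinear_discrete_infsup}, which I would establish by a Schatz-type perturbation argument. The idea: for arbitrary $x_k \in X_k$, apply the discrete inf-sup for $L$ on $(X_k, Y_k)$ from (S1) to bound $\|x_k\|_X$ in terms of $\sup_{y_k\in Y_k}\langle L x_k, y_k\rangle/\|y_k\|_Y$, then write $\langle L x_k, y_k\rangle = b(x_k, y_k) - \langle N'(u)x_k, y_k\rangle$ to introduce $b$. The compactness of $N'(u):X\to Y^*$, combined with the approximation property of $X_0\subset X_k$ (and the corresponding property of $Y_0 \subset Y_k$) coming from \eqref{eqn:X0}, allows one to absorb the $N'(u)$ term via a duality/approximation estimate. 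The hard part will be ensuring that the initial subspaces are fine enough that the compact term is uniformly controlled across all $k$; this is precisely the role of \eqref{eqn:X0} and it is why the hypothesis must be assumed rather than derived.

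With the Lipschitz hypothesis, the continuous inf-sup, the discrete inf-sup, and the initial-space condition \eqref{eqn:X0} all verified for $F = L + N$, direct application of Theorem~\ref{thm:apriori_estimates} yields the existence of a locally unique discrete solution $u_k \in X_k$ to \eqref{eqn:nonlinear_galerkin} for each $k \geqs 0$, together with the quasi-optimal error estimate \eqref{eqn:quasi_optimal_Xk}, which is exactly the stated conclusion.
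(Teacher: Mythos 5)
Your overall plan is exactly the paper's: verify the hypotheses of Theorem~\ref{thm:apriori_estimates} for $F=L+N$ and invoke it. The Lipschitz observation is the same, and the Fredholm argument for the continuous inf-sup is a sound way to fill in a detail the paper leaves implicit, with one caveat: local uniqueness in the sense of Definition~\ref{def:local_unique} does \emph{not} by itself yield injectivity of $F'(u)$ (consider $F(u)=u^3$ at $u=0$). The continuous inf-sup for $F'(u)$ is really a standing hypothesis imported from the abstract framework — in particular Theorem~\ref{thm:apriori_estimates} explicitly assumes \eqref{eqn:nonlinear_continuous_infsup} — and you should simply take it as given rather than derive it from local uniqueness.

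The genuine gap is the discrete inf-sup \eqref{eqn:nonlinear_discrete_infsup}. You flag it as ``the hard part'' and sketch a Schatz-type absorption of the compact term, but the approximation condition \eqref{eqn:X0} is a \emph{single-function} approximation statement for $u\in X_0$ and does not directly control the operator-level quantity $\|(I-\Pi_k^Y)\circ N'(u)^*\|$ or $\|N'(u)\circ(I-\Pi_k^X)\|$ that a Schatz argument actually needs, uniformly over the nested (but non-quasi-uniform) adaptive sequence $\{X_k\}$. The paper avoids this by citing \cite[Theorem~5.1]{Rappaz.J2006}, where the uniform discrete inf-sup for $L+$compact perturbation under assumptions of the form (S1)--(S2) is established; that citation is exactly the missing piece in your argument. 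If you want to avoid the citation, you would need to state (or prove) a collective-compactness or uniform-approximation hypothesis on $\{Y_k\}$ for the fixed compact operator $N'(u)$, which \eqref{eqn:X0} alone does not provide.
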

\begin{proof}
It is straightforward to check that $F'(u)$ is Lipschitz continuous.
The following inf-sup condition was proved in~\cite[Theorem 5.1]{Rappaz.J2006}: 
$$
\inf_{x\in X_k, \|x\|_X=1} \sup_{y\in Y_k, \|y\|_Y=1} \langle F'(u)x,y\rangle = \inf_{y\in Y_k, \|y\|_Y=1} \sup_{x\in X_k, \|x\|_X=1} \langle F'(u) x,y\rangle=\beta_1>0.
$$
Then the conclusion follows by Theorem~\ref{thm:apriori_estimates}.
\end{proof}

\begin{remark}
The (global) existence and uniqueness of the solution can sometimes be proved by standard arguments in the calculus of variations.
The {\em a priori} error estimate in Theorem~\ref{thm:semi_linear} can also be proved in a different way, if {\em a priori} $L^{\infty}$ estimates on the solution $u$ and the discrete solutions $u_k$ hold.
We refer to Section ~\ref{sec:contraction-ex} for the details.
\end{remark}

\begin{example}\label{ex:semi_linear}
Consider the following semi-linear equation 
\begin{equation}\label{eqn:semi_linear2}
F(u) := -\Delta u + u^m -f  =0,
\end{equation}
with homogeneous Dirichlet boundary condition $u|_{\partial \Omega} =0.$
We assume that $m\geqs 2$ is a constant $f\in L^p(\Omega)$ for some $p>1$ satisfies $p\geqs d-\frac{d}{m}.$
\end{example}
For this nonlinear equation, we define the linear and nonlinear components of $F$ as $Lu= -\Delta u$ and $N(u) = u^m -f.$
We let $X=W_0^{1,p}(\Omega)$ with  $\|\cdot\|_{X} = |\cdot|_{1,p,\Omega}$ and $Y=W_0^{1, q}(\Omega)$ with the norm $\|\cdot\|_{Y} = |\cdot|_{1,q,\Omega},$ where $q$ satisfies $\frac{1}{p} + \frac{1}{q} =1.$
By the Sobolev Embedding Theorem and the choice of $m,$ we have $W^{1, p}(\Omega) \hookrightarrow L^{mp}(\Omega).$
Therefore, for any $u\in W_0^{1,p}(\Omega),$ we have  $N(u)\in L^p(\Omega),$ which is compact embedded in $Y^{*} =W^{-1, p}(\Omega).$
A special case when $m=3$ and $p=2$ in $\mathbb{R}^2$ can be found in Rappaz~\cite{Rappaz.J2006}.

Given a conforming triangulation ${\mathcal T}_k,$  let $X_k\subset X$ and $Y_k\subset Y$ be the piecewise linear continuous finite element space defined on $\cT_k$.
Then the finite element approximation of the equation~\eqref{eqn:semi_linear2} reads,
\begin{equation}
\label{eqn:semi_linear_fem}
\mbox{find}\;\; u_k\in X_k, \;\mbox{such that}\; \int_{\Omega}\nabla u_k\cdot \nabla v_k + u_k^m v_k -fv_k dx =0,\quad \forall v_k\in Y_k.
\end{equation}
Based on Theorem~\ref{thm:semi_linear} and Theorem~\ref{thm:apriori_estimates}, we have the following proposition.
\begin{proposition}
\label{prop:semi_linear_solvability}
If the Laplacian operator $\Delta: W_0^{1,p}(\Omega)\to W^{-1,p}(\Omega)$ is an isomorphism, then for $\|h_0(x)\|_{\infty,\Omega}$ sufficiently small, the Petrov-Galerkin problem~\eqref{eqn:semi_linear_fem} have a unique solution $u_k \in X_k$ in the neighbor of $u,$ which satisfies the {\em a priori} error estimate
$$\|u-u_k\|_{1,p,\Omega} \lesssim \min_{\chi_k\in X_k}\|u-\chi_k\|_{1,p,\Omega}.$$
\end{proposition}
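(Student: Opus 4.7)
The plan is to reduce the statement to Theorem~\ref{thm:semi_linear} with the splitting $Lu = -\Delta u$ and $N(u) = u^m - f$, which in turn rests on the abstract framework of Theorem~\ref{thm:apriori_estimates}. The work then amounts to verifying assumptions (S1)--(S2) on $(L, N)$, local Lipschitz continuity of $N'$ at $u$, and the initial-mesh condition \eqref{eqn:X0}, in the concrete setting $X = W_0^{1,p}(\Omega)$, $Y = W_0^{1,q}(\Omega)$ with $\frac{1}{p} + \frac{1}{q} = 1$ and $Y^{*} = W^{-1,p}(\Omega)$.

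First I would verify (S1). The continuous inf-sup condition \eqref{eqn:L_continuous_infsup} is exactly the assumed isomorphism $-\Delta : W_0^{1,p}(\Omega) \to W^{-1,p}(\Omega)$. For the uniform discrete inf-sup \eqref{eqn:L_discrete_infsup}, I would invoke the classical $W^{1,p}$-stability of the Galerkin projection for the Laplacian on shape-regular meshes (in the spirit of Rannacher--Scott), together with the fact that the bisection procedure keeps $\{\cT_k\}$ uniformly shape regular. For (S2), the hypothesis $p \geqs d - d/m$ places $mp$ at or below the Sobolev exponent of $W^{1,p}$, so $W_0^{1,p}(\Omega) \hookrightarrow L^{mp}(\Omega)$ and therefore $u \mapsto u^m$ maps $X$ continuously into $Z := L^p(\Omega)$; combining this with the (dual Rellich) compact embedding $L^p(\Omega) \hookrightarrow W^{-1,p}(\Omega)$ gives $N : X \to Z \hookrightarrow Y^{*}$ compactly.

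Next I would establish local Lipschitz continuity of $N'(u) w = m u^{m-1} w$ in $u \in X$. This follows from the elementary pointwise bound $|a^{m-1} - b^{m-1}| \lesssim (|a|^{m-2} + |b|^{m-2}) |a - b|$, H\"older's inequality with exponents tuned to the chain $W^{1,p} \hookrightarrow L^{mp}$, and the resulting estimate
\[
\|(N'(u) - N'(v)) w\|_{Y^{*}} \lesssim (\|u\|_{X}^{m-2} + \|v\|_{X}^{m-2}) \|u - v\|_{X} \|w\|_{X}
\]
in a neighborhood of $u$; since $f$ is independent of $u$, it contributes nothing to $N'$.

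Finally, the initial-mesh approximation condition \eqref{eqn:X0} is obtained from a standard Lagrange or Scott--Zhang interpolation estimate: $\inf_{\chi_0 \in X_0} \|u - \chi_0\|_{1,p,\Omega} \lesssim \|h_0\|_{\infty,\Omega}^{\sigma}$ with some $\sigma > 0$ depending on the regularity of $u$, so that taking $\|h_0\|_{\infty,\Omega}$ sufficiently small forces the right-hand side below the explicit threshold in \eqref{eqn:X0}. With (S1), (S2), local Lipschitz $N'(u)$, and \eqref{eqn:X0} all in place, Theorem~\ref{thm:semi_linear} delivers both the unique solvability of \eqref{eqn:semi_linear_fem} in a neighborhood of $u$ and the quasi-optimal error estimate. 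The main technical obstacle is the uniform $W^{1,p}$-type discrete inf-sup for $p \neq 2$, which is standard but nontrivial; for the symmetric case $p = 2$ it is immediate from coercivity.
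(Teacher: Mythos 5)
Your proposal is correct and follows essentially the same route as the paper's own proof: reduce to Theorem~\ref{thm:semi_linear}, establish the continuous inf-sup from the assumed isomorphism, and establish the uniform discrete inf-sup for the Laplacian from $W^{1,p}$-stability of the Galerkin projection via the Rannacher--Scott duality argument (which the paper writes out in detail). You are somewhat more explicit than the paper about verifying (S2), the local Lipschitz continuity of $N'$, and the initial-mesh condition \eqref{eqn:X0}; the paper handles (S2) in the surrounding text, assumes \eqref{eqn:X0} in the general discussion at the start of the examples section, and dismisses the Lipschitz check as straightforward.
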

\begin{proof}
It is straightforward to check $F'(u)$ is Lipschitz continuous.
By assumption on  $\Delta,$ $L=-\Delta$ satisfies the continuous inf-sup condition~\eqref{eqn:L_continuous_infsup}.
That is, we have 
$$\inf_{w\in X, \|w\|_{X} =1}\sup_{v\in Y, \|v\|_{Y} =1}(\nabla w, \nabla v)=\inf_{v\in Y, \|v\|_{Y} =1}\sup_{w\in X, \|w\|_{X} =1} (\nabla w, \nabla v) \geqs \alpha_0>0.$$
We need to show that $L$ satisfies the discrete inf-sup condition~\eqref{eqn:L_discrete_infsup}.
Let $P_k: W_0^{1,q}(\Omega) \to Y_k$ be the Galerkin projection, i.e., for any $v\in W_0^{1,q}(\Omega)$
$$(\nabla w_k, \nabla (v -P_k v)) =0, \quad\forall w_k\in X_k.$$
It is well known that 
$\|P_k v\|_{Y} \lesssim \|v\|_{Y}$, $\forall v\in Y$,
see~\cite{Rannacher.R;Scott.R1982} for example.
For any $w_k\in X_k$ with $\|w_k\|_{X}=1,$ by the continuous inf-sup condition, there exists a function $v\in W_0^{1,q}(\Omega)$ with $\|v\|_{Y}=1$ such that
$$\frac{\alpha_0}{2}\leqs (\nabla w_k, \nabla v) = (\nabla w_k, \nabla P_k v).$$
Hence
\begin{eqnarray*}
\sup_{v_k\in Y_k, \|v_k\|_{Y} =1}(\nabla w_k, \nabla v_k) &\geqs& \left(\nabla w_k, \frac{\nabla P_k v}{\|P_k v\|_{Y}}\right) \geqs \frac{\alpha_0}{2\|P_k v\|_{Y}}
\geqs  \alpha_1>0,
\end{eqnarray*}
for some constant $\alpha_1.$
In the last step, we used the stability of $P_k.$
This proves the discrete inf-sup condition:
$$
\inf_{v_k\in Y_k, \|v\|_{Y}=1}
\sup_ {w_k\in X_k, \|w_k\|_{X}=1}
(\nabla w_k, \nabla v_k) 
\geqs \alpha_1>0.
$$
The conclusion then follows by Theorem~\ref{thm:semi_linear}.
\end{proof}

Let $\sigma=\overline{\tau}\cap \overline{\tau}'$ be the interface between two elements $\tau$ and $\tau'\in {\mathcal T}_k,$ and $n_{\sigma}$ be a fixed unit normal of $\sigma.$
For any $w\in X_k,$ we denote the jump residual on $\sigma$ as $[\nabla w\cdot n_{\sigma}].$
Now we define the local error indicator
\begin{equation}
\label{eqn:semi_linear_indicator}
\eta(w, \tau)^p := h^p_{\tau} \|w^m -f\|^p_{0,p,\tau} + \sum_{\sigma\subset \partial\tau} h_{\sigma} \left\|\left[\nabla w\cdot n_{\sigma} \right]\right\|^p_{0,p, \sigma}, \quad \forall w\in X_{k},
\end{equation}
and define $\eta(w, \mathcal{S}) := \left(\sum_{\tau\in \mathcal{S}} \eta^p(w, \tau)\right)^{\frac{1}{p}}$ for any subset $\mathcal{S} \subset \cT_k.$
We also introduce the oscillation:
$${\rm osc}_k (\tau) : = h_{\tau}\|f -\pi_0 f\|_{0,p,\tau},$$ 
where $\pi_0$ is the element-wise $L^p$ projection; and denote ${\rm osc}_k (\mathcal{S}) := \left(\sum_{\tau\in \mathcal{S}} {\rm osc}_k^p(\tau)\right)^{\frac{1}{p}}$ for any subset $\mathcal{S} \subset \cT_k.$
Follow the general framework developed in~\cite{Verfurth.R1996}, we have the following {\em a posteriori} error estimates.
\begin{theorem}
\label{thm:semi_linear_posteriori}
Let $u\in W_0^{1,p}(\Omega)$ be a solution to~\eqref{eqn:semi_linear2}, and $u_k\in X_k$ be the solution of Petrov-Galerkin equation~\eqref{eqn:semi_linear_fem}.
Then for any subset $\cS \subset \cT_k$ and $v\in Y,$ we have
$$\langle F(u_k), v\rangle \lesssim \eta(u_{k},\cS) \|v\|_{Y(\Omega_{\cS})}  +  \eta(u_{k},\cT_k\setminus \cS)\|v\|_{Y(\Omega_{\cT_k\setminus \cS})}.$$
Consequently, there exists a constant $C_0$ depending only on $m$ and the shape regularity of $\cT_k$ such that
\begin{equation}
\label{eqn:semi1-upper}
\| u - u_k\|_{1, p,\Omega} \leqs C_0 \eta(u_{k},\cT_k).
\end{equation}
Furthermore, there exist constants $C_1$ and $C_2$ depending only on $m$ and the shape regularity of $\cT_k$ such that
\begin{equation}
\label{eqn:semi1-loc-lower}
\eta(u_{k}, \tau) \leqs C_1 \| u - u_k\|_{1, p,\omega_{\tau}} + C_2 osc_k(\omega_{\tau}).
\end{equation}
\end{theorem}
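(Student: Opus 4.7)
The plan is to prove the three assertions in order: the localized (residual-weighted) upper bound, the global upper bound, and the elementwise lower bound. The proof will follow the standard Verfürth framework adapted to the polynomial semilinear operator, with the key nonlinearity living only in the volume residual $u_k^m - f$.

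For the localized upper bound, the first step is to invoke Galerkin orthogonality: since $\langle F(u_k),v_k\rangle=0$ for all $v_k\in Y_k$, we have $\langle F(u_k),v\rangle=\langle F(u_k),v-v_k\rangle$ for arbitrary $v_k\in Y_k$. I will choose $v_k=I_k v$ to be a Scott--Zhang type quasi-interpolant preserving the homogeneous Dirichlet boundary condition, with the standard local approximation estimates $\|v-I_k v\|_{0,q,\tau}\lesssim h_\tau\|v\|_{1,q,\omega_\tau}$ and $\|v-I_k v\|_{0,q,\sigma}\lesssim h_\sigma^{1/q}\|v\|_{1,q,\omega_\sigma}$. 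Expanding $\langle F(u_k),v-v_k\rangle=\int_\Omega\nabla u_k\cdot\nabla(v-v_k)+\int_\Omega(u_k^m-f)(v-v_k)$ and integrating by parts elementwise, the $-\Delta u_k$ volume contribution vanishes on each $\tau$ (since $u_k$ is piecewise affine) and the boundary contributions assemble into the jump residuals on the interior skeleton. Applying Hölder elementwise yields
\begin{equation*}
\langle F(u_k),v\rangle\lesssim\sum_{\tau\in\cT_k}h_\tau\|u_k^m-f\|_{0,p,\tau}\|v\|_{1,q,\omega_\tau}+\sum_{\sigma}h_\sigma^{1/q}\|[\nabla u_k\cdot n_\sigma]\|_{0,p,\sigma}\|v\|_{1,q,\omega_\sigma}.
\end{equation*}
A discrete Hölder inequality, combined with the split into $\cS$ and $\cT_k\setminus\cS$ (absorbing the bounded overlap coming from the patches $\omega_\tau$ into the implicit constant via the shape-regularity assumption), produces the claimed bound. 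Taking $\cS=\cT_k$ gives $\|F(u_k)\|_{Y^*}\lesssim\eta(u_k,\cT_k)$, and combining with Verfürth's linearization result (Theorem~\ref{thm:verf}) applied to $F$ at the regular solution $u$ delivers the global estimate \eqref{eqn:semi1-upper}.

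For the local lower bound \eqref{eqn:semi1-loc-lower}, I will run the classical bubble function argument, treating the volume and jump contributions separately. For the volume part, replace $u_k^m-f$ by its oscillation-corrected version $u_k^m-\pi_0 f$, test the identity $F(u_k)-F(u)=(u_k^m-u^m)-\Delta(u_k-u)$ against $\psi_\tau(u_k^m-\pi_0 f)$ (with $\psi_\tau$ the standard interior bubble on $\tau$), and use the inverse/norm-equivalence inequalities on the finite dimensional space of polynomials of degree $\leqs m$ on $\tau$ to recover $h_\tau\|u_k^m-\pi_0 f\|_{0,p,\tau}\lesssim\|u-u_k\|_{1,p,\tau}+h_\tau\|(u^m-u_k^m)\|_{0,p,\tau}$; the latter term is absorbed by continuity of the Nemytskii map $w\mapsto w^m$ on bounded sets of $W^{1,p}$ (using the embedding $W^{1,p}\hookrightarrow L^{mp}$ guaranteed by the assumption $p\geqs d-d/m$), followed by the triangle inequality with the oscillation term. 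For the jump part, the same template with an edge bubble $\psi_\sigma$ extended into $\omega_\sigma$ by an appropriate lifting reduces $h_\sigma^{1/p}\|[\nabla u_k\cdot n_\sigma]\|_{0,p,\sigma}$ to an integrated test against $F(u_k)-F(u)$ over the two-element patch, and the already-controlled volume residual appears on the right, absorbed into $\text{osc}_k(\omega_\tau)$ and $\|u-u_k\|_{1,p,\omega_\tau}$.

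The main technical obstacle will be the treatment of the nonlinear Nemytskii term in the lower bound: one needs to control $\|u^m-u_k^m\|_{0,p,\tau}$ by $\|u-u_k\|_{1,p,\omega_\tau}$ up to a factor that may be absorbed, which requires uniform $L^\infty$ (or uniform $L^{mp}$) boundedness of $u$ and $u_k$ near the fixed regular solution and careful use of the factorization $a^m-b^m=(a-b)\sum_{j=0}^{m-1}a^jb^{m-1-j}$. This is why the result is local to the neighborhood of $u$ established in Theorem~\ref{thm:semi_linear}. The remainder of the argument is standard, and all constants depend only on $m$ and the shape regularity of $\cT_k$ as claimed because the bubble-function inequalities and quasi-interpolation estimates are scale-invariant on shape-regular meshes.
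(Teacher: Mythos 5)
Your proposal reconstructs the Verf\"urth framework that the paper itself invokes (the paper states the theorem as a consequence of~\cite{Verfurth.R1994,Verfurth.R1996} without a dedicated proof), so the approach is the same and the argument is sound, including the correct observation that the lower-bound constants are hidden-local through the $L^\infty$ (or $L^{mp}$) control of $u$ and $u_k$ needed for the Nemytskii factorization. The one small slip is the exponent in the scaled trace estimate for the quasi-interpolant: it should read $\|v - I_k v\|_{0,q,\sigma} \lesssim h_\sigma^{1/p}\|v\|_{1,q,\omega_\sigma}$ (since $1/p = 1 - 1/q$), not $h_\sigma^{1/q}$; this is precisely what makes the edge contribution line up with the $h_\sigma^{1/p}\|[\nabla u_k\cdot n_\sigma]\|_{0,p,\sigma}$ factor in the definition of $\eta(u_k,\tau)$ after the discrete H\"older step.
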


Finally, based on these observations, the adaptive algorithm for the nonlinear equation~\eqref{eqn:semi_linear2} is convergent.
\begin{corollary}
\label{cor:semi_linear}
The adaptive algorithm for the nonlinear equation~\eqref{eqn:semi_linear2} converges, that is, $u_k\to u$ as $k\to \infty.$
Moreover, we have
$$\lim_{k\to \infty} \eta(u_{k},\cT_k) =0.$$
\end{corollary}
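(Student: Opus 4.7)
My plan is to deduce the corollary by verifying each of the abstract hypotheses of Theorem~\ref{thm:conv} and Theorem~\ref{thm:convergence_estimator} for the setup of Example~\ref{ex:semi_linear}, at which point the two stated limits follow at once. A substantial part of this verification has already been carried out in the preceding material: Proposition~\ref{prop:semi_linear_solvability} delivers the Lipschitz continuity of $F'(u)$ together with the continuous and discrete inf-sup conditions~\eqref{eqn:nonlinear_continuous_infsup}--\eqref{eqn:nonlinear_discrete_infsup}, and guarantees that the initial meshsize is small enough for~\eqref{eqn:X0} to hold so that the Petrov--Galerkin problem~\eqref{eqn:semi_linear_fem} is well posed on every admissible refinement of $\cT_0$. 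The conforming and nested piecewise-linear spaces satisfy~\eqref{eqn:group-femspace}; the subadditivity and absolute continuity~\eqref{eqn:subadditive}--\eqref{eqn:normcont} hold for $W^{1,p}$ and $W^{1,q}$ norms; and the local approximation property~\eqref{eqn:Yapp} holds with $s=1$ for the Scott--Zhang interpolant on the dense subspace $\overline Y = Y\cap W^{2,q}(\Omega)$. The refinement assumption~\eqref{eqn:group-refine} is standard for shape-regular bisection, and the marking assumption~\eqref{eqn:mark} is part of the hypotheses of the algorithm. Finally, Theorem~\ref{thm:semi_linear_posteriori} supplies precisely the localized upper bound~\eqref{e:est-up-bnd}.

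The remaining and most substantive item is the stability estimate~\eqref{e:est-stab} for the indicator~\eqref{eqn:semi_linear_indicator}. I will treat the jump term and the element-residual term separately, taking $Z=L^p(\Omega)$ and $D=f$. The jump contribution $h_\sigma^{1/p}\|[\nabla u_k\cdot n_\sigma]\|_{0,p,\sigma}$ is controlled by $\|\nabla u_k\|_{0,p,\omega_\tau}\leqs\|u_k\|_{X(\omega_\tau)}$ via a scaled trace inequality combined with an inverse estimate on the piecewise-linear space. For the element residual $h_\tau\|u_k^m-f\|_{0,p,\tau}$, the piece $h_\tau\|f\|_{0,p,\tau}\leqs\|f\|_{0,p,\tau}=\|D\|_{Z(\tau)}$ is immediate since $\|h_0\|_\infty\leqs 1$. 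The nonlinear piece $h_\tau\|u_k^m\|_{0,p,\tau}$ requires uniform boundedness: by Theorem~\ref{thm:abstract} the sequence $\{u_k\}$ converges in $X$, hence is bounded in $X=W^{1,p}(\Omega)$, and by the Sobolev embedding $W^{1,p}(\Omega)\hookrightarrow L^{mp}(\Omega)$ (valid under the hypothesis $p\geqs d-d/m$) it is uniformly bounded in $L^{mp}(\Omega)$ by some constant $C$. Then
\begin{equation*}
\|u_k^m\|_{0,p,\tau} = \|u_k\|_{0,mp,\tau}^m \leqs C^{m-1}\|u_k\|_{0,mp,\tau} \lesssim \|u_k\|_{X(\omega_\tau)},
\end{equation*}
where the final step uses a scaled local Sobolev embedding on the shape-regular patch. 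Combining the three contributions yields~\eqref{e:est-stab}.

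To close the argument, I will verify~\eqref{e:loc-low-bnd} by bounding the oscillation term in~\eqref{eqn:semi1-loc-lower} by ${\rm osc}_k(\omega_\tau)\leqs h_\tau\|f\|_{0,p,\omega_\tau}\lesssim \mu(\tau)^{1/d}\|D\|_{Z(\omega_\tau)}$ with $D=f$ and $\phi(t)=t^{1/d}$, which combined with the local lower bound already in~\eqref{eqn:semi1-loc-lower} gives the required form. With all hypotheses in place, Theorem~\ref{thm:conv} yields $u_k\to u$ in $X$ and Theorem~\ref{thm:convergence_estimator} yields $\eta(u_k,\cT_k)\to 0$. The hard part is the nonlinear piece of~\eqref{e:est-stab}: linearity in $\|u_k\|_X$ is not directly available, and I must bootstrap through the \emph{already-established} strong convergence of Theorem~\ref{thm:abstract} to produce the uniform $L^{mp}$ bound that converts the superlinear term $\|u_k^m\|_{0,p,\tau}$ into a linear one; this is the one place where the circularity between convergence and stability has to be handled carefully.
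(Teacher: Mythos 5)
Your high-level route — verify the hypotheses of Theorems~\ref{thm:conv} and~\ref{thm:convergence_estimator}, with Theorem~\ref{thm:semi_linear_posteriori} supplying~\eqref{e:est-up-bnd} and the local lower bound — is the same as the paper's, but you diverge where it matters, namely in verifying the stability estimate~\eqref{e:est-stab}, and there your argument has a genuine gap. The paper establishes~\eqref{e:est-stab} by a two-line combination of the local lower bound~\eqref{eqn:semi1-loc-lower} and the triangle inequality, $\eta(u_k,\tau)\lesssim \|u-u_k\|_{1,p,\omega_\tau}+\mathrm{osc}_k(\omega_\tau)\leqs \|u_k\|_{1,p,\omega_\tau}+\|u\|_{1,p,\omega_\tau}+\|h(f-\pi_0f)\|_{0,p,\omega_\tau}$, and absorbs the two data terms into $\|D\|_{Z(\omega_\tau)}$ with $D$ depending on $u$ and $f$. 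You instead estimate the indicator term by term directly from its definition, which has the appeal that $D=f$ depends only on the data, not the unknown $u$. But the final step of your chain for the nonlinear piece,
\begin{equation*}
\|u_k^m\|_{0,p,\tau}=\|u_k\|_{0,mp,\tau}^m\leqs C^{m-1}\|u_k\|_{0,mp,\tau}\lesssim\|u_k\|_{X(\omega_\tau)},
\end{equation*}
is false as written: there is no $\tau$-uniform constant in a ``scaled local Sobolev embedding'' of the form $\|v\|_{L^{mp}(\tau)}\lesssim\|v\|_{W^{1,p}(\omega_\tau)}$. Rescaling to the reference element shows the constant carries a factor $h_\tau^{\,d/(mp)-d/p}=h_\tau^{-d(m-1)/(mp)}$ on the $L^p$ part of the $W^{1,p}$ norm, which blows up as $h_\tau\to 0$ whenever $m>1$.

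The repair is available but you must keep the factor $h_\tau$ attached to the element residual throughout: what one actually needs is $h_\tau\|u_k\|_{0,mp,\tau}\lesssim\|u_k\|_{W^{1,p}(\omega_\tau)}$, whose scaling exponent $1+d/(mp)-d/p$ is $\geqs 0$ exactly under the standing hypothesis $p\geqs d-d/m$ of Example~\ref{ex:semi_linear}. Your uniform $L^{mp}$ bound from Theorem~\ref{thm:abstract} and the bootstrap through already-established strong convergence are fine (and indeed necessary to linearize the superlinear term), but the intermediate inequality without the $h_\tau$ weight is not. In contrast, the paper's route via the local lower bound sidesteps this scaling issue entirely by never estimating $\|u_k^m\|_{0,p,\tau}$ on its own, at the modest price of a $D$ that depends on $u$ as well as $f$.
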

\begin{proof}
By Theorem~\ref{thm:semi_linear_posteriori}, $\eta(u_{k},\cT_k)$ satisfies~\eqref{e:est-up-bnd}.
To show~\eqref{e:est-stab}, by the local lower bound and triangle inequality, we obtain:
\begin{eqnarray*}
\eta(u_{k}, \tau) &\leqs & C_1 \| u - u_k\|_{1, p,\omega_{\tau}} + C_2 osc_k(\omega_{\tau})\\
&\leqs & C_1\|u_k\|_{1, p, \omega_{\tau}} + C_1\|u\|_{1, p, \omega_{\tau}} + C_2\|h(f-\pi_0 f)\|_{0,p,\omega_{\tau}}\\
&\lesssim& \|u_k\|_{1, p, \omega_{\tau}} + \|D\|_{0,p,\omega_{\tau}},
\end{eqnarray*}
where $D$ depends only on $f$ and $u.$
Here we use the fact that $u\in W^{1,p}_0(\Omega),$ i.e., $$\|u\|_{1,p,\omega(\tau)}\leqs C$$ for some constant $C\geqs 0.$
 Also notice that the local lower bound~\eqref{eqn:semi1-loc-lower} implies~\eqref{e:loc-low-bnd}, then the conclusion follows by Theorem~\ref{thm:conv} and Theorem~\ref{thm:convergence_estimator}.
\end{proof}

As a second example of a semi-linear equation involving a system of equations, we consider the stationary incompressible Navier-Stokes problem:
\begin{example}\label{ex:navier_stokes}
Consider
\begin{equation}
\label{eqn:navier_stokes}
\left\{\begin{array} {rll}
-\nu \Delta \bu +(\bu\cdot\nabla) \bu +\nabla p &=& \mathbf{f} \;\; \mbox{ in } \Omega\subset {\mathbb{R}}^d\\
{\rm div} \bu                                           &=& 0\;\; \mbox{ in } \Omega,\\
\bu &= & 0\;\;  \mbox{ on } \partial\Omega 
\end{array}
\right.
\end{equation}
where $\nu$ is a constant viscosity of the fluid and $\mathbf{f}\in L^2(\Omega)^d$ is the given force field.
\end{example}
For this example, we let $X=Y= H_0^1(\Omega)^d\times L_0^2(\Omega)$ with the graph norm $$\|[\bv, q]\|_{X} = \left(\|\bv\|^2_{1,\Omega} + \|q\|^2_{0,\Omega}\right)^{\frac{1}{2}}.$$ Let 
\begin{eqnarray*}
\langle L[\bu, p], [\bv, q]\rangle &= &\int_{\Omega} \nu \nabla \bu:\nabla \bv - p {\rm div} \bv -q {\rm div} \bu dx\\
\langle N([\bu, p]), [\bv, q]\rangle &=& \int_{\Omega} (\bu\cdot \nabla) \bu\cdot \bv -\mathbf{f}\cdot \bv dx.
\end{eqnarray*}
When $d=2,$ since $\bu\in {H_0^1(\Omega)}^d,$ by the Sobolev Embedding Theorem, we have $\bu\in L^p(\Omega)$ for all $1<p<\infty.$
Hence, $(\bu \cdot \nabla) \bu$ is in ${L^q(\Omega)}^2$ for all $1\leqs q < 2.$
Similarly, when $d=3,$ we notice that $\bu\in L^p(\Omega)^3$ for all $1<p\leqs 6.$
Therefore, $(\bu \cdot \nabla) \bu$ is in ${L^q(\Omega)}^3$ for all $1\leqs q \leqs \frac{3}{2}.$
If we set $Z={L^{q}(\Omega)}^d$ for $\frac{6}{5}< q<\frac{3}{2},$ the property (S2) of $N$ is true with $d=2$ or 3.

On the other hand, the operator $L$ is given by the Stokes problem.
It is well-known that the continuous inf-sup condition is satisfied by $L$.
Given the triangulation $\cT_k$, we denote the finite element space $X_{k} :=V_{k}\times Q_{k} \subset X$, and assume that there holds the following discrete inf-sup condition for Stokes operator $L$ on $V_k\times Q_k:$ 
\begin{equation}
\label{eqn:ns-discrete-infsup}
\|q_{k}\|_{0,\Omega} \lesssim \sup_{\bv_{k}\in V_{k}} \frac{(\div \bv_{k}, q_{k})}{\|\bv_{k}\|_{1,\Omega}}.
\end{equation}
We refer to~\cite{Verfurth.R1984,Girault.V;Raviart.P1986,Brezzi.F;Fortin.M1991} for construction of finite element spaces that satisfy~\eqref{eqn:ns-discrete-infsup}.
By this construction, the linear operator $L$ satisfies the assumption (S1).

The Petrov-Galerkin approximation to the equation~\eqref{eqn:navier_stokes} is:
Find $[\bu_k, p_k]\in X_k$ such that 
\begin{equation}
\label{eqn:navier_stokes_fem}
\langle F([\bu_k, p_k]), [\bv_k, q_k] \rangle =0,\quad \forall [\bv_k, q_k] \in X_k,
\end{equation}
where $\langle F([\bu_k, p_k]), [\bv_k, q_k] \rangle := \langle L[\bu_{k}, p_{k}], [\bv_{k}, q_{k}]\rangle + \langle N([\bu_{k}, p_{k}]), [\bv_{k}, q_{k}]\rangle.$
Based on Theorem~\ref{thm:semi_linear} and the abstract framework in Section~\ref{sec:weak_framework}, we have the following proposition.
\begin{proposition}
\label{prop:navier_stokes_solvability}
The Petrov-Galerkin problem~\eqref{eqn:navier_stokes_fem} for the stationary Navier-Stokes equation has a unique solution $[u_k, p_{k}]$ in a neighborhood of $[u, p].$
\end{proposition}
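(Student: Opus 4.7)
The plan is to apply Theorem~\ref{thm:semi_linear} to the decomposition $F=L+N$ set up in Example~\ref{ex:navier_stokes}, with $X=Y=H_0^1(\Omega)^d\times L_0^2(\Omega)$ and $X_k=V_k\times Q_k$. This reduces the proposition to checking the hypotheses (S1), (S2), Lipschitz continuity of $F'$ near $[\bu,p]$, and the approximation condition~\eqref{eqn:X0} on the initial space $X_0$.

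First I would verify (S1). The continuous inf-sup condition for the Stokes operator $L$ on $X\times Y$ is classical: it follows from coercivity of the viscous term on $H_0^1(\Omega)^d$ together with the continuous LBB condition for the divergence. The discrete inf-sup for $L$ on $V_k\times Q_k$ is precisely what the hypothesis~\eqref{eqn:ns-discrete-infsup} (combined with coercivity of the viscous block) buys us, and the constant is uniform in $k$ because the adaptive refinement of Section~\ref{subsec:local-structure} keeps the meshes in a shape-regular class. Next, for (S2) I would observe, as already noted in the example, that the convection term $(\bu\cdot\nabla)\bu$ lies in $L^q(\Omega)^d$ for any $q<2$ when $d=2$ and for $q\in(\tfrac{6}{5},\tfrac{3}{2}]$ when $d=3$, by the Sobolev embedding. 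Setting $Z=L^q(\Omega)^d\times\{0\}$, the Rellich--Kondrachov theorem gives the compact embedding $Z\hookrightarrow Y^*=H^{-1}(\Omega)^d\times L_0^2(\Omega)$ required by (S2).

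For Lipschitz continuity of $F'$, since $L$ is linear it suffices to bound $N'$, whose action is
\[
\langle N'([\bu,p])[\bw,r],[\bv,q]\rangle=\int_\Omega\bigl[(\bw\cdot\nabla)\bu+(\bu\cdot\nabla)\bw\bigr]\cdot\bv\,dx,
\]
and is affine in $[\bu,p]$. Hölder's inequality together with the embeddings $H^1(\Omega)\hookrightarrow L^4(\Omega)$ (valid for $d\leq 4$) and $H^1(\Omega)\hookrightarrow L^6(\Omega)$ give, for all $[\bu_1,p_1],[\bu_2,p_2]$ in a bounded neighborhood of $[\bu,p]$, an estimate of the form $\|N'([\bu_1,p_1])-N'([\bu_2,p_2])\|_{\mathcal L(X,Y^*)}\lesssim\|\bu_1-\bu_2\|_{1,\Omega}$, with constant depending on the neighborhood. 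Hence $F'$ is Lipschitz at $[\bu,p]$ with some radius $\delta>0$ and constant $L$. The approximation condition~\eqref{eqn:X0} is then ensured by the standing hypothesis in Section~\ref{sec:examples} that $\|h_0\|_{\infty,\Omega}$ is sufficiently small, combined with the usual interpolation estimates for the Taylor--Hood-type (or other Stokes-stable) pair $(V_0,Q_0)$.

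With (S1), (S2), the Lipschitz property, and the condition on $X_0$ in hand, Theorem~\ref{thm:semi_linear} immediately gives existence and local uniqueness of the Petrov--Galerkin solution $[\bu_k,p_k]\in V_k\times Q_k$ near $[\bu,p]$, together with a quasi-optimal {\em a priori} estimate. The main obstacle I anticipate is obtaining a discrete inf-sup constant for the full linearized operator $F'([\bu,p])=L+N'([\bu,p])$ that is uniform in $k$; this is precisely where (S2) is used via the Rappaz-type argument cited in the proof of Theorem~\ref{thm:semi_linear}, since the compactness of $N'([\bu,p]):X\to Y^*$ (which follows from the same Rellich compactness as in (S2)) converts the uniform Stokes inf-sup~\eqref{eqn:ns-discrete-infsup} into a uniform inf-sup for the linearization.
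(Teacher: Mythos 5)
Your proof is correct and takes essentially the same route as the paper: verify (S1), (S2), the Lipschitz property of $N'$, and the approximation condition~\eqref{eqn:X0}, then invoke Theorem~\ref{thm:semi_linear}. The paper states the proposition without an explicit proof (the verification of (S1)--(S2) is done in the exposition preceding the proposition, and the uniform discrete inf-sup for $F'([\bu,p])$ is delegated to the Rappaz compactness argument cited inside the proof of Theorem~\ref{thm:semi_linear}); you supply those details correctly, including the key observation that compactness of $N'([\bu,p])$ converts the discrete Stokes inf-sup~\eqref{eqn:ns-discrete-infsup} into a uniform discrete inf-sup for the linearization.
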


Error indicators for this equation have been developed by several papers, see~\cite{Caloz.G;Rappaz.J1994, Verfurth.R1994, Berrone.S2001}.
For any $\tau\in {\mathcal T}_k,$ we define $\eta([\bu_{k}, p_{k}], \tau)$ as 
\begin{eqnarray*}
\eta([\bu_{k}, p_{k}], \tau)^2
   & := & h_{\tau}^2 \|-\nu \Delta \mathbf{u}_k 
          + (\mathbf{u}_k\nabla)\mathbf{u_k} 
          + \nabla p_k -\mathbf{f}\|_{0,\tau}^2 
\\
   &  + & \|{\rm div} \mathbf{u}_k\|_{0, \tau}^2 
\\
   &  + & \sum_{\sigma\subset \partial\tau} 
          h_{\sigma} \left\|[(\nu \nabla \mathbf{u}_k- p_{k})\cdot n]\right\|_{0,\sigma}^2,
\end{eqnarray*}
and  define the oscillation by
$${\rm osc}_k(\tau) : = h_{\tau} \|f-\pi_0 f\|_{0,\tau}.$$ 
As usual, for any subset $\mathcal{S}\subset {\mathcal T}_k$ we denote 
$$\eta([\bu_{k}, p_{k}], \mathcal{S}) := \left(\displaystyle\sum_{\tau\in \mathcal{S}} \eta^2([\bu_{k},p_{k}], \tau)\right)^{\frac{1}{2}} \mbox{  and   } {\rm osc}_k(\mathcal{S}) = \left(\displaystyle\sum_{\tau\in \mathcal{S}} {\rm osc}_k(\tau)^2\right)^{\frac{1}{2}}.$$ 
We then have the following {\em a posteriori} error estimates.
\begin{theorem}
\label{thm:navier_stokes_posteriori}
Let $[\bu, p]\in X$ be a locally unique solution to~\eqref{eqn:navier_stokes}, and $[\bu_k, p_k]\in X_k$ be the solution to the Petrov-Galerkin problem~\eqref{eqn:navier_stokes_fem}.
Then for any subset $\cS \subset \cT_k$ and $v\in X,$ we have the nonlinear residual estimate
\begin{eqnarray*}
\langle F([\bu_k, p_k]), [\bv, q] \rangle \lesssim &&\eta([\bu_{k},p_{k}],\cS) \|[\bv,q]\|_{X(\Omega_{\cS})} \\ 
&&+  \eta([\bu_{k},p_{k}],\cT_k\setminus \cS)\|[\bv, q]\|_{X(\Omega_{\cT_k\setminus \cS})}.
\end{eqnarray*}
Moreover, we have the following {\em a posteriori} error estimates:
\begin{eqnarray*}
&&\left( \|\bu-\bu_k\|_{1,\Omega}^2 + \|p-p_k\|_{0,\Omega}^2\right)^{\frac{1}{2}}
\lesssim  \eta([\bu_{k}, p_{k}], \cT_k),\\
&&\eta([\bu_{k}, p_{k}], \tau) \lesssim \left( \|\bu-\bu_k\|_{1,\omega_{\tau}}^2 + \|p-p_k\|_{0,\omega_{\tau}}^2\right)^{\frac{1}{2}} + {\rm osc}_k(\omega_{\tau}).
\end{eqnarray*}
\end{theorem}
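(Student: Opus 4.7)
The plan is to follow the standard Verf\"urth-type framework for nonlinear \emph{a posteriori} estimates, adapted to the stationary Navier--Stokes system. The three conclusions will be obtained in the order: (i) nonlinear residual estimate, (ii) global upper bound, (iii) local lower bound.

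First I would derive the residual estimate. Let $[\bv,q]\in X$ be arbitrary and pick a quasi-interpolant $[\bv_k,q_k]=[I_k\bv,0]\in X_k$, where $I_k$ is a Cl\'ement/Scott--Zhang operator adapted to $V_k$ (for the pressure component we may simply take $q_k=0$ since the pressure residual lives in $L^2$). Using the Galerkin orthogonality $\langle F([\bu_k,p_k]),[\bv_k,q_k]\rangle=0$ from~\eqref{eqn:navier_stokes_fem}, rewrite $\langle F([\bu_k,p_k]),[\bv,q]\rangle=\langle F([\bu_k,p_k]),[\bv-\bv_k,q]\rangle$ and integrate by parts element-by-element. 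This produces three contributions: the interior momentum residual $-\nu\Delta\bu_k+(\bu_k\cdot\nabla)\bu_k+\nabla p_k-\bbf$ tested against $\bv-\bv_k$, the divergence residual $\div\bu_k$ tested against $q$, and the jump residual $[(\nu\nabla\bu_k-p_k I)\cdot n]$ on interelement faces tested against traces of $\bv-\bv_k$. Applying a weighted Cauchy--Schwarz inequality, splitting the sum over $\tau\in\cS$ and $\tau\in\cT_k\setminus\cS$, and using the local interpolation estimates $\|\bv-I_k\bv\|_{0,\tau}\lesssim h_\tau\|\bv\|_{1,\omega_\tau}$ and $\|\bv-I_k\bv\|_{0,\sigma}\lesssim h_\sigma^{1/2}\|\bv\|_{1,\omega_\sigma}$ yields the desired subset bound in the form required by~\eqref{e:est-up-bnd}.

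Next, taking $\cS=\cT_k$ in the residual estimate gives $\|F([\bu_k,p_k])\|_{X^*}\lesssim\eta([\bu_k,p_k],\cT_k)$. Combined with Verf\"urth's linearization theorem (Theorem~\ref{thm:verf}), applied to the nonlinear operator $F$ at the locally unique regular solution $[\bu,p]$ (whose Gateaux derivative is a linear homeomorphism by the discussion around Proposition~\ref{prop:navier_stokes_solvability}), this produces the global upper bound
\begin{equation*}
\bigl(\|\bu-\bu_k\|_{1,\Omega}^2+\|p-p_k\|_{0,\Omega}^2\bigr)^{1/2}\lesssim\|F([\bu_k,p_k])\|_{X^*}\lesssim\eta([\bu_k,p_k],\cT_k),
\end{equation*}
provided $[\bu_k,p_k]$ is inside the linearization ball, which is guaranteed by the \emph{a priori} estimate of Theorem~\ref{thm:semi_linear} for $\|h_0\|_\infty$ small enough.

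For the local lower bound I would use the classical bubble function technique. For each element $\tau$ let $b_\tau$ be the interior polynomial bubble and for each face $\sigma$ let $b_\sigma$ be the edge/face bubble. Testing the strong residuals against $b_\tau$ times the interior residual (with $\bbf$ replaced by its projection $\pi_0\bbf$) and against $b_\sigma$ times an extension of the jump, then exploiting norm equivalences on finite-dimensional polynomial spaces and integration by parts, yields control of each of the three pieces of $\eta([\bu_k,p_k],\tau)^2$ by $\|\bu-\bu_k\|_{1,\omega_\tau}$, $\|p-p_k\|_{0,\omega_\tau}$, and $\mathrm{osc}_k(\omega_\tau)$. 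The divergence term is immediate since $\div\bu=0$ gives $\|\div\bu_k\|_{0,\tau}=\|\div(\bu-\bu_k)\|_{0,\tau}\leqs\|\bu-\bu_k\|_{1,\tau}$.

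The main obstacle is the treatment of the convective term $(\bu_k\cdot\nabla)\bu_k$ in the lower bound. Writing
\begin{equation*}
(\bu_k\cdot\nabla)\bu_k-(\bu\cdot\nabla)\bu=((\bu_k-\bu)\cdot\nabla)\bu_k+(\bu\cdot\nabla)(\bu_k-\bu),
\end{equation*}
one must bound these cross terms using H\"older's inequality and the Sobolev embeddings $H^1(\Omega)\hookrightarrow L^q(\Omega)$ with the dimension-dependent exponents of Example~\ref{ex:navier_stokes}; in three dimensions the admissible exponents are tighter, so I would need an \emph{a priori} $L^\infty$- or $L^4$-bound on $\bu_k$ (available because $\bu_k$ stays in a small $H^1$-neighborhood of the smooth $\bu$) to absorb these terms into the stated right-hand side. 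Once this nonlinear book-keeping is carried out, the usual bubble function manipulations deliver the claimed efficiency estimate.
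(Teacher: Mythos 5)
The paper states this theorem without proof, pointing to the references Caloz--Rappaz, Verf\"urth, and Berrone cited just before the indicator definition. Your outline correctly reconstructs the standard Verf\"urth argument (Cl\'ement/Scott--Zhang quasi-interpolation and Galerkin orthogonality for the local residual bound, Theorem~\ref{thm:verf} for reliability via the $Y^*$-norm of the residual, bubble functions and the strong-form error identity for efficiency), and you rightly single out the convective cross-terms as the delicate step in the efficiency bound; this is essentially the intended route.
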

 
Therefore,  the adaptive algorithm for the nonlinear equation~\eqref{ex:navier_stokes} is convergent.
\begin{corollary}
\label{cor:navier_stokes}
Let $[\bu, p]\in X$ be a locally unique solution to~\eqref{eqn:navier_stokes}, and $[\bu_k, p_k]\in X_k$ be the solution to the Petrov-Galerkin problem~\eqref{eqn:navier_stokes_fem} at each adaptive step $k.$
Then we have $[\bu_k,p_k]\to [\bu, p] $ as $k\to \infty.$
Moreover, we have
$$\lim_{k\to \infty} \eta([\bu_{k}, p_{k}], \cT_k) =0.$$
\end{corollary}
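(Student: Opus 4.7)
My plan is to follow the template laid out in Corollary~\ref{cor:semi_linear}, verifying that each hypothesis of Theorem~\ref{thm:conv} and Theorem~\ref{thm:convergence_estimator} holds in the Navier-Stokes setting. The algorithmic hypotheses~\eqref{eqn:group-refine} on \textsf{REFINE} and~\eqref{eqn:mark} on \textsf{MARK} are already in force through the choice of bisection and of D\"orfler's strategy specified at the start of Section~\ref{sec:examples}. The subspace/inf-sup hypotheses grouped in~\eqref{eqn:group-femspace} have been collected into Proposition~\ref{prop:navier_stokes_solvability}, provided the initial mesh is fine enough to satisfy~\eqref{eqn:initial_partition}. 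So what actually needs attention is the cluster~\eqref{eqn:group-estimate} of estimator properties and the local lower bound~\eqref{e:loc-low-bnd}.

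First I would read the residual/subset upper bound~\eqref{e:est-up-bnd} directly off the first statement of Theorem~\ref{thm:navier_stokes_posteriori}, and the local lower bound~\eqref{e:loc-low-bnd} directly off the last statement, with the continuous function $\phi(\mu(\tau)) = \mu(\tau)^{1/d}$ absorbing the $h_\tau$ factor in the oscillation term. Next, to verify the local stability assumption~\eqref{e:est-stab}, I would combine the local lower bound with the triangle inequality, exploiting that the exact solution $[\bu, p]\in X$ has a fixed global $X$-norm: specifically,
$$
\eta([\bu_k, p_k], \tau) \lesssim \|[\bu, p] - [\bu_k, p_k]\|_{X(\omega_\tau)} + \mathrm{osc}_k(\omega_\tau)
\lesssim \|[\bu_k, p_k]\|_{X(\omega_\tau)} + \|[\bu, p]\|_{X(\omega_\tau)} + h_\tau\|\mathbf{f}\|_{0,\omega_\tau}.
$$
I would then choose the auxiliary space $Z = L^q(\Omega)^d$ with $q$ as identified in Example~\ref{ex:navier_stokes}, so that the compact embedding~\eqref{eqn:z-space} is in hand, and let $D\in Z$ be a fixed section built from $\mathbf{f}$ and $[\bu, p]$ (the $X(\omega_\tau)$-norm of $[\bu, p]$ is dominated by its $Z(\omega_\tau)$-norm up to a constant via the embedding, and $h_\tau \leqs 1$). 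This yields~\eqref{e:est-stab} in the required form.

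Once all hypotheses are verified, I would apply Theorem~\ref{thm:conv} to get strong convergence $[\bu_k, p_k] \to [\bu_\infty, p_\infty]$ in $X$ together with the weak-$*$ residual convergence~\eqref{eqn:res-weak}; the local uniqueness of $[\bu, p]$ assumed in the statement and Theorem~\ref{thm:abstract_convergence} then identify $[\bu_\infty, p_\infty] = [\bu, p]$. Finally, Theorem~\ref{thm:convergence_estimator}, whose hypothesis~\eqref{e:loc-low-bnd} was already verified above, delivers $\lim_{k\to\infty}\eta([\bu_k, p_k], \cT_k) = 0$.

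The one point that requires care rather than bookkeeping is pinning down the compact embedding of the nonlinear convection term $(\bu\cdot\nabla)\bu$ into $Y^*$: as noted in Example~\ref{ex:navier_stokes}, this constrains the integrability exponent of the auxiliary space $Z$ (with $q<2$ in two dimensions and $q\in(6/5, 3/2]$ in three dimensions), and this is the non-routine ingredient invoked when selecting $Z$ in the stability step. Everything else is a direct translation of the argument used for Corollary~\ref{cor:semi_linear} to the vector-valued, saddle-point setting.
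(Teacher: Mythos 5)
Your plan correctly mirrors the paper's terse proof (upper bound from Theorem~\ref{thm:navier_stokes_posteriori}, stability via triangle inequality and local lower bound, conclusion from Theorems~\ref{thm:conv} and~\ref{thm:convergence_estimator}), and the bookkeeping of algorithmic hypotheses and the identification $\phi(\mu(\tau))=\mu(\tau)^{1/d}$ are fine. However, your justification of the stability bound~\eqref{e:est-stab} contains a genuine error. You claim ``the $X(\omega_\tau)$-norm of $[\bu,p]$ is dominated by its $Z(\omega_\tau)$-norm up to a constant via the embedding,'' but this is backwards: the continuous embedding $X\hookrightarrow Z$ of~\eqref{eqn:z-space} gives $\|\cdot\|_Z\lesssim\|\cdot\|_X$, not the reverse. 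Worse, with the $Z=L^q(\Omega)^d$ ($q<2$) you selected, no fixed $D\in L^q$ can have its local $L^q$-norm dominating the local $H^1\times L^2$-norm of $[\bu,p]$ on a shrinking patch $\omega_\tau$, since H\"older introduces a factor $\mu(\omega_\tau)^{1/q-1/2}\to 0$ going the wrong way. You have also conflated two distinct roles: the $L^q$ space with small $q$ is the codomain of the convection term $(\bu\cdot\nabla)\bu$ invoked to verify condition (S2); this need not coincide with the auxiliary space hosting the fixed datum $D$ in~\eqref{e:est-stab}.

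The correct move — which is what the proof of Corollary~\ref{cor:semi_linear} does implicitly when it sets $D$ to ``depend only on $f$ and $u$'' — is to take the stability auxiliary space to be $L^2$ and construct $D$ explicitly as a function encoding the local density of the $X$-norm of $[\bu,p]$ together with $\mathbf{f}$; for instance $D=\left(|\nabla\bu|^2+|\bu|^2+|p|^2+|\mathbf{f}|^2\right)^{1/2}\in L^2(\Omega)$. Then $\|[\bu,p]\|_{X(\omega_\tau)}+h_\tau\|\mathbf{f}\|_{0,\omega_\tau}\lesssim\|D\|_{L^2(\omega_\tau)}$ is immediate, $L^2$ has the required local structure (subadditivity and absolute continuity, \eqref{eqn:subadditive} and~\eqref{eqn:normcont}), and $X\hookrightarrow L^2$ is a compact embedding, so all requirements of~\eqref{eqn:z-space} and~\eqref{e:est-stab} are satisfied. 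With this correction in the stability step, the remainder of your argument goes through and matches the paper.
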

\begin{proof}
The estimate~\eqref{e:est-up-bnd} of the error estimator $\eta([\bu_{k}, p_{k}],\cT_k)$ follows by Theorem~\ref{thm:navier_stokes_posteriori}.
Similar to Corollary~\ref{cor:semi_linear}, we can easily show~\eqref{e:est-stab} by the local lower bound and triangle inequality.
 The conclusion then follows by Theorem~\ref{thm:conv} and Theorem~\ref{thm:convergence_estimator}.
\end{proof}

\subsection{A Quasi-Linear Example}

\begin{example}\label{ex:quasi_linear}
We now consider a quasi-linear example, the stationary heat equation with 
convection and nonlinear diffusion:
\begin{equation}
\label{eqn:heat}
F(u) =
- {\rm div}(\kappa(u) \nabla u)  + \mathbf{b} \cdot\nabla u - f =0 \;\; \mbox{ in } \Omega,
\end{equation}
with homogeneous boundary condition $u|_{\partial\Omega} = 0$.
We assume for all $s\in \mathbb{R}, \; \kappa(s)\in C^2(\mathbb{R})$ satisfies
$\kappa(s)\geqs \alpha\geqs 0$  and $|\kappa^{(l)}(s)|\leqs \gamma_l$,
for $l=0,1,2$, for some constants 
$\alpha, \; \gamma_0,\; \gamma_1,$ and $\gamma_2.$
We assume the vector field $\mathbf{b}\in W^{1,\infty}(\Omega)^d$ such that ${\rm div }\mathbf{b} =0,$ and $f\in L^{\infty}(\Omega).$
\end{example}
Let  $X=W_0^{1,p}(\Omega)$ and $Y = W_0^{1,q}(\Omega),$ with $\frac{1}{p}+\frac{1}{q} =1.$
As before, we let $X_k\subset X$ and $Y_k\subset Y$ be the piecewise linear continuous finite element space defined on $\cT_k$.
The finite element approximation of the equation~\eqref{ex:quasi_linear} reads,
\begin{equation}
\label{eqn:heat_fem}
\mbox{find}\;\; u_k\in X_k, \;\mbox{such that}\; \int_{\Omega}\kappa(u_k)\nabla u_k\cdot \nabla v_k + \mathbf{b}\nabla u_k v_k -fv_k dx =0,\quad \forall v_k\in Y_k.
\end{equation}
We have the following properties.
\begin{proposition}
\label{prop:heat_solvability}
Let $u\in W_0^{1,p}(\Omega)$ be a locally unique solution to the equation~\eqref{eqn:heat}.
Then the mapping $F: X \to Y^{*}$ is of class $C^1$ for $2<p<\infty,$ and $F'(u)$ is an isomorphism form $X$ to $Y^{*}.$
Moreover, if the Laplacian operator 
$\Delta: X\to Y^{*}$ is an isomorphism, then for $\|h_0(x)\|_{\infty,\Omega}$ sufficiently small, the Petrov-Galerkin problem~\eqref{eqn:heat_fem} have a unique solution $u_k \in X_k.$
\end{proposition}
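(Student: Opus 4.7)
The plan is to verify the three claims in order and conclude via Theorem~\ref{thm:apriori_estimates}. First I would compute the Gateaux derivative of $F$ at $u$ in direction $w$: after formally integrating by parts in the weak formulation,
\begin{equation*}
\langle F'(u)w, v\rangle = \int_{\Omega}\bigl[\kappa(u)\nabla w + \kappa'(u)\, w\, \nabla u\bigr]\cdot \nabla v\,dx + \int_{\Omega}(\mathbf{b}\cdot\nabla w)\, v\,dx.
\end{equation*}
Using the pointwise bounds $|\kappa^{(l)}|\le \gamma_l$ for $l=0,1,2$ together with the Sobolev embedding $W^{1,p}(\Omega)\hookrightarrow L^\infty(\Omega)$ (valid for $2<p<\infty$ when $d=2$, and under analogous conditions for $d=3$), a direct estimate produces boundedness of $F'(u):X\to Y^*$; a second variation and the same embeddings give Lipschitz continuity of $u\mapsto F'(u)$ in a neighborhood of $u$, yielding the $C^1$ statement and the Lipschitz hypothesis required by Theorem~\ref{thm:apriori_estimates}.

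For the isomorphism claim I would split $F'(u)=A(u)+K(u)$, with principal part $A(u)w:=-\operatorname{div}(\kappa(u)\nabla w)$ and lower-order perturbation $K(u)w:=-\operatorname{div}(\kappa'(u)\,w\,\nabla u)+\mathbf b\cdot\nabla w$. The operator $A(u):X\to Y^*$ is linear elliptic with a bounded, uniformly positive coefficient, so once $\Delta:X\to Y^*$ is an isomorphism, $A(u)$ is as well by standard $W^{1,p}$ elliptic theory (freezing-of-coefficients plus a small-perturbation argument, together with Meyers-type regularity). The perturbation $K(u)$ is bounded from $X$ into $L^{p}(\Omega)\hookrightarrow\hookrightarrow Y^*$, so $F'(u)$ is a compact perturbation of an isomorphism and hence Fredholm of index zero. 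Injectivity follows from the local-uniqueness hypothesis on $u$ (regarded as a regular solution), so $F'(u)$ is bijective and therefore an isomorphism by the open mapping theorem. Equivalently, the bilinear form $b(x,y)=\langle F'(u)x,y\rangle$ satisfies the continuous inf-sup condition~\eqref{eqn:nonlinear_continuous_infsup}.

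For the Petrov-Galerkin existence claim I still need the uniform discrete inf-sup condition~\eqref{eqn:nonlinear_discrete_infsup}. Mimicking the projection argument used in Proposition~\ref{prop:semi_linear_solvability}, I would introduce the Galerkin projection $P_k$ associated with $A(u)$ on $Y_k$, and invoke Rannacher--Scott type $W^{1,q}$-stability~\cite{Rannacher.R;Scott.R1982} for the projection of a smooth-coefficient elliptic operator to obtain uniform boundedness of $P_k$; this lifts the continuous inf-sup for $A(u)$ to a discrete inf-sup for $A(u)$. A Schatz-type duality argument then absorbs the compact perturbation $K(u)$ to give a discrete inf-sup for the full $F'(u)$ uniformly in $k$, provided $\|h_0\|_{\infty,\Omega}$ is sufficiently small. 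With continuous and discrete inf-sup in hand, together with the Lipschitz continuity of $F'$ at $u$, the approximation condition~\eqref{eqn:X0} on $X_0$ reduces to the standard estimate $\inf_{\chi_0\in X_0}\|u-\chi_0\|_X\lesssim \|h_0\|_\infty^\alpha$ for some $\alpha>0$ from $W^{1,p}$-interpolation, which is achieved by taking $\|h_0\|_{\infty,\Omega}$ small enough. Theorem~\ref{thm:apriori_estimates} then delivers the locally unique $u_k\in X_k$ together with the quasi-optimal error bound.

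The main obstacle will be the uniform discrete inf-sup for $F'(u)$: the variable (and only Lipschitz, since $u\in W^{1,p}$) coefficient $\kappa(u)$ obstructs any direct reduction to the constant-coefficient Laplacian, and $W^{1,p}$-stability of the associated Galerkin projection for $p\neq 2$ is delicate and typically relies on local quasi-uniformity plus the Rannacher--Scott framework; moreover, ensuring that the discrete inf-sup constant does not collapse when one incorporates the convection/reaction-type lower-order term $K(u)$ requires a quantitative Aubin--Nitsche-type estimate that must be uniform across the entire adaptive sequence $\{\cT_k\}$ rather than only in a single refinement limit.
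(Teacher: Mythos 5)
Your proof follows the same high-level route as the paper (establish the structural hypotheses, then invoke Theorem~\ref{thm:apriori_estimates}), but you fill in substantially more detail than the paper does. The paper's actual proof is a single sentence: it refers to Caloz--Rappaz~\cite{Caloz.G;Rappaz.J1994} for the $C^1$ property and the isomorphism of $F'(u)$, and then states that the Petrov--Galerkin well-posedness ``follows by Theorem~\eqref{thm:apriori_estimates}.'' Your Fredholm decomposition $F'(u)=A(u)+K(u)$ with the compactness/injectivity argument is a reasonable reconstruction of what the cited reference establishes, and your handling of the $X_0$-approximation condition~\eqref{eqn:X0} matches Remark~\ref{rk:initial_partition}.

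Where you diverge most usefully from the paper is in flagging the discrete inf-sup condition~\eqref{eqn:nonlinear_discrete_infsup}. Theorem~\ref{thm:apriori_estimates} requires it, but the paper never actually verifies it for this quasi-linear example: the hypothesis ``$\Delta:X\to Y^*$ is an isomorphism'' plus the proof of Proposition~\ref{prop:semi_linear_solvability} only covers the constant-coefficient Laplacian, whereas the linearization $F'(u)$ here has the variable principal part $-\operatorname{div}(\kappa(u)\nabla\,\cdot\,)$ with only $W^{1,p}$ (hence $C^{0,\alpha}$ but not smooth) coefficient plus the convection/reaction-type lower-order part. You correctly identify that lifting the continuous inf-sup to a uniform discrete one in the $W^{1,p}$/$W^{1,q}$ duality is the genuine technical obstacle, and the route you sketch (Rannacher--Scott $W^{1,p}$-stability of a suitable Galerkin projection for the frozen-coefficient operator, combined with a Schatz-type duality argument to absorb the compact perturbation for $\|h_0\|_\infty$ small) is the standard way to attack it. Be aware, though, that the Rannacher--Scott machinery is stated for quasi-uniform meshes and needs nontrivial adaptation to the graded, locally quasi-uniform meshes generated by the adaptive loop; this is exactly the kind of uniform-in-$k$ requirement you already flag as delicate. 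In short, your proposal is consistent with the paper's route but is more honest about a step the paper delegates entirely to a citation and to an unverified hypothesis of Theorem~\ref{thm:apriori_estimates}.
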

\begin{proof}
We refer to~\cite{Caloz.G;Rappaz.J1994} for the proof of the first part of this proposition.
The second part of the conclusion then follows by Theorem~\eqref{thm:apriori_estimates}.
\end{proof}

For the {\em a posteriori} error estimator, we introduce
\begin{eqnarray*}
\eta(u_{k}, \tau)^p:=& h^p_{\tau} \|- {\rm div} (\kappa(u_k)\nabla u_k) + \mathbf{b} \cdot \nabla  u_k  - f \|^p_{0,p,\tau} \\
&+ \sum_{\sigma\subset \partial\tau} h_{\sigma} \left\|\left[\kappa(u_k)\nabla u_k\cdot n\right]\right\|^p_{0,p,\sigma},
\end{eqnarray*}
and define the oscillation by
\begin{eqnarray*}
{\rm osc}_k^p(\tau):= &&h_{\tau}^p \left\|(I-\pi_0)\left(-{\rm div}(\kappa(u_k)\nabla u_k) + \mathbf{b} \cdot \nabla u_k -f \right)\right\|^p_{0,p,\tau}\\
 &&+ 
\sum_{\sigma\subset \partial\tau} h_{\sigma} \left\|\left[(I-\pi_1)\kappa(u_k)\nabla u_k\cdot n\right]\right\|^p_{0,p,\sigma},
\end{eqnarray*}
where $\pi_0$ and $\pi_1$ are the element-wise $L^p$ projections onto the $\mathcal{P}_0$ and $\mathcal{P}_1$ spaces respectively.
Also, we denote 
$$\eta(u_{k},\mathcal{S}) := \left(\sum_{\tau\in \mathcal{S}} \eta^p(u_{k},\tau)\right)^{\frac{1}{p}} \mbox{  and  }{\rm osc}_k (\mathcal{S}) := \left(\sum_{\tau\in \mathcal{S}} {\rm osc}_k^p(\tau)\right)^{\frac{1}{p}}$$ for any subset $\mathcal{S} \subset \cT_k.$
Again, following the general framework in~\cite{Verfurth.R1994,Verfurth.R1996}, 
we obtain {\em a posteriori} error estimates.
\begin{theorem}
\label{thm:heat_posteriori}
Let $u\in W_{0}^{1,p}(\Omega)$ be a locally unique solution to~\eqref{eqn:heat}.
Then we have
$$\langle F(u_k), v\rangle \lesssim \eta(u_{k},\cS) \|v\|_{Y(\Omega_{\cS})}  +  \eta(u_{k},\cT_k\setminus \cS)\|v\|_{Y(\Omega_{\cT_k\setminus \cS})},$$
for any subset $\cS \subset \cT_k$ and $v\in Y.$
Furthermore, we have the following {\em a posteriori} error estimates:
\begin{eqnarray*}
\|u-u_k\|_{1,p,\Omega}&\lesssim & \eta(u_{k},\cT_k),\\
\eta(u_{k},\tau) &\lesssim & \|u-u_k\|_{1,p,\omega(\tau)} + {\rm osc}_k(\omega_{\tau}).
\end{eqnarray*}
\end{theorem}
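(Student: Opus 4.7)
The proof breaks into the three estimates in the theorem, and follows the standard Verf\"urth-type residual framework adapted to the $L^p$ setting, with the weak-form machinery already in place from Section~\ref{subsec:weak_aposteriori}. The first step is the localized residual upper bound. Given $v\in Y=W_0^{1,q}(\Omega)$, I would pick a Scott--Zhang type quasi-interpolant $I_k v\in Y_k$ and exploit Galerkin orthogonality $\langle F(u_k),I_k v\rangle=0$ to write $\langle F(u_k),v\rangle=\langle F(u_k),v-I_k v\rangle$. Elementwise integration by parts (valid since $u_k$ is piecewise smooth on $\cT_k$) produces the interior volumetric residual $R_\tau:=-\operatorname{div}(\kappa(u_k)\nabla u_k)+\mathbf{b}\cdot\nabla u_k-f$ on each $\tau$, plus the jump contribution $[\kappa(u_k)\nabla u_k\cdot n]$ across each interior face $\sigma$. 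Applying H\"older's inequality together with the local approximation bounds $\|v-I_k v\|_{0,q,\tau}\lesssim h_\tau\|v\|_{1,q,\omega(\tau)}$ and $\|v-I_k v\|_{0,q,\sigma}\lesssim h_\sigma^{1/q}\|v\|_{1,q,\omega(\sigma)}$ gives, after reassembling over any subset $\cS\subset\cT_k$ and its complement, the desired localized bound with $\eta(u_k,\cS)$ and $\|v\|_{Y(\Omega_{\cS})}$ on one side and the complementary indices on the other. The finite-overlap property of the patches $\omega(\tau)$ and discrete $\ell^p$--$\ell^q$ H\"older absorb the multiple counting.

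Taking $\cS=\cT_k$ in the localized estimate yields $\langle F(u_k),v\rangle\lesssim \eta(u_k,\cT_k)\|v\|_Y$ for every $v\in Y$, i.e.\ $\|F(u_k)\|_{Y^*}\lesssim\eta(u_k,\cT_k)$. The global upper bound $\|u-u_k\|_{1,p,\Omega}\lesssim\eta(u_k,\cT_k)$ then follows from Theorem~\ref{thm:verf} (equivalently, from the linearization argument of Section~\ref{subsec:weak_aposteriori}), since Proposition~\ref{prop:heat_solvability} guarantees that $F'(u):X\to Y^*$ is an isomorphism and $F'$ is locally Lipschitz near $u$ by the $C^2$ assumption on $\kappa$; for $k$ large enough (i.e.\ once $u_k$ lies in the Verf\"urth ball of radius $R$ around $u$, which is ensured by Theorem~\ref{thm:apriori_estimates} and the assumption on $\|h_0\|_\infty$), the constant $C_2$ in \eqref{eqn:verf} gives exactly this bound.

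For the local lower bound I would use the classical Verf\"urth bubble function technique, now in the $L^p$ setting. Let $b_\tau$ be the standard interior bubble on $\tau$ and $b_\sigma$ the face bubble on $\sigma\subset\partial\tau$. Testing the weak residual against $\bar R_\tau b_\tau$, where $\bar R_\tau:=\pi_0 R_\tau$ on $\tau$ and $0$ otherwise, and using the norm equivalences $\|b_\tau w\|_{0,p,\tau}\simeq\|w\|_{0,p,\tau}$ for polynomials $w$ of fixed degree, produces $h_\tau\|\bar R_\tau\|_{0,p,\tau}\lesssim\|u-u_k\|_{1,p,\tau}+h_\tau\|R_\tau-\bar R_\tau\|_{0,p,\tau}$; the data oscillation term on the right-hand side is absorbed into $\operatorname{osc}_k(\tau)$. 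A similar argument with a face-bubble extension, using $\bar J_\sigma:=\pi_1[\kappa(u_k)\nabla u_k\cdot n]$, yields $h_\sigma^{1/p}\|\bar J_\sigma\|_{0,p,\sigma}\lesssim\|u-u_k\|_{1,p,\omega(\sigma)}+\operatorname{osc}_k(\omega_\sigma)$. Triangle inequality then recovers $\eta(u_k,\tau)$ up to oscillation.

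The main obstacle is the quasi-linear coefficient $\kappa(u_k)$: unlike the semilinear case, $\kappa(u_k)\nabla u_k$ is not a polynomial on $\tau$, so the jump $[\kappa(u_k)\nabla u_k\cdot n]$ and the volumetric term cannot be tested directly against their polynomial oscillations without a controllable remainder. The remedy is to linearize $\kappa(u_k)-\kappa(u)=\kappa'(\xi)(u_k-u)$ via the $C^2$ bound on $\kappa$, split $R_\tau=R_\tau^{\text{lin}}+R_\tau^{\text{nl}}$ where $R_\tau^{\text{nl}}$ depends on $u-u_k$ and is therefore absorbable into the left-hand side of the lower bound, and similarly for the jump term. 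Together with the $L^\infty$ bounds on $\kappa$, $\kappa'$, $\mathbf{b}$, and $f$, this produces constants independent of $u_k$ within the neighborhood of $u$, completing the local lower bound and thus the theorem.
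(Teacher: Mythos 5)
The paper gives no proof of this theorem; it merely points to the Verf\"urth residual framework~\cite{Verfurth.R1994,Verfurth.R1996}, and your outline (Scott--Zhang interpolant plus Galerkin orthogonality plus elementwise integration by parts for the localized upper bound; Theorem~\ref{thm:verf} to convert the residual bound into a two-sided error bound; bubble functions combined with a linearization of $\kappa(u_k)$ for the local lower bound) is precisely the route that framework supplies, so the strategy is the right one.

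There is, however, a concrete slip in the face estimate that prevents the computation from matching the indicator. You assert $\|v - I_k v\|_{0,q,\sigma}\lesssim h_\sigma^{1/q}\|v\|_{1,q,\omega(\sigma)}$; the scaled trace inequality combined with the first-order $L^q$ interpolation error gives instead $\|v - I_k v\|_{0,q,\sigma}\lesssim h_\tau^{1-1/q}\|v\|_{1,q,\omega(\tau)} = h_\tau^{1/p}\|v\|_{1,q,\omega(\tau)}$, since $1/p + 1/q = 1$. The indicator is built so that its jump contribution scales as $h_\sigma^{1/p}\|[\kappa(u_k)\nabla u_k\cdot n]\|_{0,p,\sigma}$ (the $\eta^p$ definition carries $h_\sigma\|\cdot\|_{0,p,\sigma}^p$, not $h_\sigma^p\|\cdot\|_{0,p,\sigma}^p$); pairing the jump against $v - I_k v$ by H\"older then produces exactly $h_\sigma^{1/p}\|[\kappa(u_k)\nabla u_k\cdot n]\|_{0,p,\sigma}\|v\|_{1,q,\omega(\tau)}$, as required, whereas the exponent $1/q$ you wrote only matches the indicator when $p=q=2$. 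Separately, in your final paragraph you describe the nonlinear remainder piece of the volumetric residual, which depends on $u-u_k$, as ``absorbable into the left-hand side of the lower bound''; in the target inequality $\eta(u_k,\tau)\lesssim \|u-u_k\|_{1,p,\omega_{\tau}} + {\rm osc}_k(\omega_{\tau})$, such a term belongs on the right (absorbed into $\|u-u_k\|_{1,p,\omega_{\tau}}$), not the left. Both are local corrections rather than a change of strategy, but the exponent error must be fixed before the upper-bound estimate lines up with $\eta(u_k,\cS)$.
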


Finally, based on the results above, the adaptive algorithm for the quasi-linear stationary heat equation~\eqref{eqn:heat} with nonlinear diffusion is convergent.
\begin{corollary}
\label{cor:heat}
Under the hypothesis of Theorem~\ref{prop:heat_solvability}, if $p\geqs 2d$ then the adaptive algorithm for the nonlinear equation~\eqref{eqn:heat} converges, that is, 
$$\lim_{k\to \infty} u_{k} = u, \quad \mbox{and} \quad \lim_{k\to \infty} \eta(u_{k},\cT_k) =0.$$
\end{corollary}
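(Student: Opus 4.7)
The plan is to derive Corollary~\ref{cor:heat} by verifying the hypotheses of the two abstract convergence theorems, Theorem~\ref{thm:conv} and Theorem~\ref{thm:convergence_estimator}, for the specific data of equation~\eqref{eqn:heat} with the indicator defined on the piecewise linear space. Well-posedness of both the continuous problem and the Petrov-Galerkin discretization, together with the continuous and discrete inf-sup conditions on $F'(u)$ and the Lipschitz continuity demanded by Theorem~\ref{thm:apriori_estimates}, are already delivered by Proposition~\ref{prop:heat_solvability} provided $\|h_0\|_{\infty,\Omega}$ is small enough to meet \eqref{eqn:initial_partition}; this also yields the uniform a priori bound $\|u_k\|_X \lesssim 1$ via the quasi-optimal estimate of Theorem~\ref{thm:abstract}. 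The standard ingredients of the adaptive loop---bisection-based \textsf{REFINE}, conforming piecewise linear \textsf{SOLVE}, and D\"orfler or Maximum \textsf{MARK}---verify \eqref{eqn:group-refine}, \eqref{eqn:group-femspace}, and \eqref{eqn:mark}. The global upper bound \eqref{e:est-up-bnd} and the local lower bound that will yield \eqref{e:loc-low-bnd} are both contained in Theorem~\ref{thm:heat_posteriori}, with oscillation absorbed into $\|u_k\|_{X(\omega_\tau)} + \|D\|_{Z(\omega_\tau)}$ with $Z = L^p(\Omega)$ and $D$ built from $f$.

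The genuinely new work is the local stability bound \eqref{e:est-stab}, and this is where the assumption $p \geqs 2d$ enters. Because $u_k$ is piecewise linear on $\cT_k$, $\Delta u_k$ vanishes element-wise and
\begin{equation*}
{\rm div}(\kappa(u_k)\nabla u_k) = \kappa'(u_k)|\nabla u_k|^2 \qquad \text{on each } \tau.
\end{equation*}
The convection contribution $h_\tau\|\mathbf{b}\cdot\nabla u_k\|_{0,p,\tau} \leqs \|\mathbf{b}\|_\infty\|u_k\|_{X(\tau)}$ and the data piece $h_\tau\|f\|_{0,p,\tau}\leqs h_\tau\|f\|_{\infty}\mu(\tau)^{1/p}$ are handled immediately, and the jump term is controlled by a trace inequality together with $|\kappa(u_k)|\leqs \gamma_0$. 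The hard part is the quadratic term $\kappa'(u_k)|\nabla u_k|^2$: the only available bound uses the inverse inequality on polynomial spaces,
\begin{equation*}
h_\tau\||\nabla u_k|^2\|_{0,p,\tau} = h_\tau\|\nabla u_k\|_{0,2p,\tau}^2 \lesssim h_\tau^{1-d/p}\|\nabla u_k\|_{0,p,\tau}^2.
\end{equation*}
The hypothesis $p\geqs 2d$ guarantees $1-d/p\geqs 1/2 > 0$, hence $h_\tau^{1-d/p}\leqs 1$ for $h_\tau\leqs 1$, and the uniform a priori bound $\|\nabla u_k\|_{0,p,\Omega}\lesssim 1$ lets me collapse one factor of $\|\nabla u_k\|_{0,p,\tau}$ into a constant, producing the required linear estimate
\begin{equation*}
h_\tau\|\kappa'(u_k)|\nabla u_k|^2\|_{0,p,\tau} \lesssim \|u_k\|_{X(\omega_\tau)}.
\end{equation*}

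Once \eqref{e:est-up-bnd}, \eqref{e:est-stab}, and \eqref{e:loc-low-bnd} are all in hand, Theorem~\ref{thm:conv} yields $u_k\to u$ in $X$ and Theorem~\ref{thm:convergence_estimator} yields $\eta(u_k,\cT_k)\to 0$, closing the argument. The main obstacle is the mismatch between the \emph{quadratic} dependence on $\nabla u_k$ in the quasi-linear residual and the \emph{linear} form of the stability hypothesis~\eqref{e:est-stab}; the assumption $p\geqs 2d$ is precisely what is needed so that the inverse estimate on piecewise linear functions, combined with the uniform $W^{1,p}$-bound on the iterates coming from the a priori theory, can absorb the extra power of $\nabla u_k$.
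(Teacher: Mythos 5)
Your proposal is correct and reaches the same conclusion via a somewhat different route than the paper, so let me compare. The paper verifies the local stability bound~\eqref{e:est-stab} indirectly: it starts from the local lower bound $\eta(u_k,\tau)\lesssim\|u-u_k\|_{1,p,\omega_\tau}+\mathrm{osc}_k(\omega_\tau)$ of Theorem~\ref{thm:heat_posteriori}, pulls $\|u_k\|$ and $\|u\|$ out via the triangle inequality, and then spends most of the effort bounding the oscillation term $(I-\pi_0)\bigl(\kappa'(u_k)|\nabla u_k|^2\bigr)$. That detour requires an extra Poincar\'e-type step and, crucially, brings in the \emph{second} derivative bound $|\kappa''|\leqs\gamma_2$ to control $\nabla\bigl(\kappa'(u_k)\bigr)=\kappa''(u_k)\nabla u_k$. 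You instead bound the element residual directly, using only $\Delta u_k=0$ on each cell and $|\kappa'|\leqs\gamma_1$, and then handle the quadratic gradient term by an explicit scaling/inverse estimate. That is cleaner, avoids $\kappa''$ entirely, and also makes the data function $D$ in~\eqref{e:est-stab} depend only on $f$ rather than on $u$ as well. Where the two arguments are essentially identical is in the role of $p\geqs 2d$: in both, the point is to dominate the scale factor arising from $h_\tau|\nabla u_k|^2$ by $\|\nabla u_k\|_{0,p,\tau}^2$ for $h_\tau\leqs 1$, after which one factor of $\|\nabla u_k\|_{0,p,\tau}$ is absorbed by the uniform a priori bound $\|u_k\|_{1,p,\Omega}\lesssim 1$ from Theorem~\ref{thm:apriori_estimates}. (Incidentally, your scaling computation $h_\tau\||\nabla u_k|^2\|_{0,p,\tau}=h_\tau^{1-d/p}\|\nabla u_k\|_{0,p,\tau}^2$ would already close for $p\geqs d$; only the paper's particular chain, because it drops one power of $h_\tau$ earlier, genuinely needs $p\geqs 2d$.) Everything else in your write-up --- invoking Proposition~\ref{prop:heat_solvability} for well-posedness, the adaptive modules for~\eqref{eqn:group-refine},~\eqref{eqn:group-femspace},~\eqref{eqn:mark}, Theorem~\ref{thm:heat_posteriori} for~\eqref{e:est-up-bnd} and~\eqref{e:loc-low-bnd}, and the two abstract convergence theorems --- matches the paper's structure.
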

\begin{proof}
Again, the error estimator $\eta(u_{k},\cT_k)$ satisfies~\eqref{e:est-up-bnd} due to Theorem~\ref{thm:heat_posteriori}.
Now we prove that Assumption~\eqref{e:est-stab} holds.
We start with the local lower bound in Theorem~\ref{thm:heat_posteriori}:
\begin{eqnarray*}
\eta(u_{k},\tau) &\lesssim & \|u-u_k\|_{1,p,\omega(\tau)} + {\rm osc}_k(\omega_{\tau})\\
&\lesssim& \|u_k\|_{1,p,\omega(\tau)} + \|u\|_{1,p,\omega(\tau)} + {\rm osc}_k(\omega_{\tau}).
\end{eqnarray*}
Since $\mathbf{b}\in W^{1,\infty}(\Omega),$ we have 
\begin{eqnarray*}
\|(I - \pi_0) f\|_{0,p,\tau} &\leqs& \|f\|_{0,p,\tau},\\
\|(I-\pi_0)\mathbf{b} \cdot \nabla u_k\|_{0,p,\tau} &\lesssim & \|\nabla u_k\|_{0,p,\tau}.
\end{eqnarray*}
On the other hand, we have 
\begin{eqnarray*}
&&\left\|(I-\pi_0)\left(-{\rm div}(\kappa(u_k)\nabla u_k)\right)\right\|_{0,p,\tau} \\
&&\leqs  \left\|(I-\pi_{0})\kappa'(u_k)\right\|_{0,p,\tau} |\nabla u_k|^{2}\\
&&\leqs  C h_{\tau} \left\|\kappa'' \nabla u_{k}\right\|_{0,p,\tau} |\nabla u_k|^2\\
&&\leqs  C\gamma_2 h_{\tau} \|\nabla u_{k}\|_{0,p,\tau} |\nabla u_k|^2\\
&&\leqs  C\gamma_2 \|\nabla u_k\|_{0,p,\tau} \|\nabla u_k\|^{2}_{0,p,\tau}\\
&&\lesssim  \|\nabla u_k\|_{0,p,\tau}.
\end{eqnarray*}
In the third step, we used the boundedness of $\kappa''(s)$ and the fact that $\nabla u_k$ is constant in $\tau;$ in the fourth step, we used the assumption $p\ge 2d;$ and in the last step, we used the {\em a priori} estimate of $u_k,$ namely, 
$$ \|u_k\|_{1,p,\Omega} \leqs \|u-u_k\|_{1,p,\Omega} + \|u\|_{1,p,\Omega} \leqs C$$
for some constant $C$ (cf.~Theorem~\ref{thm:apriori_estimates}).
Similarly, by noticing $\kappa$ is uniformly bounded and the {\em a priori} error estimate of $u_k,$ one can easily obtain
$$\sum_{\sigma\subset \partial\tau} h_{\sigma} \left\|\left[(I-\pi_1)\kappa(u_k)\nabla u_k\cdot n\right]\right\|^p_{0,p,\sigma} \lesssim \|\nabla u_k\|_{0,p,\omega_{\tau}}.$$
Therefore, we obtain the stability estimate
$$\eta(u_{k},, \tau) \lesssim \|u_k\|_{1,p,\omega_{\tau}}  +\|D\|_{0,p,\omega_{\tau}},$$
where $D$ depending only on $u$ and $f.$
 The conclusion then follows by Theorem~\ref{thm:conv} and Theorem~\ref{thm:convergence_estimator}.
\end{proof}

\section{An Abstract Contraction Framework}
\label{sec:contraction-abstract}

In this section, we develop a second distinct abstract convergence
framework to allow for establishing contraction under 
additional minimal assumptions.
The framework generalizes the AFEM contraction arguments used
in~\cite{Mekchay.K;Nochetto.R2005,Cascon.J;Kreuzer.C;Nochetto.R;Siebert.K2007,Nochetto.R;Siebert.K;Veeser.A2009,Holst.M;Tsogtgerel.G2009,Holst.M;McCammon.J;Yu.Z;Zhou.Y2009}
to general approximation techniques for abstract nonlinear problems.
The three key ingredients to the contraction argument are as in the
existing linear frameworks: 
quasi-orthogonality,
error indicator domination of the error, 
and a type of error indicator reduction.

\subsection{Quasi-Orthogonality}
\label{subsec:contraction-quasi}

One of the main tools for establishing contraction in
adaptive algorithms is perturbed- or {\em quasi}-orthogonality.
Let $X_1 \subset X_2 \subset X$ and $Y_1 \subset Y_2 \subset Y$
be triples of Banach spaces, and consider (for the moment,
arbitrary and unrelated) $u_1 \in X_1$,
$u_2 \in X_2$, and $u \in X$.
If $X$ also had Hilbert-space structure, so that
the native norm $\|\cdot\|_X$ on $X$ was
induced by an inner product $\| \cdot \|_X = (\cdot,\cdot)_X^{1/2}$,
and if orthogonality were to hold $(u-u_2,u_2-u_1)_X = 0$, then one
would have the Pythagorean Theorem:
\begin{equation}
\label{E:pythag}
\|u-u_1\|_X ^2 = \|u-u_2\|_X^2 + \|u_2-u_1\|_X^2.
\end{equation}
Quasi-orthogonality is a more general concept whereby one
gives up orthogonality~\eqref{E:pythag}, and instead works with
inequalities involving a (semi-)norm $\tbar\cdot\tbar$ that could
be the native norm $\|\cdot\|_X$, or more generally could be
an energy norm or semi-norm particularly suited to the problem at hand.
From the triangle inequality in the Banach space $X$ together with
the discrete Holder inequality, one always has the following inequality:
\begin{equation}
\label{E:ineq1}
\lambda \| u-u_1\|_X^2 
   \leqs \| u-u_2\|_X^2 + \| u_2-u_1\|_X^2,
\end{equation}
with $\lambda = 1/2$.
Quasi-orthogonality then refers to establishing the more difficult 
inequality in the other direction to supplement~\eqref{E:ineq1}:
\begin{equation}
\Lambda \| u-u_1\|_X^2 
   \geqs \| u-u_2\|_X^2 + \| u_2-u_1\|_X^2,
\end{equation}
for some $\Lambda \geqs 1$, which is convenient to write in the form
\begin{equation}
\label{E:quasi-banach}
\| u-u_2\|_X^2 
   \leqs \Lambda \| u-u_1\|_X^2 - \| u_2-u_1\|_X^2.
\end{equation}
We wish now to develop conditions for establishing~\eqref{E:quasi-banach}.
We will see shortly that it will be critical for us to be able to 
establish~\eqref{E:quasi-banach} with constant $\Lambda$ close to one; 
this will only be possible if $u-u_2$ and $u_2-u_1$ are nearly 
``orthogonal'' in some generalized sense, which implies that we must work 
with a norm related to the Petrov-Galerkin (PG) ``projection'' process,
and may require that we work with a norm other than the 
native norm $\|\cdot\|_X$.

To this end, consider a continuous bilinear form $b(\cdot,\cdot)$
on $X \times Y$:
\begin{equation}
\label{E:contraction_continuity}
b : X \times Y \rightarrow \mathbb{R},
\quad \quad
b(u,v) \leqs M \|u\|_X \|v\|_Y, \quad \quad \forall u \in X, v \in Y.
\end{equation}
Assume $b$ satisfies {\em inf-sup} conditions on $X$ and $Y$:
There exists $\beta_0>0$ such that
\begin{equation}\label{E:contraction_infsup}
  \inf_{u\in X, \|u\|_X=1} \sup_{v\in Y, \|v\|_Y=1} b(u,v) =  \inf_{v\in Y, \|v\|_Y=1} \sup_{u\in X, \|u\|_X=1} b(u,v) =\beta_0>0.
\end{equation}
In the subspaces $X_k$ and $Y_k$, $k=1,2$, we assume $b$ satisfies 
a discrete {\em inf-sup} condition:
There exists a constant $\beta_1>0$ such that
\begin{equation}\label{E:contraction_infsup_discrete}
  \inf_{u\in X_k, \|v\|_X=1} \sup_{v\in Y_k, \|v\|_Y=1} b(u,v) = \inf_{v\in Y_k, \|v\|_Y=1} \sup_{u\in X_k, \|u\|_X=1} b(u,v)\geqs\beta_1>0.
\end{equation}
Given now $f \in Y^*$,
we assume that $u\in X$ is the solution to the operator equation involving
$b$ and $f$,
and that $u_1 \in X_1$ and $u_2 \in X_2$ are corresponding PG approximations:
\begin{eqnarray}
\textrm{Find~} u \in X \textrm{~such~that~}
b(u,v) & = & f(v), \quad \quad \forall ~~ v \in Y.
\label{E:contraction_opeqn}
\\
\textrm{Find~} u_1 \in X_1 \textrm{~such~that~}
b(u_1,v_1) & = & f(v_1), \quad \quad \forall v_1 \in Y_1 \subset Y_2 \subset Y.
\label{E:contraction_opeqn_pg1}
\\
\textrm{Find~} u_2 \in X_2 \textrm{~such~that~}
b(u_2,v_2) & = & f(v_2), \quad \quad \forall v_2 \in Y_2 \subset Y.
\label{E:contraction_opeqn_pg2}
\end{eqnarray}
With this setup, we can establish the quasi-orthogonality
inequality in the norm $\|\cdot\|_X$ for PG approximations
defined by any continuous bilinear form satisfying {\em inf-sup} conditions.
\begin{theorem}
\label{T:quasi-banach}
Assume the bilinear form $b : X \times Y \rightarrow \mathbb{R}$ satisfies
the continuity~\eqref{E:contraction_continuity}
and {\em inf-sup} conditions
~\eqref{E:contraction_infsup} and~\eqref{E:contraction_infsup_discrete}.
Assume that $u$, $u_1$, and $u_2$ are defined by
\eqref{E:contraction_opeqn},
\eqref{E:contraction_opeqn_pg1},
and
\eqref{E:contraction_opeqn_pg2},
respectively.
Then quasi-orthogonality~\eqref{E:quasi-banach} holds with
\begin{equation}
\Lambda = \left( 1 + \frac{2 M}{\beta_1} \right)^2 \geqs 1.
\end{equation}
\end{theorem}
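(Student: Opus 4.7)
The plan is to establish the inequality
\[
\|u-u_2\|_X^2 + \|u_2-u_1\|_X^2 \leqs \Lambda \|u-u_1\|_X^2
\]
by bounding each of the two terms on the left separately in terms of $\|u-u_1\|_X$, using Galerkin-type orthogonality together with the discrete inf-sup condition on $X_2 \times Y_2$.

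First I would record the Galerkin-type consistency relations. Subtracting \eqref{E:contraction_opeqn_pg1} from \eqref{E:contraction_opeqn} gives $b(u-u_1,v_1)=0$ for all $v_1\in Y_1$, and subtracting \eqref{E:contraction_opeqn_pg2} from \eqref{E:contraction_opeqn} gives $b(u-u_2,v_2)=0$ for all $v_2\in Y_2$. In particular, for any $v_2\in Y_2$,
\[
b(u_2-u_1,v_2) = b(u_2-u,v_2) + b(u-u_1,v_2) = b(u-u_1,v_2).
\]

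Next I would apply the discrete inf-sup condition \eqref{E:contraction_infsup_discrete} to the element $u_2-u_1\in X_2$: there exists $v_2\in Y_2$ with $\|v_2\|_Y=1$ such that $\beta_1\|u_2-u_1\|_X \leqs b(u_2-u_1,v_2)$. Combining this with the identity above and continuity \eqref{E:contraction_continuity} yields
\[
\beta_1\|u_2-u_1\|_X \leqs b(u-u_1,v_2) \leqs M\|u-u_1\|_X,
\]
so that $\|u_2-u_1\|_X \leqs (M/\beta_1)\|u-u_1\|_X$. A triangle inequality then gives
\[
\|u-u_2\|_X \leqs \|u-u_1\|_X + \|u_1-u_2\|_X \leqs \left(1 + \tfrac{M}{\beta_1}\right)\|u-u_1\|_X.
\]

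Finally I would square both bounds, add them, and absorb constants into the claimed $\Lambda = (1+2M/\beta_1)^2$. Indeed,
\[
\left(1+\tfrac{M}{\beta_1}\right)^2 + \left(\tfrac{M}{\beta_1}\right)^2
 = 1 + \tfrac{2M}{\beta_1} + \tfrac{2M^2}{\beta_1^2}
 \leqs 1 + \tfrac{4M}{\beta_1} + \tfrac{4M^2}{\beta_1^2}
 = \left(1+\tfrac{2M}{\beta_1}\right)^2,
\]
which yields the stated inequality with $\Lambda\geqs 1$. There is no real obstacle here beyond choosing constants: the continuous inf-sup condition \eqref{E:contraction_infsup} is not even needed in the argument (only used implicitly to guarantee that $u$ exists), and the strength of the result is entirely driven by the discrete inf-sup constant $\beta_1$ and the continuity constant $M$. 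The delicate point worth flagging is that $\Lambda$ is bounded away from $1$ whenever $M/\beta_1$ is not small; this is why, as noted in the surrounding text, one often needs to work in a problem-adapted (semi-)norm in order for the contraction machinery in subsequent sections to succeed.
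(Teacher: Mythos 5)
Your proof is correct and follows essentially the same route as the paper: identical Galerkin-orthogonality relations, the same use of the discrete inf-sup condition on $u_2-u_1$ to obtain $\|u_2-u_1\|_X\leqs (M/\beta_1)\|u-u_1\|_X$, and the same triangle inequality. The only (cosmetic) difference is at the end: you square the two bounds individually and then absorb $(1+M/\beta_1)^2+(M/\beta_1)^2$ into $(1+2M/\beta_1)^2$, whereas the paper first forms the combined inequality $\|u-u_2\|_X+\|u_2-u_1\|_X\leqs(1+2M/\beta_1)\|u-u_1\|_X$, squares once, and drops the non-negative cross term; both yield the same $\Lambda$. Your observation that the continuous inf-sup condition is used only to ensure $u$ exists is also accurate.
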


\begin{proof}
With no inner-product we have only the following type of 
generalized ``orthogonality'' to exploit:
\begin{eqnarray}
b(u-u_2,v_2) & = & 0, \quad \quad \forall v_2 \in Y_2 \subset Y,
\\
b(u-u_1,v_1) & = & 0, \quad \quad \forall v_1 \in Y_1 \subset Y_2 \subset Y,
\end{eqnarray}
which are obtained by subtracting
\eqref{E:contraction_opeqn_pg1}
and
\eqref{E:contraction_opeqn_pg2}
from~\eqref{E:contraction_opeqn}.
This leads us to:
\begin{equation}
\label{E:keyplay}
b(u-u_1,v_2) = -b(u_1-u_2,v_2)
             =  b(u_2-u_1,v_2),
\quad \quad \forall v_2 \in Y_2.
\end{equation}
Combining~\eqref{E:keyplay} with the 
{\em inf-sup} condition~\eqref{E:contraction_infsup_discrete}
and continuity~\eqref{E:contraction_continuity} gives
the estimate
\begin{equation}
\label{E:estimate}
\beta_1 \|u_2 - u_1\|_X
\leqs
\sup_{0 \ne v_2 \in Y_2} \frac{b(u_2-u_1,v_2)}{\|v_2\|_Y}
=
\sup_{0 \ne v_2 \in Y_2} \frac{b(u-u_1,v_2)}{\|v_2\|_Y}
\leqs
M \|u-u_1\|_X.
\end{equation}
Starting with the triangle inequality 
\begin{equation}
\label{E:triangle}
\|u-u_2\|_X \leqs \|u-u_1\|_X + \|u_2-u_1\|_X,
\end{equation}
we add twice~\eqref{E:estimate} to~\eqref{E:triangle} to obtain
\begin{equation}
\label{E:quasi-nosquare}
\|u-u_2\|_X + \|u_2-u_1\|_X \leqs \hat{\Lambda} \|u-u_1\|_X,
\end{equation}
with
$\hat{\Lambda} = 1 + 2 M / \beta_1 \geqs 1$.
If we square both sides we obtain
\begin{equation}
\|u-u_2\|_X^2 + 2 \|u_2-u_1\|_X \|u-u_2\|_X + \|u_2-u_1\|_X^2
\leqs \hat{\Lambda}^2 \|u-u_1\|_X^2.
\end{equation}
The second term on the left is non-negative; we drop it
to give~\eqref{E:quasi-banach} with $\Lambda = \hat{\Lambda}^2$.
\end{proof}

It is clear from the proof of Theorem~\ref{T:quasi-banach} that 
to establish~\eqref{E:quasi-banach} with constant
$\Lambda \geqs 1$ close one, one must
establish a version of~\eqref{E:contraction_continuity} that,
when restricted to particular arguments from subspaces, 
will hold with constant $M \geqs 0$ close to zero.
This then resembles some type of strengthened Cauchy inequality.
Note that if $X=Y$, then the bilinear form $b(\cdot,\cdot)$ 
defines an energy (semi-)norm through:
\begin{equation}
\label{E:energy-norm}
b : X \times X \rightarrow \mathbb{R},
\quad \quad \tbar w \tbar^2 = b(w,w), \quad \quad \forall w \in X.
\end{equation}
It will be more fruitful to consider quasi-orthogonality
with respect to this semi-norm:
\begin{equation}
\label{E:quasi}
\tbar u-u_2\tbar^2 
   \leqs \Lambda \tbar u-u_1\tbar^2 - \tbar u_2-u_1\tbar^2.
\end{equation}
To establish~\eqref{E:quasi},
it will be useful if strengthened Cauchy inequalities hold:
\begin{equation}
\label{E:cauchy}
b(u-u_1,v_1) \leqs \gamma \tbar u-u_1 \tbar \tbar v_1 \tbar,
\quad
b(v_1,u-u_1) \leqs \gamma \tbar u-u_1 \tbar \tbar v_1 \tbar,
\end{equation}
\begin{equation*}
\forall v_1 \in X_1, \quad \gamma \in [0,1).
\end{equation*}
In this case, one immediately has the following 
without {\em inf-sup} conditions:
\begin{theorem}
\label{T:quasi}
Let $X_1 \subset X_2 \subset X$ be a triple of Banach spaces,
and
let $u \in X$, $u_1 \in X_1$, and $u_2 \in X_2$ be such that
the bilinear form $b : X \times X \rightarrow \mathbb{R}$ 
satisfies the strengthened Cauchy inequality~\eqref{E:cauchy}
for some $\gamma \in [0,1)$.
Then quasi-orthogonality~\eqref{E:quasi} holds in the energy
semi-norm $\tbar\cdot\tbar^2=b(\cdot,\cdot)$ with
$\Lambda = 1 / (1 - \gamma) \geqs 1$.
\end{theorem}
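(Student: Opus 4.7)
The plan is to imitate the standard Galerkin quasi-orthogonality identity, but replace exact orthogonality with the strengthened Cauchy bound~\eqref{E:cauchy}.

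First I would use the triangle decomposition $u-u_1 = (u-u_2) + (u_2-u_1)$ to expand the ``big'' energy term via bilinearity of $b$:
\begin{equation*}
\tbar u - u_1\tbar^2 = b(u-u_1,u-u_1) = \tbar u-u_2\tbar^2 + \tbar u_2-u_1\tbar^2 + b(u-u_2,u_2-u_1) + b(u_2-u_1,u-u_2).
\end{equation*}
This is an identity with no orthogonality assumption; it simply rearranges the square of the sum. Note that $u_2-u_1 \in X_2$, and in the symmetric Galerkin case at level $X_2$ both cross terms vanish and we recover the exact Pythagorean identity. In the present setting the cross terms will not vanish, but the strengthened Cauchy inequality will ensure they are controllably small.

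Next I would bound the two cross terms using~\eqref{E:cauchy} (applied at the level appropriate to the test argument $v = u_2-u_1$), yielding
\begin{equation*}
| b(u-u_2,u_2-u_1)| + | b(u_2-u_1,u-u_2)| \leqs 2\gamma\,\tbar u-u_2\tbar\,\tbar u_2-u_1\tbar.
\end{equation*}
Substituting into the identity, rearranging, and applying Young's inequality $2ab \leqs a^2 + b^2$ to the right-hand side, I obtain
\begin{equation*}
\tbar u-u_2\tbar^2 + \tbar u_2-u_1\tbar^2 \leqs \tbar u-u_1\tbar^2 + \gamma\,\tbar u-u_2\tbar^2 + \gamma\,\tbar u_2-u_1\tbar^2.
\end{equation*}
Moving the $\gamma$-terms to the left absorbs both squares on the left with factor $(1-\gamma)$, giving
\begin{equation*}
(1-\gamma)\left(\tbar u-u_2\tbar^2 + \tbar u_2-u_1\tbar^2\right) \leqs \tbar u-u_1\tbar^2.
\end{equation*}
Dividing by $1-\gamma > 0$ and moving $\tbar u_2-u_1\tbar^2$ to the right yields the desired quasi-orthogonality~\eqref{E:quasi} with $\Lambda = 1/(1-\gamma) \geqs 1$.

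The main obstacle, such as it is, is notational rather than mathematical: the strengthened Cauchy as stated in~\eqref{E:cauchy} is expressed at level ``$1$'', whereas the proof needs it against $v = u_2-u_1$, which lies in $X_2$ not necessarily in $X_1$. I would interpret~\eqref{E:cauchy} as the scale-invariant template meant to hold at each level of the nested hierarchy, so that applying it at the finer level gives the bound used above. The only real step is the absorption via Young's inequality, which is routine; the constant $\Lambda = 1/(1-\gamma)$ is sharp in this argument and collapses to $1$ precisely in the Galerkin limit $\gamma = 0$, confirming consistency with~\eqref{E:pythag}.
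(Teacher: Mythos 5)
Your proof is correct and follows essentially the same argument as the paper: expand $\tbar u-u_1\tbar^2$ via the triangle decomposition and bilinearity, bound the two cross terms by the strengthened Cauchy inequality, apply Young's inequality to absorb the resulting $2\gamma ab$ term, and divide by $1-\gamma$. Your side remark about the index mismatch in~\eqref{E:cauchy} (stated with $u-u_1$ and $v_1\in X_1$, but needed here with $u-u_2$ and $v_2=u_2-u_1\in X_2$) is a fair observation about a slight notational looseness in the paper, and your reading of it as a per-level template is the intended one.
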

\begin{proof}
We begin with the identity
\begin{eqnarray}
\tbar u - u_1 \tbar^2 
  & = & \tbar u-u_2 \tbar^2 + \tbar u_2 - u_1 \tbar^2
\nonumber
\\
  &   & + b(u-u_2,u_2-u_1) + b(u_2-u_1,u-u_2).
\label{E:expandX}
\end{eqnarray}
Using~\eqref{E:cauchy} in~\eqref{E:expandX} and Cauchy-Schwarz inequality gives
\begin{eqnarray}
\tbar u - u_1 \tbar^2 
  & \geqs & \tbar u-u_2 \tbar^2 + \tbar u_2 - u_1 \tbar^2
 - \gamma \tbar u-u_2 \tbar^2
 - \gamma \tbar u_2 - u_1 \tbar^2
\nonumber
\\
  & = & \left( 1 - \gamma \right) \left( \tbar u-u_2 \tbar^2 + \tbar u_2 - u_1 \tbar^2\right),
\end{eqnarray}
which gives~\eqref{E:quasi} after multiplication 
by $\Lambda = 1 / (1-\gamma) \geqs 1$.
\end{proof}

While Theorem~\ref{T:quasi} gives quasi-orthogonality in the
energy norm $\tbar\cdot\tbar$ without {\em inf-sup} conditions, 
it is important to point out that establishing the Cauchy inequality 
in the energy norm usually goes through the native norm $\|\cdot\|_X$,
and then relating the Cauchy inequalities in the two norms
requires additional structure such as {\em inf-sup} conditions.
To make this more clear, let us assume that $X=Y$ and $b(\cdot, \cdot):X\to X$ is
coercive for $m=\beta_0=\beta_1>0:$
\begin{equation}
\label{E:coercivity}
m \|w\|_X^2 \leqs \tbar w \tbar^2 = b(w,w),
\quad \quad \forall w \in X.
\end{equation}
To allow for a weaker condition than coercivity~\eqref{E:coercivity},
it is useful have a (Gelfand) triple of Banach
spaces $X \subset Z \subset X^*$,
with continuous embedding of $X$ into the intermediate space $Z$.
\begin{subequations}
\label{group:coercivity}
A G\aa{}rding inequality with $m>0$ and $C_G \geqs 0$
for the form $b(\cdot,\cdot)$ is then a possibility:
\begin{equation}
\label{E:gaarding}
m \|w\|_X^2 \leqs \tbar w \tbar^2 + C_G \|w\|_Z^2,
\quad \quad \forall w \in X.
\end{equation}
If $C_G=0$, then inequality~\eqref{E:gaarding}
reduces to~\eqref{E:coercivity}.
To exploit~\eqref{E:gaarding},
we need a lifting inequality between $X$ and $Z$
when $u_2 \in X_2$ and $u_1 \in X_1$ are approximations of $u \in X$:
\begin{equation}\label{E:lifting}
\|u-u_2\|_Z \leqs C_L \sigma \|u-u_2\|_X,
\quad \quad
\|u_2-u_1\|_Z \leqs C_L \sigma_0 \|u_2-u_1\|_X,
\end{equation} 
\end{subequations}
where it is assumed that $\sigma_0 = c \sigma$ for fixed $c>0$,
and that $\sigma$ can be made arbitrarily small
for sufficiently large subspaces $X_1 \subset X_2 \subset X$, where typically
$u_1 \in X_1$ and $u_2 \in X_2$ are PG approximations to $u \in X$.
Inequalities~\eqref{E:lifting} can be established using
the ``Nitsche trick'' with certain regularity assumptions;
cf.~\cite{Mekchay.K;Nochetto.R2005,Holst.M;Tsogtgerel.G2009}.
The usefulness of~\eqref{group:coercivity}
is made clear by the following Lemma~\ref{L:lifting}, due essentially 
to Schatz~\cite{Schatz.A1974}.
\begin{lemma}
\label{L:lifting}
Assume the bilinear form $b : X \times X \rightarrow \mathbb{R}$
satisfies~\eqref{E:gaarding}--\eqref{E:lifting}.
Then~\eqref{E:coercivity} holds with $w=u-u_2$ and $w=u_2-u_1$
for $\sigma$ sufficiently small,
with constant $\overline{m} = m - C_G C_L^2 \sigma^2 > 0$.
\end{lemma}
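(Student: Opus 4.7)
The plan is to combine the G\aa{}rding-type inequality~\eqref{E:gaarding} with the lifting estimates~\eqref{E:lifting} and absorb the weaker-norm term on the right back into the left-hand side, using smallness of $\sigma$. This is essentially the classical Schatz argument, adapted to our abstract Banach-triple setting $X \subset Z \subset X^*$.

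First I would set $w = u - u_2 \in X$ and invoke~\eqref{E:gaarding}, which gives
\begin{equation*}
m \|u-u_2\|_X^2 \leqs \tbar u-u_2 \tbar^2 + C_G \|u-u_2\|_Z^2.
\end{equation*}
Next, I apply the first lifting inequality from~\eqref{E:lifting} to bound $\|u-u_2\|_Z \leqs C_L \sigma \|u-u_2\|_X$, square it, and substitute:
\begin{equation*}
m \|u-u_2\|_X^2 \leqs \tbar u-u_2 \tbar^2 + C_G C_L^2 \sigma^2 \|u-u_2\|_X^2.
\end{equation*}
Rearranging yields
\begin{equation*}
(m - C_G C_L^2 \sigma^2) \|u-u_2\|_X^2 \leqs \tbar u-u_2 \tbar^2,
\end{equation*}
which is precisely~\eqref{E:coercivity} with the stated constant $\overline{m} = m - C_G C_L^2 \sigma^2$, provided $\sigma$ is chosen small enough (i.e.\ $\sigma^2 < m/(C_G C_L^2)$) so that $\overline{m} > 0$.

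For the second case $w = u_2 - u_1$, I repeat the argument using the second half of~\eqref{E:lifting}, obtaining
\begin{equation*}
(m - C_G C_L^2 \sigma_0^2) \|u_2 - u_1\|_X^2 \leqs \tbar u_2 - u_1 \tbar^2.
\end{equation*}
Since $\sigma_0 = c \sigma$ with $c$ fixed, this also holds with the slightly weaker constant $\overline{m}$ (or with $\sigma$ replaced by $\max\{1,c\}\sigma$), after choosing $\sigma$ small enough that both G\aa{}rding reductions remain positive. There is no real obstacle here: the proof is a direct absorption argument, and the only subtlety is the quantitative threshold on $\sigma$, which is implicit in the hypothesis ``$\sigma$ sufficiently small.'' The real work in applications lies not in this lemma, but in verifying the lifting estimates~\eqref{E:lifting} (typically via a Nitsche duality argument together with regularity of the adjoint problem), as noted in the paragraph preceding the lemma.
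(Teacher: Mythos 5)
Your argument is correct and is essentially the paper's own proof: apply the G\aa{}rding inequality~\eqref{E:gaarding} to $w=u-u_2$, absorb the $Z$-norm term using the lifting estimate~\eqref{E:lifting}, and rearrange to obtain the reduced coercivity constant $\overline{m}=m-C_GC_L^2\sigma^2$, then repeat with $w=u_2-u_1$ (the paper likewise notes this gives ``slightly different constants''). No gap or deviation to report.
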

\begin{proof}
We observe that
\begin{equation}
m\|u-u_2\|_X^2 
\leqs \tbar u-u_2 \tbar^2 + C_G \|u-u_2\|_Z^2
\leqs \tbar u-u_2 \tbar^2 + C_G C_L^2 \sigma^2 \|u-u_2\|_X^2,
\end{equation}
which implies the result for $\sigma > 0$ sufficiently small.
We note that $0 < \overline{m} \leqs m$, with $\overline{m}=m$
when coercivity holds ($C_G=0$).
The same argument for $u_2-u_1$ gives the same result with
slightly different constants appearing in the argument.
\end{proof}

\subsection{Global Quasi-Orthogonality for Semilinear Problems}
\label{subsec:global-quasi}

We now consider a nonlinear problem for which we can
establish a strengthened Cauchy inequality, and then subsequently
quasi-orthogonality, globally in $X$.
We use a Lifting-type argument requiring the PG approximation space 
be sufficiently good (large).
Such an approach is used in~\cite{Mekchay.K;Nochetto.R2005}
for nonsymmetric linear problems,
and in~\cite{Holst.M;Tsogtgerel.G2009,Holst.M;McCammon.J;Yu.Z;Zhou.Y2009}
for semilinear problems.

Let $X_1 \subset X_2 \subset X$ be a triple of Banach spaces,
and consider $F:X\rightarrow X^*$ such that
\begin{equation}\label{E:semilinear_form}
F(u) = Lu + N(u), \quad L \in L(X,X^*), \quad N:X \rightarrow X^*.
\end{equation}
The operator $L$ induces a bilinear form $b : X \times X \rightarrow \mathbb{R}$
and subsequently an energy (semi-) 
norm $\tbar\cdot\tbar : X \rightarrow \mathbb{R}$, through the relations
\begin{equation}
\label{E:aform}
b(u,v) = \langle L u, v \rangle,
\quad
\forall u,v \in X,
\quad
\quad
\tbar u \tbar = b(u,u)^{1/2},
\quad
\forall u \in X.
\end{equation}
We have the equations for $u \in X$ and its PG approximations $u_1 \in X_1$ 
and $u_2 \in X_2$:
\begin{eqnarray}
b(u,v) + \langle N(u),v \rangle &=& 0, \quad \forall v \in X,
\label{E:pg-u}
\\
b(u_1,v_1) + \langle N(u_1),v_1 \rangle &=& 0, 
   \quad \forall v_1 \in X_1  \subset X_2 \subset X,
\label{E:pg-uH}
\\
b(u_2,v_2) + \langle N(u_2),v_2 \rangle &=& 0, 
   \quad \forall v_2 \in X_2 \subset X.
\label{E:pg-uh}
\end{eqnarray}
We will need the following Lipschitz property
(globally in $X$) for term $N(\cdot)$:
\begin{equation}\label{E:lipschitz}
\langle N(u) - N(u_2), v_2 \rangle \leqs K \|u-u_2\|_Z \|v_2\|_X,
\quad \forall v_2 \in X_2,
\end{equation}
where $u$ is the exact solution and $u_2$ is the
PG approximation in $X_2$, and where $Z$ is part of the triple
$X \subset Z \subset X^*$ as in Section~\ref{subsec:contraction-quasi}.
By splitting $F$ into a linear part $L$
satisfying continuity and G\aa{}rding assumptions, and a
remainder $N$ satisfying only the Lipschitz assumption,
we will be able to establish both Cauchy inequalities
and subsequently quasi-orthogonality, globally in $X$, for a large 
class of nonlinear problems.
\begin{theorem}
\label{T:quasi-semilinear}
Let $u$, $u_1$, and $u_2$ satisfy~\eqref{E:pg-u}--\eqref{E:pg-uh},
and let the Lipschitz~\eqref{E:lipschitz} 
and Lifting~\eqref{E:lifting} conditions hold.
Let the energy norm $\tbar\cdot\tbar$ induced by $b(\cdot,\cdot)$ 
as in~\eqref{E:aform} 
satisfy the G\aa{}rding inequality~\eqref{E:gaarding}.
Then for $\sigma$ sufficiently small,
$b(\cdot,\cdot)$ satisfies the 
strengthened Cauchy inequality~\eqref{E:cauchy} with 
$\gamma = K C_L \sigma/(m - C_G C_L^2 \sigma^2) \in (0,1)$,
and the quasi-orthogonality inequality~\eqref{E:quasi} holds with
\begin{equation}
\label{E:lambda_sigma}
\Lambda = \frac{1}{1-\gamma}
        = \frac{1}{1 - K C_L \sigma /(m-C_G C_L^2 \sigma^2)} \geqs 1.
\end{equation}
For sufficiently small $\sigma$,
$\gamma$ can be made arbitrarily small
and $\Lambda$ can be made arbitrarily close to one.
\end{theorem}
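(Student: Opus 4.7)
The plan is to derive the strengthened Cauchy inequality \eqref{E:cauchy} from the nonlinear Galerkin relation together with the Lipschitz bound on $N$, the Lifting inequality, and the enhanced coercivity of Lemma \ref{L:lifting}, and then invoke Theorem \ref{T:quasi} to conclude the quasi-orthogonality inequality \eqref{E:quasi}.

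First, subtracting \eqref{E:pg-uh} from \eqref{E:pg-u} restricted to the subspace $X_2$ gives the nonlinear Galerkin identity
\[
b(u-u_2, v_2) = -\langle N(u) - N(u_2), v_2\rangle, \qquad \forall v_2 \in X_2.
\]
Combining this with the Lipschitz bound \eqref{E:lipschitz} and the Lifting inequality \eqref{E:lifting} yields
\[
|b(u-u_2, v_2)| \leqs K\|u-u_2\|_Z\|v_2\|_X \leqs K C_L \sigma \|u-u_2\|_X \|v_2\|_X.
\]

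Next, I would invoke Lemma \ref{L:lifting}, which under the Lifting hypothesis \eqref{E:lifting} upgrades the G\aa{}rding inequality \eqref{E:gaarding} to the coercivity bound $\overline{m}\|w\|_X^2 \leqs \tbar w\tbar^2$ with $\overline{m} = m - C_G C_L^2\sigma^2 > 0$, valid for the error quantities $w=u-u_2$ and $w=u_2-u_1$. Since \eqref{E:cauchy} is invoked in the expansion driving Theorem \ref{T:quasi} precisely with test functions of the form $u_2-u_1$ (and with $u-u_2$ in the first slot), the native-norm estimate above converts cleanly into the energy-norm estimate
\[
|b(u-u_2, v_2)| \leqs \frac{K C_L \sigma}{\overline{m}}\,\tbar u-u_2 \tbar\, \tbar v_2 \tbar = \gamma\, \tbar u-u_2\tbar\, \tbar v_2\tbar,
\]
where $\gamma = K C_L \sigma/(m - C_G C_L^2 \sigma^2)$. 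The transposed version $b(v_2, u-u_2) \leqs \gamma \tbar u-u_2\tbar \tbar v_2\tbar$ follows immediately if $L$ is symmetric (so that $b$ is symmetric and the two inequalities coincide), and otherwise by a parallel Galerkin/Lipschitz bound in which the roles of $b$ and its adjoint are exchanged; this is precisely the symmetric pairing already used inside the proof of Theorem \ref{T:quasi}.

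With \eqref{E:cauchy} in hand, Theorem \ref{T:quasi} applies directly and delivers the quasi-orthogonality \eqref{E:quasi} in the energy semi-norm with $\Lambda = 1/(1-\gamma) \geqs 1$. Finally, as $\sigma \to 0^+$, one reads off $\overline{m} \to m$ and $\gamma \to 0$, so $\Lambda \to 1^+$, giving the closing claim that $\gamma$ can be made arbitrarily small and $\Lambda$ arbitrarily close to one.

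The principal obstacle is the delicate tracking of constants relative to $\sigma$: one needs $\sigma$ small enough first so that $\overline{m} > 0$ (Lemma \ref{L:lifting} is available to convert the native norm to the energy norm), and then small enough that $\gamma < 1$ (so Theorem \ref{T:quasi} produces a finite $\Lambda$). In practice this reduces to an explicit quadratic smallness condition on $\sigma$, which is the familiar requirement that the coarsest subspace $X_1$ already be sufficiently rich to resolve the essential scales of the underlying linearization at $u$.
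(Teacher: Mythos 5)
Your proposal is correct and follows essentially the same route as the paper: Galerkin orthogonality together with the Lipschitz~\eqref{E:lipschitz} and Lifting~\eqref{E:lifting} bounds yields the native-norm Cauchy estimate, Lemma~\ref{L:lifting} upgrades it to the energy norm with $\gamma = K C_L \sigma/(m - C_G C_L^2\sigma^2)$, and Theorem~\ref{T:quasi} then produces~\eqref{E:quasi} with $\Lambda = 1/(1-\gamma)$. Your aside about the transposed form of~\eqref{E:cauchy} when $b$ is non-symmetric is a detail the paper leaves implicit (all its semilinear examples use a symmetric $b$), but the rest of the argument coincides step for step.
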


\begin{proof}
Subtracting~\eqref{E:pg-u} and~\eqref{E:pg-uh} with $v=v_2=u_2-u_1$, we have
\begin{eqnarray*}
|b(u-u_2,u_2-u_1)|
  & = & |- \langle b(u) - b(u_2), u_2-u_1 \rangle |
\\
  & \leqs & K \| u - u_2 \|_Z \| u_2-u_1 \|_X
\\
  & \leqs & \gamma_X \| u - u_2 \|_X \| u_2-u_1 \|_X,
\end{eqnarray*}
after using~\eqref{E:lipschitz} and~\eqref{E:lifting}, 
where $\gamma_X = K C_L \sigma \in (0,1)$ for $\sigma$ 
sufficiently small.
Since $b(\cdot,\cdot)$ satisfies the G\aa{}rding inequality~\eqref{E:gaarding},
then by Lemma~\ref{L:lifting}, we have
\begin{eqnarray*}
|b(u-u_2,u_2-u_1)|
  & \leqs & \gamma \tbar u - u_2 \tbar \tbar u_2-u_1 \tbar,
\end{eqnarray*}
where
$\gamma = \gamma_X / \overline{m} 
   = K C_L \sigma / (m - C_G C_L^2 \sigma^2) \in (0,1)$ 
for $\sigma$ sufficiently small.
By Theorem~\ref{T:quasi}, we have~\eqref{E:quasi}
holds with $\Lambda$ as in~\eqref{E:lambda_sigma},
which can be made arbitrarily close to one
for $\sigma > 0$ sufficiently small.
The conclusion then follows by Theorem~\ref{T:quasi}.
\end{proof}

\subsection{Local Quasi-Orthogonality for General Nonlinear Problems}
\label{subsec:local-quasi}

Consider now general operators $F:X\rightarrow X^*$,
where $X_1 \subset X_1 \subset X$ is a triple of Banach spaces.
We have equations for $u \in X$ and its 
PG approximations in $u_1 \in X_1$ and $u_2 \in X_2$:
\begin{eqnarray}
\langle F(u),v \rangle &=& 0, \quad \forall v \in X,
\label{E:gen-pg-u}
\\
\langle F(u_1),v_1 \rangle &=& 0, 
\quad \forall v_1 \in X_1 \subset X_2 \subset X,
\label{E:gen-pg-uH}
\\
\langle F(u_2),v_2 \rangle &=& 0, 
\quad \forall v_2 \in X_2 \subset X.
\label{E:gen-pg-uh}
\end{eqnarray}
Not having access to any additional structure in $F$ to exploit
as in the semilinear case,
we will need to work locally in an $\epsilon_0$-ball around $u \in X$
for some $\epsilon_0>0$: We assume
\begin{equation}
\label{E:gen-locality}
\|u-u_1\|_X\leqs \epsilon_0,
\quad \quad
\|u-u_0\|_X\leqs \epsilon_0.
\end{equation}
We assume $F'$ is Lipschitz in the ball:
There exists a Lipschitz constant $L>0$ such that
\begin{equation}
\label{E:gen-lipschitz}
\|F'(u)-F'(w)\|_{\mathcal{L}(X,X^*)} \leqs L\|u-w\|_X,
\quad \forall w\in X ~\mathrm{s.t.}~ \|u-w\|_X\leqs \epsilon_0.
\end{equation}
Define now the bilinear form
\begin{equation}
\label{E:gen-aform}
b : X \times X \rightarrow \mathbb{R},
\quad
b(w,v) = \langle F'(u) w, v \rangle.
\end{equation}
We then have the Cauchy inequality locally 
in the $\epsilon_0$-ball, leading to quasi-orthogonality.
\begin{theorem}
\label{T:quasi-nonlinear}
Let $u$, $u_1$, and $u_0$ satisfy~\eqref{E:gen-pg-u}--\eqref{E:gen-pg-uh},
and let the Locality~\eqref{E:gen-locality} and
Lipschitz~\eqref{E:gen-lipschitz} conditions hold,
with $b(\cdot,\cdot)$ defined as in~\eqref{E:gen-aform}.

\begin{enumerate}
\item[(1)] If $b(\cdot,\cdot)$ satisfies
coercivity~\eqref{E:coercivity},
then $b(\cdot,\cdot)$ satisfies the 
Cauchy inequality~\eqref{E:cauchy} with 
$\gamma = \epsilon_0 L/(2m) \in (0,1)$,
and quasi-orthogonality~\eqref{E:quasi} holds
for $\epsilon_0 > 0$ sufficiently small with 
\begin{equation}
\label{E:lambda_a}
\Lambda = \frac{1}{1 - \epsilon_0 L/(2m)}
     \geqs 1.
\end{equation}

\item[(2)] If $b(\cdot,\cdot)$ satisfies
G\aa{}rding~\eqref{E:gaarding}
and the Lifting~\eqref{E:lifting} inequalities,
then $b(\cdot,\cdot)$ satisfies the 
Cauchy inequality~\eqref{E:cauchy} with 
$\gamma = \epsilon_0 L/(2[m - C_G C_L^2 \sigma^2]) \in (0,1)$,
and quasi-orthogonality~\eqref{E:quasi} holds
for $\epsilon_0 > 0$ and $\sigma > 0$ sufficiently small with
\begin{equation}
\label{E:lambda_ah}
\Lambda = \frac{1}{1 - \epsilon_0 L/(2[m-C_G C_L^2 \sigma^2])}
    \geqs 1.
\end{equation}
\end{enumerate}

In either case, the constant
$\gamma$ can be made arbitrarily small,
and the constant $\Lambda$ 
can be made arbitrarily close to one,
for sufficiently small $\epsilon_0$ and $\sigma$.
\end{theorem}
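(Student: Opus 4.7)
The strategy is to establish the strengthened Cauchy inequality \eqref{E:cauchy} for the bilinear form $b(w,v) = \langle F'(u)w,v\rangle$ defined in \eqref{E:gen-aform}, and then invoke Theorem~\ref{T:quasi} to obtain quasi-orthogonality. The key observation is that the Petrov-Galerkin orthogonality at the level of the nonlinear operator $F$, namely $\langle F(u)-F(u_2),v\rangle = 0$ for all $v \in X_2$, can be converted into an \emph{approximate} orthogonality for $b$ by a fundamental theorem of calculus argument, with the defect controlled by $L$ and $\epsilon_0$.

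Concretely, I would first subtract \eqref{E:gen-pg-uh} from \eqref{E:gen-pg-u} and apply the operator-valued identity $F(u) - F(u_2) = \int_0^1 F'(u_2 + t(u-u_2))(u-u_2)\,dt$ to obtain
\[
\int_0^1 \langle F'(u_2 + t(u-u_2))(u-u_2),v\rangle\,dt = 0, \qquad \forall v \in X_2.
\]
Adding and subtracting the constant integrand $\langle F'(u)(u-u_2),v\rangle$ then gives
\[
b(u-u_2,v) = \int_0^1 \langle [F'(u) - F'(u_2+t(u-u_2))](u-u_2),v\rangle\,dt.
\]
Since $\|u-(u_2+t(u-u_2))\|_X = (1-t)\|u-u_2\|_X \leqs \epsilon_0$ by the locality assumption \eqref{E:gen-locality}, the Lipschitz condition \eqref{E:gen-lipschitz} applies uniformly along the path $t\mapsto u_2 + t(u-u_2)$ and yields
\[
|b(u-u_2,v)| \leqs \tfrac{L}{2}\|u-u_2\|_X^2\|v\|_X \leqs \tfrac{L\epsilon_0}{2}\|u-u_2\|_X\|v\|_X.
\]

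The next step is to convert this native-norm bound into an energy-norm bound. In case (1), coercivity \eqref{E:coercivity} directly gives $\|w\|_X \leqs \tbar w\tbar/\sqrt{m}$, producing the Cauchy inequality \eqref{E:cauchy} with $\gamma = L\epsilon_0/(2m)$. In case (2), the G\aa{}rding inequality \eqref{E:gaarding} together with the lifting inequality \eqref{E:lifting} combine via Lemma~\ref{L:lifting} to give effective coercivity with constant $\overline{m} = m - C_G C_L^2 \sigma^2$, yielding $\gamma = L\epsilon_0/(2\overline{m})$. In both cases $\gamma \in [0,1)$ provided $\epsilon_0$ (and, in case (2), $\sigma$) is taken sufficiently small. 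Invoking Theorem~\ref{T:quasi} then produces quasi-orthogonality \eqref{E:quasi} with $\Lambda = 1/(1-\gamma)$, which matches \eqref{E:lambda_a} and \eqref{E:lambda_ah} and which can be driven arbitrarily close to $1$.

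The main technical obstacle is that the proof of Theorem~\ref{T:quasi} formally requires the Cauchy inequality in both orderings of the arguments, whereas the integral/Lipschitz argument above naturally produces only the bound on $|b(u-u_2,v)|$. In the standard examples of interest (Poisson-Boltzmann, the Hamiltonian constraint, and problems derived from energy functionals more generally), $F'(u)$ is self-adjoint and the reverse direction is automatic. For the general case one may either apply an analogous integral argument to a dual Petrov-Galerkin formulation, or observe that only the \emph{symmetric combination} $b(u-u_2,u_2-u_1) + b(u_2-u_1,u-u_2)$ enters the proof of Theorem~\ref{T:quasi} and bound this directly using the identity above. A secondary point to double-check is that the factor $\tfrac{1}{2}$ arising from $\int_0^1(1-t)\,dt$ produces exactly the constants claimed in \eqref{E:lambda_a} and \eqref{E:lambda_ah}, rather than an extra factor of two that would slightly worsen the contraction constant.
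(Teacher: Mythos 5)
Your proof is essentially the paper's proof: the paper uses the Taylor remainder form $F(u+w) = F(u) + F'(u)w + \int_0^1[F'(u+sw)-F'(u)]w\,ds$ with $w = u_2-u$ rather than the direct fundamental-theorem-of-calculus representation you use, but the change of variables $s = 1-t$ shows the two integrals are the same, and both yield $\gamma_X = \epsilon_0 L/2$ before the energy-norm conversion via coercivity or Lemma~\ref{L:lifting}. The symmetry concern you flag at the end is well taken and is in fact tacitly present in the paper's own argument — it too bounds only $|b(u-u_2,u_2-u_1)|$ before invoking Theorem~\ref{T:quasi}, which uses both orderings of the arguments; this is harmless for the self-adjoint examples in Section~\ref{sec:contraction-ex}, and your proposed remedy (bound the symmetric combination $b(u-u_2,u_2-u_1)+b(u_2-u_1,u-u_2)$ directly, which is all the identity~\eqref{E:expandX} actually needs) is the correct fix in the general case.
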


\begin{proof}
Subtracting~\eqref{E:gen-pg-u} and~\eqref{E:gen-pg-uh} with
$v=v_2=u_2-u_1 \in X_2$ we have
\begin{equation}
\label{E:gen-galerkin}
\langle F(u)-F(u_2),u_2-u_1 \rangle = 0.
\end{equation}
We also have the mean-value formula:
\begin{equation}
\label{E:taylor}
F(u+w) = F(u) + F'(u)w 
   + \int_0^1 \left[ F'(u+sw) - F'(u) \right] w ~ds.
\end{equation}
Using~\eqref{E:taylor} with $w=u_2-u$ 
together with~\eqref{E:gen-galerkin} gives:
\begin{eqnarray*}
|b(u-u_2,u_2-u_1)|
& = & |-b(u_2-u,u_2-u_1)|
\\
& = & |-\langle F'(u) (u_2-u),u_2-u_1 \rangle|
\\
& = & |-\langle F(u+[u_2-u]) - F(u),u_2-u_1 \rangle
 \\
&  & + \langle 
   \int_0^1 \left[ F'(u+s[u_2-u]) - F'(u) \right] (u_2-u) ~ds, u_2-u_1
       \rangle |
\\
& \leqs &
 \left( \int_0^1 \| F'([1-s]u+su_2) - F'(u) \|_{\mathcal{L}(X,X^*)} ~ds \right)
\\
& &
 \cdot \| u-u_2 \|_X \| u_2-u_1 \|_X.
\end{eqnarray*}
Using now~\eqref{E:gen-lipschitz} and~\eqref{E:gen-locality} 
we can establish the Cauchy inequality~\eqref{E:cauchy} as follows:
\begin{eqnarray*}
|b(u-u_2,u_2-u_1)| 
& \leqs & \left( L \|u-u_2\|_X \int_0^1 s ~ds \right)
          \| u-u_2 \|_X \| u_2-u_1 \|_X
\\
& \leqs & \gamma_X \| u-u_2 \|_X \| u_2-u_1 \|_X,
\end{eqnarray*}
where $\gamma_X = \epsilon_0 L / 2 \in (0,1)$ for $\epsilon_0$ 
sufficiently small.

If $b(\cdot,\cdot)$ satisfies the 
coercivity inequality~\eqref{E:coercivity} for $m>0$, 
then we have established the Cauchy inequality~\eqref{E:cauchy},
with $\gamma = \gamma_X / m = \epsilon_0 L / (2m) \in (0,1)$ 
for $\epsilon_0$ sufficiently small.
By Theorem~\ref{T:quasi}, we have~\eqref{E:quasi}
holds with $\Lambda$ as in~\eqref{E:lambda_a},
which can be made arbitrarily close to one
for $\epsilon_0 > 0$ sufficiently small.

Instead of coercivity,
if $b(\cdot,\cdot)$ satisfies the 
G\aa{}rding~\eqref{E:gaarding} and lifting~\eqref{E:lifting} inequalities,
then by Lemma~\ref{L:lifting} we have
established the Cauchy inequality~\eqref{E:cauchy},
with 
$\gamma = \gamma_X / (m - C_G C_L^2 \sigma^2) 
   = \epsilon_0 L / [2(m-C_G C_L^2 \sigma^2)] \in (0,1)$ 
for $\epsilon_0$ sufficiently small.
By Theorem~\ref{T:quasi}, we have~\eqref{E:quasi}
holds with $\Lambda$ as in~\eqref{E:lambda_ah},
which can be made arbitrarily close to one
for $\epsilon_0 > 0$ and $\sigma > 0$ sufficiently small.
\end{proof}

\subsection{Contraction}
\label{subsec:contraction-contract}

We now establish a contraction result for
approximation techniques for nonlinear equations on Banach spaces,
which is an abstraction of the contraction arguments
in~\cite{Mekchay.K;Nochetto.R2005,Cascon.J;Kreuzer.C;Nochetto.R;Siebert.K2007,Nochetto.R;Siebert.K;Veeser.A2009,Holst.M;Tsogtgerel.G2009,Holst.M;McCammon.J;Yu.Z;Zhou.Y2009}.
Let $X_1 \subset X_2 \subset X$ be a triple of Banach spaces,
let $u \in X$, and let $u_1 \in X_1$ and $u_2 \in X_2$ be approximations 
to~$u$.
We are interested in the quality of the approximations;
as such, the following three distance measures between the 
three solutions are of fundamental importance:
\begin{equation}
\label{E:e-notation}
e_2 = \| u - u_2 \|_X, \quad \quad
e_1 = \| u - u_1 \|_X, \quad \quad
E_1 = \| u_2 - u_1 \|_X,
\end{equation}
where $\| \cdot \|_X$ is a norm on $X$;
this could be either the native Banach norm, or more
generally a norm associated with a problem-specific bilinear form.
We are interested in approximation algorithms which involve
abstract ``error indicator'' functionals that will be taken later
to be practical implementable {\em a posteriori} error indicators
commonly used in AFEM algorithms:
\begin{equation}
\label{E:eta-notation}
\eta_1 : X_1 \mapsto \mathbb{R}, 
\quad \quad 
\eta_2 : X_2 \mapsto \mathbb{R}.
\end{equation}
When written without arguments, these functionals are taken to be 
evaluated at $u_1$ and $u_2$ respectively, and represent approximations
to the error:
\begin{equation}
\eta_1 = \eta_1(u_1) \approx e_1,
\quad \quad
\eta_2 = \eta_2(u_2) \approx e_2.
\end{equation}
In order to build a contraction argument involving the errors,
we will need three fundamental assumptions relating the five 
quantities above:
\begin{assumption}[Quasi-Orthogonality]\label{A:quasi}
There exists $\Lambda \geqs 1$ such that
\begin{equation}
e_2^2 \leqs \Lambda e_1^2 - E_1^2.
\end{equation}
\end{assumption}
\begin{assumption}[Upper-Bound]\label{A:upper}
There exists $C_1 > 0$ such that
\begin{equation}
e_k^2 \leqs C_1 \eta_k^2,
\quad k=1,2.
\end{equation}
\end{assumption}
\begin{assumption}[Indicator Reduction]\label{A:reduct}
There exists $C_2 > 0$ and $\omega \in (0,1)$ such that
\begin{equation}
\eta_2^2 \leqs C_2 E_1^2 + (1-\omega) \eta_1^2.
\end{equation}
\end{assumption}
Using these three assumptions, we have the following
abstract contraction result.
\begin{theorem}[Abstract Contraction]\label{T:contraction-abstract}
Let $X_1 \subset X_2 \subset X$ be a triple of Banach spaces,
let $u \in X$, let $u_1 \in X_1$ and $u_2 \in X_2$ be approximations 
to~$u$ with error defined as in~\eqref{E:e-notation},
and let $\eta_1$ and $\eta_2$ be error indicators as
in~\eqref{E:eta-notation}.
Let the Assumptions~\ref{A:quasi}, \ref{A:upper}, and~\ref{A:reduct} hold.
Let $\beta \in (0,1)$ be arbitrary, and assume the constant
$\Lambda$ in Assumption~\ref{A:quasi} satisfies the bound:
\begin{equation}
\label{E:lambda-interval}
1 \leqs \Lambda < 1 + \frac{\beta \omega}{C_1 C_2}.
\end{equation}
Then there exists $\gamma > 0$ and $\alpha \in (0,1)$ such that:
\begin{equation}
e_2^2 + \gamma \eta_2^2 \leqs \alpha^2 \left( e_1^2 + \gamma \eta_1^2 \right),
\end{equation}
where $\gamma$ can be taken to be anything in the non-empty interval
\begin{equation}
\label{E:gamma-interval}
\frac{(\Lambda - 1)C_1}{\beta \omega} 
< \gamma <
\min \left\{ \frac{1}{C_2}, \frac{\Lambda C_1}{\beta \omega} \right\},
\end{equation}
and where $\alpha$ is subsequently given by
$\alpha^2 = \max \{ \alpha_1^2, \alpha_2^2 \} \in (0,1)$,
with
\begin{equation}
\label{E:alphas}
0 < \alpha_1^2 = \Lambda - \frac{\beta \omega \gamma}{C_1} < 1,
\quad \quad
0 < \alpha_2^2 = 1 - [1 - \beta] \omega < 1.
\end{equation}
\end{theorem}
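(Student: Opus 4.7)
The plan is to combine the three Assumptions in a linear fashion and then convert the resulting inequality into the desired contraction form by using the Upper Bound to trade $e_1^2$ against $\eta_1^2$. The key insight is that the quasi-orthogonality term $-E_1^2$ and the indicator-reduction term $+C_2 E_1^2$ carry opposite signs, so that if $\gamma \leq 1/C_2$, the combination eliminates the awkward $E_1^2$ term entirely. The splitting parameter $\beta \in (0,1)$ will then control how much of the remaining "budget" is used to shrink the $e_1^2$ coefficient versus the $\eta_1^2$ coefficient.

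Concretely, I would first multiply Assumption~\ref{A:reduct} by $\gamma$ and add it to Assumption~\ref{A:quasi}, obtaining
\begin{equation*}
e_2^2 + \gamma \eta_2^2 \leqs \Lambda e_1^2 + (\gamma C_2 - 1) E_1^2 + \gamma(1-\omega)\eta_1^2.
\end{equation*}
Restricting to $\gamma \leqs 1/C_2$ kills the $E_1^2$ contribution. Next, I would invoke Assumption~\ref{A:upper} at level $k=1$ in the form $e_1^2 \leqs C_1 \eta_1^2$ to shave a piece off the $\Lambda e_1^2$ term: writing
\begin{equation*}
\Lambda e_1^2 = \left(\Lambda - \frac{\beta\omega\gamma}{C_1}\right)e_1^2 + \frac{\beta\omega\gamma}{C_1}e_1^2 \leqs \left(\Lambda - \frac{\beta\omega\gamma}{C_1}\right)e_1^2 + \beta\omega\gamma\,\eta_1^2,
\end{equation*}
and substituting back, the $e_1^2$ coefficient becomes $\alpha_1^2$ and the $\eta_1^2$ coefficient combines to $\gamma(1-(1-\beta)\omega) = \alpha_2^2\gamma$, yielding exactly
\begin{equation*}
e_2^2 + \gamma\eta_2^2 \leqs \alpha_1^2 e_1^2 + \alpha_2^2 \gamma \eta_1^2 \leqs \alpha^2 (e_1^2 + \gamma \eta_1^2)
\end{equation*}
with $\alpha^2 = \max\{\alpha_1^2,\alpha_2^2\}$.

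The remaining work is bookkeeping on the parameters. The bound $\alpha_2^2 = 1 - (1-\beta)\omega \in (0,1)$ follows immediately from $\beta,\omega \in (0,1)$. For $\alpha_1^2 \in (0,1)$, I need $\gamma$ strictly greater than $(\Lambda-1)C_1/(\beta\omega)$ (to push $\alpha_1^2$ below $1$) and strictly less than $\Lambda C_1/(\beta\omega)$ (to keep $\alpha_1^2$ positive); combined with $\gamma \leqs 1/C_2$, this produces the interval~\eqref{E:gamma-interval}. The non-emptiness of this interval is the only place where hypothesis~\eqref{E:lambda-interval} enters: the inequality $(\Lambda-1)C_1/(\beta\omega) < 1/C_2$ is precisely the stated upper bound on $\Lambda$, while $(\Lambda-1)C_1/(\beta\omega) < \Lambda C_1/(\beta\omega)$ is automatic.

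There is no real obstacle here; the argument is essentially bookkeeping once one sees the correct linear combination. The only subtlety worth flagging is the choice of how to "spend" the slack provided by $C_1\eta_1^2 \geqs e_1^2$: the parameter $\beta$ interpolates between putting all of the reduction into the energy error (small $\alpha_1$, large $\alpha_2$) and all of it into the indicator (large $\alpha_1$, small $\alpha_2$), and the factor $\max\{\alpha_1^2,\alpha_2^2\}$ shows that optimizing over $\beta$ would balance the two, though the statement of the theorem does not require this optimization.
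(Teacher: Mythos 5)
Your proof is correct and follows essentially the same path as the paper's: combine quasi-orthogonality with $\gamma$ times the indicator-reduction inequality, restrict $\gamma \leqs 1/C_2$ to eliminate the $E_1^2$ term, then use the $\beta$-split together with $e_1^2 \leqs C_1 \eta_1^2$ to obtain the coefficients $\alpha_1^2$ and $\alpha_2^2$, with the hypothesis~\eqref{E:lambda-interval} entering precisely to guarantee non-emptiness of the $\gamma$-interval. The only cosmetic difference is that you describe the key step as splitting $\Lambda e_1^2$ and upgrading part of it to $\eta_1^2$, whereas the paper splits the $\gamma(1-\omega)\eta_1^2$ term and downgrades part of it to $e_1^2$ via Assumption~\ref{A:upper}; these are the same algebraic move read in opposite directions.
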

\begin{proof}
Beginning with Assumption~\ref{A:quasi}, we have for any $\gamma > 0$,
\begin{equation}
e_2^2 + \gamma \eta_2^2 \leqs \Lambda e_1^2 - E_1^2 + \gamma \eta_2^2.
\end{equation}
Using now Assumption~\ref{A:reduct}, we have
\begin{equation}
e_2^2 + \gamma \eta_2^2 
\leqs 
\Lambda e_1^2 - E_1^2 
+ \gamma \left[ C_2 E_1^2 + (1-\omega) \eta_1^2 \right].
\end{equation}
Assume now that $0 < \gamma \leqs 1 / C_2$.
In this case, the negative term involving $E_1^2$ dominates the
positive term, which implies:
\begin{equation}
e_2^2 + \gamma \eta_2^2 
\leqs 
\Lambda e_1^2 + \gamma (1-\omega) \eta_1^2.
\end{equation}
We now split the negative contribution involving $\eta_1^2$ into
two parts, using any $\beta \in (0,1)$:
\begin{equation}
e_2^2 + \gamma \eta_2^2 
\leqs 
\Lambda e_1^2 - \beta \omega \gamma \eta_1^2 
   + \gamma (1-[1-\beta] \omega) \eta_1^2.
\end{equation}
We now finally invoke Assumption~\ref{A:upper} on the first term
involving $\beta$:
\begin{equation}
e_2^2 + \gamma \eta_2^2 
\leqs
\left( \Lambda - \frac{\beta \omega \gamma}{C_1} \right) e_1^2
   + \gamma (1-[1-\beta] \omega) \eta_1^2
= \alpha_1^2 e_1^2 + \alpha_2^2 \gamma \eta_1^2,
\end{equation}
where $\alpha_1^2$ and $\alpha_2^2$ are as in~\eqref{E:alphas}.
Note $\omega \in (0,1)$ from Assumption~\ref{A:reduct}, 
and also for any $\beta \in (0,1)$ it holds 
$1-\beta \in (0,1)$ and $[1 - \beta]\omega \in (0,1)$.
Therefore, for any $\beta \in (0,1)$ we have that
$\alpha_2^2$ satisfies the second inequality in~\eqref{E:alphas}.
It remains to determine $\gamma > 0$ so that $0 < \alpha_1^2 < 1$,
with $\alpha_1^2$ as given in~\eqref{E:alphas}, leading to
\begin{equation}
\frac{(\Lambda - 1)C_1}{\beta \omega} 
< \gamma < \frac{\Lambda C_1}{\beta \omega}.
\end{equation}
We have already imposed $\gamma > 0$ and $\gamma \leqs 1 / C_2$.
Recalling $\Lambda \geqs 1$,
to ensure $\alpha_1^2 \in (0,1)$ we must have $\gamma$ in the 
the interval~\eqref{E:gamma-interval}.
If $\Lambda = 1$, this interval is clearly
non-empty for any $C_1 > 0$, $C_2 > 0$, $\beta \in (0,1)$, 
and $\omega \in (0,1)$.
If $\Lambda > 1$, 
since the term involving $\Lambda$ in the upper-bound always dominates
the lower bound, to ensure the interval for $\gamma$ is non-empty we must
restrict $\Lambda$ so that
$(\Lambda - 1)C_1/(\beta \omega) < 1/C_2$.
This holds if $\Lambda$ lies in the interval~\eqref{E:lambda-interval}.
We now simply note that this interval for $\Lambda$ is
non-empty for any $C_1 > 0$, $C_2 > 0$, $\beta \in (0,1)$, 
and $\omega \in (0,1)$.
To finish the proof, we now take 
$\alpha^2 = \max\{\alpha_1^2,\alpha_2^2\} \in (0,1)$.
\end{proof}

We now establish the main contraction and convergence result we are after.

\begin{theorem}[Abstract Convergence]\label{T:convergence-abstract}
Let $\{X_k\}_{k=1}^{\infty}$, $X_k \subset X_{k+1} \subset X$, 
$\forall k\geqs 0$, be a nested sequence of Banach spaces.
Let $u \in X$, 
and let $\{ u_k \}_{k=0}^{\infty}$ be a sequence of
approximations to~$u$ from $X_k$.
Let the Assumptions~\ref{A:quasi}, \ref{A:upper}, and~\ref{A:reduct} hold
with the same constants $\Lambda$, $C_1$, $C_2$, and $\omega$,
for any successive pair of approximations $u_k$ and $u_{k+1}$ and their
corresponding error indicators $\eta_k$ and $\eta_{k+1}$.
Let $\alpha$, $\beta$, $\gamma$, and $\Lambda$ be as in
Theorem~\ref{T:contraction-abstract}.
Then the sequence $\{ u_k \}_{k=1}^{\infty}$ contracts 
toward $u \in X$ according to:
\begin{equation}
e_{k+1}^2 + \gamma \eta_{k+1}^2
\leqs \alpha^2 \left(
e_k^2 + \gamma \eta_k^2
\right),
\end{equation}
and therefore converges to $u \in X$ at the following rate:
\begin{equation}
    e_k^2 + \gamma \eta_k^2 \leqs C \alpha^{2k},
\end{equation}
for some constant 
$C = C(u_1, \eta_1, \Lambda, C_1, C_2, \alpha, \beta, \gamma, \omega)$.
\end{theorem}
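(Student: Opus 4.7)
The plan is to obtain the one-step contraction for every successive pair $(u_k,u_{k+1})$ by a direct application of Theorem~\ref{T:contraction-abstract}, and then iterate to get geometric decay. First, observe that the hypothesis is set up precisely so that the triple $X_k\subset X_{k+1}\subset X$ with the approximations $u_k\in X_k$, $u_{k+1}\in X_{k+1}$, the exact solution $u\in X$, and the indicators $\eta_k,\eta_{k+1}$ fit the abstract framework of Section~\ref{subsec:contraction-contract}: the distances $e_{k+1}=\|u-u_{k+1}\|_X$, $e_k=\|u-u_k\|_X$, $E_k=\|u_{k+1}-u_k\|_X$ are exactly the quantities in \eqref{E:e-notation}, and Assumptions~\ref{A:quasi}, \ref{A:upper}, \ref{A:reduct} hold with uniform constants $\Lambda,C_1,C_2,\omega$ across all $k$. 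Because the constants are uniform, the choices of $\beta\in(0,1)$, $\gamma$ from the interval~\eqref{E:gamma-interval}, and $\alpha$ from~\eqref{E:alphas} made in Theorem~\ref{T:contraction-abstract} are all independent of $k$.

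Next, for each $k\geqs 1$ I would apply Theorem~\ref{T:contraction-abstract} to the pair $(u_k,u_{k+1})$ with this single fixed choice of $(\beta,\gamma,\alpha,\Lambda)$, obtaining the one-step contraction
\begin{equation*}
e_{k+1}^2+\gamma\eta_{k+1}^2\;\leqs\;\alpha^2\bigl(e_k^2+\gamma\eta_k^2\bigr),
\end{equation*}
which is the first conclusion of the theorem. The key point here is just that the hypothesis~\eqref{E:lambda-interval} for $\Lambda$ is already guaranteed by hypothesis (since the same $\Lambda,C_1,C_2,\omega,\beta$ that validated Theorem~\ref{T:contraction-abstract} for one pair validate it for all pairs), so no new constraint on the spaces enters at stage $k$.

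Finally, I would iterate the one-step contraction down to the base index. A trivial induction gives
\begin{equation*}
e_k^2+\gamma\eta_k^2\;\leqs\;\alpha^{2(k-1)}\bigl(e_1^2+\gamma\eta_1^2\bigr),
\end{equation*}
so taking $C=\alpha^{-2}(e_1^2+\gamma\eta_1^2)$ (which depends only on the listed quantities $u_1,\eta_1,\Lambda,C_1,C_2,\alpha,\beta,\gamma,\omega$ through $e_1=\|u-u_1\|_X$, $\eta_1$, and $\gamma$) yields the claimed geometric rate $e_k^2+\gamma\eta_k^2\leqs C\alpha^{2k}$. Since $\alpha\in(0,1)$, this in particular forces $e_k\to 0$ and $\eta_k\to 0$.

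Honestly, there is no serious obstacle here: the hard work has already been done in Theorem~\ref{T:contraction-abstract}, and this convergence statement is its routine iterative corollary. The only thing to be slightly careful about is verifying that the parameters $\beta,\gamma,\alpha$ can be chosen once and for all, rather than depending on $k$; this is immediate because Assumptions~\ref{A:quasi}--\ref{A:reduct} are assumed to hold with the same constants at every level, so the feasibility interval~\eqref{E:lambda-interval}--\eqref{E:gamma-interval} is a single fixed interval independent of $k$.
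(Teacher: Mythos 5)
Your argument is exactly the paper's: the paper simply states that both conclusions follow immediately from Theorem~\ref{T:contraction-abstract}, and your proposal spells out the same one-step application followed by iteration. Correct and essentially identical.
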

\begin{proof}
Both results follow immediately from Theorem~\ref{T:contraction-abstract}.
\end{proof}

\section{Convergence Based on Contraction and Some Examples} 
\label{sec:contraction-ex}

Here we use the abstract contraction result 
(Theorem~\ref{T:contraction-abstract})
established in Section~\ref{sec:contraction-abstract} to prove
a contraction result (Theorem~\ref{T:contraction-afem} below)
for the adaptive algorithm described in Section~\ref{sec:adaptive}.
Theorem~\ref{T:contraction-abstract} was
based on three core assumptions: Quasi-orthogonality,
Indicator domination of the error,
and Indicator Reduction.
We showed how to establish the first of these, namely the
Quasi-Orthogonality Assumption~\ref{A:quasi}, 
for PG approximations for two general classes of nonlinear problems 
in Section~\ref{subsec:contraction-quasi}.
The second assumption, namely the Indicator Domination Error 
Assumption~\ref{A:upper}, is a standard result for residual-type indicators; 
our adaptive algorithm produces indicators with this property, 
cf.~\eqref{eqn:global_upper}.
We focus on establishing the third assumption, namely
the Indicator Reduction Assumption~\ref{A:reduct},
in Section~\ref{subsec:contract} below, and then
prove the main contraction result in Theorem~\ref{T:contraction-afem}
for the adaptive algorithm~\eqref{eqn:adaptive},
based on Theorem~\ref{T:contraction-abstract}.
We then apply this contraction result to several nonlinear PDE examples
in Sections~\ref{subsec:semilinear-contraction}--\ref{subsec:hc}.

\subsection{Contraction of AFEM} 
\label{subsec:contract}

What remains in order to use the abstract contraction result in
Theorem~\ref{T:contraction-abstract} for AFEM is the third assumption, namely
the Indicator Reduction Assumption~\ref{A:reduct}.
Following~\cite{Cascon.J;Kreuzer.C;Nochetto.R;Siebert.K2007,Nochetto.R;Siebert.K;Veeser.A2009}, we will first reduce establishing 
Assumption~\ref{A:reduct} to a simpler {\em local Lipschitz} assumption 
on the indicator, namely Assumption~\ref{A:local-lipschitz} below.
Establishing Assumption~\ref{A:reduct} will then reduce to an assumption
on the marking strategy in the AFEM algorithm; we satisfy this assumption
by using the standard D\"orfler strategy~\eqref{E:dorfler-property}.
{\em Admissible discrete functions} in Assumption~\ref{A:local-lipschitz}
refer to discrete functions which are known {\em a priori} to satisfy
specific properties of discrete PG approximations, such as discrete
{\em a priori} bounds.
We will later show how to establish Assumption~\ref{A:local-lipschitz} for 
several nonlinear problems in Section~\ref{sec:examples}
using continuous and discrete {\em a priori} bounds.

To simplify the presentation below, we will denote
$$
e_k = \tbar u-u_k \tbar,
\hspace{0.5cm}
E_k = \tbar u_k - u_{k+1} \tbar,
$$
$$ 
\eta_k = \eta(u_k,\cT_k),
\hspace{0.5cm}
\eta_k(\cM_k) = \eta(u_k,\cM_k),
\hspace{0.5cm}
\eta_0(\mathbf{D}) = \eta_0(\mathbf{D},\cT_0),
$$
where $\mathbf{D}$ represents the set of problem coefficients and nonlinearity.
We also denote $V_k:=V_D(\cT_k)$ for simplicity. 


\begin{assumption}[Local Lipschitz]
   \label{A:local-lipschitz}
Let $\cT$ be a conforming partition.
For all $\tau \in \cT$ and for any pair of admissible discrete functions
$v,w \in X(\cT),$ it holds that
\begin{eqnarray}
|\eta(v,\tau) - \eta( w,\tau)|
&\leqs& \bar{\Lambda}_1 \eta(\mathbf{D},\tau) \|v-w\|_{1,2,\omega_{\tau}},
\end{eqnarray}
where $\bar{\Lambda}_1 > 0$ depends only on the shape-regularity of $\cT_0,$
and where $\eta(\mathbf{D},\tau)$ depends only on appropriate norm behavior
of the equation coefficients over the local one-ring of elements surrounding 
$\tau$, and on the Lipschitz properties on $\tau$ of the nonlinearity 
acting on admissible functions in $X(\cT)$.
The parameter $\eta(\mathbf{D},\tau)$ is assumed to be monotone
non-increasing with mesh refinement.
\end{assumption}

Based on Assumption~\ref{A:local-lipschitz},
we have the following indicator reduction result
(see also~\cite{Holst.M;Tsogtgerel.G2009,Holst.M;McCammon.J;Yu.Z;Zhou.Y2009}), 
which extends the linear case appearing 
in~\cite{Cascon.J;Kreuzer.C;Nochetto.R;Siebert.K2007,Nochetto.R;Siebert.K;Veeser.A2009}
to the nonlinear case.
The proof is essentially identical to 
that of~\cite[Corollary~4.4]{Cascon.J;Kreuzer.C;Nochetto.R;Siebert.K2007},
except that it allows for nonlinearity in Assumption~\ref{A:local-lipschitz};
we include it for completeness.
The main difficulty in the nonlinear case will be establishing
Assumption~\ref{A:local-lipschitz} and simultaneously satisfying the 
assumption on the parameter $\lambda$ appearing in 
Lemma~\ref{L:indicator-reduction}.
\begin{lemma}[Nonlinear Indicator Reduction]
   \label{L:indicator-reduction}
Let $\cT$ be a partition, and let 
the parameters $\theta \in (0,1]$ and $\ell \geqs 1$ be given. 
Let $\cM=\textsf{MARK}(\{\eta(v,\tau)\}_{\tau\in \cT},\cT,\theta)$,
and let $\cT_* = \textsf{REFINE}(\cT,\cM, \ell).$
If $\Lambda_1 = (d+1)\bar{\Lambda}_1^2/\ell$ with $\bar{\Lambda}_1$
from Assumption~\ref{A:local-lipschitz} and $\lambda = 1-2^{-(\ell/d)} > 0$,
then for all admissible $v \in X(\cT),$ 
$v_* \in X(\cT_*),$ 
and any $\delta > 0$, it holds that
$$
\eta^2(v_*,\cT_*) 
  \leqs (1 + \delta) [ \eta^2(v,\cT) - \lambda \eta^2(v,\cM) ]
 + (1 + \delta^{-1}) \Lambda_1 \eta^2(\mathbf{D},\cT_0)
                  \tbar v_* - v \tbar^2.
$$
\end{lemma}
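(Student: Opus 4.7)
The plan is to follow the by-now standard Cascón–Kreuzer–Nochetto–Siebert perturbation-plus-reduction argument, but phrased against the nonlinear local Lipschitz property in Assumption~\ref{A:local-lipschitz} instead of the usual affine-in-coefficients version. First, I would start from the element-wise decomposition $\eta^2(v_*,\cT_*) = \sum_{\tau_* \in \cT_*} \eta^2(v_*,\tau_*)$, and on each $\tau_* \in \cT_*$ apply the triangle-type estimate
\[
\eta(v_*,\tau_*) \leqs \eta(v,\tau_*) + \bar{\Lambda}_1 \eta(\mathbf{D},\tau_*) \|v_*-v\|_{1,2,\omega_{\tau_*}},
\]
which makes sense because $v \in X(\cT)\subset X(\cT_*)$ by nestedness. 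Squaring via Young's inequality $(a+b)^2 \leqs (1+\delta)a^2 + (1+\delta^{-1})b^2$ produces two sums to control.

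The second sum is the perturbation term. Here I would use (i) the assumed monotonicity of $\eta(\mathbf{D},\cdot)$ under refinement to replace each $\eta^2(\mathbf{D},\tau_*)$ by $\eta^2(\mathbf{D},\cT_0)$ pulled out as a global factor, (ii) shape-regularity to bound the multiplicity of element occurrences in the patches $\omega_{\tau_*}$, giving the $(d+1)$ factor, and (iii) norm equivalence on $X(\cT_*)$ between $\|\cdot\|_{1,2,\Omega}$ and the energy (semi-)norm $\tbar\cdot\tbar$ to convert $\sum_{\tau_*}\|v_*-v\|_{1,2,\omega_{\tau_*}}^2 \lesssim \tbar v_*-v\tbar^2$. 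Collecting everything produces the prefactor $(1+\delta^{-1})\Lambda_1 \eta^2(\mathbf{D},\cT_0)\tbar v_*-v\tbar^2$ with $\Lambda_1=(d+1)\bar{\Lambda}_1^2/\ell$ (the $1/\ell$ factor appears from carefully distributing the finite-overlap bound across the $\ell$-fold bisection rather than being absorbed into constants).

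The first sum is the reduction term, and requires splitting $\cT_*$ into the portion $\cT \cap \cT_*$ left untouched by \textsf{REFINE} and the portion $\cT_* \setminus \cT$ consisting of descendants of elements that were subdivided. On untouched elements $\eta(v,\tau_*)=\eta(v,\tau)$ because $h_{\cT}$ and the local $v$-data agree. On an element $\tau \in \cM$ that is bisected $\ell$ times, the $\ell$-fold reduction of $h$ by $2^{-\ell/d}$ combined with the monotonicity of the element indicator in $h$ yields $\sum_{\tau_*\subset\tau} \eta^2(v,\tau_*) \leqs 2^{-\ell/d}\eta^2(v,\tau)$; since \textsf{REFINE} at least subdivides all marked elements~\eqref{eqn:refine}, summing over $\cM$ (and noting any additional elements refined by completion only help) gives
\[
\sum_{\tau_*\in\cT_*}\eta^2(v,\tau_*) \leqs \eta^2(v,\cT) - (1-2^{-\ell/d})\eta^2(v,\cM) = \eta^2(v,\cT) - \lambda\,\eta^2(v,\cM).
\]
Combining the two bounds finishes the proof.

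The main obstacle is the indicator-scaling step on refined elements: in the linear case of~\cite{Cascon.J;Kreuzer.C;Nochetto.R;Siebert.K2007} this relies on the explicit $h^s$-scaling of the residual indicator together with boundedness of the discrete function, whereas in the present abstract nonlinear setting one must verify this scaling under Assumption~\ref{A:local-lipschitz} using that $v\in X(\cT)$ is admissible (i.e., satisfies the {\em a priori} bounds) so that the nonlinear part of $\eta(v,\tau_*)$ still scales at least as well as $h_{\tau_*}$. This is precisely the point where admissibility (controlled by {\em a priori} $L^\infty$ bounds on $v$) is essential, and it is the compatibility condition that each of the examples in Sections~\ref{subsec:semilinear-contraction}--\ref{subsec:hc} will have to verify separately.
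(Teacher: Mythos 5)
Your proof is correct and follows essentially the same route as the paper: apply the local Lipschitz bound element-wise over $\cT_*$ (legitimate because $v\in X(\cT)\subset X(\cT_*)$ by nestedness), square with Young's inequality, sum using shape-regularity and energy/$H^1$ norm equivalence for the perturbation term, invoke monotonicity of $\eta(\mathbf{D},\cdot)$, and use the $2^{-\ell/d}$ reduction of the indicator on marked elements to get the $\lambda$-contraction of the first term. The one small difference is that where the paper simply cites ``a short argument from the proof of Corollary~4.4'' of Cascón--Kreuzer--Nochetto--Siebert for the step $\eta^2(v,\cT_*)\leqs\eta^2(v,\cT)-\lambda\,\eta^2(v,\cM)$, you spell this step out and correctly flag that its validity in the abstract nonlinear setting relies on the indicator scaling like a positive power of $h$ on admissible functions -- precisely the point Remark~\ref{rem:lemma-indicator-reduction} singles out as the assumption to be verified example by example.
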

\begin{proof}
The proof follows that 
in~\cite[Corollary~4.4]{Cascon.J;Kreuzer.C;Nochetto.R;Siebert.K2007},
with minor adjustment to allow the Lipschitz parameter
in Assumption~\ref{A:local-lipschitz} to depend on point-wise
behavior of admissible functions in an $L^{\infty}$ interval;
we outline the argument here for completeness.
Using Assumption~\ref{A:local-lipschitz} with $v$ and $v_*$
taken to be in $X(\cT_*),$ gives
$$
\eta(v_*,\tau_*)
\leqs 
\eta(v,\tau_*)
+
\bar{\Lambda}_1 \eta(\mathbf{D},\tau_*)
    \|v_*-v\|_{1,2,\omega_{\tau_*}} \quad \forall \tau_*\in \cT_*.
$$
After squaring both sides and
applying Young's inequality with arbitrary $\delta > 0$ we have
$$
\eta^2(v_*,\tau_*)
\leqs 
(1+\delta) \eta^2(v,\tau_*)
+
(1+\delta^{-1}) \bar{\Lambda}_1^2 \eta^2(\mathbf{D},\tau_*)
    \|v_*-v\|_{1,2,\omega_{\tau_*}}^2 \quad \forall \tau_*\in \cT_*.
$$
We now sum over the elements $\tau_* \in \cT_*,$ using the fact
that for shape regular partitions there is a small finite
number of elements in the overlaps of the patches $\omega_{\tau_*}$
that are multiply represented in the sum.
This gives
$$
\eta^2(v_*,\cT_*)
\leqs 
(1+\delta) \eta^2(v,\cT_*)
+
(1+\delta^{-1}) \Lambda_1^2 \eta^2(\mathbf{D},\cT_*)
    \tbar v_*-v \tbar^2,
$$
where we have used equivalence between the energy norm and the
norm on $H^1$ (based on either coercivity or a G\aa{}rding inequality
together with lifting; cf.~Lemma~\ref{L:lifting}),
and then absorbed both the norm equivalence constant
and the finite over-representation of elements in the sum into the 
new constant $\Lambda_1$.

Now take admissible $v\in X(\cT)$;
a short argument from the proof of 
Corollary 4.4 in~\cite{Cascon.J;Kreuzer.C;Nochetto.R;Siebert.K2007} 
gives
\begin{equation}
   \label{eqn:eta-reduct}
\eta^2(v,\cT_*) \leqs \eta^2(v,\cT \setminus \cM) + 2^{-(\ell/d)} \eta^2(v,\cM)
  = \eta^2(v,\cT) - \lambda \eta^2(v,\cM).
\end{equation}
Finally, monotonicity
$\eta(\mathbf{D},\cT_*) \leqs \eta(\mathbf{D},\cT_0),$ combined with~\eqref{eqn:eta-reduct} yields the result.
\end{proof}
\begin{remark}
\label{rem:lemma-indicator-reduction}
The difficulty in the nonlinear case will be establishing
Assumption~\ref{A:local-lipschitz} and simultaneously satisfying the 
assumption on the parameter $\lambda$ appearing in 
Lemma~\ref{L:indicator-reduction}.
In the case of problems for which we can control the nonlinearity
using {\em a priori} $L^{\infty}$ control of solutions and discrete
approximations, we will be able to establish 
Assumption~\ref{A:local-lipschitz}; several such examples of
increasing difficulty are analyzed
in Sections~\ref{subsec:semilinear-contraction}--\ref{subsec:hc}.
The assumption on $\lambda$ appearing in 
Lemma~\ref{L:indicator-reduction} is essentially the assumption that
the residual indicator contains only terms that decay as with $h^{\alpha}$
for some $\alpha > 0$.
\end{remark}

We will now make use of the D\"orfler marking strategy 
\eqref{E:dorfler-property}.
This simple marking strategy will ensure that
the abstract indicator reduction Assumption~\ref{A:reduct}
holds.
\begin{lemma}
\label{L:indicator-reduction-abstract}
Let the conditions for Lemma~\ref{L:indicator-reduction} hold.
Let the Dorfler marking property~\eqref{E:dorfler-property}
hold for some $\theta \in (0,1]$, and restrict $\delta > 0$
in Lemma~\ref{L:indicator-reduction} so that
\begin{equation}
\label{E:delta-interval}
0 < \delta < \frac{\lambda \theta^2}{1 - \lambda \theta^2}.
\end{equation}
Then Indicator Reduction Assumption~\ref{A:reduct} holds with 
$C_2 = (1+\delta^{-1}) \Lambda_1 \eta^2(\mathbf{D},\cT_0)$
and
\begin{equation}
\omega = 1 - (1+\delta)(1-\lambda \theta^2) \in (0,1).
\end{equation}
\end{lemma}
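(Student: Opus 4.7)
The plan is to apply Lemma~\ref{L:indicator-reduction} directly with $v = u_k$, $v_* = u_{k+1}$, $\cT = \cT_k$, $\cM = \cM_k$, and $\cT_* = \cT_{k+1}$, which are precisely the outputs of the \textsf{MARK} and \textsf{REFINE} modules of one step of the adaptive iteration~\eqref{eqn:adaptive}. This yields, in the notation $\eta_k = \eta(u_k,\cT_k)$ and $E_k = \tbar u_{k+1} - u_k \tbar$, the inequality
\begin{equation*}
\eta_{k+1}^2 \leqs (1+\delta)\bigl[\eta_k^2 - \lambda \eta^2(u_k,\cM_k)\bigr]
+ (1+\delta^{-1})\Lambda_1 \eta^2(\mathbf{D},\cT_0)\, E_k^2.
\end{equation*}

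Next I would invoke the D\"orfler property~\eqref{E:dorfler-property}, which says $\eta(u_k,\cM_k) \geqs \theta\, \eta_k$, so that $-\lambda\eta^2(u_k,\cM_k) \leqs -\lambda\theta^2 \eta_k^2$. Substituting gives
\begin{equation*}
\eta_{k+1}^2 \leqs (1+\delta)(1-\lambda\theta^2)\,\eta_k^2
+ (1+\delta^{-1})\Lambda_1 \eta^2(\mathbf{D},\cT_0)\, E_k^2,
\end{equation*}
which is exactly the form of Assumption~\ref{A:reduct} with the choices $C_2 = (1+\delta^{-1})\Lambda_1\eta^2(\mathbf{D},\cT_0)$ and $1-\omega = (1+\delta)(1-\lambda\theta^2)$, i.e.\ $\omega = 1 - (1+\delta)(1-\lambda\theta^2)$.

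It remains to verify $\omega \in (0,1)$. The bound $\omega < 1$ is immediate since $\delta > 0$ and $\lambda\theta^2 \in (0,1)$ (recall $\lambda = 1-2^{-\ell/d} \in (0,1)$ and $\theta \in (0,1]$), so $(1+\delta)(1-\lambda\theta^2) > 0$. The bound $\omega > 0$ is equivalent to $(1+\delta)(1-\lambda\theta^2) < 1$, i.e.\ $\delta < \lambda\theta^2/(1-\lambda\theta^2)$, which is precisely the assumption~\eqref{E:delta-interval}; note the interval in~\eqref{E:delta-interval} is nonempty exactly because $\lambda\theta^2 \in (0,1)$. This completes the proof.

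Since everything reduces to substituting the D\"orfler lower bound into the previous lemma and checking an elementary interval condition on $\delta$, there is no real obstacle here; the only subtlety worth stating explicitly is that the heavy nonlinear content (Assumption~\ref{A:local-lipschitz} and the norm equivalence via G\aa{}rding/lifting) has already been absorbed in Lemma~\ref{L:indicator-reduction}, so that this step is a purely algebraic bridge between the nonlinear reduction estimate and the abstract Assumption~\ref{A:reduct} needed by Theorem~\ref{T:contraction-abstract}.
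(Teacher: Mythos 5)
Your proof is correct and follows exactly the same route as the paper: apply Lemma~\ref{L:indicator-reduction} with $v=u_k$, $v_*=u_{k+1}$, substitute the D\"orfler bound $\eta(u_k,\cM_k)\geqs\theta\eta_k$, and read off $C_2$ and $\omega$, with the interval condition~\eqref{E:delta-interval} arising precisely from requiring $\omega>0$. Your explicit check that $\omega<1$ via $(1+\delta)(1-\lambda\theta^2)>0$ is a small but worthwhile addition that the paper leaves implicit.
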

\begin{proof}
By Lemma~\ref{L:indicator-reduction} we have for any $\delta > 0$:
$$
\eta^2(v_*,\cT_*) 
  \leqs (1 + \delta) [ \eta^2(v,\cT) - \lambda \eta^2(v,\cM) ]
 + (1 + \delta^{-1}) \Lambda_1 \eta^2(\mathbf{D},\cT_0)
                  \tbar v_* - v \tbar^2.
$$
The Dorfler marking property~\eqref{E:dorfler-property} gives
$$
\eta^2(v_*,\cT_*) 
  \leqs (1 + \delta) (1 - \lambda \theta^2) \eta^2(v,\cT) 
 + (1 + \delta^{-1}) \Lambda_1 \eta^2(\mathbf{D},\cT_0)
                  \tbar v_* - v \tbar^2,
$$
which we will write as
$$
\eta_{k+1}^2 \leqs C_2 E_k^2 + (1-\omega) \eta_k^2,
$$
with
\begin{equation}
\eta_{k+1} = \eta(v_*,\cT_*),
\quad \quad
\eta_k = \eta(v,\cT),
\quad \quad
E_k    = \tbar v_* - v \tbar,
\end{equation}
\begin{equation}
C_2    = (1+\delta^{-1}) \Lambda_1 \eta^2(\mathbf{D},\cT_0),
\quad \quad
(1-\omega) = (1+\delta) (1 - \lambda \theta^2).
\end{equation}
To ensure that $\omega = 1 - (1+\delta) (1 - \lambda \theta^2) \in (0,1)$,
we restrict $\delta > 0$ so that
\begin{equation}
0 < (1+\delta) (1 - \lambda \theta^2) < 1,
\end{equation}
or so that
\begin{equation}
-1 < \delta < \frac{1}{1 - \lambda \theta^2} -1 
     = \frac{1 - [1-\lambda \theta^2]}{1 - \lambda \theta^2}
     = \frac{\lambda \theta^2}{1 - \lambda \theta^2}.
\end{equation}
Since we must also take $\delta > 0$, we have then the range 
for $\delta$ is as in~\eqref{E:delta-interval}
to ensure Assumption~\ref{A:reduct} with 
$\omega = 1 - (1+\delta) (1 - \lambda \theta^2) \in (0,1)$.
\end{proof}
\begin{remark}
By first establishing Theorem~\ref{T:contraction-abstract} based only
on three simple assumptions relating the error and error indicator,
the main contraction argument in Theorem~\ref{T:contraction-abstract} is
general, applies to nonlinear problems, and does not involve details of 
the adaptive algorithm that produces the approximations 
or the error indicators.
The local Lipschitz and marking assumptions we use above to establish
the indicator reduction assumption bring in the details of the particular 
adaptive algorithm and the problem only at the last moment, and helps
clarify the impact of the various parameters on the contraction argument 
and rate.
\end{remark}


The supporting results we need are now in place;
we can now establish the second of the two main convergence results 
of the paper, this one concerning contraction.
\begin{theorem}[Contraction]\label{T:contraction-afem}
Let Assumption~\ref{A:quasi} (Quasi-Orthogonality)
and Assumption~\ref{A:upper} (Upper-Bound) hold,
and assume that the conditions of 
Lemma~\ref{L:indicator-reduction-abstract} hold.
Let $\beta \in (0,1)$ be arbitrary, 
and assume the constant $\Lambda$ in Assumption~\ref{A:quasi} satisfies:
\begin{equation}
1 \leqs \Lambda < 1 + \frac{\beta \omega}{C_1 C_2},
\end{equation}
where the constant $C_1$ is as in Assumption~\ref{A:upper},
and $C_2$ and $\omega$ are as in Lemma~\ref{L:indicator-reduction-abstract}.
Then there exists $\gamma > 0$ and $\alpha \in (0,1)$ such that:
\begin{equation}
\tbar u - u_{k+1} \tbar^2 + \gamma \eta_{k+1}^2
\leqs \alpha^2 \left(
\tbar u - u_k \tbar^2 + \gamma \eta_k^2
\right),
\end{equation}
where $\gamma$ can be taken to be anything in the non-empty interval
\begin{equation}
\frac{(\Lambda - 1)C_1}{\beta \omega} 
< \gamma <
\min \left\{ \frac{1}{C_2}, \frac{\Lambda C_1}{\beta \omega} \right\},
\end{equation}
and where $\alpha$ is subsequently given by
\begin{equation}
0 < \alpha^2 = \max \{ \alpha_1^2, \alpha_2^2 \} < 1,
\end{equation}
with
\begin{equation}
0 < \alpha_1^2 = \Lambda - \frac{\beta \omega \gamma}{C_1} < 1,
\quad \quad
0 < \alpha_2^2 = 1 - [1 - \beta] \omega < 1.
\end{equation}
\end{theorem}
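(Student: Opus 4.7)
The plan is to observe that Theorem~\ref{T:contraction-afem} is essentially a direct corollary of the abstract contraction result, Theorem~\ref{T:contraction-abstract}, once the indicator reduction assumption has been secured via Lemma~\ref{L:indicator-reduction-abstract}. Thus the proof reduces to verifying that the three hypotheses of Theorem~\ref{T:contraction-abstract} are in force for the successive AFEM iterates, with constants matching those in the statement.

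First I would identify the correspondence between the abstract setup and the AFEM iterates: take $X_1 = X(\cT_k)$, $X_2 = X(\cT_{k+1})$, $u_1 = u_k$, $u_2 = u_{k+1}$, and identify the error and indicator quantities with $e_k = \tbar u - u_k \tbar$, $E_k = \tbar u_{k+1} - u_k \tbar$, $\eta_k = \eta(u_k,\cT_k)$, as set up at the start of Section~\ref{subsec:contract}. Assumption~\ref{A:quasi} is given by hypothesis (with constant $\Lambda$), and Assumption~\ref{A:upper} is given by hypothesis (with constant $C_1$).

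Next I would invoke Lemma~\ref{L:indicator-reduction-abstract}, whose hypotheses are assumed to hold, to supply Assumption~\ref{A:reduct}: the lemma produces constants $C_2 = (1+\delta^{-1})\Lambda_1 \eta^2(\mathbf{D},\cT_0)$ and $\omega = 1 - (1+\delta)(1-\lambda\theta^2) \in (0,1)$, with $\delta$ chosen from the non-empty interval~\eqref{E:delta-interval}, satisfying precisely the form $\eta_{k+1}^2 \leqs C_2 E_k^2 + (1-\omega)\eta_k^2$. These are exactly the constants appearing in the hypothesis $1 \leqs \Lambda < 1 + \beta\omega/(C_1 C_2)$ of the theorem.

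Finally, with all three assumptions verified, I would apply Theorem~\ref{T:contraction-abstract} directly to the pair $(u_k, u_{k+1})$. The hypothesis on $\Lambda$ matches~\eqref{E:lambda-interval}, so the theorem yields existence of $\gamma$ in the interval~\eqref{E:gamma-interval} and contraction constant $\alpha^2 = \max\{\alpha_1^2,\alpha_2^2\} \in (0,1)$, with $\alpha_1^2 = \Lambda - \beta\omega\gamma/C_1$ and $\alpha_2^2 = 1 - (1-\beta)\omega$, as in~\eqref{E:alphas}. Since the constants $\Lambda, C_1, C_2, \omega$ are independent of $k$, the resulting bound
\[
\tbar u - u_{k+1} \tbar^2 + \gamma \eta_{k+1}^2 \leqs \alpha^2 \left( \tbar u - u_k \tbar^2 + \gamma \eta_k^2 \right)
\]
is precisely the assertion. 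There is no real obstacle here: the work was done in establishing Theorem~\ref{T:contraction-abstract} abstractly, and in reducing indicator reduction to the local Lipschitz property combined with D\"orfler marking in Lemma~\ref{L:indicator-reduction-abstract}; the only mild point worth stating explicitly is that the interval for $\delta$ in~\eqref{E:delta-interval} is non-empty (since $\lambda\theta^2 \in (0,1)$), so the constants $C_2$ and $\omega$ are genuinely available before applying Theorem~\ref{T:contraction-abstract}.
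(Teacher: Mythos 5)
Your proposal is correct and follows essentially the same route as the paper: invoke Lemma~\ref{L:indicator-reduction-abstract} to supply Assumption~\ref{A:reduct}, then apply the abstract Theorem~\ref{T:contraction-abstract} to the pair $(u_k, u_{k+1})$ under the stated identifications. The paper's own proof is just a two-sentence version of this; your added remarks (the dictionary $u_1 = u_k$, $u_2 = u_{k+1}$, and the non-emptiness of the $\delta$-interval since $\lambda\theta^2 \in (0,1)$) are correct and harmless elaborations of what the paper leaves implicit.
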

\begin{proof}
By Lemma~\ref{L:indicator-reduction-abstract},
Assumption~\ref{A:local-lipschitz} and Property~\eqref{E:dorfler-property}
together imply that Assumption~\ref{A:reduct} holds.
The result then follows by Theorem~\ref{T:contraction-abstract}.
\end{proof}

We now apply the Contraction Theorem~\ref{T:contraction-afem}
to establish contraction of the 
adaptive algorithm~\eqref{eqn:adaptive} for specific nonlinear PDE examples.
Note that the more general weak*-convergence framework is also applicable to 
each of these examples, as we discussed in Section~\ref{sec:examples};
what we gain here is fixed-rate contraction
of the error at each iteration of AFEM, and subsequently the possibility
of establishing optimality of AFEM.
In each of the following examples, we use the standard residual error 
indicator, denoted by $\eta.$
For the marking strategy in the AFEM algorithm, we use the standard D\"orfler 
marking strategy~\eqref{E:dorfler-property}.

\subsection{A Semi-Linear Example}
\label{subsec:semilinear-contraction}

Our first example is a special case of equation~\eqref{eqn:semi_linear2}.
\begin{example}\label{ex:semi_linear_contraction}
    Let $\Omega\subset \R^2$ be a convex polygonal domain, and 
    $f\in L^2(\Omega).$
    Consider the weak form formulation of the semi-linear 
    equation~\eqref{eqn:semi_linear2}
    \begin{equation}\label{eqn:semi_linear_weak}
    \mbox{Find } u\in H_0^1(\Omega), \mbox{ s.t. } (\nabla u, \nabla v) 
    + (u^3, v)  = (f, v),\quad \forall v\in H_0^1(\Omega).   
     \end{equation}
\end{example}
\noindent
Here, the solution and test spaces are the 
Hilbert space $X=Y:=H_{0}^{1}(\Omega).$
Let $X_k=Y_{k}\subset H_0^1(\Omega)$ be the continuous piecewise linear
finite element spaces defined on $\cT_k,$ which we assume to be an exact
partition of $\Omega$.
For convenience, we denote 
$a(u,v) := (\nabla u, \nabla v)$ and $N(u) = u^3.$
It is not difficult to see that 
\begin{eqnarray}
(N(u) - N(v), u-v) &\geqs& 0, \quad\forall u,v \in H_0^1(\Omega),
  \label{eqn:pbe-monotone} \\
\|N(u)- N(v)\|_{\cL(H^1(\Omega),H^{-1}(\Omega))} 
&\lesssim& \|u - v\|_{0,\Omega}, \quad \forall u,v \in L^{\infty}(\Omega).
  \label{eqn:pbe-lipschitz}
\end{eqnarray}
The Galerkin approximation of the equation~\eqref{ex:semi_linear_contraction} 
then reads
\begin{equation}
\label{eqn:semi_linear_weak_fem}
\mbox{Find}\;\; u_k\in X_k, \;\mbox{such that}\; 
   a(u_k, v_k) + (N(u_k), v_k) = (f,v_k),\quad \forall v_k\in X_k.
\end{equation}
Existence and uniqueness of solutions to~\eqref{eqn:semi_linear_weak} 
and~\eqref{eqn:semi_linear_weak_fem} follow by standard variational
or fixed-point arguments, cf.~\cite{Stru00,Kesa89}.
To establish both {\em a priori} and {\em a posteriori} error estimates,
we will need $L^{\infty}$ control of the continuous solution $u$ and 
as well as the discrete solutions $u_k$.
\begin{lemma}[Continuous {\em A Priori} Estimates]
    \label{lm:apriori-u}
    Let $u\in H_0^1(\Omega)$ be the exact solution 
    to~\eqref{eqn:semi_linear_weak}.
    Then $u\in L^{\infty}(\Omega)$.
\end{lemma}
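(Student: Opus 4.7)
The plan is to bootstrap regularity of $u$ by combining two-dimensional Sobolev embeddings with elliptic regularity for the Laplacian on a convex polygonal domain. First I would observe that in dimension two the embedding $H_0^1(\Omega) \hookrightarrow L^p(\Omega)$ holds for every $p \in [1,\infty)$, so from $u \in H_0^1(\Omega)$ one gets $u \in L^p(\Omega)$ for all finite $p$, and hence $u^3 \in L^q(\Omega)$ for every finite $q$. In particular $u^3 \in L^2(\Omega)$, so the right-hand side of
\begin{equation*}
  -\Delta u = f - u^3
\end{equation*}
lies in $L^2(\Omega)$.

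Next I would invoke the standard elliptic regularity theorem for the Laplacian with homogeneous Dirichlet boundary data on a convex polygonal planar domain (cf. Grisvard), which gives $u \in H^2(\Omega) \cap H_0^1(\Omega)$ together with an estimate
\begin{equation*}
  \|u\|_{2,\Omega} \lesssim \|f - u^3\|_{0,\Omega} \lesssim \|f\|_{0,\Omega} + \|u\|_{0,6,\Omega}^3,
\end{equation*}
the last factor being finite by the previous paragraph. Finally, the two-dimensional Sobolev embedding $H^2(\Omega) \hookrightarrow C^0(\overline{\Omega}) \subset L^\infty(\Omega)$ yields the conclusion.

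The only step that truly uses the hypotheses in a non-trivial way is the $H^2$ regularity shift, which crucially relies on the convexity of the polygonal domain; on a non-convex polygon one would only recover $H^{3/2+\varepsilon}$ regularity at the re-entrant corners, and a separate argument (e.g., a Stampacchia truncation using $(u-k)^+$ as a test function and exploiting the sign of $\int u^3 (u-k)^+$) would be required. Under the stated convexity assumption, however, the argument is essentially immediate once the nonlinearity has been shown to produce an $L^2$ source term via the subcritical Sobolev embedding.
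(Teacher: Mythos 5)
Your argument is correct, and it takes a genuinely different route from the paper's. You bootstrap: the subcritical Sobolev embedding $H_0^1(\Omega)\hookrightarrow L^p(\Omega)$ for all finite $p$ in two dimensions places $u^3$ in $L^2(\Omega)$, then the $H^2$ regularity shift on a convex polygon lifts $u$ into $H^2(\Omega)\cap H_0^1(\Omega)$, and the planar embedding $H^2(\Omega)\hookrightarrow C^0(\overline{\Omega})$ concludes. The paper instead splits $u = u^l + u^n$, where $u^l\in H_0^1(\Omega)$ solves the linear Poisson problem with source $f$ (so $u^l\in H^2\subset L^\infty$ by the same convex-polygon regularity you invoke), and then derives a two-sided pointwise bound $\alpha \leqs u^n \leqs \beta$ for the remainder by testing against the truncations $(u^n-\beta)^+$ and $(u^n-\alpha)^-$ and exploiting the sign of $s\mapsto s^3$ — precisely the Stampacchia-type argument you mention as a fallback for the nonconvex case. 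Your route is shorter and in fact delivers more ($u\in H^2$, hence Hölder continuity); the paper's route is chosen because it produces \emph{explicit} pointwise bounds $u_-\leqs u\leqs u_+$ in terms of $f$ and $\Omega$, which the contraction framework later needs in order to fix a concrete $L^\infty$ interval on which the nonlinearity has a uniform Lipschitz constant (cf. Lemma~\ref{L:local-lipschitz-ex1}), and because the same truncation technique transfers to the Poisson--Boltzmann and Hamiltonian constraint examples, where the data is too rough for a straightforward $H^2$ bootstrap.
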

\begin{proof}
    We split the solution $u= u^l + u^n,$ where $u^l$ is the solution to the linear equation 
    $$(\nabla u^l, v) = (f, v),\quad \forall v\in H_0^1(\Omega).$$
    Since $\Omega$ is convex, elliptic regularity theory implies $u^l \in H^2(\Omega)\cap H_0^1(\Omega),$ hence $u^l\in L^{\infty}(\Omega).$
    It remains to show that $u^n\in L^{\infty}.$
    Using arguments similar to~\cite{Jerome.J1985,Chen.L;Holst.M;Xu.J2007},
    define 
    \begin{equation}
\alpha = \mathop{\textrm{arg~max}}_{c}
    \left\{(c + \sup_{x\in \Omega} u^l)^3 \leqs 0\right\},
\quad
\beta  = \mathop{\textrm{arg~min}}_{c}
    \left\{(c + \inf_{x\in \Omega} u^l)^3 \geqs 0\right\}.
    \end{equation}
    Let $\overline{\phi} = (u^n -\beta)^+ :=\max\{u^n - \beta, 0\}$ and  $\underline{\phi} = (u^n -\alpha)^- :=\min\{u^n - \alpha, 0\}.$
    Then obviously $\overline{\phi}, \underline{\phi}\in H_0^1(\Omega).$
Hence, for $\phi = \overline{\phi}$ or $\phi = \underline{\phi}$ we have
    $$(\nabla u^n , \nabla \phi)  = - ((u^n + u^l)^3, \phi)\leqs 0.$$
    This implies $0\leqs \|\nabla \phi\| \leqs 0,$ so $\phi =0.$
    Thus $\alpha \leqs u^n \leqs \beta$ almost everywhere in $\Omega$.
\end{proof}
In order to establish {\em a priori} $L^{\infty}$ bounds for $u_k,$ we require the mesh satisfy the regularity condition
\begin{equation}
\label{eqn:mesh-condition}
a_{i,j} = \int_{\Omega}\nabla \phi_i \nabla \phi_j \leqs 0,\quad  j\neq i.
\end{equation}
See for example~\cite{Chen.L;Holst.M;Xu.J2007} for a discussion of this
condition.
We then have the following {\em a priori} $L^{\infty}$ estimate for the 
discrete solution $u_k.$
\begin{lemma}[Discrete {\em A Priori} Estimates]
    \label{lm:discrete-apriori}
    Let $u_k\in X_k \subset H_0^1(\Omega)$ be the exact solution 
    to~\eqref{eqn:semi_linear_weak_fem}.
    Assume the triangulation $\cT_k$ of $\Omega$ 
    satisfies~\eqref{eqn:mesh-condition}. 
    Then $u_k\in L^{\infty}(\Omega)$.
\end{lemma}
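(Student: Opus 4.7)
The plan is to mimic at the discrete level the splitting argument used for the continuous case in Lemma~\ref{lm:apriori-u}, replacing Stampacchia cutoffs by their nodal interpolants and using the mesh condition~\eqref{eqn:mesh-condition} to recover the key sign properties. First I would write $u_k = u_k^l + u_k^n$ where $u_k^l \in X_k$ is the Galerkin solution of the linear Poisson problem $(\nabla u_k^l, \nabla v_k) = (f, v_k)$ for all $v_k \in X_k$. Since $\Omega$ is a convex polygon and $f \in L^2(\Omega)$, the continuous solution $u^l$ lies in $H^2(\Omega) \cap H_0^1(\Omega) \hookrightarrow L^\infty(\Omega)$, and under the mesh condition~\eqref{eqn:mesh-condition} the stiffness matrix is an M-matrix, which gives a discrete maximum principle for $u_k^l$ and thus $\|u_k^l\|_{\infty,\Omega} \lesssim \|u^l\|_{\infty,\Omega} < \infty$.

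It remains to show that the nonlinear remainder $u_k^n := u_k - u_k^l \in X_k$, which satisfies
$$(\nabla u_k^n, \nabla v_k) = -\bigl((u_k^n + u_k^l)^3, v_k\bigr), \qquad \forall v_k \in X_k,$$
is also in $L^\infty$. Mirroring the continuous proof, set
$$\beta = \inf\{c \in \R : (c + \inf_\Omega u_k^l)^3 \geqs 0\}, \qquad \alpha = \sup\{c \in \R : (c + \sup_\Omega u_k^l)^3 \leqs 0\},$$
both finite by the $L^\infty$ bound on $u_k^l$. As test functions we use the nodal interpolants $\overline\phi_k = I_k(u_k^n - \beta)^+$ and $\underline\phi_k = I_k(u_k^n - \alpha)^-$, which lie in $X_k$ and vanish on $\partial\Omega$. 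At every interior node $x_i$, the value $(u_k^n(x_i) + u_k^l(x_i))^3$ has the same sign as $\overline\phi_k(x_i)$ (respectively opposite sign to $\underline\phi_k(x_i)$), so using a quadrature/lumping argument the nonlinear term satisfies $((u_k^n + u_k^l)^3, \overline\phi_k) \geqs 0$.

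The main obstacle, and the place where the mesh condition~\eqref{eqn:mesh-condition} is essential, is establishing the discrete analogue of the chain-rule inequality
$$(\nabla u_k^n, \nabla I_k(u_k^n - \beta)^+) \geqs \|\nabla I_k(u_k^n - \beta)^+\|_{0,\Omega}^2 \geqs 0.$$
Writing $u_k^n = \sum_j U_j \phi_j$ and $I_k(u_k^n - \beta)^+ = \sum_j (U_j - \beta)^+ \phi_j$, the bilinear form expands as $\sum_{i,j} a_{ij} U_i (U_j-\beta)^+$; the condition $a_{ij} \leqs 0$ for $i \neq j$ (together with $\sum_j a_{ij} = 0$ for interior rows) converts the usual pointwise identity $t \cdot (t-\beta)^+ \geqs ((t-\beta)^+)^2$ into the desired discrete inequality, since each off-diagonal cross term contributes with a favorable sign. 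Combining the two nonnegative contributions forces $\|\nabla \overline\phi_k\|_{0,\Omega} = 0$, hence $\overline\phi_k \equiv 0$; the same argument applied to $\underline\phi_k$ yields $\alpha \leqs u_k^n(x_i) \leqs \beta$ at every node, and piecewise linearity propagates this to all of $\Omega$, completing the proof.
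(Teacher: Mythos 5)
The paper gives no proof of this lemma at all — it simply cites Jerome (1985) and Chen--Holst--Xu (2007) — so what you have written is a reconstruction from scratch. Your overall strategy (discrete decomposition $u_k=u_k^l+u_k^n$, discrete maximum principle for $u_k^l$ via the M-matrix property from~\eqref{eqn:mesh-condition}, then a discrete Stampacchia cutoff for $u_k^n$ with the bilinear-form inequality coming from off-diagonal sign control) is indeed the standard route taken in those references, and the stiffness-matrix inequality $(\nabla u_k^n,\nabla I_k(u_k^n-\beta)^+)\geqs 0$ can be justified along the lines you sketch. A small correction on the way: for $H_0^1$-conforming piecewise linears the interior row sums of the stiffness matrix are not zero but only nonnegative, $\sum_{j}a_{ij}\geqs 0$, since the contributions of the eliminated boundary nodes are $\leqs 0$; this is harmless because $\beta=-\inf u_k^l\geqs 0$ (as $u_k^l$ vanishes on $\partial\Omega$), so each diagonal contribution $d_i U_i(U_i-\beta)^+$ is still nonnegative.

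The genuine gap is the step where you claim $\bigl((u_k^n+u_k^l)^3,\overline\phi_k\bigr)\geqs 0$. In the continuous proof this is a pointwise sign argument: $\overline\phi=(u^n-\beta)^+$ vanishes exactly on $\{u^n\leqs\beta\}$, and $(u^n+u^l)^3>0$ on $\{u^n>\beta\}$. At the discrete level $\overline\phi_k=I_k(u_k^n-\beta)^+$ is the nodal interpolant of the cutoff, and its support is strictly larger than $\{u_k^n>\beta\}$: on an element where $u_k^n$ crosses the threshold $\beta$, $\overline\phi_k$ is positive on the whole element while $u_k=u_k^n+u_k^l$ becomes negative near the nodes below threshold, so the integrand $u_k^3\,\overline\phi_k$ changes sign inside the element. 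The pointwise sign argument therefore fails, and the ``quadrature/lumping argument'' you invoke to rescue it is not available for the scheme~\eqref{eqn:semi_linear_weak_fem}, which is stated with exact integration of the nonlinear term. You would either need to (a) explicitly modify the discrete problem to use nodal quadrature (mass lumping) for the cubic term — which would change the scheme being analyzed and should be stated — or (b) avoid the cutoff test function entirely and argue as in Chen--Holst--Xu via the nonnegativity of the discrete Green's function under the M-matrix condition, combined with the monotonicity~\eqref{eqn:pbe-monotone} of $N$, to compare the discrete solution against sub- and super-solutions. As written, the proposal silently proves the bound for a lumped variant of~\eqref{eqn:semi_linear_weak_fem} rather than for~\eqref{eqn:semi_linear_weak_fem} itself.
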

\begin{proof}
    See~\cite{Jerome.J1985,Chen.L;Holst.M;Xu.J2007}.
\end{proof}
Lemma~\ref{lm:apriori-u} and Lemma~\ref{lm:discrete-apriori} provide
{\em a priori} $L^{\infty}$ bounds for $u$ and $u_{k}.$
That is, if $u$ and $u_k$ are exact solutions to 
\eqref{eqn:semi_linear_weak} and~\eqref{eqn:semi_linear_weak_fem},
then they must satisfy
$$
u_{-}(x) \leqs u(x), u_{k}(x) \leqs u_{+}(x), 
\textrm{~for~almost~every~} x \in \Omega,
$$
where $u_{-}, u_{+}\in L^{\infty}$ are fixed {\em a priori} bounds.
In other words, we know that any solutions $u$ and $u_k$ to
\eqref{eqn:semi_linear_weak} and~\eqref{eqn:semi_linear_weak_fem}
can be found in $[u_{-}, u_{+}] \cap H_{0}^{1}(\Omega)$,
so that we do not have to look in the larger space $H_0^1(\Omega)$
for $u$ and $u_k$.
We now have the tools in place for establishing the following
quasi-optimal {\em a priori} error estimate for Galerkin approximations.
\begin{proposition}[Quasi-Optimal {\em A Priori} Error Estimates]
    \label{prop:quasi-optimal}
    Let $u$ and $u_k$ be exact solutions to~\eqref{eqn:semi_linear_weak} 
    and~\eqref{eqn:semi_linear_weak_fem}, respectively. 
    If both $u,u_k\in L^{\infty}(\Omega),$ then we have 
     $$|u-u_k|_{1,\Omega} \lesssim \min_{\chi_k\in X_k}|u-\chi_k |_{1,\Omega}.$$
\end{proposition}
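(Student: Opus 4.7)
The plan is to follow the classical argument for monotone semilinear problems, using Galerkin orthogonality, the monotonicity property \eqref{eqn:pbe-monotone} to discard a nonnegative term, and the Lipschitz property \eqref{eqn:pbe-lipschitz} (which is available because of the $L^\infty$ hypothesis on both $u$ and $u_k$) to control the nonlinear contribution by an $L^2$ inner product, finally invoking Poincar\'e to return to the $H^1$ seminorm.

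First I would subtract the discrete equation \eqref{eqn:semi_linear_weak_fem} from the continuous equation \eqref{eqn:semi_linear_weak} restricted to $v = v_k \in X_k$ to obtain the Galerkin orthogonality identity
\begin{equation*}
   a(u-u_k, v_k) + (N(u) - N(u_k), v_k) = 0, \qquad \forall v_k \in X_k.
\end{equation*}
Then, writing $e_k := u - u_k$ and picking an arbitrary $\chi_k \in X_k$, I would split
\begin{equation*}
   |e_k|_{1,\Omega}^2 = a(e_k, e_k) = a(e_k, u - \chi_k) - a(e_k, u_k - \chi_k),
\end{equation*}
and apply Galerkin orthogonality with $v_k = u_k - \chi_k \in X_k$ to rewrite the second term as $(N(u) - N(u_k), u_k - \chi_k)$. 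Decomposing $u_k - \chi_k = -e_k + (u - \chi_k)$ yields
\begin{equation*}
   |e_k|_{1,\Omega}^2 + (N(u) - N(u_k), u - u_k) = a(e_k, u - \chi_k) + (N(u) - N(u_k), u - \chi_k).
\end{equation*}

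Next I would invoke monotonicity \eqref{eqn:pbe-monotone} to drop the second term on the left as nonnegative, leaving only the two right-hand side terms to estimate. The first is bounded directly by Cauchy--Schwarz: $a(e_k, u - \chi_k) \leqs |e_k|_{1,\Omega}\, |u-\chi_k|_{1,\Omega}$. For the second, the pointwise factorization $u^3 - u_k^3 = (u - u_k)(u^2 + u u_k + u_k^2)$, combined with the uniform $L^\infty$ bounds on $u$ and $u_k$ coming from Lemma~\ref{lm:apriori-u} and Lemma~\ref{lm:discrete-apriori}, gives a constant $C = C(\|u\|_{\infty}, \|u_k\|_{\infty})$ such that
\begin{equation*}
   |(N(u)-N(u_k), u-\chi_k)| \leqs C \|u-u_k\|_{0,\Omega} \|u-\chi_k\|_{0,\Omega}.
\end{equation*}
Applying the Poincar\'e inequality $\|w\|_{0,\Omega} \lesssim |w|_{1,\Omega}$ to both factors, this is bounded by a multiple of $|e_k|_{1,\Omega}\, |u-\chi_k|_{1,\Omega}$.

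Combining the two bounds gives $|e_k|_{1,\Omega}^2 \lesssim |e_k|_{1,\Omega}\, |u - \chi_k|_{1,\Omega}$, and dividing by $|e_k|_{1,\Omega}$ and taking the infimum over $\chi_k \in X_k$ yields the desired quasi-optimal estimate. The main subtle point is ensuring that the Lipschitz constant $C$ is uniform in $k$: this is precisely what the discrete $L^\infty$ bound in Lemma~\ref{lm:discrete-apriori} provides, since the bounds $u_-,u_+$ there are independent of the mesh. Everything else is a routine application of Cauchy--Schwarz, Poincar\'e, and monotonicity.
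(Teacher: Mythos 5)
Your proposal is correct and follows essentially the same argument as the paper: Galerkin orthogonality to move the $a(\cdot,\cdot)$ term into the nonlinear term, the decomposition of $u_k-\chi_k$ so that the monotone part $(N(u)-N(u_k),\,u-u_k)\geqs 0$ can be dropped, and the $L^\infty$ bounds combined with Poincar\'e to control the remaining nonlinear term. The only cosmetic difference is that you write the Lipschitz bound via the algebraic factorization $u^3-u_k^3=(u-u_k)(u^2+uu_k+u_k^2)$ whereas the paper uses the mean-value form $3(\theta u+(1-\theta)u_k)^2$; both yield the same constant depending on $\|u\|_\infty,\|u_k\|_\infty$.
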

\begin{proof}
    Note that the error $u-u_k$ satisfies that 
    $$
        b(u-u_k, v_k) + (N(u) - N(u_k), v_k) =0, \quad \forall v_k \in X_k.
    $$
    Therefore, we have 
    \begin{eqnarray*}
        |u-u_k|_{1,\Omega}^2 &=& a(u-u_k,u-u_k ) \\
        &=& a(u-u_k, u-v_k) + a(u-u_k, v_k-u_k)\\
        &\leqs& |u-u_k|_{1,\Omega} |u-v_k|_{1,\Omega} - (N(u) - N(u_k), v_k -u_k)\\
        &=& |u-u_k|_{1,\Omega} |u-v_k|_{1,\Omega} - (u^3 - u_k^3, u -u_k) + (u^3 - u_k^3, u-v_k)\\
        &\lesssim& |u-u_k|_{1,\Omega} |u-v_k|_{1,\Omega}.
    \end{eqnarray*}
    Here, we noted that
$(N(u) - N(u_k), u -u_k) \geqs 0$ by~\eqref{eqn:pbe-monotone} 
and 
    \begin{eqnarray*}
         (N(u) - N(u_k), u-v_k) &\lesssim& \sup_{x\in \Omega}(\theta u(x) + (1-\theta) u_k(x))^2\|u-u_k\| \|u-v_k\|\\
          &\lesssim& |u-u_k|_{1,\Omega} |u-v_k|_{1,\Omega},
    \end{eqnarray*}
by {\em a priori} $L^{\infty}$ bounds for $u$ and $u_k$ and the Poincar\'e inequality.
Since $\chi_{k}\in X_{k}$ is arbitrary, we have $|u-u_k|_{1,\Omega} \lesssim \min_{\chi_k\in X_k}|u-\chi_k |_{1,\Omega}.$
\end{proof}
\begin{remark}
We note a major difference between Proposition~\ref{prop:quasi-optimal} 
and Proposition~\ref{prop:semi_linear_solvability} is that
Proposition~\ref{prop:quasi-optimal} does not require the 
initial mesh to be sufficiently small; however we need the 
{\em a priori} $L^{\infty}$ bound for $u_k,$ which was built
in Lemma~\ref{lm:discrete-apriori}.
\end{remark}
Using the results in Section~\ref{subsec:global-quasi}, we can now easily 
establish quasi-orthogonality.
\begin{lemma}[Quasi-Orthogonality]
  \label{L:ex1-quasi-orthog}
Let $u$ be the solution to equation~\eqref{eqn:semi_linear_weak}, and $u_{k+1}$ and $u_k$ be the solutions to~\eqref{eqn:semi_linear_weak_fem} on $\cT_{k+1}$ and $\cT_k$ respectively.  Let  $X_{k}\subset X_{k+1},$ and the triangulations $\cT_{k}$ satisfy the condition~\eqref{eqn:mesh-condition}. Assume that there exist a $\sigma_{k+1}>0$ with $\sigma_{k+1}\to 0$ as $k\to \infty$ such that 
\begin{equation}
\label{eqn:aubin-nitsche}
    \|u-u_{k+1}\|_{0,\Omega} \leqs \sigma_{k+1} \|\nabla u-\nabla u_{k+1} \|_{0,\Omega},
\end{equation}
Then there exists a constant $C^*>0$, 
such that for sufficiently small $h,$ we have 
$$
| u-u_{k+1} |_{1,\Omega}^2 \leqs \Lambda_{k+1} |u-u_k |_{1,\Omega}^2 
   - | u_{k+1} - u_k |_{1,\Omega}^2,
$$
where $\Lambda_{k+1} = (1 - C^* \sigma_{k+1} K)^{-1}>0$ with $K = 3\sup_{\chi\in [u_{-}, u_{+}]}\|\chi^{2}\|_{\infty,\Omega}.$
\end{lemma}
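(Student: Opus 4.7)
The plan is to specialize the abstract semilinear argument of Theorem~\ref{T:quasi-semilinear} to this concrete $u^3$ nonlinearity, using the \emph{a priori} $L^\infty$ bounds of Lemmas~\ref{lm:apriori-u} and~\ref{lm:discrete-apriori} to control it, and using the Aubin--Nitsche-type hypothesis~\eqref{eqn:aubin-nitsche} to play the role of the lifting inequality~\eqref{E:lifting}. Since $a(\cdot,\cdot)$ is symmetric and coercive, we start from the exact (Pythagorean-style) identity
\begin{equation*}
|u-u_k|_{1,\Omega}^2
= |u-u_{k+1}|_{1,\Omega}^2 + |u_{k+1}-u_k|_{1,\Omega}^2
+ 2\,a(u-u_{k+1},\,u_{k+1}-u_k),
\end{equation*}
so the entire task is to bound the cross term $a(u-u_{k+1},u_{k+1}-u_k)$ from above by a small multiple of $|u-u_{k+1}|_{1,\Omega}\,|u_{k+1}-u_k|_{1,\Omega}$ (a strengthened Cauchy inequality), whereupon Young's inequality and rearrangement will yield the claim.

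First, because $X_k\subset X_{k+1}$, the difference $u_{k+1}-u_k$ is an admissible test function in the discrete equation~\eqref{eqn:semi_linear_weak_fem} at level $k+1$, so subtracting from~\eqref{eqn:semi_linear_weak} gives the Galerkin-type identity
\begin{equation*}
a(u-u_{k+1},u_{k+1}-u_k) = -\,(u^3-u_{k+1}^3,\,u_{k+1}-u_k).
\end{equation*}
Next, using the factorization $u^3-u_{k+1}^3 = (u-u_{k+1})(u^2+uu_{k+1}+u_{k+1}^2)$ together with the \emph{a priori} $L^\infty$ bounds $u,u_{k+1}\in[u_-,u_+]$ gives the pointwise estimate $|u^3-u_{k+1}^3|\leqs K\,|u-u_{k+1}|$, whence by Cauchy--Schwarz
\begin{equation*}
|a(u-u_{k+1},u_{k+1}-u_k)| \leqs K\,\|u-u_{k+1}\|_{0,\Omega}\,\|u_{k+1}-u_k\|_{0,\Omega}.
\end{equation*}
Apply~\eqref{eqn:aubin-nitsche} to the first factor to lift it to the $H^1$ seminorm with gain $\sigma_{k+1}$, and Poincar\'e's inequality to the second factor (with constant $C^*$ absorbing any further shape-regularity constants), yielding
\begin{equation*}
|a(u-u_{k+1},u_{k+1}-u_k)| \leqs C^*K\sigma_{k+1}\,|u-u_{k+1}|_{1,\Omega}\,|u_{k+1}-u_k|_{1,\Omega}.
\end{equation*}

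Finally, by $2ab\leqs a^2+b^2$ we obtain
\begin{equation*}
2|a(u-u_{k+1},u_{k+1}-u_k)| \leqs C^*K\sigma_{k+1}\left(|u-u_{k+1}|_{1,\Omega}^2+|u_{k+1}-u_k|_{1,\Omega}^2\right),
\end{equation*}
and substituting into the opening identity gives
\begin{equation*}
|u-u_k|_{1,\Omega}^2 \geqs (1-C^*K\sigma_{k+1})\bigl(|u-u_{k+1}|_{1,\Omega}^2+|u_{k+1}-u_k|_{1,\Omega}^2\bigr),
\end{equation*}
which, after choosing $h$ small enough that $C^*K\sigma_{k+1}<1$ (possible since $\sigma_{k+1}\to 0$), rearranges to the claimed bound with $\Lambda_{k+1}=(1-C^*K\sigma_{k+1})^{-1}$. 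The main obstacle is purely the control of the nonlinear cross term: everything hinges on converting $(u^3-u_{k+1}^3,\cdot)$ into a $\gamma<1$ strengthened Cauchy inequality, which requires simultaneously the uniform $L^\infty$ reservoir $K$ and the $L^2$-gain $\sigma_{k+1}$ from duality. The mesh condition~\eqref{eqn:mesh-condition} enters only to validate Lemma~\ref{lm:discrete-apriori} and thereby freeze $K$ independently of $k$.
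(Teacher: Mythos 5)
Your proof is correct and follows essentially the same route as the paper: the paper simply verifies the Lipschitz condition~\eqref{E:lipschitz} via the \emph{a priori} $L^\infty$ bounds and then cites Theorem~\ref{T:quasi-semilinear}, whereas you unpack that theorem (and Theorem~\ref{T:quasi} underneath it) into the explicit Pythagorean expansion, Galerkin-orthogonality reduction of the cross term, strengthened Cauchy inequality via the $L^\infty$-Lipschitz bound and Aubin--Nitsche lifting, and Young's inequality. The only cosmetic point is that the constant you call $C^*$ absorbs the Poincar\'e constant of $\Omega$ (and any norm-equivalence factor), not shape-regularity constants, but this does not affect the argument.
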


\begin{proof}
    From the definition, $a(\cdot, \cdot)$ is a symmetric coercive bilinear form. The energy norm $\tbar v \tbar:=a(v,v) = |v|_{1,\Omega}^{2}$ is equivalent to the $H^{1}$-norm in $H_{0}^{1}(\Omega).$
    Now we verify the Lipschitz continuity~\eqref{E:lipschitz} for $N(u) = u^{3}.$
It follows by the {\em a priori} error estimates Lemma~\ref{lm:apriori-u} and Lemma~\ref{lm:discrete-apriori} of $u$ and $u_{k+1}:$
\begin{eqnarray*}
  |( N(u) - N(u_{k+1}), v_{k+1})|&\leqs& \sup_{\chi\in [u_-, u_+]}\|3\chi^{2}\|_{\infty} \|u-u_{k+1}\|_{0,\Omega} \|v_{k+1}\|_{0,\Omega}\\
  &\leqs& K \|u-u_{k+1}\|_{0,\Omega} \|v_{k+1}\|_{1,\Omega},
\end{eqnarray*}
where $K = \sup_{\chi\in [u_-, u_+]}\|3\chi^{2}\|_{\infty}<\infty.$
The conclusion follows by Theorem~\ref{T:quasi-semilinear}.
\end{proof}

Now that the Quasi-Orthogonality Assumption~\ref{A:quasi} is in place,
recall that the residual-based {\em a posteriori} error indicator for 
equation~\eqref{eqn:semi_linear_weak} is given 
by~\eqref{eqn:semi_linear_indicator}:
$$\eta_k(u_{k}, \tau)^2 := h^2_{\tau} \|u_k^3 -f\|^2_{0,\tau} + \sum_{\sigma\subset \partial\tau} h_{\sigma} \left\|\left[\nabla u_k\cdot n_{\sigma} \right]\right\|^2_{0, \sigma},$$
with $\eta_k(v, \mathcal{S}) := \left(\sum_{\tau\in \mathcal{S}} \eta_k^2(v, \tau)\right)^{\frac{1}{2}}$ for any subset $\mathcal{S} \subset \cT_k.$
The second ingredient of the contraction argument, namely the
Upper Bound Assumption~\ref{A:upper}, is provided by 
Theorem~\ref{thm:semi_linear_posteriori}.
To apply the contraction Theorem~\ref{T:contraction-afem}, we only need to 
verify the Local Lipschitz Assumption~\ref{A:local-lipschitz}, which 
implies the Indicator Reduction Lemma~\ref{L:indicator-reduction}. 
To this end, we introduce the PDE-related indicator:
\begin{eqnarray*}
    \eta^2({\bD},\tau) &:=& h_{\tau}^2\sup_{\chi\in [u_-, u_+]} \|3 \chi^{2}\|_{\infty,\tau}^2.
\end{eqnarray*}
For any subset $\cS\subset \cT,$ let $\eta({\bD},\cS):= \max_{\tau\in \cS}\{\eta({\bD},\tau)\}.$
By the definition, it is obvious that $\eta({\bD},\cT)$ is monotone decreasing, i.e.,
\begin{equation}
    \label{eqn:data-monotone}
        \eta({\bD},\cT_*) \le \eta({\bD},\cT)
\end{equation}
for any refinement $\cT_*$ of $\cT.$

\begin{lemma}[Local Lipschitz]
   \label{L:local-lipschitz-ex1}
Let $\cT$ be a conforming partition.
For all $\tau \in \cT$ and for any pair of discrete functions
$v,w \in [u_-,u_+] \cap X(\cT),$ it holds that
\begin{eqnarray}
|\eta(v,\tau) - \eta( w,\tau)|
&\leqs&\bar{\Lambda}_1 \eta(\mathbf{D},\tau) |v-w |_{1,\omega_{\tau}},
\end{eqnarray}
where $\bar{\Lambda}_1 > 0$ depends only on the shape-regularity of $\cT_0,$
 and the maximal values that $u^{3}$ can
obtain on the $L^{\infty}$-interval $[u_-,u_+].$
\end{lemma}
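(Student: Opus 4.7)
The plan is to bound the difference $|\eta(v,\tau) - \eta(w,\tau)|$ by exploiting the fact that $\eta(\cdot,\tau)$ is an $\ell^2$-assembly of seminormlike quantities, most of which are affine in their argument (the jump pieces) and only the volumetric piece is genuinely nonlinear (through $v^3$). Applying the reverse triangle inequality in $\ell^2$ and using that $f$ cancels in the interior residual and that jumps are linear in $\nabla(\cdot)$, I would first reduce the claim to
\begin{equation*}
|\eta(v,\tau) - \eta(w,\tau)|^2
\leqs
h_\tau^2 \|v^3 - w^3\|_{0,\tau}^2
+ \sum_{\sigma\subset\partial\tau} h_\sigma
 \bigl\|[\nabla(v-w)\cdot n_\sigma]\bigr\|_{0,\sigma}^2.
\end{equation*}
This removes the data $f$ from the analysis and reduces everything to estimating two geometric/algebraic contributions.

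For the interior residual term, I would use the algebraic factorization $v^3 - w^3 = (v-w)(v^2+vw+w^2)$ together with the admissibility hypothesis $v,w\in[u_-,u_+]\cap X(\cT)$, which yields
\begin{equation*}
h_\tau \|v^3-w^3\|_{0,\tau}
\leqs 3 h_\tau \sup_{\chi\in[u_-,u_+]} \|\chi^2\|_{\infty,\tau}\,\|v-w\|_{0,\tau}
= \eta(\mathbf{D},\tau)\,\|v-w\|_{0,\tau}.
\end{equation*}
This is precisely where the {\em a priori} $L^\infty$ bounds from Lemma~\ref{lm:apriori-u} and Lemma~\ref{lm:discrete-apriori} enter: without them, the cubic nonlinearity cannot be controlled locally in terms of $\|v-w\|$ alone.

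For the jump contributions, I would invoke the standard trace inequality followed by the inverse inequality available for piecewise-linear functions on shape-regular partitions, to obtain
\begin{equation*}
h_\sigma^{1/2} \bigl\|[\nabla(v-w)\cdot n_\sigma]\bigr\|_{0,\sigma}
\leqs C_\sigma\,|v-w|_{1,\omega_\tau},
\end{equation*}
with $C_\sigma$ depending only on the shape-regularity of $\cT_0$. Summing over the $d+1$ faces of $\tau$ and combining with the interior bound (after passing $\|v-w\|_{0,\tau}\leqs \|v-w\|_{1,\omega_\tau}$) produces the desired estimate with $\bar{\Lambda}_1$ determined by the shape-regularity constant and the sup of $3\chi^2$ over $[u_-,u_+]$.

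The main obstacle is the matching of the $h_\tau$-scaling on the two sides: the interior piece naturally produces the factor $\eta(\mathbf{D},\tau)\sim h_\tau$, while the jump terms only produce a constant multiple of $|v-w|_{1,\omega_\tau}$ with no extra $h_\tau$. To write the bound in the single form $\bar{\Lambda}_1\,\eta(\mathbf{D},\tau)\,\|v-w\|_{1,\omega_\tau}$, one has to absorb the ``linear part'' of the indicator into $\eta(\mathbf{D},\tau)$, which amounts to augmenting $\eta(\mathbf{D},\tau)$ by a constant (or using the uniform bound $h_\tau\leqs h_0\leqs 1$ guaranteed by our initial mesh assumption) so that $\eta(\mathbf{D},\tau)$ bounds both the nonlinearity Lipschitz constant and the geometric constants from the jump terms. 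Once this harmonization is done, the monotonicity property~\eqref{eqn:data-monotone} of $\eta(\mathbf{D},\cdot)$ is preserved, so all hypotheses needed by Lemma~\ref{L:indicator-reduction} continue to hold.
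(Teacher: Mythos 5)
Your decomposition (interior residual controlled via $v^3-w^3 = (v-w)(v^2+vw+w^2)$ plus the admissibility interval, jump controlled via trace/inverse) is exactly the paper's, and you have correctly put your finger on a real problem that the paper's own proof elides when it simply says ``Therefore, we get the desired estimate.'' With $\eta(\mathbf{D},\tau) = h_\tau\sup_{\chi\in[u_-,u_+]}\|3\chi^2\|_{\infty,\tau}$ as defined, the interior piece indeed scales as $\eta(\mathbf{D},\tau)\|v-w\|_{0,\tau}$, but the jump piece, after $h_\sigma^{1/2}\|[\nabla(v-w)\cdot n_\sigma]\|_{0,\sigma}\leqs h_\sigma^{1/2} h_\tau^{-1/2}|v-w|_{1,\omega_\tau}\lesssim |v-w|_{1,\omega_\tau}$, produces an $O(1)$ constant in front of $|v-w|_{1,\omega_\tau}$ with no $h_\tau$ factor, and this cannot be absorbed into $\eta(\mathbf{D},\tau)\sim h_\tau$ uniformly over refinement levels. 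So the bound as literally stated fails whenever $h_\tau$ is small, and your first fix — redefining $\eta(\mathbf{D},\tau)$ to include an $O(1)$ contribution coming from the (constant) diffusion coefficient of the principal part, exactly as in the Cascon--Kreuzer--Nochetto--Siebert framework where their data indicator carries a term like $\|A\|_{L^\infty(\omega_\tau)}$ without an extra $h_\tau$ — is the correct repair. This preserves monotonicity under refinement and keeps Lemma~\ref{L:indicator-reduction} intact.

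Your parenthetical alternative, however, does not work: using $h_\tau \leqs h_0 \leqs 1$ gives an inequality in the wrong direction. To bound the $O(1)$ jump constant by $\eta(\mathbf{D},\tau)\sim h_\tau K$ you would need $h_\tau$ bounded \emph{below}, not above, and the mesh size goes to zero under adaptive refinement. Drop that alternative. A second, more minor point: the interior bound actually produces $\eta(\mathbf{D},\tau)\|v-w\|_{0,\tau}$, and on the patch $\omega_\tau$ the $L^2$ norm is not controlled by the $H^1$ \emph{seminorm} (no local Poincar\'e for functions not vanishing on $\partial\omega_\tau$), so the estimate really lands in the full local $H^1$ norm $\|v-w\|_{1,\omega_\tau}$, not $|v-w|_{1,\omega_\tau}$. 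This matches the statement of the general Assumption~\ref{A:local-lipschitz} (which uses $\|\cdot\|_{1,2,\omega_\tau}$) rather than the seminorm version in Lemma~\ref{L:local-lipschitz-ex1}; after summing over elements in Lemma~\ref{L:indicator-reduction} and invoking a global Poincar\'e for $v-w\in H^1_0(\Omega)$, the distinction washes out, but it is worth being precise at the local level.
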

\begin{proof}
By the definition of $\eta,$  we have
\begin{eqnarray*}
    \eta(v,\tau) & \lesssim & \eta(w,\tau) + h_{\tau} \|v^{3} - w^{3}\|_{0,\tau} + \frac{1}{2}\sum_{\sigma\subset \partial \tau}h_{\sigma}^{\frac{1}{2}}\|n_{\sigma}\cdot [\nabla (v-w)]\|_{0,\sigma}
\end{eqnarray*}
Notice that 
$$
    \|v^{3} - w^{3}\|_{0,\tau}\le \left(\sup_{\chi\in [u_-, u_+]} \|3\chi^{2}\|_{\infty,\tau}\right) \|v-w\|_{0,\tau}.
$$
On the other hand, we also have
$$\|n_{\sigma}\cdot [\nabla (v-w)]\|_{0,\sigma} \le h_{\tau}^{-\frac{1}{2}}\|\nabla v - \nabla w\|_{0,\omega_{\tau}}.$$
 Therefore ,we get the desired estimate for $\eta.$
\end{proof}

Combining all of the above we obtain a contraction 
result for the AFEM algorithm:
\begin{theorem}[Contraction and Convergence]
    \label{T:ex1-contraction}
Let $\{\cT_k, V_k, u_k \}_{k\geqs 0}$ be the
sequence of finite element meshes, spaces, and solutions,
respectively, produced by AFEM($\theta$,$l$) with
marking parameter $\theta \in (0,1]$ and bisection level $\ell \geqs 1$.
Let $h_0$ be sufficiently fine so that Lemma~\ref{L:ex1-quasi-orthog}
holds for $\{\cT_k, V_k, u_k\}_{k\geqs 0}.$
Then, there exist constants $\gamma > 0$ and $\alpha \in (0,1)$,
depending only on $\theta$, $\ell$, and the shape-regularity of
the initial triangulation $\cT_0$, such that
$$
    |u - u_{k+1} |_{1,\Omega}^2 + \gamma \eta_{k+1}^2
   \leqs
\alpha^2 \left( |u - u_k|_{1,\Omega}^2 + \gamma \eta_k^2 \right).
$$
Consequently, we have the following convergence of AFEM algorithm:
$$
    |u - u_k |_{1,\Omega}^2 + \gamma \eta_k^2
   \leqs
   C_0 \alpha^{2k},
$$
for some constant $C_0 = C_0(u_0, h_0, \theta, l, \cT_0).$
\end{theorem}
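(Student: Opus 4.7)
The plan is to verify the three hypotheses of the abstract contraction Theorem~\ref{T:contraction-afem} along the sequence $\{\cT_k, V_k, u_k\}_{k\geqs 0}$ produced by AFEM, and then read off the conclusion. Working with the energy norm $\tbar v\tbar = |v|_{1,\Omega}$, the desired contraction then follows after a single invocation of that theorem, and the geometric decay in $k$ is obtained by iterating.

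First I would install the required \emph{a priori} $L^\infty$ control so that the local Lipschitz estimate of Lemma~\ref{L:local-lipschitz-ex1} and the quasi-orthogonality of Lemma~\ref{L:ex1-quasi-orthog} apply uniformly in $k$. Because the initial mesh satisfies~\eqref{eqn:mesh-condition} and this property is preserved by bisection refinement (up to shape-regularity constants inherited from $\cT_0$), Lemma~\ref{lm:discrete-apriori} places every $u_k$ in the fixed interval $[u_-,u_+]$, which in turn makes the constant $K=3\sup_{\chi\in[u_-,u_+]}\|\chi^2\|_{\infty,\Omega}$ and the data indicator $\eta(\mathbf{D},\cdot)$ bounded independently of $k$. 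With this in place, Theorem~\ref{thm:semi_linear_posteriori} yields the Upper-Bound Assumption~\ref{A:upper} with a $k$-independent constant $C_1$ (take $C_1=C_0^2$), and Lemma~\ref{L:ex1-quasi-orthog} yields Quasi-Orthogonality Assumption~\ref{A:quasi} with constant $\Lambda_{k+1}=(1-C^*\sigma_{k+1}K)^{-1}$.

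Next I would combine the Local Lipschitz Lemma~\ref{L:local-lipschitz-ex1}, the monotonicity~\eqref{eqn:data-monotone} of $\eta(\mathbf{D},\cT_k)$, and the D\"orfler marking~\eqref{E:dorfler-property} used inside $\textsf{MARK}$, so that Lemma~\ref{L:indicator-reduction-abstract} applies and produces uniform constants $C_2>0$ and $\omega\in(0,1)$ (depending only on $\theta$, $\ell$, and the shape-regularity of $\cT_0$) in the Indicator Reduction Assumption~\ref{A:reduct}. At this point all three abstract assumptions are in force with constants that do not depend on $k$ except through $\Lambda_{k+1}$.

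The main obstacle is matching the quasi-orthogonality constant $\Lambda_{k+1}$ to the structural bound
$$
1\leqs \Lambda_{k+1} < 1 + \frac{\beta\omega}{C_1 C_2}
$$
required by Theorem~\ref{T:contraction-afem}. Since $\sigma_{k+1}\to 0$ from~\eqref{eqn:aubin-nitsche} (an Aubin--Nitsche duality estimate valid on the convex polygon $\Omega$, and a decreasing function of the maximal mesh size), choosing $h_0$ sufficiently fine forces $\sigma_{k+1}$ to be uniformly small for every $k\geqs 0$, hence $\Lambda_{k+1}$ uniformly close to one, so that the displayed inequality holds with a fixed margin for all $k$. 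Fixing any $\beta\in(0,1)$ and selecting $\gamma>0$ from the non-empty interval~\eqref{E:gamma-interval} — which is then $k$-independent — Theorem~\ref{T:contraction-afem} delivers a single $\alpha\in(0,1)$ such that
$$
|u-u_{k+1}|_{1,\Omega}^2+\gamma\eta_{k+1}^2
\leqs \alpha^2\bigl(|u-u_k|_{1,\Omega}^2+\gamma\eta_k^2\bigr),
\qquad \forall k\geqs 0.
$$
Iterating this inequality from $k=0$ gives the geometric decay $|u-u_k|_{1,\Omega}^2+\gamma\eta_k^2\leqs C_0\alpha^{2k}$ with $C_0=|u-u_0|_{1,\Omega}^2+\gamma\eta_0^2$, which depends only on $u_0$, $h_0$, $\theta$, $\ell$, and $\cT_0$, completing the proof.
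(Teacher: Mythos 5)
Your approach is essentially the paper's own: the published proof of Theorem~\ref{T:ex1-contraction} is a single sentence, ``the results follow from Theorem~\ref{T:contraction-afem}'', and what you have done is expand that into the implicit chain of verifications (Lemma~\ref{L:ex1-quasi-orthog} for Assumption~\ref{A:quasi}, the upper bound~\eqref{eqn:semi1-upper} from Theorem~\ref{thm:semi_linear_posteriori} for Assumption~\ref{A:upper}, and Lemmas~\ref{L:local-lipschitz-ex1} and~\ref{L:indicator-reduction-abstract} together with D\"orfler marking for Assumption~\ref{A:reduct}), plus the observation that $\sigma_{k+1}\to 0$ lets one push $\Lambda_{k+1}$ uniformly into the required window~\eqref{E:lambda-interval} by refining $h_0$. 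That is exactly the intended argument.

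One caution: you assert that the weak-acuteness condition~\eqref{eqn:mesh-condition} is ``preserved by bisection refinement.'' This is not true in general --- newest-vertex bisection can create obtuse triangles from acute ones --- so Lemma~\ref{lm:discrete-apriori} cannot be invoked for all $k$ on that basis. The theorem statement avoids this by \emph{stipulating} that Lemma~\ref{L:ex1-quasi-orthog} (whose hypotheses include~\eqref{eqn:mesh-condition} for each $\cT_k$) holds along the whole AFEM sequence; you should take this as a hypothesis rather than attempt to derive it from properties of the refinement routine.
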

\begin{proof}
The results follow from 
Theorems~\ref{T:contraction-afem}.
\end{proof}

\subsection{The Poisson-Boltzmann Equation}
\label{subsec:pbe}

The second example we consider is the nonlinear 
Poisson-Boltzmann equation (PBE), which is widely 
used for modeling the electrostatic interactions among charges particles;
it is important in many areas of science and engineering, including
biochemistry and biophysics.
The nonlinear PBE is
\begin{equation} \label{eqn:pbe}
    \begin{array}{rlll} 
    -\nabla \cdot (\epsilon \nabla \tilde{u}) + \kappa^2 \sinh \tilde{u} 
         &=& f, & \text{~in~} \Omega, \\
    u &=& 0, & \textrm{~on~} \partial \Omega,
    \end{array}
\end{equation} 
where $f = \sum_{i=1}^{N} q_i \delta(x_i)$, with 
$x_{i}\in \Omega_{m}\subset \Omega.$
Here, $\epsilon = \epsilon(x)>0$ is a strictly positive 
spatially-dependent dielectric coefficient, 
with the modified Debye-Huckel constant taking the value $\kappa = 0$ in the 
solute (molecule) region $\Omega_{m}$ and then strictly positive in the solvent 
region $\Omega_{s}:=\Omega\setminus \Omega_{m}.$
We will denote the interface between the molecular and solvent
regions as $\Gamma=\partial\Omega_{m}$.
 
One of the main analysis and approximation theory difficulties with the PBE
arises from the singular function $f$, which does not belong to 
$H^{-1}(\Omega)$; this implies~\eqref{eqn:pbe} does not have a solution 
in $H^{1}(\Omega),$ or at least does not have a normal $H^1$ weak formulation
with test functions coming form $H^1$.
To address this and other features of the PBE, 
Chen, Xu and Holst~\cite{Chen.L;Holst.M;Xu.J2007} used a two-scale
decomposition (see also~\cite{GDLM93,ZPVKL96}) to split the solution into a 
self-energy corresponding to the electrostatic potential $u^{s}$, and 
a screening potential due to high dielectric and mobile ions in the 
solution region. 
The singular component $u^s$ of the electrostatic potential satisfies
\begin{eqnarray}
    \label{eqn:us_1}
    -\nabla \cdot (\epsilon_m \nabla u^s) = \sum_{i=1}^{N} q_i \delta(x_i),
\end{eqnarray}
which can be assembled from the Green's functions
$u^s : = \sum_{i=1}^{N} q_i/(\epsilon_m |x-x_i|)$.
Subtracting~\eqref{eqn:us_1} from~\eqref{eqn:pbe} gives the
equation for $u$:
\begin{eqnarray}
\label{eqn:ur_1}
-\nabla \cdot (\epsilon \nabla u) + \kappa^2 \sinh (u + u^s) = \nabla \cdot ((\epsilon - \epsilon_m) \nabla u^s). 
\end{eqnarray}
In~\cite{Chen.L;Holst.M;Xu.J2007}, a new solution theory,
approximation theory, and convergent AFEM algorithm for the nonlinear PBE 
was established, based on this decomposition.
However, it was discovered later numerically that this decomposition 
requires that the regular component must be solved at an extremely high 
accuracy.
This defect is built into the decomposition itself due to the large
scale separation between the two components of the splitting.
A related decomposition scheme without this stability problem was 
studied numerically for finite difference schemes
in~\cite{Chern.I;Liu.J;Wan.W2003}, and then analyzed
carefully in~\cite{Holst.M;McCammon.J;Yu.Z;Zhou.Y2009}.
This 3-term decomposition uses the same first component in the molecular 
$u^{s}$ as defined in~\eqref{eqn:us_1};
the second component $u^h$ is the harmonic extension of the trace of
$u^s$ on $\Gamma$ (the interface between $\Omega_m$ and $\Omega_s$)
into the molecular region, with $u^h$ satisfying
\begin{equation} \label{eqn:uh_1}
    \begin{array}{rlll} 
        -\Delta u^h & =&  0 &\mbox{in}~\Omega_m,  \\
              u^h & =&-u^s  &\mbox{on}~\Gamma.
    \end{array}
\end{equation} 
One sets $u^s + u^h =0$ in $\Omega_s,$ 
with the harmonic extension $u^h$ continuous across the interface
by construction.
The remaining regular component satisfies the Regularized PBE (RPBE):
\begin{equation}
    \label{eqn:reg-pbe}
    \begin{array}{rlll}
    -\nabla\cdot (\epsilon \nabla u) + \kappa^2 \sinh (u) &=& 0 &\mbox{in } \Omega\\
    
    [u]_{\Gamma} =0 \mbox{ and }{\left[\epsilon \frac{\partial u}{\partial n_{\Gamma}}\right]}_{\Gamma} &=& g_{\Gamma} &\mbox{on } \Gamma\\

    u|_{\partial \Omega} & = & g &\mbox{on } \partial\Omega.\\
    \end{array},
\end{equation}
where $$g_{\Gamma} = g_{\Gamma} :=\varepsilon_m \frac{\partial 
(u^s + u^{h})}{\partial n_{\Gamma}}|_{\Gamma}.$$
Apart from the techniques required to handle the singular features 
described above, the remaining complexities (the discontinuous dielectric 
and Debye-Huckel constants and super-critical nonlinearity) can be
handled directly by the framework described in this paper;
in particular, both forms of the regularized 
problem~\eqref{eqn:ur_1} and~\eqref{eqn:reg-pbe},
analyzed in~\cite{Chen.L;Holst.M;Xu.J2007} 
and in~\cite{Holst.M;McCammon.J;Yu.Z;Zhou.Y2009} respectively,
fit into the class of semilinear problems described in
Section~\ref{subsec:semilinear-contraction}. 
The results remaining to be established for use of the AFEM
contraction framework essentially all follow from Lipschitz control 
of the nonlinearity~\eqref{E:lipschitz};
this control is gained through establishing continuous and
discrete {\em a priori} $L^{\infty}$ estimates for the weak solution $u$
to~\eqref{eqn:ur_1} and~\eqref{eqn:reg-pbe},
and for the Galerkin approximation $u_k$ of these solutions.
Such {\em a priori} $L^{\infty}$ estimates are established in
analyzed in~\cite{Chen.L;Holst.M;Xu.J2007} 
and~\cite{Holst.M;McCammon.J;Yu.Z;Zhou.Y2009}, following
cutoff-function arguments similar to those used in
Lemma~\ref{lm:apriori-u} above.
Both the quasi-orthogonality result in
Theorem~\ref{T:quasi-semilinear} and the nonlinear local Lipschitz
Assumption~\ref{A:local-lipschitz}
follow from the these {\em priori} $L^{\infty}$ bounds;
for details see~\cite{Holst.M;McCammon.J;Yu.Z;Zhou.Y2009}.
Contraction (hence convergence) of AFEM then follows by the
contraction Theorem~\ref{T:contraction-afem}; 
see~\cite{Holst.M;McCammon.J;Yu.Z;Zhou.Y2009} for the
complete argument.
For a short derivation of the equation, and a more detailed discussion of 
the solution theory, the approximation theory, and adaptive methods,
see~\cite{Chen.L;Holst.M;Xu.J2007,Holst.M;McCammon.J;Yu.Z;Zhou.Y2009}.

\subsection{The Hamiltonian Constraint Equation}
\label{subsec:hc}

The third example we consider is the scalar Hamiltonian constraint equation, 
which together with the vector momentum constraint, appears as the 
coupled Einstein constraint equations which arise in general relativity.
The derivation of the constraint equations is based on a 
\emph{conformal decomposition} technique, 
introduced by Lichnerowicz and York~\cite{aL44,jY71,jY72}.
In certain physical situations (constant mean extrinsic curvature of
the 3-manifold spatial domain), the constraints decouple so that the
(linear) momentum constraint can be solved first for a vector potential $w$,
leaving the Hamiltonian constraint to be solved separately
for a scalar conformal factor $u$.
Let $\Omega \subset \R^d$ be bounded and polyhedral, with $d \geqs 2.$
We consider then AFEM algorithms the scalar Hamiltonian constraint
equation:
Find $u$ such that
\begin{equation}
\label{eqn:H-constraint}
    \left\{\begin{array}{rll}
- \nabla \cdot (A \nabla u) + N(u) &=& 0 \quad \quad \mathrm{in}~ \Omega,\\
  n \cdot (A \nabla u) + G(u) &=& 0 \quad \quad \mathrm{on}~ \partial_N \Omega,\\
                            u &=& 0 \quad \quad \mathrm{on}~ \partial_D \Omega.                 
          \end{array}\right.
\end{equation}
The boundary conditions of primary interest in both mathematical and
numerical relativity include the 
cases $\partial_{D}\Omega=\emptyset$ or $\partial_{N}\Omega=\emptyset,$ 
which covers various combinations of boundary conditions considered 
in the literature~\cite{YoPi82,dM05b,sD04} for the constraint equations.
The tensor $A$ is a Riemannian metric, so it appears here as a uniformly 
positive definite symmetric matrix function on $\Omega$:
\begin{equation}
c_1 |\xi|^2 \leqs A_{ij}(x) \xi_i \xi_j \leqs c_2 |\xi|^2, 
\quad \mathrm{a.e.~in}~ \Omega,
\end{equation}
with component functions $A_{ij} \in L^{\infty}$.
The principle part of equation~\eqref{eqn:H-constraint} is the 
Laplace-Beltrami operator with certain Riemannian metric $h_{ab}.$
The (nonlinear) boundary function $G$ is assumed to be 
$C^2(\partial_N \Omega).$
The nonlinear function $N(\cdot)$ in the Hamiltonian constraint equation reads:
\begin{equation*}
N(\phi) =  
a_{\tiR}\phi
+ a_{\tau}\phi^5 
- a_{\rho}\phi^{-3} 
- a_{w}\phi^{-7},
\end{equation*}
where $a_\tau,a_\rho,a_w\in H^{-1}_{D}(\Omega)$  are nonnegative functions, 
and $a_{\tiR} := \frac{1}{8}R\in H^{-1}_{D}(\Omega)$, with the scalar 
curvature $R$ of the metric $h_{ab}.$
Here $$u_{-}, u_{+}\in H^{1}_{D}(\Omega)\cap L^{\infty}(\Omega) 
\mbox{  with  }0<u_{-}\leqs u_{+}<\infty.$$
The construction of the subsolution $u_{-}$ and the supersolution $u_{+}$ 
for the constraint equations was discussed in 
detail in~\cite{Holst.M;Nagy.G;Tsogtgerel.G2009}.

Note that $N(u)$ is well-defined only on essentially
bounded subsets of $L^2:$
\begin{equation}
N : [u_-,u_+] \subset L^2(\Omega) \rightarrow H_D^{-1}(\Omega).
\end{equation}
Such a restriction will give rise to a Lipschitz property of $N$ on
this set:
\begin{equation}
   \label{E:NK}
\|N(u)-N(v)\|_{\cL(H^1_D(\Omega),H^{-1}_{D}(\Omega))} \leqs K \|u-v\|_{L^2(\Omega)}, 
    \quad \forall u,v \in [u_-,u_+] \cap L^2(\Omega),
\end{equation}
which is a key tool 
for controlling the nonlinearity in the solution theory, when
combined with {\em a priori} $L^{\infty}$ bounds to establish
the interval $[u_-,u_+],$ as we saw in 
Section~\ref{subsec:semilinear-contraction}. 

A weak formulation of equation~\eqref{eqn:H-constraint} is then: 
Find $u \in [u_-,u_+] \cap H^{1}_{D}(\Omega)$ such that
\begin{equation}
\label{eqn:H-weak}
a(u,v) + \langle f(u),v \rangle = 0, \quad \forall v \in H^{1}_D(\Omega),
\end{equation}
where 
\begin{eqnarray*}
a(u,v) & = & \int_{\Omega} A \nabla u\cdot \nabla v dx,
\\
\langle f(u),v \rangle & = &
     \int_{\Omega} N(u)v dx + \int_{\partial_N \Omega}G(u)v ds.
\end{eqnarray*}
Thanks to the control of the nonlinearity provided by
the {\em a priori} $L^{\infty}$ bounds we established on any 
solution $u$ to the Hamiltonian constraint, it was showed in~\cite{Holst.M;Nagy.G;Tsogtgerel.G2009} that equation~\eqref{eqn:H-weak} is a well-posed problem.
In particular, there exists a solution 
$u \in [u_-,u_+] \cap H_{D}^{1}(\Omega).$

The remaining ideas in design the AFEM algorithm and its convergence analysis
are the same as before. 
Namely, we develop the {\em a priori} $L^{\infty}$ bounds of $u$ and the 
finite element approximation $u_{k}.$
Based on these {\em a priori} bounds, we then establish
quasi-orthogonality and the local Lipschitz property.
Finally, contraction and convergence of the AFEM algorithm follows by the 
contraction Theorem~\ref{T:contraction-afem}.
For a detailed discussion of the equation, the solution theory, approximation
theory, and convergence analysis of AFEM, 
see~\cite{Holst.M;Tsogtgerel.G2009,Holst.M;Nagy.G;Tsogtgerel.G2009}.

\section{Conclusion}
\label{sec:conc}

In this article we developed convergence theory for a general class
of adaptive approximation algorithms for abstract nonlinear 
operator equations on Banach spaces, and then used the theory to
obtain convergence results for practical adaptive finite element
methods (AFEM) applied to a several classes of nonlinear elliptic equations.
In the first part of the paper,
we developed a weak-* convergence framework for nonlinear 
operators, whose Gateaux derivatives are locally Lipschitz and satisfy a local inf-sup condition.
The framework can be viewed as extending the recent convergence results for
linear problems of Morin, Siebert and Veeser 
to a general nonlinear setting.
We formulated an abstract adaptive approximation algorithm 
for nonlinear operator equations in Banach spaces with local structure.
The weak-* convergence framework was then applied to this class of 
abstract locally adaptive algorithms, giving a general convergence result.
The convergence result was then applied to a standard AFEM algorithm in
the case of several semilinear and quasi-linear scalar elliptic equations 
and elliptic systems, including a semilinear problem with polynomial nonlinearity, the 
steady Navier-Stokes equations, and a more general quasilinear problem.
This yielded several new AFEM convergence results for these nonlinear 
problems.

In the second part of the paper, we developed a second abstract convergence 
framework based on strong contraction, extending the recent contraction 
results for linear problems of Cascon, 
Kreuzer, Nochetto, and Siebert and of Mekchay and Nochetto 
to abstract nonlinear problems.
We then established conditions under which it is possible to apply the 
contraction framework to the abstract adaptive algorithm defined earlier,
giving a contraction result for AFEM-type algorithms applied to nonlinear
problems.
The contraction result was then applied to a standard AFEM algorithm in
the case of several semilinear scalar elliptic equations,
including a semilinear problem with polynomial nonlinearity, the 
Poisson-Boltzmann equation, and the Hamiltonian constraint in
general relativity, yielding AFEM contraction results in each case.

\section{Acknowledgments}
   \label{sec:ack}

MH was supported in part by NSF Awards~0715146 and 0915220,
by DOE Award DE-FG02-04ER25620,
and by DOD/DTRA Award HDTRA-09-1-0036.
GT and YZ were were supported in part by NSF Award~0715146.

\bibliographystyle{abbrv}
\bibliography{../bib/books,../bib/papers,../bib/mjh,../bib/library,../bib/ref-gn,../bib/coupling,../bib/pnp}

\begin{thebibliography}{10}

\bibitem{Adams.R1978}
R.~A. Adams.
\newblock {\em Sobolev Spaces}.
\newblock Academic Press, 1978.

\bibitem{Ainsworth.M;Oden.J2000}
M.~Ainsworth and J.~Oden.
\newblock {\em A Posteriori Error Estimation in Finite Element Analysis}.
\newblock John Wiley \& Sons, Inc., 2000.

\bibitem{Arnold.D;Mukherjee.A;Pouly.L2000}
D.~N. Arnold, A.~Mukherjee, and L.~Pouly.
\newblock Locally adapted tetrahedral meshes using bisection.
\newblock {\em SIAM Journal of Scientific Computing}, 22(2):431--448, 2000.

\bibitem{Babuska.I;Rheinboldt.W1978a}
I.~Babu{\v s}ka and W.~C. Rheinboldt.
\newblock Error estimates for adaptive finite element computations.
\newblock {\em SIAM Journal on Numerical Analysis}, 15:736--754, 1978.

\bibitem{Babuska.I;Rheinboldt.W1978}
I.~Babu{\v s}ka and W.~C. Rheinboldt.
\newblock A posteriori error error estimates for the finite element method.
\newblock {\em International Journal for Numerical Methods in Engineering},
  12:1597--1615, 1978.

\bibitem{Babuska.I;Vogelius.M1984}
I.~Babu{\v s}ka and M.~Vogelius.
\newblock Feeback and adaptive finite element solution of one-dimensional
  boundary value problems.
\newblock {\em Numerische Mathematik}, 44:75--102, 1984.

\bibitem{Bank.R;Rose.D1981}
R.~E. Bank and D.~J. Rose.
\newblock Global approximate {N}ewton methods.
\newblock {\em Numerische Mathematik}, 37:279--295, 1981.

\bibitem{Bank.R;Smith.R1993}
R.~E. Bank and R.~K. Smith.
\newblock A posteriori estimates based on hierarchical basis.
\newblock {\em SIAM Journal on Numerical Analysis}, 30:921--935, 1993.

\bibitem{Bansch.E;Siebert.K1995}
E.~B{\"a}nsch and K.~Siebert.
\newblock {\em {A Posteriori Error Estimation for Nonlinear Problems by Duality
  Techniques}}.
\newblock Albert-Ludwigs-Univ., Math. Fak., 1995.

\bibitem{Berrone.S2001}
S.~Berrone.
\newblock Adaptive discretization of stationary and incompressible
  navier--stokes equations by stabilized finite element methods.
\newblock {\em Computer Methods in Applied Mechanics and Engineering},
  190(34):4435--4455, 2001.

\bibitem{Binev.P;Dahmen.W;DeVore.R2004}
P.~Binev, W.~Dahmen, and R.~DeVore.
\newblock Adaptive finite element methods with convergence rates.
\newblock {\em Numerische Mathematik}, 97(2):219--268, 2004.

\bibitem{Brezzi.F;Fortin.M1991}
F.~Brezzi and M.~Fortin.
\newblock {\em Mixed and hybrid finite element methods}.
\newblock Springer-Verlag, 1991.

\bibitem{Caloz.G;Rappaz.J1994}
G.~Caloz and J.~Rappaz.
\newblock {Numerical analysis for nonlinear and bifurcation problems}.
\newblock {\em Handbook of Numerical Analysis}, 1994.

\bibitem{Carstensen.C2009}
C.~Carstensen.
\newblock {Convergence of adaptive finite element methods in computational
  mechanics}.
\newblock {\em Applied Numerical Mathematics}, 59(9):2119--2130, 2009.

\bibitem{Carstensen.C;Orlando.A;Valdman.J2006}
C.~Carstensen, A.~Orlando, and J.~Valdman.
\newblock {A convergent adaptive finite element method for the primal problem
  of elastoplasticity}.
\newblock {\em International Journal for Numerical Methods in Engineering},
  67(13), 2006.

\bibitem{Cascon.J;Kreuzer.C;Nochetto.R;Siebert.K2007}
J.~M. Casc\'on, C.~Kreuzer, R.~H. Nochetto, and K.~G. Siebert.
\newblock Quasi-optimal convergence rate for an adaptive finite element method.
\newblock {\em Preprint 9, University of Augsburg}, 2007.

\bibitem{Cascon.J;Kreuzer.C;Nochetto.R;Siebert.K2008}
J.~M. Cascon, C.~Kreuzer, R.~H. Nochetto, and K.~G. Siebert.
\newblock Quasi-optimal convergence rate for an adaptive finite element method.
\newblock {\em SIAM Journal on Numerical Analysis}, 46(5):2524--2550, 2008.

\bibitem{Charina.M;Conti.C;Fornasier.M2008}
M.~Charina, C.~Conti, and M.~Fornasier.
\newblock {Adaptive frame methods for nonlinear variational problems}.
\newblock {\em Numerische Mathematik}, 109(1):45--75, 2008.

\bibitem{Chen.L2006a}
L.~Chen.
\newblock Short implementation of bisection in {MATLAB}.
\newblock {\em report}, 2006.

\bibitem{Chen.L;Holst.M;Xu.J2007}
L.~Chen, M.~Holst, and J.~Xu.
\newblock The finite element approximation of the nonlinear poisson-boltzmann
  equation.
\newblock {\em SIAM Journal on Numerical Analysis}, 45(6):2298--2320, 2007.

\bibitem{Chern.I;Liu.J;Wan.W2003}
I.-L. Chern, J.-G. Liu, and W.-C. Wan.
\newblock Accurate evaluation of electrostatics for macromolecules in solution.
\newblock {\em Methods and Applications of Analysis}, 10:309--328, 2003.

\bibitem{Ciarlet.P1978}
P.~G. Ciarlet.
\newblock {\em The Finite Element Method for Elliptic Problems}, volume~4 of
  {\em Studies in Mathematics and its Applications}.
\newblock North-Holland Publishing Co., Amsterdam-New York-Oxford, 1978.

\bibitem{sD04}
S.~Dain.
\newblock Trapped surfaces as boundaries for the constraint equations.
\newblock {\em Classical Quantum Gravity}, 21(2):555--573, 2004.

\bibitem{Deuflhard.P2004}
P.~Deuflhard.
\newblock {\em Newton Methods for Nonlinear Problems. Affine Invariance and
  Adaptive Algorithms}.
\newblock Springer Series in Computational Mathematics. Springer, 2004.

\bibitem{Diening.L;Kreuzer.C2008}
L.~Diening and C.~Kreuzer.
\newblock {Linear Convergence of an Adaptive Finite Element Method for the\$
  p\$-Laplacian Equation}.
\newblock {\em SIAM Journal on Numerical Analysis}, 46:614, 2008.

\bibitem{Dorfler.W1996}
W.~D\"orfler.
\newblock A convergent adaptive algorithm for {Poisson}'s equation.
\newblock {\em SIAM Journal on Numerical Analysis}, 33:1106--1124, 1996.

\bibitem{GDLM93}
M.~K. Gilson, M.~E. Davis, B.~A. Luty, and J.~A. McCammon.
\newblock Computationn of electrostatic forces on solvated molecules using the
  poisson-boltzmann equation.
\newblock {\em J. Phys. Chem.}, 97:3591--3600, 1993.

\bibitem{Girault.V;Raviart.P1986}
V.~Girault and P.~A. Raviart.
\newblock {\em Finite element methods for {Navier}--{Stokes} equations}.
\newblock Springer-Verlag, Berlin, 1986.
\newblock Theory and algorithms.

\bibitem{Hebey96}
E.~Hebey.
\newblock {\em Sobolev spaces on {R}iemannian manifolds}, volume 1635 of {\em
  Lecture notes in mathematics}.
\newblock Springer, Berlin, New York, 1996.

\bibitem{Holst.M2001}
M.~Holst.
\newblock Adaptive numerical treatment of elliptic systems on manifolds.
\newblock {\em Advances in Computational Mathematics}, 15:139--191, 2001.

\bibitem{Holst.M;McCammon.J;Yu.Z;Zhou.Y2009}
M.~Holst, J.~McCammon, Z.~Yu, Y.~Zhou, and Y.~Zhu.
\newblock {Adaptive Finite Element Modeling Techniques for the
  Poisson-Boltzmann Equation}.
\newblock {\em Preprint}, 2009.

\bibitem{HNT07b}
M.~Holst, G.~Nagy, and G.~Tsogtgerel.
\newblock Rough solutions of the {Einstein} constraints on closed manifolds
  without near-{CMC} conditions.
\newblock {\em Comm. Math. Phys.}, 288(2):547--613, 2009.
\newblock Available as \href{http://arxiv.org/abs/0712.0798}{arXiv:0712.0798
  [gr-qc]}.

\bibitem{Holst.M;Nagy.G;Tsogtgerel.G2009}
M.~Holst, G.~Nagy, and G.~Tsogtgerel.
\newblock {Rough solutions of the Einstein constraints on closed manifolds
  without near-CMC conditions}.
\newblock {\em Communications in Mathematical Physics}, 288(2):547--613, 2009.

\bibitem{HoTs07a}
M.~Holst and G.~Tsogtgerel.
\newblock Adaptive finite element approximation of nonlinear geometric {PDE}.
\newblock Preprint.

\bibitem{Holst.M;Tsogtgerel.G2009}
M.~Holst and G.~Tsogtgerel.
\newblock Convergent adaptive finite element approximation of the einstein
  constraints.
\newblock {\em Preprint}, 2009.

\bibitem{HoTs07b}
M.~Holst, G.~Tsogtgerel, and Y.~Zhu.
\newblock Convergent adaptive finite element approximation of the {Einstein}
  constraints.
\newblock Preprint.

\bibitem{Jerome.J1985}
J.~W. Jerome.
\newblock Consistency of semiconductor modeling: An existence/stability
  analysis for the stationary van roosbroeck system.
\newblock {\em SIAM Journal on Applied Mathematics}, 45(4):565--590, Aug. 1985.

\bibitem{Kesa89}
S.~Kesavan.
\newblock {\em Topics in Functional Analysis and Applications}.
\newblock John Wiley \& Sons, Inc., New York, NY, 1989.

\bibitem{Kim.K2007}
K.~Kim.
\newblock {A posteriori error estimators for locally conservative methods of
  nonlinear elliptic problems}.
\newblock {\em Applied Numerical Mathematics}, 57(9):1065--1080, 2007.

\bibitem{Krizek.M;Nemec.J;Vejchodsky.T2001}
M.~K{\v{r}}{\'\i}{\v{z}}ek, J.~N{\v{e}}mec, and T.~Vejchodsk{\`y}.
\newblock {A Posteriori Error Estimates for Axisymmetric and Nonlinear
  Problems}.
\newblock {\em Advances in Computational Mathematics}, 15(1):219--236, 2001.

\bibitem{aL44}
A.~Lichnerowicz.
\newblock L'integration des \'equations de la gravitation relativiste et le
  probl\`eme des n corps.
\newblock {\em J. Math. Pures Appl.}, 23:37--63, 1944.

\bibitem{Lions.J;Magenes.E1973}
J.~Lions and E.~Magenes.
\newblock {\em Non-Homogeneous Boundary Value Problems and Applications I}.
\newblock Springer-Verlag Berlin Heidelberg New York, 1973.

\bibitem{LiRh94}
J.~Liu and W.~Rheinboldt.
\newblock A posteriori finite element error estimators for indefinite elliptic
  boundary value problems.
\newblock {\em Numer. Funct. Anal. and Optimiz.}, 15(3):335--356, 1994.

\bibitem{dM05b}
D.~Maxwell.
\newblock Solutions of the {{\ E}}instein constraint equations with apparent
  horizon boundaries.
\newblock {\em Comm. Math. Phys.}, 253(3):561--583, 2005.

\bibitem{Mekchay.K;Nochetto.R2005}
K.~Mekchay and R.~Nochetto.
\newblock Convergence of adaptive finite element methods for general second
  order linear elliptic {PDE}.
\newblock {\em SIAM Journal on Numerical Analysis}, 43(5):1803--1827, 2005.

\bibitem{Morin.P;Nochetto.R;Siebert.K2002}
P.~Morin, R.~H. Nochetto, and K.~G. Siebert.
\newblock Convergence of adaptive finite element methods.
\newblock {\em SIAM Review}, 44(4):631--658, 2002.

\bibitem{Morin.P;Siebert.K;Veeser.A2008}
P.~Morin, K.~Siebert, and A.~Veeser.
\newblock {A Basic Convergence Result for Conforming Adaptive Finite Elements}.
\newblock {\em Mathematical Models and Methods in Applied Sciences},
  18(5):707--737, 2008.

\bibitem{Morin.P;Siebert.K;Veeser.A2007a}
P.~Morin, K.~G. Siebert, and A.~Veeser.
\newblock A basic convergence result for conforming adaptive finite elements.
\newblock {\em Math. Models Methods Appl. Sci}, 18:707--737, 2007.

\bibitem{Morin.P;Siebert.K;Veeser.A2007}
P.~Morin, K.~G. Siebert, and A.~Veeser.
\newblock Convergence of finite elements adapted for weak norms.
\newblock {\em Preprint, University of Augsburg}, 2007.

\bibitem{Nochetto.R;Siebert.K;Veeser.A2009}
R.~Nochetto, K.~Siebert, and A.~Veeser.
\newblock {Theory of adaptive finite element methods: An introduction}.
\newblock In R.~DeVore and A.~Kunoth, editors, {\em Multiscale, Nonlinear and
  Adaptive Approximation}, pages 409--542. Springer, 2009.
\newblock Dedicated to Wolfgang Dahmen on the Occasion of His 60th Birthday.

\bibitem{OrRh70}
J.~M. Ortega and W.~C. Rheinboldt.
\newblock {\em Iterative Solution of Nonlinear Equations in Several Variables}.
\newblock Academic Press, New York, NY, 1970.

\bibitem{Ortner.C;Praetorius.D2008}
C.~Ortner and D.~Praetorius.
\newblock On the convergence of non-conforming finite element methods.
\newblock {\em Preprint}, (14/2008), 2008.

\bibitem{Pala65}
R.~Palais.
\newblock {\em Seminar on the {A}tiyah-{S}inger index theorem}.
\newblock Princeton University Press, Princeton, 1965.

\bibitem{Plaza.A;Carey.G2000}
A.~Plaza and G.~F. Carey.
\newblock Local refinement of simplicial grids based on the skeleton.
\newblock {\em Applied Numerical Mathematics}, 32(2):195--218, 2000.

\bibitem{Pousin.J;Rappaz.J1994}
J.~Pousin and J.~Rappaz.
\newblock {Consistency, stability, a priori and a posteriori errors for
  {Petrov}-{Galerkin} methods applied to nonlinear problems}.
\newblock {\em Numerische Mathematik}, 69(2):213--231, 1994.

\bibitem{Rannacher.R;Scott.R1982}
R.~Rannacher and R.~Scott.
\newblock Some optimal error estimates for piecewise linear finite element
  approximations.
\newblock {\em Mathematics of Computation}, 38(158):437--445, 1982.

\bibitem{Rappaz.J2006}
J.~Rappaz.
\newblock Numerical approximation of pdes and cl{\'e}ment's interpolation.
\newblock {\em Partial Differential Equations and Functional Analysis}, pages
  237--250, 2006.

\bibitem{Repin.S2000a}
S.~Repin.
\newblock {A posteriori error estimation for nonlinear variational problems by
  duality theory}.
\newblock {\em Journal of Mathematical Sciences}, 99(1):927--935, 2000.

\bibitem{Repin.S2008}
S.~Repin.
\newblock {\em A posteriori estimates for partial differential equations},
  volume~4 of {\em Radon Series on Computational and Applied Mathematics}.
\newblock Walter de Gruyter GmbH \& Co. KG, Berlin, 2008.

\bibitem{Schatz.A1974}
A.~Schatz.
\newblock An observation concerning {R}itz-{G}alerkin methods with indefinite
  bilinear forms.
\newblock {\em Mathematics of Computation}, 28(205):952--962, 1974.

\bibitem{Siebert.K2009}
K.~Siebert.
\newblock {A convergence proof for adaptive finite elements without lower
  bound}.
\newblock {\em Preprint Universit{\"a}t Duisburg-Essen}, 2009.

\bibitem{Stevenson.R2007}
R.~Stevenson.
\newblock Optimality of a standard adaptive finite element method.
\newblock {\em Found. Comput. Math.}, 7(2):245--269, 2007.

\bibitem{Stevenson.R2008}
R.~Stevenson.
\newblock The completion of locally refined simplicial partitions created by
  bisection.
\newblock {\em Mathemathics of Computation}, 77:227--241, 2008.

\bibitem{Stru00}
M.~Struwe.
\newblock {\em Variational Methods}.
\newblock Springer-Verlag, Berlin, Germany, third edition, 2000.

\bibitem{Vain73}
M.~Vainberg.
\newblock {\em Variational method and method of monotone operators in the
  theory of nonlinear equations}.
\newblock John Wiley \& Sons Ltd, New York, NY, 1973.

\bibitem{Veeser.A2002}
A.~Veeser.
\newblock Convergent adaptive finite elements for the nonlinear {Laplacian}.
\newblock {\em Numerische Mathematik}, 92:743--770, 2002.

\bibitem{Verfurth.R1984}
R.~Verf{\"u}rth.
\newblock {Error estimates for a mixed finite element approximation of the
  Stokes problem}.
\newblock {\em RAIRO Anal. Numer}, 18(175-182), 1984.

\bibitem{Verfurth.R1994}
R.~Verf{\"u}rth.
\newblock A posteriori error estimates for nonlinear problems. finite element
  discretizations of elliptic equations.
\newblock {\em Mathematics of Computation}, 62(206):445--475, Apr. 1994.

\bibitem{Verfurth.R1996}
R.~Verf{\"u}rth.
\newblock {\em A review of a posteriori error estimation and adaptive mesh
  refinement tecniques}.
\newblock B. G. Teubner, 1996.

\bibitem{jY71}
J.~York.
\newblock Gravitational degrees of freedom and the initial-value problem.
\newblock {\em Phys. Rev. Lett.}, 26(26):1656--1658, 1971.

\bibitem{jY72}
J.~York.
\newblock Role of conformal three-geometry in the dynamics of gravitation.
\newblock {\em Phys. Rev. Lett.}, 28(16):1082--1085, 1972.

\bibitem{YoPi82}
J.~W. {York, Jr.} and T.~Piran.
\newblock The initial value problem and beyond.
\newblock In R.~A. Matzner and L.~C. Shepley, editors, {\em Spacetime and
  Geometry: The {A}lfred {S}child Lectures}, pages 147--176, Austin, Texas,
  1982. University of Texas Press.

\bibitem{ZPVKL96}
Z.~Zhou, P.~Payne, M.~Vasquez, N.~Kuhn, and M.~Levitt.
\newblock Finite-difference solution of the poisson-boltzmann equation:
  Complete elimination of self-energy.
\newblock {\em J.\ Comput.\ Chem.}, 17:1344--1351, 1996.

\end{thebibliography}



\end{document}